\renewcommand\a{\alpha}
\renewcommand\b{\beta}
\newcommand\la{\lambda}
\newcommand\z{\zeta}
\newcommand\e{\eta}
\renewcommand\th{\theta}
\newcommand\io{\iota}
\newcommand\s{\sigma}
\newcommand\f{\phi}
\newcommand\vf{\varphi}
\renewcommand\t{\tau}
\renewcommand\r{\rho}
\newcommand\w{\omega}
\newcommand{\OO}{\mathbb O}
\newcommand\Ql{\bar{\mathbf Q}_l}
\newcommand\BP{\mathbf P}
\newcommand\BZ{\mathbf Z}
\newcommand\BM{\mathbf M}
\newcommand\BH{\mathbf H} 
\newcommand\BI{\mathbf I}
\newcommand\BJ{\mathbf J}
\newcommand\Bp{\mathbf p}
\newcommand\Bm{\mathbf m}
\newcommand\Bv{\mathbf v}
\newcommand\Bk{\mathbf k}
\newcommand\Bw{\mathbf w}
\newcommand\Bla{\boldsymbol\lambda}
\newcommand\Bmu{\boldsymbol\mu}
\newcommand\Bxi{\boldsymbol{\xi}}
\newcommand\CB{\mathcal{B}}
\newcommand\ZC{\mathcal{C}}
\newcommand\CH{\mathcal{H}}
\newcommand\CI{\mathcal{I}}
\newcommand\CE{\mathcal{E}}
\newcommand\CL{\mathcal{L}}
\newcommand\CM{\mathcal{M}}
\newcommand\CN{\mathcal{N}}
\newcommand\CO{\mathcal{O}}
\newcommand\CK{\mathcal{K}}
\newcommand\CP{\mathcal{P}}
\newcommand\CQ{\mathcal{Q}}
\newcommand\CU{\mathcal{U}}
\newcommand\CF{\mathcal{F}}
\newcommand\CG{\mathcal{G}}
\newcommand\CT{ \mathcal{T}}
\newcommand\CZ{ \mathcal{Z}}
\newcommand\CX{ \mathcal{X}}
\newcommand\CY{ \mathcal{Y}}
\newcommand\CW{ \mathcal{W}}
\newcommand\iv{^{-1}}
\newcommand\wh{\widehat}
\newcommand\wt{\widetilde}
\newcommand\wg{^{\wedge}}
\newcommand\ol{\overline}
\newcommand\hra{\hookrightarrow}
\newcommand\lra{\leftrightarrow}
\newcommand\IC{\operatorname{IC}}
\newcommand\Ker{\operatorname{Ker}}
\newcommand\Hom{\operatorname{Hom}}
\newcommand\End{\operatorname{End}}
\newcommand\Ind{\operatorname{Ind}}
\newcommand\supp{\operatorname{supp}\,}
\newcommand\reg{_{\operatorname{reg}}}
\newcommand\rg{\operatorname{reg}}
\newcommand\unip{\operatorname{uni}}
\newcommand\uni{_{\operatorname{uni}}}
\newcommand\id{\operatorname{id}}
\newcommand\lp{\operatorname{\!\langle\!}}
\newcommand\rp{\operatorname{\!\rangle\!}}
\renewcommand\Im{\operatorname{Im}}
\newcommand\nat{^{\natural}}
\newcommand\odd{\operatorname{odd}}
\newcommand{\isom}{\,\raise2pt\hbox{$\underrightarrow{\sim}$}\,}
\numberwithin{equation}{section}
\newtheorem{thm}{Theorem}[section]
\newtheorem{lem}[thm]{Lemma}
\newtheorem{cor}[thm]{Corollary}
\newtheorem{prop}[thm]{Proposition}
\def \para#1{\par\medskip\textbf{#1}
              \addtocounter{thm}{1}}
\def \remark#1{\par\medskip\noindent
                \textbf{Remark #1}
                \addtocounter{thm}{1}}
\begin{document}
\setlength{\baselineskip}{4.9mm}
\setlength{\abovedisplayskip}{4.5mm}
\setlength{\belowdisplayskip}{4.5mm}
%%%
%%%
\renewcommand{\theenumi}{\roman{enumi}}
\renewcommand{\labelenumi}{(\theenumi)}
\renewcommand{\thefootnote}{\fnsymbol{footnote}}
%%%
\renewcommand{\thefootnote}{\fnsymbol{footnote}}
%%%
\allowdisplaybreaks[2]
%\NoBlackBoxes
\parindent=20pt
%%%%%%%%%%%%%%%%%%%%
%%%%%%%%%%%%%%%%%%%%%%%%%%%%%%%%%%%
\medskip
\begin{center}
 {\bf Exotic symmetric spaces of higher level}
\\
{\bf -- Springer correspondence for complex reflection groups --} 
\\
\vspace{1cm}
Toshiaki Shoji
\\
\vspace{0.7cm}
\title{}
\end{center}

\begin{abstract}
Let $G = GL(V)$ for a $2n$-dimensional vector space $V$, and $\th$ an involutive automorphism 
of $G$ such that $H = G^{\th} \simeq Sp(V)$. Let 
$G^{\io\th}\uni$ be the set of unipotent elements $g \in G$ such that $\th(g) = g\iv$. 
For any integer $r \ge 2$, 
we consider the variety $G^{\io\th}\uni \times V^{r-1}$, on which $H$ acts diagonally.     
Let $W_{n,r} = S_n\ltimes (\BZ/r\BZ)^n$ be a complex reflection group.
In this paper, generalizing the known result for $r = 2$,  we show that there exists a natural bijective 
correspondence (Springer correspondence) between the set of irreducible representations 
of $W_{n,r}$ and a certain set of $H$-equivariant simple perverse sheaves on 
$G^{\io\th}\uni \times V^{r-1}$. 
We also consider a similar problem  for $G \times V^{r-1}$, on which $G$ acts diagonally, 
where $G = GL(V)$ for a finite dimensional vector space $V$. 
\end{abstract}

\maketitle
\markboth{SHOJI}{EXOTIC SYMMETRC SPACES}
\pagestyle{myheadings}

\begin{center}
{\sc Introduction}
\end{center}

Let $V$ be a $2n$ dimensional vector space over $\Bk$, where $\Bk$ 
is an algebraically closed field with char $\Bk \ne 2$. Let $G = GL(V)$
and $\th : G \to G$ an involutive automorphism such that 
$G^{\th} \simeq Sp(V)$. Let $\io : G \to G$ be the anti-automorphism 
$g \mapsto g\iv$, and put 
$G^{\io\th} = \{ g \in G \mid \th(g) = g\iv\}$.
We consider the variety $\CX = G^{\io\th} \times V$, on which $H = G^{\th}$ acts
diagonally.  Let $G^{\io\th}\uni$ be the set of unipotent elements in $G^{\io\th}$, 
and define a closed subvariety $\CX\uni$ of $\CX$ by 
$\CX\uni = G^{\io\th}\uni \times V$.  
$\CX\uni$ is nothing but the exotic nilpotent cone introduced by Kato [K]. 
It is known that $\CX\uni$ is $H$-stable, and the set of $H$-orbits
in $\CX\uni$ is in bijection with the set $\CP_{n,2}$ of double partitions of $n$
([K]). 
Let $B$ be a $\th$-stabel Borel subgroup of $G$, $U$ the unipotent radical of $B$, 
and $(M_i)_{1 \le i \le n}$ 
be an isotropic flag in $V$ such that whose stabilizer in $H$ is $B^{\th}$.  
We define a variety $\wt\CX\uni$ by 
\begin{equation*}
\wt\CX\uni = \{ (x, v, gB^{\th}) \in G^{\io\th}\uni \times V \times H/B^{\th}
                   \mid g\iv xg \in U^{\io\th}, g\iv v \in M_n \},
\end{equation*}
and define a map $\pi_1: \wt\CX\uni \to \CX\uni$ by the projection on the first two factors. 
In [K], [SS1], the Springer correspondence between the set of $H$-orbits in $\CX\uni$ 
and the set of irreducible representations of the Weyl group $W_{n,2}$ of type $C_n$ was studied. 
More precisely, it is stated as follows; $(\pi_1)_!\Ql[\dim \CX\uni]$ is a semisimple 
perverse sheaf on $\CX\uni$, equipped with $W_{n,2}$-action and is decomposed as
\begin{equation*}
\tag{1}
(\pi_1)_!\Ql[\dim \CX\uni] \simeq \bigoplus_{\Bla \in \CP_{n,2}} V(\Bla)
                  \otimes \IC(\ol\CO_{\Bla}, \Ql)[\dim \CO_{\Bla}], 
\end{equation*} 
where $V(\Bla)$ is the irreducible representation of $W_{n,2}$ and $\CO_{\Bla}$ 
is the $H$-orbit in $\CX\uni$ corresponding to $\Bla \in \CP_{n,2}$.
\par
In this paper, we consider the variety $G^{\io\th} \times V^{r-1}$ for a positive
integer $r \ge 2$, with the diagonal action of $H$. We call it the exotic symmetric 
space of level $r$.  
Let $W_{n,r} = S_n \ltimes (\BZ/r\BZ)^n$ be the complex reflection group $G(r,1,n)$,
where $S_n$ is the symmetric group of degree $n$.
We will generalize the previous result to the correspondence between the set of 
irreducible representations of $W_{n,r}$ and a certain set of simple perverse sheaves on 
$G^{\io\th}\uni \times V^{r-1}$. 
Let $\CQ_{n,r}$ be the set of $\Bm = (m_1, \dots, m_r) \in \BZ_{\ge 0}^r$ 
such that $\sum m_i = n$, and $\CQ_{n,r}^0$ the subset of $\CQ_{n,r}$ 
consisting of $\Bm$ such that $m_r = 0$. For each $\Bm \in \CQ_{n,r}$, we define varieties
\begin{align*}
\wt\CX_{\Bm, \unip} &= \{ (x, \Bv, gB^{\th}) \in G^{\io\th}\uni \times V^{r-1} \times H/B^{\th}
               \mid g\iv xg \in U^{\io\th}, g\iv \Bv \in \prod_{i=1}^{r-1}M_{p_i} \}, \\
\CX_{\Bm, \unip} &= \bigcup_{g \in H}g(U^{\io\th} \times \prod_{i=1}^{r-1}M_{p_i}),
\end{align*}
where $p_i = m_1 + \cdots + m_i$ for each $i$.
We define a map $\pi^{(\Bm)}_1: \wt\CX_{\Bm \unip} \to \CX_{\Bm, \unip}$ by the projection 
on the first two factors.   
In the case where $\Bm = (n, 0, \dots, 0)$, we write $\wt\CX_{\Bm, \unip}, \CX_{\Bm, \unip}$ 
and $\pi^{(\Bm)}_1$ simply by $\wt\CX\uni, \CX\uni$ and $\pi_1$.  Note that even in this case, 
the map $\wt\CX\uni \to G^{\io\th}\uni \times V^{r-1}$ is not surjective.  
For each $\Bm \in \CQ_{n,r}$ we consider a map 
$\ol\pi_{\Bm,1} : \pi_1\iv(\CX_{\Bm,\unip}) \to \CX_{\Bm,\unip}$.  Note that 
$\wt\CX_{\Bm,\unip} \subset \pi_1\iv(\CX_{\Bm, \unip}) \subset \wt\CX\uni$.
Let $\CP_{n,r}$ be the set of $r$-tuples of partitions 
$\Bla = (\la^{(1)}, \dots, \la^{(r)})$ such that $\sum_i|\la^{(i)}| = n$.
For $\Bm \in \CQ_{n,r}^0$, let 
$\wt\CP(\Bm)$ be the set of all $\Bla = (\la^{(1)}, \dots, \la^{(r)}) \in \CP_{n,r}$
such that $|\la^{(i)}| = m_i$ for $i = 1, \dots, r-2$ (hence $|\la^{(r-1)}| = k$ for 
$0 \le k \le m_{r-1}$). 
As a generalization of (1), we prove the following result, 
which is regarded as the Springer correspondence for 
$W_{n,r}$.  Assume that $\Bm \in \CQ_{n,r}^0$.  Then 
the complex $(\ol\pi_{\Bm,1})_!\Ql[\dim \CX_{\Bm,\unip}]$ is a semisimple perverse sheaf 
on $\CX_{\Bm, \unip}$, equipped with a $W_{n,r}$-action, and is decomposed as 

\begin{equation*}
\tag{2}
(\ol\pi_{\Bm,1})_!\Ql[\dim \CX_{\Bm, \unip}] 
       \simeq \bigoplus_{\Bla \in \wt\CP(\Bm)}V(\Bla) \otimes \IC(\ol X_{\Bla}, \Ql)[\dim X_{\Bla}],
\end{equation*}   
where $V(\Bla)$ is an irreducible representation of $W_{n,r}$, 
and $X_{\Bla}$ is a certain smooth irreducible subvariety of $\CX_{\Bm, \unip}$ parametrized 
by $\Bla \in \wt\CP(\Bm)$. Any irreducible representation of $W_{n,r}$ is realized in this way 
uniquely for a suitable choice of $\Bm \in \CQ^0_{n,r}$. 
We can determine the varieties $X_{\Bla}$ explicitly. Note that in the case $r \ge 3$, 
$G^{\io\th}\uni \times V^{r-1}$ has infinitely many $H$-orbits. So the description of $X_{\Bla}$
becomes more complicated compared to the case where $r = 2$.  
\par
In the course of the proof, we show a weaker version of the Springer correspondence.
For each $\Bm \in \CQ_{n,r}^0$, we define a subgroup $\CW\nat_{\Bm}$ of $W_{n,2}$ by 
\begin{equation*}
\CW\nat_{\Bm} = S_{m_1} \times \cdots \times S_{m_{r-2}} \times W_{m_{r-1},2}.
\end{equation*}
For $\Bla \in \wt\CP(\Bm)$, one can associate an irreducible representation $\r_{\Bla}$ 
of $\CW\nat_{\Bm}$ in a canonical way. We show that the complex 
$(\pi_1^{(\Bm)})_!\Ql[\dim \CX_{\Bm,\unip}]$ is a semisimple perverse sheaf on $\CX_{\Bm \unip}$, 
equipped with $\CW\nat_{\Bm}$-action, and is decomposed as

\begin{equation*}
\tag{3}
(\pi_1^{(\Bm)})_!\Ql[\dim \CX_{\Bm, \unip}]\simeq 
                     \bigoplus_{\Bla \in \wt\CP(\Bm)}\r_{\Bla}\otimes 
              \IC(\ol X_{\Bla}, \Ql)[\dim X_{\Bla}].
\end{equation*} 
Any irreducible representation of $\CW\nat_{\Bm}$ is realized in this way uniquely 
for a suitable choice of $\Bm \in \CQ_{n,r}^0$. 
Note that the group $W_{n,r}$ is not directly relatd to the geometry of 
$H/B^{\th}$, while $\CW\nat_{\Bm}$ behaves well since it is a subgroup of $W_{n,2}$.
So first we show (3), and then prove (2) by making use of (3). 
\par
We also consider the variety $\CX = G \times V$, where $V$ is an $n$-dimensional 
vector space over $\Bk$ (of any characteristic), 
and $G = GL(V)$.  $G$ acts diagonally on $\CX$.  Put 
$\CX\uni = G\uni \times V$, where $G\uni$ is the set of unipotent elements in $G$.
The variety $\CX\uni$ is called the enhanced nilpotent cone by Achar-Henderson [AH]. 
It is known by [AH], [T] that $\CX\uni$ is $G$-stable, and  
the set of $G$-orbits is in bijection with $\CP_{n,2}$.
For each $\Bm = (m_1, m_2) \in \CQ_{n,2}$, one can define a similar 
map $\pi_1^{(\Bm)} : \wt\CX_{\Bm,\unip} \to \CX_{\Bm, \unip}$ as in the exotic case. 
Achar-Henderson [AH] and Finkelberg-Ginzburg-Travkin [FGT]
proved the Springer correspondence between the set of irreducible representaions of 
$S_{m_1} \times S_{m_2}$ and the set of simple perverse sheaves associated to the  
$G$-orbits in $\CX_{\Bm, \unip}$, which are direct summands of 
$(\pi_1^{(\Bm)})_!\Ql[\dim \CX_{\Bm, \unip}]$.
In this paper, we consider the variety $\CX = G \times V^{r-1}$ with the diagonal $G$-action, 
which we call the enhanced space of level $r$.
The arguments used to prove the Springer correspondence (3) in the exotic case can be applied 
also to the enhanced case, step by step, by a suitable modification. 
Actually the argument becomes drastically simple. 
We show that the Springer correspondence holds for 
$\CW\nat_{\Bm} = S_{m_1} \times \cdots \times S_{m_r}$ for any $\Bm \in \CQ_{n,r}$.   
(This result was announced by the author in 2009, but was not published.)
In [Li], Li established the Springer correspondence for such $\CW_{\Bm}\nat$ 
in connection with certain perverse sheaves arising from 
the framed Jordan quiver.  Considering the framed Jordan quiver is essentially the same 
as considering the enhanced variety. So in this case our result is regarded as 
an alternate approach for his result.

\par\bigskip\bigskip

\begin{center}
{\bf Contents}
\end{center}
\par\medskip

1. \ Intersection cohomology on $G^{\io\th}\reg \times V^{r-1}$ (exotic case)

2. \ Intersection cohomology on $G^{\io\th} \times V^{r-1}$ (exotic case)

3. \ A variant of Theorem 2.2

4. \ Intersection cohomology on $G^{\io\th} \times V^{r-1}$ (enhanced case)

5. \ Unipotent variety of enhanced type 

6. \ Unipotent variety of exotic type

7. \ Springer correspondence

8.  \ Determiniation of the Springer correspsondence

\par\bigskip 
\section{Intersection cohomology on $G\reg^{\io\th} \times V^{r-1}$ (exotic case)}

\para{1.1.}
Let $\Bk$ be an algebraically closed field.
In this paper, we consider the following two cases.
\par
\medskip
(I) \ {\bf The exotic case.} 
\par
Let $V$ be a $2n$ dimensional vector space over $\Bk$ (with char $\Bk \neq 2$), 
with basis $\{ e_1, \dots e_n, f_1, \dots f_n\}$.
Let $G =  GL_{2n}$.  
Consider an involutive automorphism  $\th : G \to G$ given by
\begin{equation*}
\th(g) = J\iv({}^tg\iv)J \quad\text{ with }
J = \begin{pmatrix}
          0 & 1_n \\
            -1_n & 0
         \end{pmatrix},
 \end{equation*}
where $1_n$ is the identity matrix of degree $n$,
and put $H = G^{\th}$.  Then $H$ is the symplectic group $Sp_{2n}$
with respect to the symplectic form 
$\lp u, v\rp ={}^tuJv$ for $u,v \in V$ 
under the identification $V \simeq \Bk^{2n}$ via the basis 
$\{ e_1, \dots, e_n, f_1, \dots f_n\}$, which  gives rise to 
a symplectic basis.  
\par
Let $\io : G \to G$ be the anti-automorphism $g \to g\iv$.  We consider the set
$G^{\io\th} = \{ g \in G \mid \th(g) = g\iv\}$. It is known that 
$G^{\io\th} = \{ g\th(g)\iv \mid g \in G\}$, and so $G^{\io\th} \simeq G/H$. 
Let $T \subset B$ be a pair of a $\th$-stable maximal torus and a $\th$-stable
Borel subgroup of $G$.  Let $M_1 \subset \cdots \subset M_n$ be an isotropic flag 
in $V$ such that its stabilizer in $H$ coincides with $B^{\th}$.  We assume that 
$M_i = \lp e_1, \dots, e_i\rp$ for $i = 1, \dots n$, and that $e_i, f_j$ are 
weight vectors for $T$. 

\medskip
(II) \  {\bf The enhanced case.}
\par
Let $\wt V = V \oplus V$, where $V$ is an $n$-dimensional vector space over $\Bk$, 
and 
$G = G_0 \times G_0$ a subgroup of $GL(\wt V)$ with 
$G_0 = GL(V)$. Let $\th : G \to G$ be an 
involution defined by $\th(g_1, g_2) = (g_2, g_1)$.  
Put $H = G^{\th} \simeq G_0$.  Then $H$ acts naturally on $V$.  
Let $G^{\io\th}$ be a subset of $G$ defined similarly to the case (I).  Then 
$G^{\io\th} \simeq G_0$ (as a set) and $H \simeq G_0$ acts on $G^{\io\th}$ by 
conjugation.  
Let $T \subset B$ be a pair of a $\th$-stable maximal torus and a $\th$-stable
Borel subgroup of $G$.  We can write $T = T_0 \times T_0$ and 
$B = B_0 \times B_0$ so that $B^{\th} \simeq B_0, T^{\th} \simeq T_0$.
Let $M_1 \subset \cdots \subset M_n = V$ be a complete flag 
in $V$ such that its stabilizer in $H$ coincides with $B^{\th}$.  We fix a basis 
$\{ e_1, \dots, e_n \}$ of $V$ such that 
$M_i = \lp e_1, \dots, e_i\rp$ for $i = 1, \dots n$, and that $e_i$ are 
weight vectors for $T^{\th}$.  Let $\{ e_1, \dots, e_n, f_1, \dots, f_n \}$ be 
a basis of $\wt V$, where $f_i = e_i$ for each $i$, and define a symplectic form
$\lp \ , \ \rp$ on $\wt V$ so that $\{ e_i, f_j\}$ gives a symplectic basis of $\wt V$.  

\para{1.2.}
For an integer $r \ge 1$, 
we consider the variety $G^{\io\th} \times V^{r-1}$ on which $H$ acts diagonally.
We call $G^{\io\th} \times V^{r-1}$ the exotic symmetric space of level $r$ in 
the case (I), and the enhanced space of level $r$ in the case (II).
Let $Q_{n,r} = \{ \Bm = (m_1, \dots, m_r) \in \BZ_{\ge 0}^r \mid \sum_im_i = n\}$.
We define $\CQ_{n,r}^0 = \{ \Bm \in \CQ_{n,r} \mid m_r = 0\}$. 
For each $\Bm \in \CQ_{n,r}$, we define $\Bp(\Bm) = (p_1, p_2, \dots, p_r)$ by 
$p_i = m_1 + \cdots + m_i$ for each $i$. 
We define varieties 
\begin{align*}
\wt\CX_{\Bm} &= \{ (x, \Bv, gB^{\th}) \in G^{\io\th} \times V^{r-1} \times H/B^{\th}
                 \mid g\iv xg \in B^{\io\th}, g\iv \Bv \in \prod_{i=1}^{r-1}M_{p_i} \}, \\ 
\CX_{\Bm}  &= \bigcup_{g \in H}g(B^{\io\th} \times \prod_{i=1}^{r-1} M_{p_i}). 
\end{align*}
We define a map $\pi^{(\Bm)}: \wt\CX_{\Bm} \to G^{\io\th} \times V^{r-1}$ by 
$\pi^{(\Bm)}(x,\Bv,gB^{\th}) = (x,\Bv)$. 
Clearly $\CX_{\Bm} = \Im \pi^{(\Bm)}$. Since 
$\wt\CX_{\Bm} \simeq  H \times^{B^{\th}}(B^{\io\th} \times \prod_i M_{p_i})$, 
$\wt\CX_{\Bm}$ is smooth and irreducible. Since $\pi^{(\Bm)}$ is proper, 
$\CX_{\Bm}$ is a closed irreducible subvariety of $G^{\io\th} \times V^{r-1}$.
In the case where $\Bm = (n, 0, \dots, 0)$, namely, $\Bp(\Bm) = (n, \dots, n)$, 
we write $\wt\CX_{\Bm}, \CX_{\Bm}$
and $\pi^{(\Bm)}$ by $\wt\CX, \CX$ and $\pi$, respectively. 
Note that for any $\Bm \in \CQ_{n,r}$, 
$\CX_{\Bm}$ is contained in $\CX$. 
The dimension of $\wt\CX_{\Bm}$ is computed as follows;
\begin{equation*}
\tag{1.2.1}
\dim \wt\CX_{\Bm} = \begin{cases}
                       2n^2 + \sum_{i=1}^r(r-i)m_i &\quad\text{ exotic case,} \\
                        n^2 + \sum_{i=1}^r(r-i)m_i &\quad\text{ enhamced case. }
                     \end{cases}
\end{equation*}
In fact, in the exotic case, by [SS1; (3.1.1)], we have
\begin{align*}
\dim \wt\CX_{\Bm} &= \dim H/B^{\th} + \dim B^{\io\th} + \sum_{i=1}^{r-1} \dim M_{p_i} \\
                  &= 2n^2 + \sum_{i=1}^{r-1}(m_1 +\cdots + m_i) \\
                  &= 2n^2 + \sum_{i=1}^r(r-i)m_i.
\end{align*}
The computation for the enhanced case is similar (in this case,
$\dim B^{\th} = \dim B^{\io\th}$). 
\par
Let $T^{\io\th}\reg$ be the set of regular semisimple elements in $T^{\io\th}$, namely, 
the set of elements in $T^{\io\th}$ such that all the eigenspaces in $V$ have 
dimension 2 (resp. dimension 1) in the exotic case (resp. in the enhanced case). 
We put $G^{\io\th}\reg = \bigcup_{g \in H}gT^{\io\th}\reg g\iv$, 
$B^{\io\th}\reg = G^{\io\th}\reg \cap B$. 
We define  varieties $\wt\CY_{\Bm}, \CY_{\Bm}$ by
\begin{align*}
\wt\CY_{\Bm} &= \{ (x,\Bv,gB^{\th}) \in G^{\io\th}\reg \times V^{r-1}\times H/B^{\th}
                     \mid g\iv xg \in B^{\io\th}\reg, g\iv \Bv \in \prod_{i=1}^{r-1} M_{p_i}\} \\
\CY_{\Bm} &= \bigcup_{g \in H}g(B^{\io\th}\reg \times \prod_{i=1}^{r-1}M_{p_i}), 
\end{align*}
and a map $\psi^{(\Bm)}: \wt\CY_{\Bm} \to G^{\io\th} \times V^{r-1}$ by 
$\psi^{(\Bm)}(x,\Bv, gB^{\th}) = (x,\Bv)$. 
Clearly $\Im \psi^{(\Bm)} = \CY_{\Bm}$.  As in the case of $\wt\CX_{\Bm}$, 
 we write $\wt\CY_{\Bm},\CY_{\Bm}$ and 
$\psi^{(\Bm)}$ by $\wt\CY,\CY, \psi$ 
in the case where $\Bm = (n,0,\dots, 0)$,. As in [SS1; (3.1.2)], $\wt\CY_{\Bm}$ can be 
expressed as 
\begin{align*}
\tag{1.2.2}
\wt\CY_{\Bm} &\simeq H \times^{B^{\th}}(B^{\io\th}\reg \times \prod_iM_{p_i}) \\
             &\simeq H \times^{B^{\th}\cap Z_H(T^{\io\th})}(T^{\io\th}\reg 
                 \times \prod_iM_{p_i}). 
\end{align*}

\para{1.3.}
In the remainder of this section, we assume that $\CX$ is of exotic type. 
We define a partial order on $\CQ_{n,r}$ by $\Bm \le \Bm'$ if 
$p_i \le p_i'$ for each $i$, where 
$\Bp(\Bm) = (p_1, \dots, p_r)$ and $\Bp(\Bm') = (p_1', \dots, p_r')$,
respectively. 
Then we have $\CY_{\Bm} \subseteq \CY_{\Bm'}$ if and only if $\Bm \le \Bm'$. 
Assume that $\Bm \le \Bm'$, then $\CX_{\Bm}$ is a closed subset of $\CX_{\Bm'}$. 
Since $\CY_{\Bm} = \CY \cap \CX_{\Bm}$, $\CY_{\Bm}$ is a closed subset of $\CY_{\Bm'}$. 
We define a subset $\CY_{\Bm}^0$ of $\CY_{\Bm}$ by 
$\CY_{\Bm}^0 = \CY_{\Bm} \backslash \bigcup_{\Bm'<\Bm}\CY_{\Bm'}$.  
Thus $\CY_{\Bm}^0$ is an open dense subset of $\CY_{\Bm}$, and we have a
partition $\CY = \coprod_{\Bm \in \CQ_{n,r}}\CY_{\Bm}^0$. 
\par 
As in [SS1; 3.2], for each subset $I \subset [1,n]$, put
$M_I = \{ v \in M_n \mid \supp(v) = I \}$, where for $v = \sum_{i=1}^na_ie_i \in M_n$,  
$\supp(v)$ is the set of $j \in [1,n]$ such that $a_j \ne 0$.
$M_I$ is an open dense subset of the space spanned by $\{ e_i \mid i \in I\}$,
which we denote by $\ol M_I$.     
For each $\Bm \in \CQ_{n,r}$, we define $\CI(\Bm)$ as the set of 
$\BI = (I_1, \dots, I_r)$ such that $[1,n] = \coprod_{i=1}^rI_i$ with 
$|I_i| = m_i$. For $\BI = (I_1, \dots, I_r)$, put 
$I_{< i} = I_1\cup I_2 \cup\cdots\cup I_{i-1}$.  Hence $|I_{< i}| = p_{i-1}$. 
For each $\BI \in \CI(\Bm)$, we define a set $\BM_{\BI} \subset (M_n)^{r-1}$
by
\begin{equation*}
\BM_{\BI} = \{ \Bv = (v_1, \dots v_{r-1}) \in (M_n)^{r-1} \mid  
      v_i \in M_{I_i} + \ol M_{I_{< i}} \}. 
\end{equation*}
and define a variety $\wt\CY_{\BI}$ by
\begin{equation*}
\wt\CY_{\BI} = H \times^{B^{\th} \cap Z_H(T^{\io\th})}(T^{\io\th}\reg \times \BM_{\BI}).
\end{equation*}

\par\noindent
Note that $Z_H(T^{\io\th}) \simeq SL_2 \times \cdots \times SL_2$ ($n$-times) and 
$B^{\th} \cap Z_H(T^{\io\th})$ can be identified with the subgroup 
$B_2 \times \cdots \times B_2$, where $B_2$ is the Borel subgroup of $SL_2$.
Since the action of $B^{\th} \cap Z_H(T^{\io\th})$ on $M_n$ is given by the 
action of the torus part $T^{\th}$, $\wt\CY_{\BI}$ is well-defined.
Let $\psi_{\BI}: \wt\CY_{\BI} \to \CY$ be the map induced from the map
given by $(g, (t,\Bv)) \mapsto (gtg\iv, g\Bv), 
    H \times (T\reg^{\io\th} \times \BM_{\BI}) \to \CY$.
Then $\Im \psi_{\BI}$ coincides with $\CY_{\Bm}^0$ (independent of $\BI$). 
For $\BI \in \CI(\Bm)$, we define a parabolic subgroup $Z_H(T^{\io\th})_{\BI}$
of $Z_H(T^{\io\th})$ by the condition that the $i$-th factor is $SL_2$ if 
$i \in I_r$ and is $B_2$ otherwise.  Since $Z_H(T^{\io\th})_{\BI}$ stabilizes 
$\BM_{\BI}$, one can define

\begin{equation*}
\wh \CY_{\BI} = H \times^{Z_H(T^{\io\th})_{\BI}}(T^{\io\th}\reg \times M_{\BI}).
\end{equation*}  
Then the map $\psi_{\BI}$ factors through $\wh\CY_{\BI}$,

\begin{equation*}
\tag{1.3.1}
\begin{CD}
\psi_{\BI} : \wt\CY_{\BI} @>\xi_{\BI} >> \wh\CY_{\BI} @>\e_{\BI}>> \CY_{\Bm}^0,
\end{CD}
\end{equation*}
where $\xi_{\BI}$ is the natural projection and $\e_{\BI}$ 
is the map induced from the map $(g,(t,\Bv)) \mapsto (gtg\iv, g\Bv)$. 
Then $\xi_{\BI}$ is a locally trivial fibration with fibre isomorphic to
\begin{equation*}
Z_H(T^{\io\th})_{\BI}/(B^{\th}\cap Z_H(T^{\io\th})) \simeq (SL_2/B_2)^{I_r}
   \simeq \BP_1^{I_r},
\end{equation*}
where $(SL_2/B_2)^{I_r}$ denotes the direct product of $SL_2/B_2$ with respect to the
factors corrsponding to $I_r$, and similarly for $\BP_1^{I_r}$.  Thus 
$\BP_1^{I_r} = (\BP_1)^{m_r}$. 
\par
Let $S_{\BI}\simeq S_{I_1} \times\cdots\times S_{I_r}$ be the stabilizer 
of $(I_1, \dots, I_r)$ in $S_n$. 
Let $\CW = N_H(T^{\io\th})/Z_H(T^{\io\th}) \simeq S_n$,  and $\CW_{\BI}$ the 
subgroup of $\CW$ corresponding to the subgroup $S_{\BI}$.  
Then $\CW$ acts on $Z_H(T^{\io\th}) \simeq SL_2 \times\cdots\times SL_2$ as 
the permutation of factors, and $\CW_{\BI}$ stabilizes the group $Z_H(T^{\io\th})_{\BI}$.  
Since $\CW_{\BI}$ stabilizes $M_{\BI}$, $\CW_{\BI}$ acts on $\wt\CY_{\BI}$ and on 
$\wh\CY_{\BI}$. 
Now the map $\e_{\BI}: \wh\CY_{\BI} \to \CY_{\Bm}^0$ turns out to be a finite Galois 
covering with group $\CW_{\BI}$. 
\par
We define $\BI(\Bm) = (I^{\circ}_1, \dots, I^{\circ}_r) \in \CI(\Bm)$ by 
$I^{\circ}_i = [p_{i-1}+1, p_i]$ for $i = 1, \dots, r$.
For $\BI = \BI(\Bm)$, put $\wt\CY_{\BI} = \wt\CY_{\Bm}^0$ and $\CW_{\BI} = \CW_{\Bm}$.  
Note that $\wt\CY_{\Bm}^0$ 
is an open dense subset of $\wt\CY_{\Bm}$, hnce irreducible. 
Put $\psi\iv(\CY_{\Bm}^0) = \wt\CY_{\Bm}^+$.  
$\CW$ acts naturally on $\wt\CY$ and the map $\psi$ is $\CW$-equivariant with respect to the
trivial action of $\CW$ on $\CY$. Hence it preserves the subset $\wt\CY_{\Bm}^+$, and 
the stabilizer of $\wt\CY_{\Bm}^0$ in $\CW$ coincides with $\CW_{\Bm}$. 
One can check that 
\begin{equation*}
\tag{1.3.2}
\wt\CY_{\Bm}^+ = \coprod_{\BI \in \CI(\Bm)}\wt\CY_{\BI} 
                = \coprod_{w \in \CW/\CW_{\Bm}}w(\wt\CY_{\Bm}^0),
\end{equation*}
where $\wt\CY_{\BI}$ is an irreducible component of $\wt\CY_{\Bm}^+$. 
\par
We have the following lemma (cf. [SS1, Lemma 3.3]).

\begin{lem} %%%  Lemma 1.4
Assume that $r \ge 2$. 
\begin{enumerate}
\item
$\CY_{\Bm}$ is open dense in $\CX_{\Bm}$, and $\wt\CY_{\Bm}$ is open dense 
in $\wt\CX_{\Bm}$. 
\item
$\dim \wt\CX_m = \dim \wt\CY_{\Bm} = 2n^2 + \sum_{i=1}^r(r-i)m_i$.
\item
$\dim \CX_{\Bm} = \dim \CY_{\Bm} = 2n^2 + \sum_{i=1}^r(r-i)m_i - m_r$. 
\item
$\CY = \coprod_{\Bm \in \CQ_{n,r}}\CY_{\Bm}^0$ gives a stratification of $\CY$ 
by smooth strata $\CY_{\Bm}^0$, and the map $\psi: \wt\CY \to \CY$ is semismall
with respect to this stratification.  
\end{enumerate}
\end{lem}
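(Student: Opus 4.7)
The plan is to generalize the proof of [SS1; Lemma 3.3] from $r=2$ to arbitrary $r \ge 2$, using the factorization (1.3.1) of $\psi_{\BI}$ to control the fibre dimension of $\psi$ over each stratum $\CY_{\Bm}^0$.

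For (i), I would first verify that $B^{\io\th}\reg = B^{\io\th} \cap G^{\io\th}\reg$ is open dense in $B^{\io\th}$: openness is clear, and density reduces, via Jordan decomposition inside $B^{\io\th}$, to the fact that $T^{\io\th}\reg$ is open dense in $T^{\io\th}$. The isomorphism $\wt\CX_{\Bm} \simeq H \times^{B^{\th}}(B^{\io\th} \times \prod_i M_{p_i})$ then shows $\wt\CY_{\Bm}$ is open dense in $\wt\CX_{\Bm}$, and (ii) follows from (1.2.1). For the $\CY_{\Bm} \subset \CX_{\Bm}$ statement, I would use $\CY_{\Bm} = \CY \cap \CX_{\Bm}$ combined with the density of $\CY$ in $G^{\io\th}\times V^{r-1}$, which forces $\CY_{\Bm}$ to be open dense in the irreducible $\CX_{\Bm}$.

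For (iii), applying (1.3.1) at $\BI = \BI(\Bm)$: since $\xi_{\BI}$ is a locally trivial $\BP_1^{m_r}$-bundle and $\e_{\BI}$ is a finite Galois cover, $\dim \CY_{\Bm}^0 = \dim \wt\CY_{\Bm}^0 - m_r = \dim \wt\CY_{\Bm} - m_r$, which combined with (ii) gives the claimed formula, and the open density propagates the equality to $\dim\CY_{\Bm} = \dim\CX_{\Bm}$. For (iv), smoothness of each $\CY_{\Bm}^0$ follows because $\wh\CY_{\BI}$ is smooth (as a homogeneous-space bundle) and $\e_{\BI}$ is finite étale. Semismallness amounts to $2\dim \psi\iv(y) \le \dim \CY - \dim \CY_{\Bm}^0$ for $y \in \CY_{\Bm}^0$; by (1.3.2), the fibre $\psi\iv(y)$ is a finite disjoint union of copies of $\BP_1^{m_r}$, hence $\dim \psi\iv(y) = m_r$, and the required inequality becomes
\begin{equation*}
2 m_r \le \dim \CY - \dim \CY_{\Bm}^0 = \sum_{i=1}^r (i-1)m_i + m_r,
\end{equation*}
equivalent to $m_r \le \sum_{i=1}^r(i-1)m_i$. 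This holds because the right-hand side is bounded below by its $i=r$ term $(r-1)m_r$, and $r\ge 2$.

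The main obstacle is the semismallness check in (iv): one must precisely identify the fibre dimension of $\psi$ over an arbitrary stratum, which is where the intermediate variety $\wh\CY_{\BI}$ and the parabolic $Z_H(T^{\io\th})_{\BI}$ in (1.3.1) are essential (they give the clean fibre $\BP_1^{I_r}$ for $\xi_{\BI}$), and then verify the numerical inequality uniformly in $\Bm$. The hypothesis $r\ge 2$ enters exactly here, consistent with the statement.
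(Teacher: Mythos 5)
Your proof follows the same route as the paper's: (i) via density of the regular semisimple part, (ii) from (1.2.1), (iii) and (iv) via the factorization $\psi_{\BI} = \e_{\BI}\circ\xi_{\BI}$ with $\xi_{\BI}$ a $\BP_1^{m_r}$-bundle and $\e_{\BI}$ a finite Galois cover, and the numerical inequality $\sum_{i=1}^r(i-1)m_i + m_r \ge 2m_r$, which as you note reduces to $(r-1)m_r \ge m_r$. Your dimension bookkeeping for (iii)--(iv) is exactly the paper's computation. One small slip in (i): $\CY$ is not dense in $G^{\io\th}\times V^{r-1}$ when $r \ge 3$ (since $\CX$ itself is a proper closed subvariety --- even the unipotent Springer map is not surjective in that case); what is true is that $\CY = \CX \cap (G^{\io\th}\reg\times V^{r-1})$ is open dense in $\CX$. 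You do not actually need this fact, though: since $\CY_{\Bm} = \CX_{\Bm}\cap(G^{\io\th}\reg\times V^{r-1})$ is automatically open in $\CX_{\Bm}$ and nonempty (it contains the image of $\wt\CY_{\Bm}$), it is dense. The paper gives an even shorter derivation, observing that $\pi^{(\Bm)}$ is a proper (hence closed) map with $(\pi^{(\Bm)})^{-1}(\CY_{\Bm}) = \wt\CY_{\Bm}$, so openness and density propagate directly from upstairs. Other than this cosmetic point the two proofs coincide.
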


\begin{proof}
Since $B^{\io\th}\reg \times \prod_i M_{p_i}$ is open dense in 
$B^{\io\th} \times \prod_iM_{p_i}$, $\wt\CY_{\Bm}$ is open dense in $\wt\CX_{\Bm}$.
Since $\pi^{(\Bm)}$ is a closed map and $(\pi^{(\Bm)})\iv(\CY_{\Bm}) = \wt\CY_{\Bm}$, 
$\CY_{\Bm}$ is open dense in $\CX_{\Bm}$. So (i) holds. (ii) follows from (1.2.1).
By using 
the decomposition $\psi_{\BI} = \e_{\BI}\circ \xi_{\BI}$ for $\BI = \BI(\Bm)$, we see that 
$\dim \wt\CY_{\Bm} = \dim \CY_{\Bm} + m_r$. Hence (iii) holds.  
By (1.3.1) and (1.3.2), $\dim \psi\iv(x,\Bv) = m_r$ for $(x,\Bv) \in \CY_{\Bm}^0$. 
Since
\begin{align*}
\dim \CY - \dim \CY^0_{\Bm} &= (2n^2 + (r-1)n) - (2n^2 + \sum_{i=1}^r(r-i)m_i - m_r) \\
              &= \sum_{i=1}^r(r-1)m_i - \sum_{i=1}^r(r-i)m_i + m_r  \\
              &= \sum_{i=1}^r(i-1)m_i + m_r  \\
              &\ge 2m_r.
\end{align*}
Hence $\dim \psi\iv(x,\Bv) \le (\dim\CY - \dim \CY^0_{\Bm})/2$ for $(x,\Bv) \in \CY_{\Bm}^0$,
and so $\psi$ is semismall.  
\end{proof}

\para{1.5.}
Let $\psi_{\Bm} : \wt\CY_{\Bm}^+ \to \CY_{\Bm}^0$ be the restriction of $\psi$ on 
$\wt\CY_{\Bm}^+$.  Then $\psi_{\Bm}$ is $\CW$-equivariant with respect to the natural 
action of $\CW$ on $\wt\CY_{\Bm}^+$ and the trivial action on $\CY_{\Bm}^0$. 
We consider the diagram 
\begin{equation*}
\tag{1.5.1}
\begin{CD}
T^{\io\th} @<\a_0<<  \wt\CY @ >\psi >> \CY,
\end{CD}
\end{equation*} 
where $\a_0$ is given by $\a_0(x,\Bv, gB^{\th}) = p_T(g\iv xg)$ 
($p_T : B^{\io\th} \to T^{\io\th}$ is the natural projection).  
Let $\CE$ be a tame local system on $T^{\io\th}$, and we consider 
the complex $\psi_*\a_0^*\CE$ on $\CY$. For each $\BI \in \CI(\Bm)$, 
we have a similar diagram as (1.5.1) by replacing $\wt\CY, \CY, \psi, \a_0$ by 
$\wt\CY_{\BI}, \CY_{\Bm}^0, \psi_{\BI}, \a_{\BI}$.
By (1.3.2), we have

\begin{equation*}
\tag{1.5.2}
(\psi_{\Bm})_!\a_0^*\CE|_{\wt\CY_{\Bm}^+} \simeq \bigoplus_{\BI \in \CI(\Bm)}
                         (\psi_{\BI})_!\a_0^*\CE|_{\wt\CY_{\BI}}.
\end{equation*} 

Let $\CE_{\BI} = \xi_{\BI}^*\a_0^*\CE|_{\wt\CY_{\BI}} = \xi_{\BI}^*\a_{\BI}^*\CE$ 
be a local system on $\wh\CY_{\BI}$, and 
$\CW_{\CE_{\BI}}$ the stabilizer of $\CE_{\BI}$ in $\CW_{\BI}$. 
In the case where $\BI = \BI(\Bm)$, we put $\CW_{\CE_{\BI}} = \CW_{\Bm,\CE}$.
In the case where $\Bm = (n,0,\dots, 0)$, we put $\CW_{\Bm,\CE} = \CW_{\CE}$, 
which is the stabilizer of $\CE$ in $\CW$.  
$\CW_{\CE}$ acts on $(\psi_{\Bm})_!\a_0^*\CE|_{\wt\CY_{\Bm}^+}$ as automorphisms of 
complexes, and permutes each direct summand $(\psi_{\BI})_!\a_{\BI}^*\CE$ according 
to the permutaion action of $S_n$ on $\CI(\Bm)$. 
Since $\e_{\BI}$ is a finite Galois covering with group $\CW_{\BI}$, 
$(\e_{\BI})_!\CE_{\BI}$ is a semisimple local system.  As in [SS1, 3.4]
the endomorphism algebra $\End((\e_{\BI})_!\CE_{\BI})$ is canonically 
isomorphic to the group algebra $\Ql[\CW_{\CE_{\BI}}]$, and 
$(\e_{\BI})_!\CE_{\BI}$ is decomposed as 
\begin{equation*}
\tag{1.5.3}
(\e_{\BI})_!\CE_{\BI} \simeq \bigoplus_{\r \in \CW_{\CE_{\BI}}\wg}
        \r \otimes \CL_{\r},
\end{equation*} 
where $\CL_{\r} = \Hom (\r, (\e_{\BI})_!\CE_{\BI})$ is a simple local system on 
$\CY_{\Bm}^0$. 

\para{1.6.}
Since $\psi_{\Bm}$ is proper and $\wt\CY_{\BI}$ is closed in $\wt \CY_{\Bm}^+$, 
$\psi_{\BI}$ is proper. Hence $\xi_{\BI}$ is also proper.  
We note that 
\par\medskip\noindent
(1.6.1) \ $R^i(\xi_{\BI})_!\Ql$ is a constant sheaf for each $i$.
\par\medskip\noindent
In fact, we have a commutative diagram 
\begin{equation*}
\begin{CD}
\wt\CY_{\BI} @>\xi_{\BI}>>  \wh\CY_{\BI}  \\
  @VVV        @VVV       \\
H/B^{\th}\cap Z_H(T^{\io\th})   @>\xi'_{\BI}>>  H/Z_H(T^{\io\th})_{\BI},
\end{CD}
\end{equation*}
where vertical maps are natural projections (see 1.3), and the map 
$\xi'_{\BI}$ is the map induced from the inclusion 
$B^{\th} \cap Z_H(T^{\io\th}) \hra Z_H(T^{\io\th})_{\BI}$.   
Since this diagram is cartesian, (1.6.1) is equivalent to the statement 
that 
\par\medskip\noindent
(1.6.2) \ $R^i(\xi'_{\BI})_!\Ql$ is a constant sheaf for each $i$.
\par\medskip\noindent
We show (1.6.2).  Since $\xi'_{\BI}$ is a locally trivial fibration, 
$R^i(\xi'_{\BI})_!\Ql$ is a locally constant sheaf on $H/Z_H(T^{\io\th})_{\BI}$.
Since $\xi'_{\BI}$ is $H$-equivariant, $R^i(\xi'_{\BI})_!\Ql$ is an $H$-equivariant
local system  on $H/Z_H(T^{\io\th})_{\BI}$, hence it is a constant sheaf, as 
$Z_H(T^{\io\th})_{\BI}$ is connected. 
Thus (1.6.2), and so (1.6.1) holds.
\par
Since 
$\xi_{\BI}$ is a $\BP_1^{I_r}$-bundle, we see that 
\begin{equation*}
\tag{1.6.3}
(\xi_{\BI})_!\a_{\BI}^*\CE \simeq H^{\bullet}(\BP_1^{I_r})\otimes \CE_{\BI},
\end{equation*}  
where $H^{\bullet}(\BP_1^{I_r})$ denotes 
$\bigoplus_{i \ge 0}H^{2i}(\BP_1^{I_r},\Ql)$, which we regard as a complex of
vector spaces $(K_i)$ with $K_{\odd} = 0$. 
It follows that
\begin{equation*}
\tag{1.6.4}
(\psi_{\BI})_!\a_{\BI}^*\CE \simeq (\e_{\BI})_!(\xi_{\BI})_!\a_{\BI}^*\CE 
       \simeq H^{\bullet}(\BP_1^{I_r})\otimes(\e_{\BI})_!\CE_{\BI}.
\end{equation*}
\par
Let $W_{n,r} = S_n\ltimes (\BZ/r\BZ)^n$ be the complex reflection group 
$G(r,1,n)$. We put $\wt\CW = \CW\ltimes (\BZ/r\BZ)^n$.
We define a subgroup $\wt\CW_{\CE}$ (resp. $\wt\CW_{\Bm, \CE}$) of $\wt\CW$ by
$\wt\CW_{\CE} = \CW_{\CE}\ltimes (\BZ/r\BZ)^n$ (resp. 
$\wt\CW_{\Bm,\CE} = \CW_{\Bm,\CE}\ltimes (\BZ/r\BZ)^n$).  
Let $\z$ be a primitive $r$-th root of unity in $\Ql$, and define a linear 
character $\t_i : \BZ/r\BZ \to \Ql^*$ by $\t_i(a) = \z^{i-1}$ for $i = 1, \dots, r$, 
where $a$ is a generator of $\BZ/r\BZ$. 
Let $\r$ be an irreducible representation of $\CW_{\Bm,\CE}$. 
Since $\CW_{\Bm, \CE}$ is decomposed as 
$\CW_{\Bm,\CE} = \CW_1 \times \cdots \times \CW_r$
with subgroups $\CW_i \subset S_{m_i}$, $\r$ can be written as
$\r = \r_1\boxtimes\cdots\boxtimes \r_r$ with $\r_i \in \CW_i\wg$.
We define an irreducible $\wt\CW_{\Bm,\CE}$-module 
$\wt\r$ (resp. $\wt\r'$) by defining the action of $(\BZ/r\BZ)^{m_i}$ on $\r_i$ via 
$\t_i^{\otimes m_i}$ for $i = 1, \dots, r$ 
(resp. via $\t_i^{\otimes m_i}$ for $i = 1, \dots, r-1$, and 
via the trivial action for $i = r$).  
Put 
$\wt V_{\r} = \Ind_{\wt\CW_{\Bm,\CE}}^{\wt\CW_{\CE}}\wt\r$.  Then 
$\wt V_{\r}$ is an irreducible $\wt\CW_{\CE}$-module.
\par
We regard $H^{\bullet}(\BP_1^{m_r})\otimes\r$ as a complex of 
$\CW_{\Bm,\CE}$-modules.
We define an action of $\BZ/r\BZ$ on 
$H^{\bullet}(\BP_1) = H^2(\BP_1) \oplus H^0(\BP_1)\simeq \Ql\oplus\Ql$ by
$\t_r \oplus \t_1$, and define an action of $(\BZ/r\BZ)^{m_r}$ on 
$H^{\bullet}(\BP_1^{m_r}) \simeq H^{\bullet}(\BP_1)^{\otimes m_r}$ 
by $(\t_r \oplus \t_1)\boxtimes \cdots \boxtimes (\t_r \oplus \t_1)$ 
$(m_r$-factors). 
Thus we can consider an extension $H^{\bullet}(\BP_1^{m_r})\otimes \wt\r'$
of $H^{\bullet}(\BP_1^{m_r})\otimes \r$, as a complex of 
$\wt\CW_{\Bm,\CE}$-modules.
In view of (1.5.2), (1.5.3) and (1.6.4), we have
\begin{equation*}
\tag{1.6.5}
(\psi_{\Bm})_!\a_0^*\CE|_{\wt\CY^+_{\Bm}} \simeq \bigoplus_{\r \in \CW_{\Bm,\CE}\wg}
   \Ind_{\wt\CW_{\Bm,\CE}}^{\wt\CW_{\CE}}\bigl(H^{\bullet}(\BP_1^{m_r})\otimes \wt\r'\bigr)
            \otimes \CL_{\r}.
\end{equation*}  
Note that by our construction, (1.6.5) can be rewritten as 
\begin{equation*}
\tag{1.6.6}
(\psi_{\Bm})_!\a_0^*\CE|_{\wt\CY^+_{\Bm}} \simeq
\biggl(\bigoplus_{\r \in \CW_{\Bm,\CE}\wg}\wt V_{\r}\otimes \CL_{\r}\biggr)[-2m_r] + \CN_{\Bm}, 
\end{equation*}
where $\CN_{\Bm}$ is a sum of various $\CL_{\r}[-2i]$ for $\r \in \CW_{\Bm,\CE}\wg$ with 
$0 \le i < m_r$. 

\par
For each $\Bm \in \CQ_{n,r}$, let $\ol\psi_{\Bm}$ be the restriction of 
$\psi$ on $\psi\iv(\CY_{\Bm})$.  In what follows, we denote 
$\a_0^*\CE|_{\psi\iv(\CY_{\Bm})}$ by $\a_0^*\CE$ for short. 
Put $d_{\Bm} = \dim \CY_{\Bm}$. 
For $1 \le j < r-1$, $0 \le k \le m_j$, 
we define a subset $\CQ(\Bm; j,k)$ of $\CQ_{\Bm}$  by 
\begin{equation*}
\CQ(\Bm; j,k) = \{ \Bm' \in \CQ_{n,r} \mid \Bm'  \le \Bm,
     p_i = p_i' \ (1 \le i \le  j-1), p_j' = p_{j-1} + k \},
\end{equation*}
where $\Bp(\Bm) = (p_1, \dots, p_r)$ and $\Bp(\Bm') = (p_1', \dots, p_r')$.
We also define $\Bm(j,k) \in \CQ_{n,r}$ by 
$\Bm(j,k) = (m_1', \dots, m_r')$ such that $m'_j = k, 
m'_{j+1} = m_{j+1} + (m_j -k)$ and $m_i' = m_i$ 
for $i \ne j, j+1$.  
In the case where $j = r-1$, we write $\Bm(j,k)$ simply as $\Bm(k)$.
Hence if $\Bm \in \CQ_{n,r}^0$, 
$\Bm(k) = (m_1, \cdots, m_{r-2}, k, k')$ with $k + k' = m_{r-1}$.  
We have the following proposition.

\begin{prop}  %%%%  Prop. 1.7
For each $\Bm \in \CQ^0_{n,r}$, $(\ol\psi_{\Bm})_!\a_0^*\CE[d_{\Bm}]$
is a semisimple perverse sheaf on $\CY_{\Bm}$, equipped with $\wt\CW_{\CE}$-action, 
and is decomposed as

\begin{equation*}
(\ol\psi_{\Bm})_!\a_0^*\CE[d_{\Bm}] \simeq \bigoplus_{0 \le k \le m_{r-1}}
      \bigoplus_{\r \in \CW_{\Bm(k),\CE}^{\wg}}\wt V_{\r}\otimes 
                \IC(\CY_{\Bm(k)}, \CL_{\r})[d_{\Bm(k)}].
\end{equation*} 
\end{prop}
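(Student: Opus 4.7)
The plan is to apply the decomposition theorem to the semismall map of Lemma 1.4, reduce the identification of the summands to a stalk computation on each stratum, and then use formula (1.6.6) combined with a clean dimension identity to pin down the strata $\CY_{\Bm(k)}$ as the only ones that contribute.

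Proper base change along the closed immersion $\CY_{\Bm}\hookrightarrow\CY$ identifies $K:=(\ol\psi_{\Bm})_!\a_0^*\CE[d_{\Bm}]$ with $(\psi_!\a_0^*\CE[d_{\Bm}])|_{\CY_{\Bm}}$. Because $m_r=0$ for $\Bm\in\CQ_{n,r}^0$, Lemma 1.4 gives $\dim\wt\CY_{\Bm}=d_{\Bm}$, so $\wt\CY_{\Bm}^+$ is open dense in $\psi\iv(\CY_{\Bm})$ and $\a_0^*\CE[d_{\Bm}]$ restricts to a shifted local system there. Lemma 1.4(iv) then ensures that $\ol\psi_{\Bm}$ is semismall for the stratification $\CY_{\Bm}=\coprod_{\Bm'\le\Bm}\CY^0_{\Bm'}$, and the decomposition theorem yields that $K$ is a semisimple perverse sheaf admitting a decomposition
\begin{equation*}
K \simeq \bigoplus_{\Bm'\le\Bm}\IC(\CY_{\Bm'},\CL_{\Bm'})[d_{\Bm'}]
\end{equation*}
for semisimple local systems $\CL_{\Bm'}$ on $\CY^0_{\Bm'}$, equivariantly for the $\wt\CW_{\CE}$-action inherited from (1.6.5).

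To identify each $\CL_{\Bm'}$, I would use the IC support condition: any summand $\IC(\CY_{\Bm''},\CL_{\Bm''})[d_{\Bm''}]$ with $\Bm''>\Bm'$ has stalks on $\CY^0_{\Bm'}$ concentrated in cohomological degrees $\le -d_{\Bm'}-1$, so in degree $-d_{\Bm'}$ only the summand indexed by $\Bm'$ itself contributes. Combined with proper base change and the identity $\psi\iv(\CY^0_{\Bm'})=\wt\CY^+_{\Bm'}$, this yields
\begin{equation*}
\CL_{\Bm'} \simeq \CH^{d_{\Bm}-d_{\Bm'}}\bigl((\psi_{\Bm'})_!\a_0^*\CE\bigr)
\end{equation*}
as a local system on $\CY^0_{\Bm'}$. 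Formula (1.6.6) applied with $\Bm'$ in place of $\Bm$ shows that the right hand side vanishes outside even degrees $0\le 2i\le 2m'_r$, with the top part in degree $2m'_r$ equal to $\bigoplus_{\r\in\CW_{\Bm',\CE}\wg}\wt V_{\r}\otimes\CL_{\r}$.

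The decisive combinatorial step is the identity
\begin{equation*}
d_{\Bm}-d_{\Bm'}-2m'_r \;=\; \sum_{i=1}^{r-2}(p_i-p'_i),
\end{equation*}
obtained by Abel summation from the dimension formula in Lemma 1.4(iii) and the hypothesis $m_r=0$. Since $\Bm'\le\Bm$ forces $p_i-p'_i\ge 0$, this gives $d_{\Bm}-d_{\Bm'}\ge 2m'_r$, with equality precisely when $p'_i=p_i$ for all $i\le r-2$, i.e., exactly when $\Bm'=\Bm(k)$ for some $0\le k\le m_{r-1}$. Together with the cohomological vanishing from (1.6.6), this forces $\CL_{\Bm'}=0$ unless $\Bm'=\Bm(k)$, and for such $\Bm'$ pins down $\CL_{\Bm(k)}\simeq\bigoplus_{\r\in\CW_{\Bm(k),\CE}\wg}\wt V_{\r}\otimes\CL_{\r}$ as $\wt\CW_{\CE}$-equivariant local systems, delivering the claimed decomposition. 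I expect the main obstacle to be exactly this Abel-summation identity together with the verification that the hypothesis $\Bm\in\CQ_{n,r}^0$ enters essentially (otherwise odd-degree and out-of-range strata would survive); once it is in place, matching the $\wt\CW_{\CE}$-module structure of the multiplicity spaces is a routine unpacking of the induction $\wt V_{\r}=\Ind_{\wt\CW_{\Bm(k),\CE}}^{\wt\CW_{\CE}}\wt\r$ from (1.6.5) together with the fact that $\CW$ permutes the components $\wt\CY_{\BI}$ transitively within each $\wt\CY^+_{\Bm'}$.
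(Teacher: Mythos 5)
Your argument is correct in its structure and takes a genuinely more direct route than the paper. Where the paper runs a double induction over the auxiliary strata $\CY^0_{j,k},\CY_{j,k}$ (formulas (1.7.2)--(1.7.3)) to build the decomposition and the $\wt\CW_{\CE}$-action simultaneously, you apply the decomposition theorem once to the semismall $\ol\psi_{\Bm}$, identify $\CL_{\Bm'}$ with $\CH^{d_{\Bm}-d_{\Bm'}}((\psi_{\Bm'})_!\a_0^*\CE)$ by base change and the strict support conditions, and then use the identity $d_{\Bm}-d_{\Bm'}-2m'_r=\sum_{i=1}^{r-2}(p_i-p'_i)$ together with the cohomological vanishing from (1.6.6) to kill the non-$\Bm(k)$ strata. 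That identity is real and is exactly the one the paper spells out when proving (3.3.4), and a variant of it underlies (1.7.5); your version makes the role of the hypothesis $m_r=0$ more transparent than the paper's step-by-step bookkeeping. What the paper's longer induction buys is a uniform template that it then reuses almost verbatim in Sections 2 and 3 for the non-semismall maps $\ol\pi_{j,k}$, where your short-circuit via semismallness is not available.

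The one step you should not call ``routine unpacking'' is the $\wt\CW_{\CE}$-equivariance. Formula (1.6.5) only gives the $\wt\CW_{\CE}$-action on the restriction of the pushforward to each open stratum $\CY^0_{\Bm'}$. The $\CW_{\CE}$ factor does extend across $\CY_{\Bm}$ for free, since it acts geometrically on $\wt\CY$ over $\CY$; but the $(\BZ/r\BZ)^n$ factor is synthesized abstractly from the grading of $H^{\bullet}(\BP_1^{m'_r})$ in 1.6 and has no geometric incarnation, so it does not ``inherit'' to $(\ol\psi_{\Bm})_!\a_0^*\CE$ on its own. The paper's induction is doing precisely the work of defining that action, one perverse-cohomology exact sequence at a time. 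In your framework the repair is: having identified the multiplicity space $V_A$ of each $A=\IC(\CY_{\Bm(k)},\CL_{\r})[d_{\Bm(k)}]$ with $\wt V_{\r}$ by the stalk computation, check that this isomorphism is $\CW_{\CE}$-equivariant for the geometric action (that is the content of (1.6.5)), and then declare that $(\BZ/r\BZ)^n$ acts through the remaining $\wt\CW_{\CE}$-module structure on $\wt V_{\r}$. This is a short argument, but it is a genuine step and not a formality, and your sketch should make it explicit.
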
 

\begin{proof}
$\ol\psi_{\Bm}$ is proper, and a similar argument as in the proof 
of Lemma 1.4 (iv) shows that $\ol\psi_{\Bm}$ is semismall with respect to 
the stratification $\CY_{\Bm} = \coprod_{\Bm' \le \Bm}\CY_{\Bm'}^0$ (note that 
$m_r = 0$).  It follows 
that $(\ol\psi_{\Bm})_!\a_0^*\CE[d_{\Bm}]$ is a semisimple perverse sheaf on 
$\CY_{\Bm}$.  
\par
For a given $\Bm \in \CQ_{n,r}$ (not necessarily in $Q_{n,r}^0$) 
we define, 
for each integer $1 \le j \le r-1$, and $0 \le k \le m_j$, 
\begin{align*}
M^{(j,k)} &= \prod_{i=1}^{j-1}(M_{[p_{i-1}+1, p_i]} + M_{p_{i-1}}) 
\times (M_{[p_{j-1} + 1, p_{j-1}+k]} + M_{p_{j-1}}) \times 
                  \prod_{i = j+1}^{r-1}M_{p_i}, \\
\CY_{j,k}^{0} &= \bigcup_{g \in H}g(T^{\io\th}\reg \times M^{(j,k)}), \\
\wt\CY_{j,k}^{+} &= \psi\iv(\CY^0_{j,k}),           
\end{align*}
and let $\psi_{j,k}: \wt\CY_{j,k}^{+} \to \CY_{j,k}^{0}$ be 
the restriciton of $\psi$ on $\wt\CY_{j,k}^{+}$.
(As a convention, we also consider the case where $j = 0, k = 0$.  
In which case $M^{(0,0)} = \prod_{i=1}^{r-1}M_{p_i}$.)
Then $\psi_{j,k}$ is a proper map. 
$\CY_{j,k}^0$ coincides with $\CY^0_{\Bm}$ in the case where
$j = r-1, k = m_{r-1}$, and coincides with $\CY_{\Bm}$ in the case where 
$j = 0$ and $k = 0$.
We also conisder the varieties
\begin{align*}
\ol M^{(j,k)} &= \prod_{i=1}^{j-1}(M_{[p_{i-1}+1, p_i]} + M_{p_{i-1}}) 
\times M_{p_{j-1}+k} \times 
                  \prod_{i = j+1}^{r-1}M_{p_i}, \\
\CY_{j,k} &= \bigcup_{g \in H}g(T^{\io\th}\reg \times \ol M^{(j,k)}), \\
\ol{\wt\CY_{j,k}^{+}} &= \psi\iv(\CY_{j,k}),           
\end{align*}
and let $\ol\psi_{j,k}: \ol{\wt\CY_{j,k}^+} \to \CY_{j,k}$ be the restriction of
$\psi$ on $\ol{\wt\CY_{j,k}^+}$.  Then $\CY_{j,k}^0$ is open dense in $\CY_{j,k}$. 
We have 
\par\medskip\noindent
(1.7.1) \ 
$\CY_{j,k} \backslash \CY_{j,k-1} = \CY_{j,k}^0$ if $k \ge 1$, and
$\CY_{j,0}(\Bm) = \CY_{j+1, m_j + m_{j+1}}(\Bm(j,0))$. 
Moreover, $\CY^0_{j,k}(\Bm)$ coincides with $\CY_{j+1, m_{j+1}}(\Bm(j,k))$.
(Here we use the notation $\CY_{j,k}(\Bm)$, etc. to inidcate the dependence 
on $\Bm$.)
\par\medskip
For $\Bm' \in \CQ(\Bm; j,k)$, $\CY_{\Bm'}^0$ is contained in $\CY_{j,k}$. 
Hence one can define an intersection cohomology $\IC(\CY'_{\Bm'}, \CL_{\r})$ 
associated to the local system $\CL_{\r}$ on $\CY^0_{\Bm'}$ (here $\CY'_{\Bm'}$
denotes the closure of $\CY_{\Bm'}^0$ in $\CY_{j,k}$).
Returning to the setting in the proposition, we consider $\Bm \in \CQ_{n,r}^0$.
We show the following formulas. 
First assume that $j = r-1$ and $0 \le k \le m_{r-1}$.  Then we have
\begin{equation*}
\tag{1.7.2}
\begin{split}
(\ol\psi_{r-1,k}&)_!\a_0^*\CE \\
&\simeq 
    \bigoplus_{0 \le k' \le k}\bigoplus_{\rho \in \CW_{\Bm(k'),\CE}\wg}
         \wt V_{\rho} \otimes \IC(\CY_{r-1, k'}, \CL_{\rho})[-2(m_{r-1} - k')]
            + \CN_{r-1,k},
\end{split}
\end{equation*}
where $\CN_{r-1,k}$ is a sum of various $\IC(\CY_{r-1,k'}, \CL_{\r})[-2i]$ 
for $0 \le k' \le k$ and $\r \in \CW_{\Bm(k'), \CE}\wg$ with 
$0 \le i < m_{r-1}-k'$. 
Next assume that $0 \le j < r-1$ and that $0 \le k \le m_j$.  Then we have 

\begin{equation*}
\tag{1.7.3}
\begin{split}
(&\ol\psi_{j,k})_*\a_0^*\CE \\
  &\simeq 
    \bigoplus_{0 \le k' \le m_{r-1}}\bigoplus_{\rho \in \CW_{\Bm(k'),\CE}\wg}
         \wt V_{\rho} \otimes \IC(\CY'_{\Bm(k')}, \CL_{\rho})[-2(m_{r-1} - k')]
             + \CN_{j,k},
\end{split}
\end{equation*}
where $\CN_{j,k}$ is a sum of various $\IC(\CY'_{\Bm'}, \CL_{\r})[-2i]$
for $\Bm' \in \CQ(\Bm;j,k)$ 
and $\r \in \CW\wg_{\Bm',\CE}$ with $i$ such that 
$0 \le 2i < d_{\Bm} - d_{\Bm'}$.

\par
Note that (1.7.3) will imply the proposition.   In fact, 
in the case where $j = 0, k = 0$, $\ol\psi_{j,k}$ coincides with $\ol\psi_{\Bm}$,
and $\CY'_{\Bm(k')}$ coincides with 
$\CY_{\Bm(k')}$.
Take $\IC(\CY_{\Bm'}, \CL_{\r})[-2i] \in \CN_{0,0}$. Since
$d_{\Bm} - d_{\Bm'}> 2i$, 
$\IC(\CY_{\Bm'}, \CL_{\r})[d_{\Bm} - 2i]$ is not a perverse sheaf. 
Since 
$(\ol\psi_{\Bm})_*\a_0^*\CE[d_{\Bm}]$ is a semisimple perverse sheaf, 
we conclude that $\CN_{0,0} = 0$. 
By Lemma 1.4, (iii), we have $d_{\Bm} - d_{\Bm(k')} = 2(m_{r-1} - k')$.
Thus the proposition follows from (1.7.3).
\par
First we show (1.7.2) by induction on $k$.  Put $j = r-1$. 
In the case where $k = 0$, $\CY_{j,0}$ coincides with $\CY_{\Bm(j,0)}^0$.  
Thus (1.7.2) follows from (1.6.5).  We assume that 
(1.7.2) holds for any $k' < k$. 
By (1.7.1), $\CY_{j,k} \backslash \CY_{j,k-1} \simeq \CY^0_{j,k} 
  = \CY_{\Bm(k)}^0$, 
and 
$\CY^0_{\Bm(k)}$ is an open dense subset of $\CY_{j,k}$.  
Since $\ol\psi_{j,k}$ is proper,
$(\ol\psi_{j,k})_!\a_0^*\CE$ is a semisimple complex on $\CY_{j,k}$.
Here we note that $(\ol\psi_{j,k})_!\a_0^*\CE$ has a natural structure of 
$\wt\CW_{\CE}$-complex.  In fact,
$(\psi_{\Bm(k)})_!\a_0^*\CE$ has a $\wt\CW_{\CE}$-action by (1.6.5).  It induces a 
$\wt\CW_{\CE}$-action on 
$(\io_0\circ\psi_{\Bm(k)})_!\a_0^*\CE$, where $\io_0$ is an open immersion 
$\CY_{\Bm(k)}^0 \hra \CY_{j,k}$, and hence on its perverse chohomology 
${}^pH^i((\io_0\circ\psi_{\Bm(k)})_!\a_0^*\CE)$.  On the other hand, by induction, 
$(\ol\psi_{j,k-1})_!\a_0^*\CE$ has a natural  $\wt\CW_{\CE}$-action, which induces a 
$\wt\CW_{\CE}$-action on ${}^pH^i((\ol\psi_{j,k-1})_!\a_0^*\CE)$. Thus, by using the
perverse cohomology exact sequence, one can define an action of $\wt\CW_{\CE}$ on 
${}^pH^i((\ol\psi_{j,k})_!\a_0^*\CE)$.  Since $(\ol\psi_{j,k})_!\a_0^*\CE$ is a 
semisimple complex, in this way the action of $\wt\CW_{\CE}$ on 
$(\ol\psi_{j,k})_!\a_0^*\CE$ can be defined. 
\par
Now, since $(\ol\psi_{j,k})_!\a_0^*\CE$ is a semisimple complex,  
it is a direct sum of 
the form $A[s]$ for a simple perverse sheaf $A$.   
Suppose that $\supp A$ is not contained in $\CY_{j,k-1}$.  
Then $\supp A \cap \CY_{\Bm(k)}^0 \ne \emptyset$ and  
the restriction of $A$ on $\CY^0_{\Bm(k)}$
is a simple perverse sheaf on $\CY_{\Bm(k)}^0$.  
The restriction of $(\ol\psi_{j,k})_!\a_0^*\CE$
on $\CY_{\Bm(k)}^0$ is isomorphic to $(\psi_{\Bm(k)})_!\a_0^*\CE$.  Hence 
its decomposition is given by the formula in (1.6.5).  It follows that 
$A|_{\CY^0_{\Bm(k)}} = \CL_{\rho}[d_{j,k}]$ for some $\r$ 
(here $d_{j,k} = \dim \CY_{\Bm(k)}$).
This implies that $A = \IC(\CY_{j,k},\CL_{\rho})[d_{j,k}]$ and 
that the direct sum of $A[s]$ appearing in $(\ol\psi_{j,k})_!\a_0^*\CE$ such that 
$\supp A \cap \CY_{\Bm(k)}^0 \ne \emptyset$ is given, in view of (1.6.6), by 
\begin{equation*}
\tag{1.7.4}
K_1 = \bigoplus_{\r \in \CW_{\Bm(k),\CE}\wg}\wt V_{\r} 
        \otimes \IC(\CY_{j,k}, \CL_{\r})[-2(m_{r-1} - k)] + \CN'_{\Bm(k)},
\end{equation*} 
where $\CN_{\Bm(k)}'$ is a sum of various $\IC(\CY_{j,k}, \CL_{\r})[-2i]$ 
with $  0 \le i < m_{r-1}-k$. 
\par
If $\supp A$ is contained in $\CY_{j,k-1}$, $A[s]$ appears as a direct summand 
of $(\ol\psi_{j,k-1})_!\a_0^*\CE$, which is decomposed as in (1.7.2) 
by induction hypothesis.  
Thus if we remove the contribution from 
the restriction of $K_1$, such $A[s]$ is determined from $(\ol\psi_{j,k-1})_!\a_0^*\CE$.
So, we consider the restriction of $K_1$ on $\CY_{j, k-1}$. 
The summands $\IC(\CY_{j,k}, \CL_{\r})[-2i]$ in $\CN_{\Bm(k)}'$ are already contained in 
$\CN_{r-1,k}$ if $0 \le i < m_{r-1}-k$.  So it is enough to consider 
$A = \IC(\CY_{j,k}, \CL_{\r})[-2(m_{r-1} - k)]$. 
Note that the multiplicity space of $A$ in $K_1$ is $\wt V_{\r}$. 
Hence the multiplicity space of a simple perverse 
sheaf $A'$ appearing in the decomposition of $A|_{\CY_{j,k-1}}$,up to shift, has a structure of 
$\wt\CW_{\CE}$-module which is a sum of $\wt V_{\r}$. 
But by (1.7.2) applied for $k-1$, the multiplicity space of a simple perverse sheaf $B$ appearing 
in the first term of $(\ol\psi_{j, k-1})_!\a_0^*\CE$ is a sum of $\wt V_{\r'}$ 
with $\r' \in \CW\wg_{\Bm(k'), \CE}$ for $k'< k$. Thus $A|_{\CY_{j,k-1}}$ gives 
no contribution on those first terms. This proves (1.7.2) for $k$.   
Hence (1.7.2) holds.     
\par
We now prove (1.7.3) by backwards induction on $j$ and induction on $k$.
So assume that $j < r-1$.  
By (1.7.1), 
$\CY_{j,0}(\Bm) = \CY_{j+1, m_j + m_{j+1}}(\Bm(j,0))$.  Hence by induction on $j$, 
we may assume that (1.7.3) holds for $\CY_{j,0}$.  Take $k \ge 1$, and 
assume that (1.7.3) holds for $k-1$.  We have 
$\CY_{j,k} \backslash \CY_{j, k-1} = \CY_{j,k}^0$, and 
$\CY_{j,k}^0(\Bm) = \CY_{j+1, m_{j+1}}(\Bm(j,k))$ by (1.7.1).  
Thus by induction on $j$, $(\psi_{j,k})_!\a_0^*\CE$ can be described by the 
formula in (1.7.3).  In particular, $(\psi_{j,k})_!\a_0^*\CE$ is a semisimple 
complex consisting of $\IC(\CY'_{\Bm'}, \CL_{\r})$, up to shift, for various 
$\Bm' \in \CQ(\Bm; j,k)$.  
Similarly, by induction on $k$, $(\ol\psi_{j-1,k})_!\a_0^*\CE$ is described by 
(1.7.3), and it is a semisimple complex consisitng of 
$\IC(\CY'_{\Bm'}, \CL_{\r})$, up to shift,  for various $\Bm' \in \CQ(\Bm; j, k-1)$.
Let $K_1$ be a semisimple complex on $\CY_{j,k}$ obtained from $(\psi_{j,k})_!\a_0^*\CE$
as in (1.7.4).  It is described by the formula (1.7.3) by replacing 
$\Bm$ by $\Bm'' = \Bm(j,k)$.  
Here we note that 
\par\medskip\noindent
(1.7.5) \ Assume that $A$ is a direct summand of $K_1$.  Then except the 
case where $A = \IC(\CY_{\Bm(k')}, \CL_{\r})[-2(m_{r-1} - k')]$, 
$A$ is contained in $\CN_{j,k}$.
\par\medskip

Assume that $A$ is a direct summand in the former 
part of $K_1$.  Then $A = \IC(\CY_{\Bm''(k')}, \CL_{\r})[-2(m''_{r-1}-k')]$
for $0 \le k' \le m''_{r-1}$, where we write $\Bm'' = (m_1'', \dots, m_r'')$.
Then $\Bp(\Bm''(k')) = (p_1', \dots, p_r')$, where $p_i = p_i'$ for $i \ne j, r-1$,
and $p'_j \le p_j, p'_{r-1} \le p_{r-1}$.  Then by Lemma 1.4 (iii), we have
\begin{align*}
d_{\Bm} - d_{\Bm''(k')} &= (p_j - p_j') + (p_{r-1} - p'_{r-1}) + m'_r  \\
                        &\ge (p_j - p_j') + 2m_r'.
\end{align*}
for $\Bm''(k') = (m_1', \dots, m_r')$.  (Note that $m_{r-1}''-k' = m_r'$.) 
Hence $d_{\Bm} - d_{\Bm''(k')} = 2(m_{r-1}''-k')$ if and only if $p_j = p_j'$, i.e., 
$\Bm''(k') = \Bm(k')$.  In that case $m''_{r-1} = m_{r-1}$.  
In all other cases, $A$ is contained in $\CM_{j,k}$. 
Next assume that $A$ is a direct summand of the latter part of $K_1$.  Thus 
$A$ is written as $A = \IC(\CY'_{\Bm'}, \CL_{\r})[-2i]$ for $\Bm' \le \Bm''$ 
with $d_{\Bm''} - d_{\Bm'} > 2i$. 
But since $\Bm'' = \Bm(j,k)$, we have $\Bm'' \le \Bm$ and $d_{\Bm} \ge d_{\Bm''}$.
Hence $\Bm' \le \Bm$ and $d_{\Bm} - d_{\Bm'} \ge d_{\Bm''} - d_{\Bm'} > 2i$.
This implies that $A$ is contained in $\CN_{j,k}$.    
(1.7.5) is proved.
\par
Now (1.7.5) shows that the former part of $K_1$ coinicdes with the former 
part of $(\ol\psi_{j,k-1})_!\a_0^*\CE$.  Hence by a similar argument as in the 
proof of (1.7.2), we obtain (1.7.3) for $(j,k)$. This proves (1.7.3), and so  
the proposition follows.
\end{proof}

\par\bigskip
\section{Intersection cohomology on $G^{\io\th} \times V^{r-1}$ (exotic case)}

\para{2.1.}
In this section we assume that $\CX, \CY$ are of exotic type.  
We keep the notation in Section 1.  For each $\Bm \in \CQ^0_{n,r}$, 
we consider 
the complex $(\ol\psi_{\Bm})_!\a_0^*\CE[d_{\Bm}]$ as in Proposition 1.7.
Under the notation there, $\CY_{\Bm(k)}^0$ is an open dense subset of 
$\CX_{\Bm(k)}$.  
Hence one can consider the complex 
\begin{equation*}
\tag{2.1.1}
K_{\Bm, T,\CE} = \bigoplus_{0 \le k \le m_{r-1}}\bigoplus_{\r \in \CW_{\Bm(k),\CE}\wg}
                   \wt V_{\rho}\otimes \IC(\CX_{\Bm(k)}, \CL_{\rho})[d_{\Bm(k)}].
\end{equation*} 
We consider the diagram

\begin{equation*}
\begin{CD}
T^{\io\th} @<\a <<  \wt\CX @>\pi >>  \CX,
\end{CD}
\end{equation*}
where $\a: \wt\CX \to T^{\io\th}$ is defined by 
$\a(x,\Bv,gB^{\th}) = p_T(g\iv xg)$.
Let $\ol\pi_{\Bm} : \pi\iv(\CX_{\Bm}) \to \CX_{\Bm}$ 
be the rstriction of $\pi$ on $\pi\iv(\CX_{\Bm})$. 
We consider the complex $(\ol\pi_{\Bm})_!\a^*\CE[d_{\Bm}]$, 
where $\a^*\CE$ is regarded as a local system on $\pi\iv(\CX_{\Bm})$
by restriction. THe following result is a generalization of 
[SS1, Theorem 4.2].

\begin{thm}  %%%  Theorem 2.2
For each $\Bm \in \CQ_{n,r}^0$, 
$(\ol\pi_{\Bm})_!\a^*\CE[d_{\Bm}] \simeq K_{\Bm,T, \CE}$ as 
perverse sheaves on $\CX_{\Bm}$. 
\end{thm}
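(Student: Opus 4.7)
The plan is to identify $(\ol\pi_{\Bm})_!\a^*\CE[d_{\Bm}]$ on $\CX_{\Bm}$ with the intermediate extension of its own restriction to the open dense subset $\CY_{\Bm} \subset \CX_{\Bm}$ (which is open dense by Lemma 1.4(i)), and then recognise this intermediate extension as $K_{\Bm,T,\CE}$ by invoking Proposition 1.7. Since $\pi\iv(\CY_{\Bm}) = \psi\iv(\CY_{\Bm})$ and $\ol\pi_{\Bm}$ restricts to $\ol\psi_{\Bm}$ on this preimage, Proposition 1.7 yields
\[
(\ol\pi_{\Bm})_!\a^*\CE[d_{\Bm}]\bigr|_{\CY_{\Bm}} \simeq \bigoplus_{0 \le k \le m_{r-1}} \bigoplus_{\r \in \CW_{\Bm(k),\CE}\wg} \wt V_{\r}\otimes \IC(\CY_{\Bm(k)},\CL_{\r})[d_{\Bm(k)}].
\]
Because $\CX_{\Bm(k)} \cap \CY_{\Bm} = \CY_{\Bm(k)}$ and the intermediate extension of $\IC(\CY_{\Bm(k)},\CL_{\r})$ from $\CY_{\Bm(k)}$ to $\CX_{\Bm(k)}$ is by construction $\IC(\CX_{\Bm(k)},\CL_{\r})$, the theorem will follow once one knows that $(\ol\pi_{\Bm})_!\a^*\CE[d_{\Bm}]$ itself equals the intermediate extension of its restriction to $\CY_{\Bm}$.

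First I would establish that $(\ol\pi_{\Bm})_!\a^*\CE[d_{\Bm}]$ is a semisimple perverse sheaf on $\CX_{\Bm}$. Since $\wt\CX_{\Bm}$ is smooth of dimension $d_{\Bm}$, the shifted local system $\a^*\CE[d_{\Bm}]$ is perverse, and $\ol\pi_{\Bm}$ is proper, so the decomposition theorem applies; it remains to check that $\ol\pi_{\Bm}$ is semismall with respect to a stratification of $\CX_{\Bm}$ refining the regular stratum $\CY_{\Bm}$ of Lemma 1.4(iv). The additional strata outside $\CY_{\Bm}$ consist of $H$-stable pieces along the non-regular locus of $G^{\io\th}$ together with the $V^{r-1}$-factor, and a dimension count parallel to the one carried out for $\psi$ in Lemma 1.4(iv) gives the required bound.

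The crucial step, and the main obstacle, is to rule out any simple summand of $(\ol\pi_{\Bm})_!\a^*\CE[d_{\Bm}]$ whose support is contained entirely in $\CX_{\Bm} \setminus \CY_{\Bm}$. This amounts to upgrading the semismallness estimate to the strict inequality
\[
\dim \ol\pi_{\Bm}\iv(x,\Bv) < \frac{1}{2}\bigl(d_{\Bm} - \dim S\bigr)
\]
for $(x,\Bv)$ in any stratum $S \subset \CX_{\Bm} \setminus \CY_{\Bm}$. Because for $r \ge 2$ the non-regular part of $G^{\io\th}$ already carries infinitely many $H$-orbits, one cannot just copy the orbit-by-orbit argument of [SS1, Theorem 4.2] for $r = 2$; instead a Jordan-type decomposition of a generic point of each stratum, together with an inductive control of the $V^{r-1}$-contribution (for instance by induction on $r$, or via the flag-bundle factorisation $\wt\CX_{\Bm} \simeq H \times^{B^{\th}}(B^{\io\th}\times\prod_iM_{p_i})$), should supply the strict gap. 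Once the strict inequality is verified, $(\ol\pi_{\Bm})_!\a^*\CE[d_{\Bm}]$ has no simple summand supported off $\CY_{\Bm}$, so it coincides with the intermediate extension from $\CY_{\Bm}$, and the above computation of the restriction identifies it with $K_{\Bm,T,\CE}$.
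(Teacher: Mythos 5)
Your closing move — show that $(\ol\pi_{\Bm})_!\a^*\CE[d_{\Bm}]$ is a semisimple perverse sheaf with no simple summand supported in $\CX_{\Bm}\setminus\CY_{\Bm}$, then read the decomposition off from Proposition 1.7 — is exactly how the paper concludes at the end of Section 2.10. The problem is the method you propose for establishing that support statement.

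Your plan rests on two assertions that you do not actually prove: that $\ol\pi_{\Bm}$ is semismall, and that the semismallness inequality is strict on every stratum outside $\CY_{\Bm}$. You suggest that both follow from "a dimension count parallel to the one carried out for $\psi$ in Lemma 1.4(iv)," but that lemma lives entirely on the regular semisimple locus $\CY$, where the fibre of $\psi$ over a point of $\CY^0_{\Bm'}$ is the explicit product $\BP_1^{m'_r}$ and the stratification by the $\Bm'$ is finite and concrete. Over the complement of $\CY_{\Bm}$ the fibres are genuine (partial) Springer fibres; their dimensions are governed by a different mechanism (compare the Steinberg-variety argument in Lemma 6.21(iii), which is needed even to get semismallness of the unipotent map), and as noted in 5.18 and 6.13, for $r\ge 3$ the fibre dimension is not constant on natural strata and there are infinitely many $H$-orbits, so the right stratification is itself not obvious. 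You flag this difficulty, but then write that a Jordan-type decomposition "should supply the strict gap." That is precisely the step that needs a proof, and it is where essentially all of the content of Sections 2.3–2.10 is concentrated.

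The paper establishes the support statement by a completely different route. It describes $\CX^0_{\Bm'}$ explicitly via Jordan decompositions (2.3.1)–(2.3.3), sets up the "restriction to a Levi" diagram with the maps $q$ and $\s$ (2.5.1)–(2.5.3), identifies the resulting simple perverse sheaves $A_{\r}$ with $\IC(\CX^0_{\Bm'},\CL_{\r})$ (Lemma 2.7), proves Proposition 2.8 (every summand of $(\pi_{\Bm'})_!\a^*\CE$ over the stratum $\CX^0_{\Bm'}$ has full support there), and then assembles these into the inductive formulas (2.10.2)–(2.10.3). The $j=k=0$ case of (2.10.3) is what shows every summand of $(\ol\pi_{\Bm})_!\a^*\CE$ has support meeting $\CY_{\Bm}$. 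No semismallness of $\ol\pi_{\Bm}$ is invoked anywhere; perversity falls out as a byproduct of the explicit decomposition. Your proposal swaps this inductive construction for an unproven dimension estimate; the estimate is certainly a consequence of the theorem (for $\CE=\Ql$), but proving it directly looks at least as hard as the paper's argument, and the proposal does not supply it.
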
 
 
\para{2.3.}
The remainder of this section is devoted to the proof of the theorem. 
As in the case of $\CY_{\Bm}^0$,  consider 
$\CX_{\Bm}^0 = \CX_{\Bm}\ \backslash \ \bigcup_{\Bm' < \Bm}\CX_{\Bm'}$
for each $\Bm \in \CQ_{n,r}$. 
We shall describe the set $\CX_{\Bm}^0$ explicitly. Put 
$\CX_{\unip} = \CX \cap (G^{\io\th}\uni \times V^{r-1})$ and 
we define $\CX_{\Bm,\unip}, \CX^0_{\Bm, \unip}$, etc. as the intersection 
of $\CX_{\Bm}, \CX_{\Bm}^0$, etc. with $\CX\uni$.
For $(x,v) \in G^{\io\th}\uni \times V$, we denote 
by $\Bk[x]v$ the subspace of $V$ spanned by $v, xv, x^2v, \dots$.  
First we note the following.

\par\medskip\noindent
(2.3.1) \ For each $(x,\Bv) \in \CX\uni$ with $\Bv = (v_1, \dots, v_{r-1})$, 
define a sequence $W_1 \subset W_2 \subset \cdots \subset W_{r-1}$ 
of subspaces of $V$ by 
$W_i = \Bk[x]v_1 + \cdots + \Bk[x]v_i$ for $i = 1,\dots, r-1$.
Then $(x,\Bv) \in \CX_{\Bm,\unip}^0$ if and only if $\dim W_i = p_i$ for each $i$. 

\par\medskip
In fact, if $(x,\Bv) \in \CX_{\Bm,\unip}$, there exists an $x$-stable isotropic flag
$(V_i)_{1 \le i \le n}$ such that $v_i \in V_{p_i}$.  
Hence we have $\dim W_i \le p_i$ for each $i$. This implies that 
$(x,\Bv)$ satisfying the condition on $(W_i)$ is contained in $\CX_{\Bm,\unip}^0$. 
Conversely, assume that $(x,\Bv) \in \CX_{\Bm,\unip}^0$. Take an $x$-stable 
isotropic flag $(V_i)$ such that $v_i \in V_{p_i}$. Suppose there exists $k$ such that
$\dim W_i = p_i$ for $i < k$ and that $\dim W_k < p_k$. 
Then  $W_i = V_{p_i}$ for $i = 1, \dots, k-1$, and  
$\Bk[x]v_k + V_{k-1}$ is an $x$-stable  proper subspace of $V_{p_k}$. 
One can find an $x$-stable flag $V_{p_{k-1}}\subset V'_{p_{k-1}+1} \subset\cdots 
\subset V'_{p_k-1} \subset V_{p_k}$, and $v_k \in V_j'\subsetneq V_{p_k}$ for some $j$.
This implies that $(x,\Bv) \in \CX_{\Bm'}$ for $\Bm' < \Bm$, a contradiction.  
Hence (2.3.1) holds.

\par\medskip
More generally, we consider $(x,\Bv) \in \CX_{\Bm}$. 
Let $x = su = us$ be the Jordan decomposition of $x \in G^{\io\th}$, 
where $s \in G^{\io\th}$ is semisimple, $u \in G^{\io\th}$ is unipotent.  
We consider the 
decoposition $V = V_1\oplus\cdots \oplus V_t$ into eigenspaces of $s$. 
Then $Z_G(s) \simeq GL_{2n_1} \times \cdots \times GL_{2n_t}$ with 
$\dim V_j = 2n_j$.  Put $G_j = GL_{2n_j}$ for each $j$.  Then $Z_G(s)$ is 
$\th$-stable, and $\th$ stabilizes each factor so that 
$Z_H(s) \simeq G_1^{\th} \times \cdots \times G_t^{\th}$ with 
$G_j^{\th} \simeq Sp_{2n_j}$.   
Take $\Bv = (v_1, \dots, v_{r-1}) \in V^{r-1}$. For $j = 1, \dots, t$, 
we define $\Bv_j = (v_{1,j}, \dots v_{r-1,j}) \in V^{r-1}$, where 
$v_{ij}$ is the projection of $v_i \in V$ on $V_j$.   
Let $u_j$ be the restriction of $u$ on $V_j$.  Then 
$(u_j, \Bv_j) \in (G_j)\uni^{\io\th} \times V_j^{r-1}$. 
We denote by $(\CX^{G_j})^0_{\Bm_j, \unip}$ the subvariety 
of $(G_j)^{\io\th}\uni \times V_j^{r-1}$
defined in a similar way as $\CX_{\Bm, \unip}^0$. 
The following property is checked easily. 

\par\medskip\noindent
(2.3.2) \ Assume that $(x,\Bv)$ is contained in $\CX_{\Bm}^0$.
Then there exist unique $\Bm_1, \dots, \Bm_t$ such that
$(u_j,\Bv_j) \in (\CX^{G_j})_{\Bm_j,\unip}^0$, where 
$\Bm_j = (m_{1,j}, \dots m_{r, j})$ with $\sum_{j=1}^t m_{ij} = m_i$
for $1 \le i \le r$. 
Coversely, if $(u_j, \Bv_j) \in (\CX^{G_j})^0_{\Bm_j, \unip}$ for each $j$, 
then $(x, \Bv) \in \CX_{\Bm}^0$ for $\Bm$ determined from 
$\Bm_1, \dots, \Bm_t$.

\par\medskip
For each $(x,\Bv) \in \CX$, let $(u_j, \Bv_j)$ be defined as above. 
For $j = 1, \dots, t$, we define a flag $W_{1,j} \subset \cdots \subset W_{r-1,j}$
of $V_j$ with respect to $(u_j, \Bv_j)$ as in (2.3.1).  Put 
$W_i = W_{i,1}\oplus\cdots\oplus W_{i,t}$ for $i = 1, \dots, r-1$, then  
we obtain a sequence $W_1 \subset \cdots \subset W_{r-1}$ of subspaces in $V$. 
We put $W_i(x,\Bv) = W_i$ for each $i$.  Then (2.3.2) can be rewritten as 
\begin{equation*}
\tag{2.3.3}
\CX_{\Bm}^0 = \{ (x,\Bv) \in \CX \mid \dim W_i(x,\Bv) = p_i \  (1 \le i \le r-1) \}.
\end{equation*}   

Recall the map $\pi: \wt\CX \to \CX$.  For each $\Bm \in \CQ_{n,r}$, 
we define $\wt\CX_{\Bm}^+ = \pi\iv(\CX_{\Bm}^0)$, and 
let $\pi_{\Bm} : \wt\CX^+_{\Bm} \to \CX_{\Bm}^0$ be the restriction of $\pi$
on $\wt\CX^+_{\Bm}$. 
Since $\CY_{\Bm}^0$ is open in $\CX_{\Bm}^0$, $\wt\CY_{\Bm}^+$ is an open 
subset of $\wt\CX_{\Bm}^+$.
For $(x,\Bv, gB^{\th}) \in \wt\CX_{\Bm}^+$, we shall associate  
$\BI \in \CI(\Bm)$ as follows; 
assume that $(x,\Bv) \in B^{\io\th} \times M_n^{r-1}$, and that $x = su$ is the 
Jordan decomposition of $x$. 
Then $M_n$ is $s$-stable, and is decomposed as 
$M_n = M_{n,1}\oplus\cdots\oplus M_{n,t}$, where $M_{n,j} = M_n \cap V_j$ 
is a maximal isotropic subspace of $V_j$. Here $M_n = \lp e_1, \dots e_n\rp$.
Since $s \in B^{\io\th}$, $M_{n,j}$ determines a set
$\{ e_{k_1}, \dots, e_{k_{n_j}}\}$ with $k_1 < k_2 < \dots < k_{n_j}$,
where $n_j = \dim M_{n,j}$ (if $s_1 \in T^{\io\th}$ is the projection of $s$, 
$\{ e_{k_1}, \dots, e_{k_{n_j}}\}$ are eigenvectors of $s_1$ on $M_{n,j}$).   
.
Let $(u_j, \Bv_j)$ be as before. 
We define $\BI= (I_1, \dots, I_{r}) \in \CI(\Bm)$ as follows; 
let $(W_i(u_j, \Bv_j))$ be as in (2.3.1) and put $p_{ij} = \dim W_i(u_j,\Bv_j)$. 
Then $W_i(u_j, \Bv_j)$ is a subspace of $M_{n,j}$. 
We define a subset $\wt I_{i,j}$ of $\{ k_1, \dots, k_{n_j}\}$ as the first 
$p_{i,j}$ numbers in $\{ k_1, \dots, k_{n_j}\}$, and define $I_{i,j}$ by 
$I_{i,j} = \wt I_{i,j} \backslash \wt I_{i-1,j}$.  Thus $|I_{i,j}| = m_{i,j}$ 
for $\Bm_j = (m_{1,j}, \dots m_{r,j})$. 
Put $I_i = I_{i,1}\coprod \dots \coprod I_{i,t}$, and 
$\BI = (I_1, \dots, I_r)$.  Then $\BI \in \CI(\Bm)$.     
Note that the attachment $(x,\Bv) \mapsto \BI$ depends only on the 
$B^{\th}$-conjugacy class of $(x, \Bv)$.  Thus we have a well-defined map
$(x,\Bv, gB^{\th}) \mapsto \BI$. 
We define a subvariety $\wt\CX_{\BI}$ of $\wt\CX_{\Bm}^+$ by 
\begin{equation*}
\wt\CX_{\BI} = \{(x,\Bv, gB^{\th}) \in \wt\CX_{\Bm}^+ \mid 
                       (x,\Bv, gB^{\th}) \mapsto \BI \}. 
\end{equation*}
We show the following lemma (cf. [SS1, Lemma 4.4]).

\begin{lem} %%%% Lemma 2.4
$\wt\CX_{\Bm}^+$ is decomposed as
\begin{equation*}
\wt\CX_{\Bm}^+ = \coprod_{\BI \in \CI(\Bm)}\wt\CX_{\BI},
\end{equation*}
where $\wt\CX_{\BI}$ is an irreducible component of $\wt\CX_{\Bm}^+$
for each $\BI$. 
\end{lem}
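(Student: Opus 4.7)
The set-theoretic partition $\wt\CX_{\Bm}^+ = \coprod_{\BI \in \CI(\Bm)}\wt\CX_{\BI}$ is immediate from the construction of the assignment $(x,\Bv,gB^{\th}) \mapsto \BI$ given in 2.3, so the content of the lemma is that each $\wt\CX_{\BI}$ is an irreducible component. My plan is to reduce the claim to the decomposition (1.3.2) of $\wt\CY_{\Bm}^+$ by showing that $\wt\CY_{\BI}$ is dense in $\wt\CX_{\BI}$.

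Unwinding the definitions of 1.3 and 2.3, when $x$ is regular semisimple the assignment of 2.3 specializes to the one implicit in 1.3, so $\wt\CY_{\BI} = \wt\CX_{\BI} \cap \wt\CY$. Since $\wt\CY$ is open in $\wt\CX$, this exhibits $\wt\CY_{\BI}$ as an open subset of $\wt\CX_{\BI}$; by (1.3.2) it is smooth and irreducible, and by Lemma 1.4 (i) we have $\dim \wt\CY_{\BI} = \dim \wt\CY_{\Bm}^+ = \dim \wt\CX_{\Bm}^+$. The crucial step is density. Given $p = (x,\Bv,gB^{\th}) \in \wt\CX_{\BI}$, write $g\iv xg = s'u'$ for the Jordan decomposition inside $B^{\io\th}$ with coarse eigenspace decomposition $V = \bigoplus_j V_j$, and deform $s'$ inside $T^{\io\th}$ to a nearby regular semisimple $s'_t$ refining the decomposition into weight lines. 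Since the recipe of 2.3 computes $\BI$ from the partition of $\{e_1,\dots,e_n\}$ according to which coarse $s'$-eigenspace each basis vector lies in and from the dimensions $\dim W_i(u'_j, \Bv'_j)$, and since neither datum changes under such refinement, the family $(gs'_t u' g\iv, \Bv, gB^{\th})$ stays in $\wt\CX_{\BI}$ while its generic member is regular semisimple and hence lies in $\wt\CX_{\BI} \cap \wt\CY = \wt\CY_{\BI}$.

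Granted density, $\wt\CX_{\BI}$ inherits irreducibility from $\wt\CY_{\BI}$, has the same dimension as $\wt\CX_{\Bm}^+$, and is therefore an irreducible component. The main obstacle is the deformation step: one must confirm that $s'$ can be perturbed to a regular element while remaining in $B^{\io\th}$ and compatible with the flag condition $g\iv \Bv \in \prod_i M_{p_i}$ (the latter is automatic since $\Bv$ and $g$ are held fixed), and then verify that the label $\BI$ does not jump along the deformation. This verification rests on matching the coarse $s'$-eigenspace combinatorics used in 2.3 with the $T^{\io\th}$-weight combinatorics used in 1.3; the key point is that the extra freedom absorbed by the $\BP_1^{I_r}$-fibers of $\xi_{\BI}$ in (1.3.1) is exactly the information lost when passing from the fine weight data to the coarse eigenspace data, so refining $s'$ to a regular element introduces no new $\BI$-label.
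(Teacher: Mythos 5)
Your proof hits only the first of the two steps in the paper's argument, and that first step is also sketched incorrectly.

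The paper treats density of $\wt\CY_{\BI}$ in $\wt\CX_{\BI}$ as immediate from the definitions and spends its effort on a different point that you omit entirely: showing that each $\wt\CX_{\BI}$ is \emph{closed} in $\wt\CX_{\Bm}^+$. This is not cosmetic. From density and irreducibility of $\wt\CY_{\BI}$, together with equality of dimensions, you only get that $\wt\CX_{\BI}$ is an irreducible subset contained in the irreducible component $\ol{\wt\CY_{\BI}}$ (closure taken in $\wt\CX_{\Bm}^+$). Irreducible components are by definition maximal \emph{closed} irreducible subsets, and an irreducible locally closed subset of full dimension need not be closed: in $\BA^2$ the subset $\{x \ne 0\} \cup \{(0,0)\}$ is irreducible and two-dimensional with open dense irreducible part, yet it is not an irreducible component of $\BA^2$. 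So your concluding sentence "has the same dimension as $\wt\CX_{\Bm}^+$, and is therefore an irreducible component" does not follow. The paper closes this gap by observing that $\ol{\wt\CX_{\BI}} \subset \wt\CX_{\BI} \cup \bigcup_{\Bm' < \Bm}\wt\CX_{\Bm'}^+$, so that $\wt\CX_{\BI} = \ol{\wt\CX_{\BI}} \cap \wt\CX_{\Bm}^+$ is closed in $\wt\CX_{\Bm}^+$; without some argument of this kind the conclusion cannot be reached.

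The deformation you propose for density is also problematic as stated. You hold the unipotent part $u'$ fixed, deform the semisimple part $s'$ inside $T^{\io\th}$ to a regular $s'_t$, and assert that the generic member $s'_t u'$ lies in $\wt\CY_{\BI}$. But $s'_t u'$ is generally not regular semisimple: in the exotic case $Z_G(s'_t) \simeq (GL_2)^n$ even for $s'_t$ regular in $T^{\io\th}$, so the Jordan decomposition of $s'_t u'$ can have a nontrivial unipotent part, and $s'_t u'$ then fails to lie in $G^{\io\th}_{\reg}$. Relatedly, the label $\BI$ is computed from the actual Jordan decomposition of the perturbed element, not from the pair $(s'_t, u')$, so the claim that "neither datum changes" does not follow without further work. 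One would at minimum need to deform both factors jointly (or deform $x$ directly inside $B^{\io\th}$ along a path that is regular semisimple for generic parameter) and then track the invariant $\BI$ along the path. Finally, the citation of Lemma 1.4 (i) for $\dim \wt\CY_{\Bm}^+ = \dim \wt\CX_{\Bm}^+$ is off: that lemma compares $\wt\CY_{\Bm}$ with $\wt\CX_{\Bm}$, not their ``$+$''-versions, and the equality you want is itself equivalent to the density you are trying to prove.
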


\begin{proof}
It is clear from the definition that $\wt\CX_{\Bm}^+$ is a disjoint union of 
various $\wt\CX_{\BI}$, and that $\wt\CX_{\BI}$ contains $\wt\CY_{\BI}$ as an 
open dense subset. Since $\wt\CY_{\Bm}^+ = \coprod_{\BI}\wt\CY_{\BI}$,  
$\wt\CY_{\Bm}^+$ is open dense in $\wt\CX_{\Bm}^+$.  Hence 
$\wt\CX_{\Bm} = \bigcup_{\BI}\ol{\wt\CY}_{\BI}$ gives a decomposition into 
irreducible components, where $\ol{\wt\CY}_{\BI}$ is the closure of $\wt\CY_{\BI}$ in 
$\wt\CX_{\Bm}^+$.  Thus in order to show the lemma, it is enough to see that 
$\wt\CX_{\BI}$ is closed in $\wt\CX_{\Bm}^+$ for each $\BI$. 
But the closure $\CZ_{\BI}$ of $\wt\CX_{\BI}$ in $\CX$ is contained in the set 
$\wt\CX_{\BI} \cup\bigcup_{\Bm'< \Bm}\wt\CX_{\Bm'}^+$.  Hence 
$\wt\CX_{\BI} = \CZ_{\BI} \cap \wt\CX_{\Bm}^+$ is closed in $\wt\CX_{\Bm}^+$.   
\end{proof}

\para{2.5.}
We fix $\Bm \in \CQ_{n,r}$.  
Let us consider the spaces $V_0 = M_{m_1}$ and $\ol V_0 = V_0^{\perp}/V_0$. 
We put $G_1 = GL(V_0)$ and $G_2 = GL(\ol V_0)$. 
Then $\ol V_0$ has a natural symplectic structure, and $G_2$ is identified with 
a $\th$-stable subgroup of $G$. 
We consider the variety $\CX' \subset G_2^{\io\th} \times \ol V_0^{r-2}$ as 
in the case of $G^{\io\th} \times V^{r-1}$.  Put $\Bm' = (m_2, \dots, m_r)$.
Thus $\Bm' \in \CQ_{n',r-1}$, where $n' = \dim \ol V_0/2$. 
The subvariety $\CX_{\Bm'}'$ of $\CX'$ with respect to $\Bm'$ is defined 
similar to $\CX_{\Bm}$. 
Let $G_1^0$ be the set of regular elements in $G_1$ (namely, the set of $x \in G_1$
such that $u$ is regular unipotent in $Z_{G_1}(s)$ for the Jordan decomposition 
$x = su$). 
For each $z = (x, \Bv) \in \CX_{\Bm}^0$, put $W_z = W_1(x, \Bv)$.   
Note that $W_z$ is an $x$-stable subspace of $V$ with $\dim W_z = m_1$, and 
that $x|_{W_z}$ is a regular element in $GL(W_z)$. 
Moreover, $W_z$ is the unique $x$-stable subspace of $V$ containing $v_1$ 
with dimension $m_1$.  
\par
We define a variety 
\begin{equation*}
\tag{2.5.1}
\begin{split}
\CK_{\Bm} = \{&(z, \f_1, \f_2) \mid z = (x,\Bv) \in \CX_{\Bm}^0 \\
              &\f_1: W_z \isom V_0, \f_2: W_z^{\perp}/W_z \isom \ol V_0
                \text{ (symplectic isom.) } \},
\end{split}
\end{equation*}
and morphisms
\begin{align*}
q: &\CK_{\Bm} \to \CX^0_{\Bm}, \quad (x, \Bv, \f_1, \f_2) \mapsto (x, \Bv), \\ 
\s: &\CK_{\Bm} \to G^0_1 \times \CX'^0_{\Bm'}, \\
       &(x, \Bv,\f_1,\f_2) \mapsto (\f_1(x|_{W_z})\f_1\iv, 
         \f_2(x|_{W_z^{\perp}/W_z})\f_2\iv, \f_2(\ol\Bv)), 
\end{align*}
where $\ol\Bv = (\ol v_2, \dots, \ol v_{r-1}) \in (W_z^{\perp}/W_z)^{r-2}$, 
and $\ol v_i$ is the image of $v_i \in W_z^{\perp}$ to $W_z^{\perp}/W_z$.   
Put $H_0 = G_1 \times G_2^{\th}$.  Then $H \times H_0$ acts on $\CK_{\Bm}$ by 
\begin{equation*}
(g, (h_1,h_2)): (x,\Bv, \f_1,\f_2) \mapsto 
        (gxg\iv, g\Bv,  h_1\f_1g\iv, h_2\f_2g\iv)
\end{equation*}
for $g \in H, (h_1,h_2) \in H_0$.
Moreover, $\s$ is $H \times H_0$-equivariant with respec to the natural action of $H_0$ 
and the trivial action of $H$ on $G^0_1\times \CX'^0_{\Bm'}$.  
We have 
\par\medskip\noindent
(2.5.2) \ The map $q$ is a principal bundle with fibre isomorphic to $H_0$. 
\par\medskip\noindent
(2.5.3) \ The map $\s$ is a locally trivial fibration with smooth fibre of 
dimension $\dim H + (r-2)m_1$.
\par\medskip
In fact, (2.5.2) is clear.  We show (2.5.3). 
For a fixd $z = (x', (x'',\Bv')) \in G^0_1 \times \CX'^0_{\Bm'}$ 
with $\Bv' = (v_2', \dots, v_{r-1}')$, the fibre 
$\s\iv(z)$ is determined by the following procedure. 
\par\medskip
(i) \ Choose an isotropic subspace $W_1$ of $V$ with $\dim W_1 = m_1$. 
\par
(ii) \ For such $W_1$, choose an isomorphism $\f_1 : W_1 \to V_0$ and a symplectic 
isomorphism $\f_2: W_1^{\perp}/W_1 \to \ol V_0$. 
\par
(iii) \ Choose $x \in G^{\io\th}$ such that $\f_1(x|_{W_1})\f_1\iv = x'$, 
$\f_2(x|_{W_1^{\perp}/W_1})\f_2\iv = x''$.  
\par
(iv) \ Choose $v_1 \in W_1$ and $v_i \in W_1^{\perp}$ such that $\f_2(\ol v_i) = v_i'$ for 
$i = 2, \dots, r-1$. 

\par\medskip
Let $P$ be the stabilizer of the flag $(V_0 \subset V_0^{\perp})$ in $G$.
Then $P$ is $\th$-stable, and is decomposed as $P = LU_P$, where $L$ is 
a $\th$-stable Levi subgorup of $P$ containing $T$ and $U_P$ is the unipotent 
radical of $P$.  For (i), such $W_1$ are parametrized by $H/P^{\th}$. 
For (ii), they are paramerrized by $G_1 \times G_2^{\io\th}$.  For (iii), 
$x$ should be contained in $P^{\io\th}$, but $x', x''$ determines the part 
corresponding to $L^{\io\th}$.  Hence the choice of $x$ is parametrized by 
$U_P^{\io\th}$.  Finally, $v_1$ is any element in $W_1$, $v_2, \dots, v_{r-1}$
are determined uniquely by $v_2', \dots, v_{r-1}'$ modulo $W_1$.    
One can check that thus obtained $(x, \Bv)$ is contained in $\in X^0_{\Bm}$.
It follows that each fibre $\s\iv(z)$ is smooth with dimension 
$\dim H + (r-2)m_1$. Hence (2.5.3) holds. 
\par
Let $B_1$ be a Borel subgorup of $G_1$ which is the stabilizer of the flag 
$(M_k)_{0 \le k \le m_1}$ in $G_1$, and $B_2$ a $\th$-stable Borel subgroup of 
$G_2$ which is the stabilizer of the flag 
$(M_{m_1+1}/M_{m_1} \subset \cdots \subset M_n/M_{m_1})$ in $G_2$.   
Put
\begin{equation*}
\wt G_1 = \{ (x,gB_1) \in G_1 \times G_1/B_1 \mid g\iv xg \in B_1 \},
\end{equation*}
and define the map $\pi^1: \wt G_1 \to G_1$ by $(x,gB_1) \mapsto x$. 
Put $\wt G_1^0 = (\pi^1)\iv(G^0_1)$, and let $\vf^1: \wt G^0_1 \to G^0_1$ 
be the restriction of $\pi^1$. 
We define $\wt\CX'$ as the subvariety of 
$G_2^{\io\th} \times \ol V_0^{r-2} \times G_2^{\th}/B_2^{\th}$ as in the 
case of $\CX'$, and let $\pi^2: \wt\CX' \to \CX'$ be the projection 
$(x,\Bv', gB^{\th}_2) \mapsto (x,\Bv')$. 
We put $\wt\CX'^+_{\Bm'} = (\pi^2)\iv ({\CX'}^0_{\Bm'})$, and 
let $\pi^2_{\Bm'}$ be the restriction of $\pi^2$ on $\wt\CX'^+_{\Bm'}$.
We define a variety 
\begin{equation*}
\begin{split}
\wt\CZ_{\Bm}^+ = \{ (x,\Bv,&gB^{\th},\f_1,\f_2) \mid (x,\Bv,gB^{\th}) \in \wt\CX_{\Bm}^+, \\  
    &\f_1: W_z \isom V_0, \f_2: W_z^{\perp}/W_z \isom \ol V_0  \text{ for } z = (x, \Bv)\}, 
\end{split}        
\end{equation*}  
and define a map $\wt q: \wt\CZ_{\Bm}^+ \to \wt\CX_{\Bm}^+$ by a natural projection. 
We define a map $\wt\s : \wt\CZ_{\Bm}^+ \to \wt G^0_1 \times \wt\CX'^+_{\Bm'}$ as follows; 
take $(x,\Bv,gB^{\th},\f_1,\f_2) \in \wt\CZ_{\Bm}^+$.  
Since $z = (x, \Bv) \in \CX_{\Bm}^0$, $W_z$ coinicdes with $g(M_{m_1})$.
Let $g_1B_1$ be the element corresponding to the flag 
$\f_1(g(M_i))_{0 \le i \le m_1}$, and $g_2B_2^{\th}$ be the element 
corresponding to the isotorpic flag $\f_2(g(M_i)/g(M_{m_1}))_{i \ge m_1}$.    
Then 
\begin{equation*}
\begin{split}
\wt\s : (x, &\Bv, gB^{\th}, \f_1, \f_2) \mapsto  \\ 
  &((\f_1(x|_{W_z})\f_1\iv, g_1B_1), (\f_2(x|_{w_z^{\perp}/W_z})\f_2\iv, \f_2(\ol \Bv), g_2B_2^{\th})).  
\end{split}
\end{equation*}
We also define a map $\wt\pi_{\Bm}: \wt\CZ_{\Bm}^+ \to \CK_{\Bm}$ by 
$(x,\Bv,gB^{\th}, \f_1, \f_2) \mapsto (x, \Bv, \f_1, \f_2)$.
Then we have the following commutative diagram.
\begin{equation*}
\begin{CD}
T_1 \times T_2^{\io\th} @<f <<  T^{\io\th} @>\id >> T^{\io\th}  \\
@A\a^1 \times \a' AA    @AA\wt\a A   @AA\a A    \\
\wt G^0_1 \times \wt\CX_{\Bm'}'^+  @<\wt\s<<   \wt\CZ_{\Bm}^+  @>\wt q >>  \wt\CX^+_{\Bm}  \\
@V\vf^1 \times \pi_{\Bm'} VV     @VV\wt\pi_{\Bm} V    @VV\pi_{\Bm} V   \\
G^0_1 \times \CX'^0_{\Bm'}  @ <\s <<  \CK_{\Bm}  @> q >>  \CX^0_{\Bm}, 
\end{CD}
\end{equation*}
where the map $\wt\a$ is defined naturally.  Note that $T^{\io\th}$ can be written as
$T^{\io\th} \simeq T_1 \times T_2^{\io\th}$, where $T_1$ is a maximal torus of $G_1$, and 
$T_2$ is a $\th$-stable maximal torus of $G_2$. We fix an isomorphism 
$f : T^{\io\th} \to T_1 \times T_2^{\io\th}$.
The map $\a^1: \wt G_1 \to T_1$ is defined as in 2.1, by ignoring $v$.  The maps 
$\a',\pi_{\Bm'}$ are defined similarly to $\a,\pi_{\Bm}$. 

\para{2.6.}
Let $T$ be a tame local system on $T^{\io\th}$. Under the isomorphism 
$f: T^{\io\th} \to T_1 \times T_2^{\io\th}$, $\CE$ can be written as 
$\CE \simeq \CE_1\boxtimes \CE_2$, where $\CE_1$ (resp. $\CE_2$) is a tame local 
system on $T_1$ (resp. $T_2^{\io\th}$). 
Then we have $\CW_{\Bm,\CE} \simeq \CW_1 \times \CW'_{\Bm',\CE_2}$, 
where $\CW_1$ is the stabilizer of $\CE_1$ in 
$S_{m_1}\simeq N_{G_1}(T_1)/T_1$, and $\CW'_{\Bm',\CE_2}$ is defined 
similarly to $\CW_{\Bm,\CE}$ with respect to 
$\CW' = N_{G_2^{\th}}(T_2^{\io\th})/Z_{G_2^{\th}}(T_2^{\io\th})$.  
As in 1.6, $\CW_{\Bm,\CE}$ is decomposed as 
$\CW_{\Bm,\CE} \simeq \CW_1 \times\cdots \times \CW_r$ with subgroups 
$\CW_i \subset S_{m_i}$.  Then $\CW'_{\Bm',\CE_2} \simeq \CW_2 \times \cdots \times \CW_r$. 
For each $\r \in \CW_{\Bm,\CE}\wg$, we construct a simple perverse sheaf $A_{\rho}$ on 
$\CX^0_{\Bm}$ as follows; 
The decomposition of the complex $\pi^1_!(\a^1)^*\CE_1[\dim G_1]$ into simple 
summands is well-known.  
Since $G_1^0$ is an open dense subset of $G_1$ containing $(G_1)\reg$, 
the decompostion of $\vf^1_!\a^*\CE_1$ is described similarly, namely we have
\begin{equation*}
\tag{2.6.1}
\vf^1_!(\a^1)^*\CE_1 \simeq \bigoplus_{\r_1 \in \CW_1\wg}\r_1
        \otimes \IC(G^0_1, \CL_{\r_1}),  
\end{equation*} 
where $\CL_{\r_1}$ is a simple local system on the set of regular semisimple elements 
in $G_1$. 
Write $\r$ as  $\r = \r_1\boxtimes\cdots\boxtimes\r_r$ with
$\r_i \in \CW_i\wg$. Then $\r' = \r_2\boxtimes\cdots\boxtimes\r_r \in \CW_{\Bm',\CE_2}\wg$.
Suppose that a simple perverse sheaf $A_{\r'}$ on $\CX^0_{\Bm'}$ was constructed. 
Put $A_1 = \IC(G^0_1,\CL_{\r_1})[\dim G_1]$.  Then $A_1\boxtimes A_{\r'}$ is an 
$H_0$-equivariant simple perverse sheaf on $G^0_1 \times \CX'^0_{\Bm'}$, and so 
$\s^*(A_1\boxtimes A_{\r'})[\b_1]$ is an $H_0$-equivariant  simple perverse sheaf on 
$\CK_{\Bm}$ by
(2.5.3), where $\b_1 = \dim H + (r-2)m_1$. 
Since $q$ is a principal bundle with group $H_0$ by (2.5.2), there exists a unique simple 
perverse sheaf $A_{\r}$ on $\CX^0_{\Bm}$ such that
\begin{equation*}
q^*A_{\r}[\b_2] \simeq \s^*(A_1\boxtimes A_{\r'})[\b_1], 
\end{equation*}
where $\b_2 = \dim H_0$. 
Note that in the case where $r = 2$, $A_{\r}$ coincides with the simple perverse sheaf 
$A_{\r}$ constructed in [SS1, 4.6]. 
\par
Let $\CL_{\r}$ be a simple local system on $\CY_{\Bm}^0$ appeared  in (1.5.3).
Since $\CY_{\Bm}^0$ is an open dense smooth subset of $\CX_{\Bm}^0$, 
one can consider the intersection cohomology $\IC(\CX_{\Bm}^0, \CL_{\r})$ on
$\CX_{\Bm}^0$.  
We have the following lemma.

\begin{lem}  %%%  Lemma 2.7.
$A_{\r} \simeq \IC(\CX_{\Bm}^0,\CL_{\r})[d_{\Bm}]$.  
\end{lem}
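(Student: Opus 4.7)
I would proceed by induction on $r$, using the construction of $A_{\r}$ given by the diagram at the end of 2.5. The base case $r = 2$ is [SS1, Lemma 4.7]: in that case $\CX' = G_2^{\io\th}$ (since there is no vector factor), and $A_{\r'}$ is a standard Grothendieck--Springer-type IC on $\CX'^0_{\Bm'}$. For the inductive step, assume the lemma for $\Bm' \in \CQ_{n', r-1}^0$, so that $A_{\r'} \simeq \IC(\CX'^0_{\Bm'}, \CL_{\r'})[d_{\Bm'}]$; combined with (2.6.1), this makes $A_1 \boxtimes A_{\r'}$ into the IC complex of the local system $\CL_{\r_1}\boxtimes \CL_{\r'}$ on a smooth open dense subset of $G_1^0 \times \CX'^0_{\Bm'}$, with shift $\dim G_1 + d_{\Bm'}$.

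The key geometric inputs are (2.5.2) and (2.5.3): $q$ is a principal $H_0$-bundle with $H_0 = G_1 \times G_2^{\th}$ connected, and $\s$ is a locally trivial smooth fibration of relative dimension $\b_1$ whose fibres are smooth and irreducible (as one sees directly from the explicit description (i)--(iv) in 2.5). Consequently both $q^*[\b_2]$ and $\s^*[\b_1]$ are t-exact, preserve simplicity of perverse sheaves, and send IC complexes of local systems to IC complexes of their pullbacks. Applying this to both sides of the defining identification $q^*A_{\r}[\b_2] \simeq \s^*(A_1 \boxtimes A_{\r'})[\b_1]$, and using $\dim \CK_{\Bm} = d_{\Bm} + \b_2 = \dim G_1 + d_{\Bm'} + \b_1$, I conclude that
\begin{equation*}
A_{\r} \simeq \IC(\CX^0_{\Bm}, \CF)[d_{\Bm}]
\end{equation*}
for a unique simple local system $\CF$ on an open dense smooth subvariety of $\CX^0_{\Bm}$.

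What remains is to show $\CF \simeq \CL_{\r}$. Both are defined on open dense subsets of $\CX^0_{\Bm}$, so it suffices to compare them on $\CY^0_{\Bm}$, which is open dense in $\CX^0_{\Bm}$ by the same argument used for Lemma 1.4(i). I would restrict the whole 2.5 diagram to $q\iv(\CY^0_{\Bm})$, over which $\s$ factors through the product of the regular semisimple locus of $G_1^0$ with $\CY'^0_{\Bm'}$. Using the factorization $\psi_{\BI} = \e_{\BI}\circ \xi_{\BI}$ of (1.3.1) for both $\CX$ and $\CX'$, the descent of $\s^*(\CL_{\r_1}\boxtimes\CL_{\r'})$ along $q$ can be identified with the $\r$-isotypic local system cut out of $(\e_{\BI})_!\CE_{\BI}$ by (1.5.3); under the decomposition $\CW_{\Bm,\CE} \simeq \CW_1 \times \CW'_{\Bm',\CE_2}$ of 2.6 this matches $\r = \r_1 \boxtimes \r'$, and hence yields $\CL_{\r}$.

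I expect the main obstacle to be this last identification of descent data: one must carefully track the $\CW_{\BI}$-action (and the $\wt\CW_{\CE}$-action of 1.5 and 1.6) through both the $\CX$ and $\CX'$ constructions and verify that the natural local system produced by descent along $q$ on $\CY^0_{\Bm}$ coincides with the one produced by the finite Galois covering $\e_{\BI}$. This is essentially a bookkeeping exercise parallel to the $r = 2$ case of [SS1, Lemma 4.7], but it is where almost all the content of the proof lies; the perverse-sheaf manipulations in the first two paragraphs are formal once the smooth/proper-base-change compatibilities are in place.
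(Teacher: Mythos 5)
Your proposal is correct and follows essentially the same route as the paper's proof: both exploit that $q$ is a principal $H_0$-bundle and $\s$ a smooth fibration with irreducible fibres to descend IC structure through the defining identity $q^*A_\r[\b_2]\simeq\s^*(A_1\boxtimes A_{\r'})[\b_1]$, then pin down the local system by restricting the whole 2.5 diagram to the regular semisimple loci and matching the finite Galois coverings via $\psi_{\BI}=\e_{\BI}\circ\xi_{\BI}$, (1.5.3) and the splitting $\CW_{\Bm,\CE}\simeq\CW_1\times\CW'_{\Bm',\CE_2}$ — exactly the content of the paper's diagram (2.7.2) and the deductions (2.7.3)–(2.7.6). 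The only organizational difference is that you make the induction on $r$ and the "$A_\r$ is an IC of \emph{some} local system $\CF$" step explicit, whereas the paper compresses this into the single reduction "it suffices to prove (2.7.1)", relying on the construction in 2.6 to supply simplicity and full support.
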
  
\begin{proof}
In order to prove the lemma, 
it is enough to see that 
\begin{equation*}
\tag{2.7.1}
\CH^{-d_{\Bm}}A_{\r}|_{\CY_{\Bm}^0} \simeq \CL_{\r}.
\end{equation*}
 
We consider the following commutative diagram.

\begin{equation*}
\tag{2.7.2}
\begin{CD}
T_1 \times T_2^{\io\th}  @<f <<  T^{\io\th} @>\id >> T^{\io\th}  \\
@A\a_0^1 \times \a_0' AA    @AA\wt\a_0A   @AA\a_0A      \\
\wt G_{1,\rg}\times \wt\CY_{\Bm'}'^0   @<\wt\s_0<<  \wt\CZ^0_{\Bm}  
         @>\wt q_0>>   \wt\CY_{\Bm}^0  \\
@V\xi^1 \times \xi'_0 VV    @VV\wt\xi_0V     @VV\xi_0V   \\
(G_1/T_1 \times T_{1,\rg}) \times \wh\CY'^0_{\Bm'}  @<\wh \s_0<< \wh\CZ^0_{\Bm}  
                   @>\wh q_0>> \wh\CY_{\Bm}^0 \\  
@V\e^1 \times \e'_0 VV    @VV\wt\e_0V   @VV\e_0V   \\
G_{1,\rg} \times \CY'_{\Bm'}  @<\s_0<<   \CK_{\Bm,\rg} @>q_0>>  \CY^0_{\Bm},  \\
\end{CD}
\end{equation*}
where $\wt\CY_{\Bm}^0 = \wt\CY_{\BI}$, $\wh\CY_{\Bm}^0 = \wh\CY_{\BI}$ 
for $\BI = \BI(\Bm)$ (see 1.3), and $\wt\CY'^0_{\Bm'} = \wt\CY'_{\BI'}, 
\wh\CY'^0_{\Bm'} = \wh\CY'_{\BI'}$ 
are defined similarly with respect to $G_2^{\io\th}\times \ol V_0^{r-2}$
with $\BI' = \BI(\Bm')$. 
Moreover, 
\begin{align*}
\wt G_{1,\rg} &= (\pi^1)\iv(G_{1,\rg}), \\
\CK_{\Bm,\rg} &= q\iv(\CY^0_{\Bm}), \\
\wt\CZ^0_{\Bm}  &= \wt q\iv(\wt\CY_{\Bm}^0)
\end{align*}
and $\wh\CZ^0_{\Bm}$ is defined as the quotient of $\wt\CZ_{\Bm}^0$ under 
the natural action of $Z_H(T^{\io\th})_{\BI}/(B^{\th} \cap Z_H(T^{\io\th}))$.
The maps $\wt q_0, q_0, \wt\s_0, \s_0$ are defined as the restriction of the
corresponding maps $\wt q,q, \wt\s, \s$. The map $\xi_0$ is $\xi_{\BI}$ 
for $\BI = \BI(\Bm)$. 
The maps $\wt\xi_0, \wt\e_0$ are defined according to $\xi_0,\e_0$. 
$\xi'_0, \e_0'$ are defined similar to $\xi_0,\e_0$ with respect 
to $G_2^{\io\th} \times \ol V_0^{r-2}$.
$\xi^1, \e^1$ are standard maps in the groups case ($\xi^1$ is an isomorphism). 
The map $\wh\s_0$ is naturally induced from $\wt\s_0$.
\par
It follows from the diagram (2.7.2) that 
\begin{equation*}
\tag{2.7.3}
\wt\s_0^*((\a_0^1)^*\CE_1\boxtimes (\a_0^2)^*\CE_2) \simeq \wt q_0^*\a_0^*\CE. 
\end{equation*}       
It is easy to check that the squares in the middle row and in the bottom row are 
all cartesian squares.  Here $\psi^1 = \e^1\circ\xi^1 : \wt G_{1,\rg} \to G_{1,\rg}$
is a finite Galois covering with group $S_{m_1}$, and $\psi^1_!(\a_0^1)^*\CE_1$ is 
decomposed as 
\begin{equation*}
\tag{2.7.4}
\psi^1_!(\a_0^1)^*\CE_1 \simeq \bigoplus_{\r_1 \in \CW_1\wg}\r_1\otimes \CL_{\r_1}. 
\end{equation*} 
On the other hand, 
by (1.5.3) and (1.6.3), we have
\begin{equation*}
\tag{2.7.5}
(\e'_0\circ\xi_0')_!(\a^2_0)^*\CE_2 \simeq \bigoplus_{\r' \in (\CW'_{\Bm',\CE_2})\wg}
         H^{\bullet}(\BP_1^{m_r})\otimes  \r'\otimes \CL_{\r'}.
\end{equation*}
Similarly, the map $\e_0\circ\xi_0$ coincides with 
$\psi_{\BI}: \wt\CY_{\BI} = \wt\CY_{\Bm}^0 \to \CY_{\Bm}^0$. Hence we have
\begin{equation*}
\tag{2.7.6}
(\e_0\circ\xi_0)_!\a_0^*\CE \simeq \bigoplus_{\r \in \CW_{\Bm,\CE}\wg}
              H^{\bullet}(\BP_1^{m_r})\otimes\r\otimes \CL_{\r}. 
\end{equation*}
Simce the Galois covering is compatible with $\s_0$ and $q_0$ thanks to the diagram (2.7.2) 
(it corresponds to the squares in the bottom row), we have 
$\s_0^*(\CL_{\r_1}\boxtimes \CL_{\r'}) \simeq q_0^*\CL_{\r}$ under 
the identification $\r = \r_1\boxtimes \r'$ for $\CW_{\Bm,\CE} = \CW_1 \times \CW'_{\Bm',\CE_2}$. 
This implies that $A_{\r}|_{\CY^0_{\Bm}}\simeq  \CL_{\r}[d_{\Bm}]$.
Hence (2.7.1) holds and the lemma follows. 
\end{proof}

By using Lemma 2.7, we show the following.

\begin{prop}  %%%%  Prop. 2.8
Under the notation in Lemma 2.7, $(\pi_{\Bm})_!\a^*\CE$ is decomposed as 
\begin{equation*}
(\pi_{\Bm})_!\a^*\CE \simeq H^{\bullet}(\BP_1^{m_r})\otimes\bigoplus_{\r \in \CW_{\Bm,\CE}\wg}
                     \wt V_{\r}\otimes \IC(\CX_{\Bm}^0,\CL_{\r}),
\end{equation*}
where $\wt V_{\rho}$ is regarded as a vector space, ignoring the $\wt\CW_{\CE}$-action.
\end{prop}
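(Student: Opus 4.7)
The plan is to proceed by induction on the level $r$, reducing the level-$r$ computation on $\CX$ to a combination of the classical Springer decomposition on $G_1 = GL(V_0)$ (formula (2.6.1)) and the level-$(r-1)$ exotic case on $\CX'$. The base case $r = 2$ is covered by [SS1, Theorem~4.2].

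For the inductive step, I would exploit the commutative diagram in 2.5. First, I verify that the two lower squares are Cartesian: the right one tautologically, since $\wt\CZ_{\Bm}^+ = \wt\CX_{\Bm}^+ \times_{\CX_{\Bm}^0}\CK_{\Bm}$; and the left one because, given $(x,\Bv,\f_1,\f_2) \in \CK_{\Bm}$ together with flags $g_1B_1$ and $g_2B_2^{\th}$ upstairs, a unique isotropic $H$-flag $gB^{\th}$ on $V$ is assembled by pulling the flag $(g_1(M_i))_{i\le m_1}$ back through $\f_1\iv$ and the flag $(g_2(M_j)/g_2(M_{m_1}))_{j > m_1}$ through $\f_2\iv$. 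Proper base change along these two squares, together with the analog of (2.7.3), then yields
\[
q^*(\pi_{\Bm})_!\a^*\CE \;\simeq\; \s^*(\vf^1 \times \pi^2_{\Bm'})_!\bigl((\a^1)^*\CE_1 \boxtimes (\a')^*\CE_2\bigr).
\]
Since $q$ is a principal $H_0$-bundle by (2.5.2), $q^*$ is fully faithful on $H_0$-equivariant semisimple complexes, so it suffices to describe the right-hand side.

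Plugging in (2.6.1) for the $G_1$-factor and the inductive hypothesis for $(\pi^2_{\Bm'})_!(\a')^*\CE_2$ on $\CX'$ of level $r-1$ (whose last parameter is $m_r$, producing the factor $H^{\bullet}(\BP_1^{m_r})$) yields a direct sum of terms $\s^*\bigl(\IC(G_1^0, \CL_{\r_1}) \boxtimes \IC(\CX'^0_{\Bm'}, \CL_{\r'})\bigr)$, up to shift, with appropriate multiplicities. By the very definition of $A_{\r}$ in 2.6 together with Lemma~2.7, each such term corresponds under the correspondence $q^* \leftrightarrow \s^*$ to an $\IC(\CX_{\Bm}^0, \CL_{\r})$-summand of $(\pi_{\Bm})_!\a^*\CE$ for $\r = \r_1 \boxtimes \r'$ under the splitting $\CW_{\Bm,\CE} = \CW_1 \times \CW'_{\Bm',\CE_2}$, with the factor $H^{\bullet}(\BP_1^{m_r})$ preserved.

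The main obstacle is to match the multiplicity vector space $\r_1 \otimes \wt V'_{\r'}$ arising on the product side with the $\wt V_{\r}$ demanded by the proposition. Since the proposition asks only for the underlying vector-space structure of the multiplicity (not for the $\wt\CW_{\CE}$-action), I can check the dimension match by restricting to the open dense stratum $\CY_{\Bm}^0 \subset \CX_{\Bm}^0$: the formulas (1.6.5)--(1.6.6) compute the multiplicity of $\CL_{\r}$ over $\CY_{\Bm}^0$ as $\dim H^{\bullet}(\BP_1^{m_r}) \cdot \dim \wt V_{\r}$, and since $\IC(\CX_{\Bm}^0, \CL_{\r})$ is determined by its restriction to the smooth open subset $\CY_{\Bm}^0$, this restriction pins down the multiplicities in the full formula on $\CX_{\Bm}^0$. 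Routine bookkeeping of cohomological shifts using (1.2.1), (2.5.3), and the offsets $\b_1, \b_2$ from 2.6 then completes the argument.
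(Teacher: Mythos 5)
Your proposal is correct and follows essentially the same route as the paper's proof: induction on $r$, the $\s$/$q$ correspondence from 2.5--2.6, Lemma 2.7 to identify $A_{\r}$ with $\IC(\CX_{\Bm}^0,\CL_{\r})$, and finally pinning down multiplicities by restriction to $\CY_{\Bm}^0$ via (1.6.5). The only (harmless) variation is that the paper runs the cartesian-square argument one irreducible component $\wt\CX_{\BI}\subset\wt\CX_{\Bm}^+$ at a time, via diagram (2.8.1), whereas you work directly with the full $\wt\CX_{\Bm}^+$ and $\wt\CX_{\Bm'}'^+$; both are valid since $\wt\CX_{\Bm}^+ = \coprod_{\BI}\wt\CX_{\BI}$.
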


\begin{proof}
We prove the proposition by induction on $r$.  
In the case where $r = 2$, the proposition holds by Proposition 4.8 in [SS1].
We assume that the proposition holds for $r' < r$. 
We fix $\BI \in \CI(\Bm)$. 
Put $\wt\CZ_{\BI} = \wt q\iv(\wt\CX_{\BI})$.
We have the following commutative diagram

\begin{equation*}
\tag{2.8.1}
\begin{CD}
\wt G^0_1 \times \wt\CX'_{\BI'}  @<<< \wt\CZ_{\BI}  
      @>>>  \wt\CX_{\BI}  \\  
  @V\vf^1\times \pi_{\BI'} VV    @VVV    @VV\pi_{\BI} V   \\
G^0_1\times \CX'^0_{\Bm'}   @<\s<<  \CK_{\Bm}  @>q>>  \CX_{\Bm}^0, 
\end{CD}
\end{equation*}
where $\pi_{\BI}$ is the restriction of $\pi_{\Bm}$ on $\wt\CX_{\BI}$, and similarly
for $\pi_{\BI'}$. Other maps are determined correspondingly. 
We note that both squares are cartesian squares.  
\par
We show the following.
\par\medskip\noindent
(2.8.2) \ Any simple summand (up to shift) of the semisimple complex 
$(\pi_{\BI})!\a^*\CE$ is contained in the set 
$\{ A_{\r} \mid \r \in \CW_{\Bm, \CE}\wg\}$. 
\par\medskip
Put $K_1 = \vf^1_!(\a^1)^*\CE_1$, and 
$K_2 = (\pi_{\Bm'})_!(\a')^*\CE_2$.
By (2.6.1) and induction hypothesis, we have 
\begin{align*}
K_1 &\simeq \bigoplus_{\r_1 \in \CW_1\wg}\r_1\otimes 
        \IC(G^0_1, \CL_{\r_1})  \\
K_2 &\simeq H^{\bullet}(\BP_1^{m_r})\otimes
      \bigoplus_{\r' \in \CW_{\Bm',\CE_2}\wg}\wt V_{\r'}\otimes 
               \IC(\CX'^0_{\Bm'}, \CL_{\r'}).
\end{align*}
Put $K_{\BI'} = (\pi_{\BI'})_!(\a')^*\CE$.
Since $\wt\CX'_{\BI'}$ is a connected component of $\wt\CX'^+_{\Bm'}$, 
any simple summand of $K_{\BI'}$ is contained in 
$K_2$, up to shift, thus it is of the form 
$\IC(\CX'^0_{\Bm'}, \CL_{\r'})$. 
A simple perverse sheaf on $\CX_{\Bm}^0$ obtained from 
$\IC(G_1^0, \CL_{\r_1})$ and $\IC(\CX'^0_{\Bm'}, \CL_{\r'})$ by the 
procedure in 2.6 actually coincides with $A_{\r}$.  
On the other hand, since the squares in the diagram (2.8.1) are both cartesian, 
we have $\s_1^*(K_1\boxtimes K_{\BI'}) \simeq q_1^*((\pi_{\BI})_!\a^*\CE)$.
(2.8.2) follows from this.
\par
By Lemma 2.4 and Lemma 2.7, (2.8.2) implies that 
\par\medskip\noindent
(2.8.3) \ Any simple summand (up to shift) of the semisimple complex 
$(\pi_{\Bm})_!\a^*\CE$ is contained in the set $\{ \IC(\CX^0_{\Bm},\CL_{\r})
     \mid \r \in \CW_{\Bm, \CE}\wg \}$.   
\par\medskip

(2.8.3) implies, in particular, that any simple summand of $K = (\pi_{\Bm})_!\a^*\CE$
has its support $\CX_{\Bm}^0$. Since the restriction of $K$ on $\CY_{\Bm}^0$ 
coincides with $K_0 = (\psi_{\Bm})_!\a_0^*\CE$, the decompostion of $K$ into 
simple summands is determined by the decomposition of $K_0$.  Hence 
the proposition follows from (1.6.5).  
\end{proof}

\remark{2.9.} 
Proposition 2.8 is a generalization of Proposition 4.8 in [SS1]. But the argument here
is much simpler than that of [SS1].  Note that $A_{\r}^0$ in [SS1, Lemma 4.7] corresponds 
to $A_{\r}$ in Lemma 2.7. 

\para{2.10.}
For $\Bm \in \CQ_{n,r}$, and for each $j, k$, 
we consider $M^{(j,k)}$ and $\ol M^{(j,k)}$ as in the proof of Proposition 1.7. 
Put
\begin{align*}
\CX^0_{j,k} &= \bigcup_{g \in H}g(B^{\io\th} \times M^{(j,k)}), \\
\wt\CX_{j,k}^+ &= \pi\iv(\CX^0_{j,k}),
\end{align*}
and let $\pi_{j,k}: \wt\CX_{j,k}^+ \to \CX_{j,k}^0$ be the restriction of $\pi$ 
on $\wt\CX^+_{j,k}$. 
Then $\pi_{j,k}$ is a proper map, and $\CY_{j,k}^0$ is open dense in 
$\CX^0_{j,k}$. 
Moreover 
$\CX_{j,k}^0$ coincides with $\CX^0_{\Bm}$ in the case where $j = r-1, k = m_{r-1}$,
coincides with
$\CX_{\Bm}$ in the case where $j = 0$. Also put 
\begin{align*}
\CX_{j,k} &= \bigcup_{g \in H}g(B^{\io\th} \times \ol M^{(j,k)}), \\
\ol{\wt\CX_{j,k}^+} &= \pi\iv(\CX_{j,k}),
\end{align*}
and let $\ol\pi_{j,k}: \ol{\wt\CX_{j,k}^+} \to \CX_{j,k}$ be the restriction of $\pi$ 
on $\ol{\wt\CX^+_{j,k}}$.
Then $\CX_{j,k}^0$ is open dense in $\CX_{j,k}$.  As in (1.7.1), we have
\par\medskip\noindent
(2.10.1) \  
$\CX_{j,k} \backslash \CX_{j,k-1} = \CX_{j,k}^0$ if $k \ge 1$, and
$\CX_{j,0}(\Bm) = \CX_{j+1, m_j + m_{j+1}}(\Bm(j,0))$.  
Moreover, $\CX^0_{j,k}(\Bm)$ coincides with $\CX_{j+1, m_{j+1}}(\Bm(j,k))$.
\par\medskip
For $\Bm' \in \CQ(\Bm;j,k)$, $\CY^0_{\Bm'}$ is containd in $\CY_{j,k}$ hence in 
$\CX_{j,k}$.
One can define an intersection cohomology $\IC(\CX'_{\Bm'}, \CL_{\r})$
assoicated to the local sysmte $\CL_{\r}$ on $\CY^0_{\Bm'}$
(here $\CX'_{\Bm'}$ denotes the closure of $\CY^0_{\Bm'}$ in $\CX_{j,k}$). 
We show the following formulas.  
First assume that $j = r-1$ and $0 \le k \le m_{r-1}$.
\begin{equation*}
\tag{2.10.2}
\begin{split}
(\ol\pi_{r-1,k}&)_!\a^*\CE \\
     &\simeq 
     \bigoplus_{0 \le k' \le k}\bigoplus_{\rho \in \CW_{\Bm(k'),\CE}\wg}
    \wt V_{\rho} \otimes \IC(\CX_{r-1,k'}, \CL_{\rho})[-2(m_{r-1}-k')] + \CM_{r-1,k}, 
\end{split}  
\end{equation*}
where $\CM_{r-1, k}$ is a sum of various $\IC(\CX_{r-1, k'}, \CL_{\r})[-2i]$ 
for $0 \le k' \le k$ and $\r \in \CW_{\Bm(k'), \CE}\wg$ with $0 \le i < m_{r-1}- k'$.  
Next assume that $0 \le j < r-1$ and that $0 \le k \le m_j$.   Then we have
\begin{equation*}
\tag{2.10.3}
\begin{split}
(\ol\pi_{j,k}&)_!\a^*\CE  \\
&\simeq 
     \bigoplus_{0 \le k' \le m_{r-1}}\bigoplus_{\rho \in \CW_{\Bm(k'),\CE}\wg}
     \wt V_{\rho} \otimes \IC(\CX'_{\Bm(k')}, \CL_{\rho})[-2(m_{r-1}-k')]  + \CM_{j,k},  
  \end{split}  
\end{equation*}
where $\CM_{j,k}$ is a sum of various $\IC(\CX'_{\Bm'}, \CL_{\r})[-2i]$ 
for $\Bm' \in \CQ(\Bm;j,k)$ and $\r \in \CW_{\Bm', \CE}\wg$ with $i$ such that
$0 \le 2i < d_{\Bm} - d_{\Bm'}$.
\par
As in the proof of Proposition 1.7, one can define an action of $\wt\CW_{\CE}$ on 
$(\ol\pi_{j,k})_!\a^*\CE$. 
Then (2.10.2) and (2.10.3) can be proved by a similar argument as in the proof of 
Proposition 1.7. 
\par
Now apply (2.10.3) to the case where $j = 0, k = 0$. In this case, $\ol\pi_{j,k}$
coincides with $\ol\pi_{\Bm}$. (2.10.3) shows that any simple perverse sheaf 
$A[s]$ appearing in the semisimple complex $(\ol\pi_{\Bm})_!\a^*\CE$ has the property 
that $\supp A \cap \CY_{\Bm} \neq \emptyset$. 
$\CY_{\Bm}$ is open dense in $\CX_{\Bm}$, and the restriction of 
$(\ol\pi_{\Bm})_!\a^*\CE$ on $\CY_{\Bm}$ coincides with $(\ol\psi_{\Bm})_!\a_0^*\CE$.
Thus the theorem  follows from Proposition 1.7.   This complets the proof 
of Theorem 2.2.

\par\bigskip\bigskip
\section{A variant of Theorem 2.2}

\para{3.1.}
In this section, we assume that $\CX$ is of exotic type.  We keep the 
notation in Section 1 and Section 2. 
For $\Bm = (m_1, \dots, m_{r-1}, 0) \in \CQ_{n,r}^0$, 
put $\CW_{\Bm}\nat = S_{m_1} \times \cdots \times S_{m_{r-2}} \times 
W_{m_{r-1}}$, where $W_n$ is the Weyl group of type $C_n$, and let 
$\CW_{\Bm,\CE}\nat$ be the stabilizer of $\CE$ in $\CW_{\Bm}\nat$.  
(Note that $\CW_{\Bm}\nat$ is not a subgroup of $W_{n,r}$ if $r \ge 3$.)
Recall that $\Bm(k) = (m_1, \dots, m_{r-2}, k, k')$ with 
$k + k' = m_{r-1}$ for $\Bm \in \CQ^0_{n,r}$.  Hence 
$\CW_{\Bm(k)} \simeq S_{m_1} \times \cdots \times S_{m_{r-2}}
  \times S_k \times S_{k'}$. 
For $\r = \r_1 \boxtimes \cdots \boxtimes \r_r \in \CW_{\Bm(k),\CE}\wg$, 
we define an irreducible 
$\CW_{\Bm,\CE}\nat$-module $V_{\r}\nat$ by  
$V_{\r}\nat = \r_1\boxtimes\cdots\boxtimes \r_{r-2} \boxtimes \wt\r_{r-1}$, 
where $\wt\r_{r-1}$ is an irreducible $W_{m_{r-1}}$-module obtained from 
$\r_{r-1}\boxtimes \r_r \in (S_k \times S_{k'})\wg$ (apply 1.6 for the case 
$r = 2$).  
Recall the map $\pi^{(\Bm)}: \wt\CX_{\Bm} \to \CX_{\Bm}$ as in 1.2 and 
consider 
$\a|_{\wt\CX_{\Bm}} : \wt\CX_{\Bm} \to T^{\io\th}$, which we denote by the 
same symbol $\a$.
The following result is a variant of Theorem 2.2.

\begin{thm}  %%%  Theorem 3.2
For each $\Bm \in \CQ_{n,r}^0$, $\pi^{(\Bm)}_!\a^*\CE[d_{\Bm}]$ is a semisimple 
perverse sheaf on $\CX_{\Bm}$ equipped with $\CW_{\Bm,\CE}\nat$-action, and is 
decomposed as 
\begin{equation*}
\pi^{(\Bm)}_!\a^*\CE[d_{\Bm}] \simeq \bigoplus_{0 \le k \le m_{r-1}}
        \bigoplus_{\r \in \CW_{\Bm(k),\CE}\wg} V_{\r}\nat\otimes 
               \IC(\CX_{\Bm(k)},\CL_{\r})[d_{\Bm(k)}].
\end{equation*}

\end{thm}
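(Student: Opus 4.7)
My approach parallels the proof of Theorem 2.2 in Sections 2.3--2.10: first establish the decomposition on the regular semisimple locus $\CY_{\Bm}$ by an analog of Proposition 1.7, and then extend to $\CX_{\Bm}$ by the covering argument of 2.5--2.8 together with the stratification argument of 2.10. The key geometric difference from Theorem 2.2 is that, while $\pi\iv(\CX_{\Bm})$ stratifies as $\wt\CX_{\Bm}^+ = \coprod_{\BI \in \CI(\Bm)}\wt\CX_{\BI}$ by Lemma 2.4, the source $\wt\CX_{\Bm}$ of $\pi^{(\Bm)}$ corresponds to the single component $\wt\CX_{\BI(\Bm)}$ attached to the distinguished index $\BI(\Bm)\in\CI(\Bm)$.

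First I would prove the analog of Proposition 1.7 for $\psi^{(\Bm)}:\wt\CY_{\Bm}\to\CY_{\Bm}$. On the open stratum $\CY_{\Bm}^0$, the identification $\wt\CY_{\Bm}^0=\wt\CY_{\BI(\Bm)}$ of (1.3.2) isolates a single summand of (1.5.2); this removes the induction $\Ind_{\wt\CW_{\Bm,\CE}}^{\wt\CW_{\CE}}$ from (1.6.5) and leaves the multiplicity equal to $\r$ itself, with no $\BP_1$-factor, since $m_r=0$ for $\Bm\in\CQ_{n,r}^0$. On a substratum $\CY_{\Bm(k)}^0$ with $k<m_{r-1}$, the fiber of $\psi^{(\Bm)}$ over $(x,\Bv)$ parametrizes the $x$-stable maximal isotropic extensions of the canonical flag $W_{\bullet}(x,\Bv)$; for $x$ regular semisimple this reduces to $\BP_1^{m_{r-1}-k}$, analogously to (1.6.3). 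Combined with the $\CW_{\Bm(k),\CE}$-action produced by the Galois covering $\e_{\BI(\Bm(k))}$, the natural action on $H^{\bullet}(\BP_1^{m_{r-1}-k})$ assembles into the irreducible $W_{m_{r-1},\CE}$-module $\wt\r_{r-1}$, hence into the $\CW_{\Bm,\CE}\nat$-module $V_{\r}\nat$. The backward-induction formalism of (1.7.2) and (1.7.3) then transfers verbatim, with all error terms $\CN_{j,k}$ forced to vanish in the resulting semisimple perverse sheaf.

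Next I would extend from $\CY_{\Bm}$ to $\CX_{\Bm}^0$ by reusing the construction of 2.5--2.8. The principal $H_0$-bundle $q:\CK_{\Bm}\to\CX_{\Bm}^0$ and the smooth locally trivial fibration $\s:\CK_{\Bm}\to G_1^0\times{\CX'}^0_{\Bm'}$ remain valid without change. Writing $\CE\simeq\CE_1\boxtimes\CE_2$ under $T^{\io\th}\simeq T_1\times T_2^{\io\th}$ and applying induction on $r$ to the smaller exotic variety $\CX'$ of type $\Bm'=(m_2,\dots,m_r)$, together with the classical $GL$-decomposition (2.6.1), one obtains simple perverse sheaves $A_{\r}\simeq\IC(\CX_{\Bm}^0,\CL_{\r})[d_{\Bm}]$ as in Lemma 2.7 and the analog of Proposition 2.8 on $\CX_{\Bm}^0$. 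The stratification argument of 2.10 (analogs of (2.10.2) and (2.10.3)) then extends the decomposition to all of $\CX_{\Bm}$; the non-perverse-shift error terms $\CM_{j,k}$ are forced to vanish because the left-hand side is a semisimple perverse sheaf.

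The main obstacle is the direct construction of the $\CW_{\Bm,\CE}\nat$-equivariant structure, especially its $W_{m_{r-1},\CE}$-factor. Since $\CW_{\Bm}\nat$ contains the hyperoctahedral factor $W_{m_{r-1}}=S_{m_{r-1}}\ltimes(\BZ/2\BZ)^{m_{r-1}}$ and is not a subgroup of the ambient $\wt\CW=\CW\ltimes(\BZ/r\BZ)^n$ when $r\ge 3$ (as noted in 3.1), this action cannot be obtained by restricting the $\wt\CW_{\CE}$-action of Theorem 2.2. It must instead be produced by the $r=2$ prescription of 1.6 applied locally to each $\BP_1^{m_{r-1}-k}$-fiber over $\CY_{\Bm(k)}^0$, and then shown to glue compatibly across substrata and through the extension from $\CX_{\Bm}^0$ to $\CX_{\Bm}$, in order to yield a well-defined $\CW_{\Bm,\CE}\nat$-action on the full pushforward $\pi^{(\Bm)}_!\a^*\CE[d_{\Bm}]$.
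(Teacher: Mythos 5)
Your proposal is correct and takes essentially the same route as the paper's Section 3.3--3.8, which the paper itself describes as proceeding "in a similar way as in the proof of Theorem 2.2": establish the analogue of Proposition 1.7 for $\psi^{(\Bm)}$ (the paper's Proposition 3.5, via (3.3.1)--(3.3.5) and (3.4.1)--(3.4.2)), then extend to $\CX_{\Bm}^0$ and $\CX_{\Bm}$ via the covering and stratification arguments of 2.5--2.10 (the paper's Proposition 3.7 and 3.8). One minor imprecision: over $\CY_{\Bm(k)}^0$ the fibre of $\psi^{(\Bm)}$ is not a single $\BP_1^{m_{r-1}-k}$ but a disjoint union of copies indexed by $\CI^{\bullet}(\Bm(k))$ together with the Galois covering data (the paper's (3.3.1)), and it is exactly this disjoint union --- packaged as the induction $\Ind_{\wt\CW^{\bullet}_{\Bm(k),\CE}}^{\wt\CW^{\bullet}_{\CE}}$ in (3.3.5) --- that supplies the $S_{m_{r-1}}$-factor of $V_{\r}\nat$; your later discussion of $\wt\r_{r-1}$ does account for this, but the geometric description of the fibre should reflect the multi-component structure.
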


\para{3.3.}
The theorem  can be proved in a similar way as in the proof of Theorem 2.2.
We will give an outline of the proof below.
We follow the notation in 1.3.
We fix $\Bm \in \CQ_{n,r}^0$, and put $\pi^{(\Bm)} = \pi^{\bullet}$. 
We also consider the map 
$\psi^{(\Bm)}: \wt\CY_{\Bm} \to \CY_{\Bm}$, which we denote by $\psi^{\bullet}$.
For each $\Bm' \le \Bm$ ($\Bm' \in \CQ_{n,r}$), put 
$\wt\CY^{\dag}_{\Bm'} = (\psi^{\bullet})\iv(\CY_{\Bm'}^0)$.
For each $\BI \in \CI(\Bm')$, the variety $\wt\CY_{\BI} \subset \wt\CY^{\dag}_{\Bm'}$ 
is defined as in 1.3.
Put $\BI^{\bullet} = \BI(\Bm) \in \CI(\Bm)$, namely, 
$\BI^{\bullet} = (I_1^{\circ}, \dots, I_r^{\circ})$ with 
$I_i^{\circ} = [p_{i-1}+1, p_i]$. 
In particular, $I_r^{\circ} = \emptyset$.
Put $\CW^{\bullet}= S_{m_1} \times \cdots \times S_{m_{r-1}} (= \CW_{\Bm})$, 
and we denote by 
$\CW^{\bullet}_{\Bm'}$ the subgroup of $\CW^{\bullet}$ which is the stabilizer 
of $\BI(\Bm')$. 
Put 
\begin{equation*}
\CI^{\bullet}(\Bm') = \{ \BI \in \CI(\Bm') \mid I_{\le i} \subset I_{\le i}^{\circ} 
 \  (1 \le i \le r)\},
\end{equation*}
where $I_{\le i}$ is defined similarly to $I_{< i}$ in 1.3.
Then as in (1.3.2), $\CW^{\bullet}$ acts naturally on $\wt\CY^{\dag}_{\Bm'}$, and 
\begin{equation*}
\tag{3.3.1}
\wt\CY_{\Bm'}^{\dag} = \coprod_{\BI \in \CI^{\bullet}(\Bm')}\wt\CY_{\BI}
                        = \coprod_{w \in \CW^{\bullet}/\CW_{\Bm'}^{\bullet}}w(\wt\CY^0_{\Bm'}),
\end{equation*}
where $\CY^0_{\Bm'} = \CY_{\BI(\Bm')}$ is as in 1.3.
For a tame local system $\CE$ on $T^{\io\th}$, we denote by $\CW^{\bullet}_{\CE}$
(resp. $\CW^{\bullet}_{\Bm',\CE}$) the stabilizer of $\CE$ in $\CW^{\bullet}$ 
(resp. in $\CW^{\bullet}_{\Bm'}$). 
Put $\wt\CW^{\bullet} = \CW^{\bullet} \ltimes (\BZ/2\BZ)^{m_{r-1}}$, which coincides with 
$\CW_{\Bm}\nat$.
We define a subgroup $\wt\CW^{\bullet}_{\CE}$ 
of $\wt\CW^{\bullet}$ by 
$\wt\CW^{\bullet}_{\CE} = \CW^{\bullet}_{\CE} \ltimes (\BZ/2\BZ)^{m_{r-1}}$. 
Thus $\wt\CW^{\bullet}_{\CE} = \CW\nat_{\Bm, \CE}$ in the notation of 3.1.
Let $\psi^{\bullet}_{\Bm'}$ be the restriction of $\psi^{\bullet}$ on $\wt\CY^{\dag}_{\Bm'}$.
As an analogue of (1.5.2), we have
\begin{equation*}
\tag{3.3.2}
(\psi^{\bullet}_{\Bm'})_!\a_0^*\CE|_{\wt\CY^{\dag}_{\Bm'}}
        \simeq \bigoplus_{\BI \in \CI^{\bullet}(\Bm')}(\psi_{\BI})_!\a_0^*\CE|_{\wt\CY_{\BI}}.
\end{equation*}
Put $\Bm' = (m_1', \dots, m_r')$.
The action of $(\BZ/2\BZ)^{m_r'}$ on $H^{\bullet}(\BP_1^{m_r'})$ 
is defined as in 1.6 by considering the case 
where $r = 2$.  
By a similar argument as in 1.5 and 1.6, we see that $(\psi^{\bullet}_{\Bm'})_!\a_0^*\CE$
is equipped with $(\BZ/2\BZ)^{m_r'} \times \CW^{\bullet}_{\CE}$-action, and is decomposed as 
\begin{equation*}
\tag{3.3.3}
(\psi^{\bullet}_{\Bm'})_!\a_0^*\CE|_{\wt\CY^{\dag}_{\Bm'}} \simeq 
          H^{\bullet}(\BP_1^{m_r'})\otimes 
      \bigoplus_{\r \in (\CW^{\bullet}_{\Bm',\CE})\wg}
     \Ind_{\CW^{\bullet}_{\Bm',\CE}}^{\CW^{\bullet}_{\CE}}\r \otimes 
          \CL_{\r},
\end{equation*}
where $\CL_{\r}$ is a simple local system on $\CY^0_{\Bm'}$ 
obtained from the Galois covering $\wh \CY_{\BI(\Bm')} \to \CY_{\Bm'}^0$ 
as in (1.5.3). 
\par
As in (1.6.6), (3.3.3) can be rewritten in the following form;
\begin{equation*}
\tag{3.3.4}
(\psi^{\bullet}_{\Bm'})_!\a^*_0\CE|_{\wt\CY^{\dag}_{\Bm'}} 
      \simeq \biggl(\bigoplus_{\r \in \CW_{\Bm(k), \CE}\wg}
            V_{\r}\nat \otimes \CL_{\r}\biggr)[-2m_r']  + \CN_{\Bm'} 
\end{equation*}
if $\Bm' = \Bm(k)$ for some $k$, and 
$(\psi^{\bullet}_{\Bm'})_!\a^*\CE|_{\wt\CY^{\dag}_{\Bm'}} \simeq \CN_{\Bm'}$ 
otherwise, where $\CN_{\Bm'}$ is a sum of various $\CL_{\r}[-2i]$ for 
$\r \in (\CW^{\bullet}_{\Bm', \CE})\wg$ with $2i < d_{\Bm} - d_{\Bm'}$.
\par
In fact, put $\Bp(\Bm') = (p_1', \dots, p_r')$.  Since $\Bm \ge \Bm'$, 
$p_i \ge p_i'$ for each $i$.  Moreover $p_{r-1} - p_{r-1}' = m_r'$ 
since $p_{r-1} = n$.
Then by Lemma 1.4 we have
\begin{align*}
d_{\Bm} - d_{\Bm'} &= \sum_{i=1}^{r-1}(p_i - p_i') + m_r' \\
                   &= \sum_{i=1}^{r-2}(p_i - p_i') + 2m_r' \\
                   &\ge 2m_r',
\end{align*}
and the equality holds only when $p_i = p_i'$ for $i = 1, \dots, r-2$, 
namely when $\Bm' = \Bm(k)$ for some $k$. 
By (3.3.3), $K = (\psi^{\bullet}_{\Bm'})_!\a_0^*\CE$ is a semisimple 
complex and each direct summand is of the form $\CL_{\r}[-2i]$ with 
$i \le m_r'$.  Hence $K \simeq \CN_{\Bm'}$ if $\Bm'$ is not of the form 
$\Bm(k)$. 
Now assume that $\Bm' = \Bm(k)$.
In this case, $\CW^{\bullet}_{\Bm', \CE} = \CW_{\Bm(k),\CE}$, and 
it follows from (3.3.3) that 
\begin{equation*}
\tag{3.3.5}
K \simeq \bigoplus_{\r \in \CW_{\Bm(k), \CE}\wg}
\Ind_{\wt\CW^{\bullet}_{\Bm(k), \CE}}^{\wt\CW^{\bullet}_{\CE}}
          (H^{\bullet}(\BP_1^{m_r'})\otimes \r)\otimes \CL_{\r},
\end{equation*}
where $\wt\CW^{\bullet}_{\Bm(k), \CE} = 
   \CW_{\Bm(k),\CE}^{\bullet} \ltimes (\BZ/2\BZ)^{m_{r-1}}$,
and $H^{\bullet}(\BP_1^{m_r'})\otimes \r$ is regarded as an $\wt\CW^{\bullet}_{\Bm(k),\CE}$-module
by the trivial action of $(\BZ/2\BZ)^k$, and by the action of $(\BZ/2\BZ)^{m_r'}$ through 
$H^{\bullet}(\BP_1^{m_r'})$ (here $k + m_r' = m_{r-1}$).  
The direct summand $\CL_{\r}[-2i]$ of $K$ satisfies the relation 
$d_{\Bm} - d_{\Bm'} = 2i$ only when $i = m'_{r-1}$. 
Hence the first assertion of (3.3.4) follows from (3.3.5). 

\para{3.4.}
For $1 \le j \le r-1$ and $0 \le k \le m_j$, 
$M^{(j,k)}, \ol M^{(j,k)}, \CY_{j,k}^0, \CY_{j,k}$ are defined as in 
the proof of Proposition 1.7.
Put $\wt\CY^{\dag}_{j,k} = (\psi^{\bullet})\iv(\CY^0_{j,k})$
and $\ol{\wt\CY^{\dag}_{j,k}} = (\psi^{\bullet})\iv(\CY_{j,k})$.
Let $\psi^{\bullet}_{j,k} : \wt\CY^{\dag}_{j,k} \to \CY^0_{j,k}$ 
be the restriction of $\psi^{\bullet}$ on $\wt\CY^{\dag}_{j,k}$, 
and $\ol\psi^{\bullet}_{j,k} : \ol{\wt\CY^{\dag}_{j,k}} \to \CY_{j,k}$
the restriction of $\psi^{\bullet}$ on $\ol{\wt\CY^{\dag}_{j,k}}$.
By using a similar argument as in the proof of (1.7.2) and (1.7.3), 
we can show the following formulas; first assume that $j = r-1$ and 
$0 \le k \le m_{r-1}$. 
\begin{equation*}
\tag{3.4.1}
\begin{split}
(&\ol\psi^{\bullet}_{r-1,k})_!\a_0^*\CE \\
    &\simeq \bigoplus_{0 \le k' \le k}\bigoplus_{\r \in \CW_{\Bm(k'), \CE}\wg}
        V_{\r}\nat\otimes \IC(\CY_{r-1, k'}, \CL_{\r})[-2(m_{r-1} - k')]
           + \CN_{r-1, k},
\end{split}
\end{equation*}   
where $\CN_{r-1, k}$ is a sum of various $\IC(\CY_{r-1,k'}, \CL_{\r})[-2i]$
for $0 \le k' \le k$ and $\r \in \CW_{\Bm(k'),\CE}\wg$ with 
$i < m_{r-1} - k'$.  
Next assume that $0 \le j < r-1$ and that $0 \le k \le m_j$. Then we have
\begin{equation*}
\tag{3.4.2}
\begin{split}
(&\ol\psi^{\bullet}_{j,k})_*\a_0^*\CE \\
  &\simeq 
    \bigoplus_{0 \le k' \le m_{r-1}}\bigoplus_{\rho \in \CW_{\Bm(k'),\CE}\wg}
         V\nat_{\rho} \otimes \IC(\CY'_{\Bm(k')}, \CL_{\rho})[-2(m_{r-1} - k')]
             + \CN_{j,k},
\end{split}
\end{equation*}
where $\CN_{j,k}$ is a sum of various $\IC(\CY'_{\Bm'}, \CL_{\r})[-2i]$
for $\Bm' \in \CQ(\Bm;j,k)$ 
and $\r \in (\CW^{\bullet}_{\Bm',\CE})\wg$ with $i$ such that 
$2i < d_{\Bm} - d_{\Bm'}$.
\par
Note that in the proof of (3.4.1), the role of the irreducible $\wt\CW_{\CE}$-module
$\wt V_{\r}$ is replaced by the irreducible $\CW_{\Bm,\CE}\nat$-module $V_{\r}\nat$.
\par
By a similar argument as in the proof of Lemma 1.4 (iv), one can show that 
$\psi^{(\Bm)}$ is semismall for $\Bm \in \CQ_{n,r}^0$. 
Then, as in the proof of Proposition 1.7 (see the paragraph after (1.7.3)), 
we obtain the following proposition from (3.4.2).  

\begin{prop}  %%%%  Prop. 3.5 
For each $\Bm \in \CQ_{n,r}^0$, $\psi^{(\Bm)}_!\a_0^*\CE[d_{\Bm}]$ is a semisimple 
perverse sheaf on $\CY_{\Bm}$ equipped with $\CW\nat_{\Bm ,\CE}$-action, and is decomposed as 
\begin{equation*}
\psi^{(\Bm)}_!\a_0^*\CE[d_{\Bm}] \simeq \bigoplus_{0 \le k \le m_{r-1}}
                 \bigoplus_{\r \in \CW_{\Bm(k), \CE}\wg}
                   V_{\r}\nat\otimes \IC(\CY_{\Bm(k)}, \CL_{\r})[d_{\Bm(k)}].
\end{equation*} 
\end{prop}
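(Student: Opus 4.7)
The plan is to deduce Proposition 3.5 from formula (3.4.2) in exactly the manner that Proposition 1.7 is extracted from (1.7.3). The substantive work lies in establishing (3.4.1) and (3.4.2) themselves, which proceed by the same double induction on $(j,k)$ used in the proof of Proposition 1.7, with base case supplied by (3.3.4) in place of (1.6.6); once those are in hand, the proposition follows by a short argument.

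First I would verify that $\psi^{(\Bm)}$ is semismall for $\Bm \in \CQ_{n,r}^0$. Using the decomposition (3.3.1) together with the factorization $\psi_{\BI} = \e_{\BI}\circ\xi_{\BI}$, the fibre $(\psi^{(\Bm)})\iv(x,\Bv)$ over a point of $\CY^0_{\Bm'}$ has dimension $m'_r$, while the codimension computation appearing in the justification of (3.3.4) (using $m_r = 0$, hence $p_{r-1} = n$) yields $d_{\Bm} - d_{\Bm'} \ge 2m'_r$. Thus $\psi^{(\Bm)}_!\a_0^*\CE[d_{\Bm}]$ is a semisimple perverse sheaf, and it carries the natural $\CW\nat_{\Bm,\CE}$-action constructed in 3.3.

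Next I would specialize (3.4.2) to the case $j = 0, k = 0$. Here $\ol\psi^{\bullet}_{0,0} = \psi^{(\Bm)}$, the ambient space $\CY_{0,0}$ equals $\CY_{\Bm}$, and the closures $\CY'_{\Bm(k')}$ coincide with $\CY_{\Bm(k')}$. The resulting formula presents $\psi^{(\Bm)}_!\a_0^*\CE$ as the asserted direct sum plus an error term $\CN_{0,0}$ whose summands $\IC(\CY_{\Bm'},\CL_{\r})[-2i]$ satisfy $2i < d_{\Bm} - d_{\Bm'}$. After shifting by $[d_{\Bm}]$ each such summand has cohomological shift strictly greater than $d_{\Bm'}$ and so fails to be a perverse sheaf; semisimplicity of the left-hand side then forces $\CN_{0,0} = 0$. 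Finally, Lemma 1.4 (iii) applied with $m_r = 0$ gives $d_{\Bm} - d_{\Bm(k')} = 2(m_{r-1} - k')$, converting the shifts $[d_{\Bm} - 2(m_{r-1}-k')]$ into $[d_{\Bm(k')}]$ and delivering the stated decomposition.

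The main obstacle is really the verification of (3.4.2) rather than this final extraction. The induction is parallel to (1.7.5) in the proof of Proposition 1.7, but the genuinely new point is the identification of the multiplicity spaces as irreducible $\CW\nat_{\Bm,\CE}$-modules $V\nat_{\r}$ rather than $\wt\CW_{\CE}$-modules $\wt V_{\r}$: via the construction in 3.1, the $W_{m_{r-1}}$-factor $\wt\r_{r-1}$ is produced by applying the $r=2$ case of the analysis in 1.6 to $\r_{r-1}\boxtimes\r_r \in (S_k \times S_{k'})\wg$, so only $(\BZ/2\BZ)^{m_{r-1}}$ acts through $H^{\bullet}(\BP_1^{m'_r})$, in contrast to the full $(\BZ/r\BZ)^n$-action appearing in Proposition 1.7.
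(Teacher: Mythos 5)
Your proposal follows the paper's own argument: establish semismallness of $\psi^{(\Bm)}$, use (3.4.1)--(3.4.2) (proved by the same double induction as (1.7.2)--(1.7.3) with base case (3.3.4)), specialize to $j=0,k=0$, kill the error term $\CN_{0,0}$ via perversity, and convert shifts using Lemma 1.4~(iii); you also correctly identify the one genuine novelty, the replacement of $\wt V_{\r}$ by $V_{\r}\nat$ as the multiplicity module. This matches the paper's proof sketch in 3.4, and your reconstruction is accurate.
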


\para{3.6.}
We follow the notation in 2.3. 
For $\Bm'\in \CQ_{n,r}$ such that $\Bm' \le \Bm$, 
put $\wt\CX^{\dag}_{\Bm'} = (\pi^{\bullet})\iv(\CX_{\Bm'}^0)$.
Then $\wt\CY_{\Bm'}^{\dag}$ is an open dense subset of $\wt\CX^{\dag}_{\Bm'}$. 
For each $\BI \in \CI(\Bm')$, the subvariety $\wt\CX_{\BI}$ of $\wt\CX_{\Bm'}^+$ 
is defined as in 2.3.   Then as in Lemma 2.4, we have 
\begin{equation*}
\wt\CX^{\dag}_{\Bm'} = \coprod_{\BI \in \CI^{\bullet}(\Bm')}\wt\CX_{\BI},
\end{equation*}
where $\wt\CX_{\BI}$ is an irreducible component of $\wt\CX^{\dag}_{\Bm'}$. 
\par
Let $\pi_{\Bm'}^{\bullet} : \wt\CX^{\dag}_{\Bm'} \to \CX^0_{\Bm'}$ be the restriction 
of $\pi^{(\Bm)}$ on $\wt\CX^{\dag}_{\Bm'}$. 
The following result is an analogue of Proposition 2.8, and is proved in a similar way.

\begin{prop}  %%%%  Prop. 3.7.
Assume that $\Bm' \le \Bm$.  Then
$(\pi^{\bullet}_{\Bm'})_!\a^*\CE$ is decomposed as 
\begin{equation*}
(\pi_{\Bm'}^{\bullet})_!\a^*\CE \simeq H^{\bullet}(\BP_1^{m'_r}) \otimes 
          \bigoplus_{\r \in \CW^{\bullet \wg}_{\Bm', \CE}}
    \Ind_{\CW^{\bullet}_{\Bm',\CE}}^{\CW^{\bullet}_{\CE}}\r \otimes \IC(\CX_{\Bm'}^0, \CL_{\r}),
\end{equation*}
where $\Ind_{\CW^{\bullet}_{\Bm',\CE}}^{\CW^{\bullet}_{\CE}}\r$ is regarded as a vector space
ignoring the $\CW^{\bullet}_{\CE}$-module structure. 
\end{prop}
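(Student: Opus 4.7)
The plan is to adapt the proof of Proposition 2.8 to the variant setting, by induction on $r$. For the base case $r = 2$ one has $\CW^{\bullet} = \CW$, $\CW^{\bullet}_{\Bm',\CE} = \CW_{\Bm',\CE}$, and $\CI^{\bullet}(\Bm') = \CI(\Bm')$, so $\pi^{\bullet}_{\Bm'}$ coincides with the map $\pi_{\Bm'}$ from Section 2. Since Proposition 2.8 is a statement at the level of vector space multiplicities, and since $\dim \Ind_{\CW^{\bullet}_{\Bm',\CE}}^{\CW^{\bullet}_{\CE}}\r = \dim \wt V_{\r}$ (the character twists in $\wt V_{\r}$ do not change the dimension), the case $r = 2$ follows at once from Proposition 2.8 (equivalently Proposition 4.8 of [SS1]).

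For the inductive step, fix $\Bm' \le \Bm$ and $\BI \in \CI^{\bullet}(\Bm')$, and let $\pi_{\BI}$ denote the restriction of $\pi^{\bullet}_{\Bm'}$ to the irreducible component $\wt\CX_{\BI}$ of $\wt\CX^{\dag}_{\Bm'}$ provided by 3.6. Set $V_0 = M_{m_1}$, $G_1 = GL(V_0)$, $G_2$ the $\th$-stable Levi factor acting on $\ol V_0 = V_0^{\perp}/V_0$; write $\Bm^{(2)} = (m_2, \dots, m_r)$ for the tail of $\Bm$ and $\Bm'' = (m_2', \dots, m_r')$ for the tail of $\Bm'$, with $\BI'$ the tail of $\BI$ (which, by construction, lies in $\CI^{\bullet}(\Bm'')$ for the smaller symmetric space $G_2^{\io\th} \times \ol V_0^{r-2}$ at level $r-1$, with ambient parameter $\Bm^{(2)}$). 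Following 2.5, construct the analogue of diagram (2.8.1):
\begin{equation*}
\begin{CD}
\wt G^0_1 \times \wt\CX'^{\dag}_{\Bm''}  @<<<  \wt\CZ_{\BI}  @>>>  \wt\CX_{\BI}  \\
@VVV    @VVV    @VV\pi_{\BI} V   \\
G^0_1 \times \CX'^0_{\Bm''}   @<<<  \CK_{\Bm'}  @>>>  \CX^0_{\Bm'},
\end{CD}
\end{equation*}
in which both squares are cartesian. Using (2.6.1) to decompose $\vf^1_!(\a^1)^*\CE_1$ and the induction hypothesis to decompose $(\pi^{\bullet}_{\Bm''})_!(\a')^*\CE_2$, together with the fact that $\wt\CX'_{\BI'}$ is a connected component of $\wt\CX'^{\dag}_{\Bm''}$, the cartesian squares imply that each simple summand of $(\pi_{\BI})_!\a^*\CE$ is of the form $A_{\r}$ from 2.6, hence $\IC(\CX^0_{\Bm'},\CL_{\r})[d_{\Bm'}]$ by Lemma 2.7, for some $\r \in (\CW^{\bullet}_{\Bm',\CE})\wg$.

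Summing over all $\BI \in \CI^{\bullet}(\Bm')$ we conclude that $(\pi^{\bullet}_{\Bm'})_!\a^*\CE$ is a semisimple complex whose simple summands all have full support $\CX^0_{\Bm'}$, so the decomposition is determined by its restriction to the open dense stratum $\CY^0_{\Bm'}$. This restriction equals $(\psi^{\bullet}_{\Bm'})_!\a_0^*\CE$, and applying (3.3.3) identifies the multiplicity space of each $\IC(\CX^0_{\Bm'},\CL_{\r})$ as $H^{\bullet}(\BP_1^{m_r'}) \otimes \Ind_{\CW^{\bullet}_{\Bm',\CE}}^{\CW^{\bullet}_{\CE}}\r$, yielding the decomposition. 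The main technical point is checking the factorization of the $\dag$-condition: the flag constraint on $\wt\CX_{\Bm'}$ inherited from the ambient $\Bm$ must split cleanly into the unconstrained flag on $\wt G^0_1$ and a $\dag$-condition (with ambient parameter $\Bm^{(2)}$) on the smaller $\wt\CX'_{\Bm''}$, and one must verify that the restriction $\BI \mapsto \BI'$ respects the $\CI^{\bullet}$ condition, so that the induction hypothesis at level $r - 1$ applies to the pair $(\Bm'', \BI')$ with the correct ambient.
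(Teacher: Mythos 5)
Your overall plan (descend through $G_1 \times G_2$ via the cartesian diagrams of 2.5--2.8, then read off the multiplicities on the dense regular stratum via (3.3.3)) is the right one, and the base case $r=2$ is handled correctly. But the inductive step has a dimension/ambient bookkeeping error that makes your cartesian diagram not parse. For $z = (x,\Bv) \in \CX^0_{\Bm'}$ the subspace $W_z = W_1(x,\Bv)$ has dimension $p_1' = m_1'$, not $m_1$; so you must take $V_0 = M_{m_1'}$. With $V_0 = M_{m_1}$ the isomorphism $\f_1 : W_z \isom V_0$ cannot exist once $m_1' < m_1$, and the space $\CK_{\Bm'}$ you are using is empty. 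Consequently $\ol V_0 = V_0^{\perp}/V_0$ has dimension $2(n-m_1')$, and the ambient parameter on the $G_2$-side cannot be $\Bm^{(2)} = (m_2, \dots, m_r)$: its size is $n - m_1 \ne n - m_1'$, so it is not even a composition of the correct total.

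The ambient constraint $g\iv v_i \in M_{p_i}$ (with $\Bp$ from the fixed $\Bm$), pushed down to $\ol V_0 = W_z^{\perp}/W_z$, places $\ol v_i$ in a subspace of dimension $p_i - m_1'$. So the correct level-$(r-1)$ ambient is the $(r-1)$-tuple $\Bn$ with $\Bp(\Bn) = (p_2 - m_1', p_3 - m_1', \dots, p_r - m_1')$, that is $\Bn = (m_1 + m_2 - m_1', m_3, \dots, m_r) \in \CQ^0_{\,n-m_1',\,r-1}$, and one checks that $\Bm'' \le \Bn$. The essential point is that $\Bn$ depends on the stratum parameter $\Bm'$ through $m_1'$, not only on $\Bm$: it reduces to $\Bm^{(2)}$ exactly when $m_1' = m_1$, and not otherwise, so the induction hypothesis has to be formulated with this $\Bm'$-dependent ambient. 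With this correction the reindexed $\BI \mapsto \BI'$ does land in $\CI^{\bullet}(\Bm'')$ for the ambient $\Bn$, the cartesian squares close up, and the remainder of your argument (identification of the simple summands as $\IC(\CX^0_{\Bm'},\CL_{\r})$ and multiplicity extraction from (3.3.3)) goes through.
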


\para{3.8.}
By making use of Proposition 3.5 and 3.7, the theorem can be proved in a similar way 
as in 2.9, 2.10. 

\par\bigskip\bigskip

\section{Intersetion cohomology on $G^{\io\th} \times V^{r-1}$ (enhanced case)}

\para{4.1.}
In this section we assume that $\CX$ is of enhanced type. 
We fix $\Bm \in \CQ_{n,r}$ (note that we don't assume $\Bm \in \CQ_{n,r}^0$), 
and consider the map 
$\pi^{(\Bm)} : \wt\CX_{\Bm} \to \CX_{\Bm}$.  Here in order to emphasize 
a similarlty with the exotic case, we follow the notation in Section 1.  
But of course a simpler expression is possible for the enhanced case. 
For example, if we write $G = G_0 \times G_0$ and 
$B = B_0 \times B_0$ for a Borel subgroup 
$B_0$ of $G_0$, $\wt\CX_{\Bm}, \CX_{\Bm}$ are given by 

\begin{align*}
\wt\CX_{\Bm} &= \{ (x, \Bv, gB_0) \in G_0 \times V^{r-1} \times G_0/B_0
                 \mid g\iv xg \in B_0, g\iv \Bv \in \prod_{i=1}^{r-1}M_{p_i} \} \\
\CX_{\Bm} &=  \bigcup_{g \in G_0}g(B_0 \times \prod_{i=1}^{r-1}M_{p_i}).
\end{align*} 
The map $\psi^{(\Bm)} : \wt\CY_{\Bm} \to \CY_{\Bm}$ is defined as in 1.2.
We put $\pi^{(\Bm)} = \pi^{\bullet}$ and $\psi^{(\Bm)} = \psi^{\bullet}$.
The subset $\CY_{\Bm'}^0$ is defined for each $\Bm'\in \CQ_{n,r}$ as in 1.3.
For each $\Bm' \le \Bm$, put $\wt\CY^{\dag}_{\Bm'} = (\psi^{\bullet})\iv(\CY^0_{\Bm'})$.
For each $\BI \in \CI(\Bm')$, the subvariety $\wt\CY_{\BI}$ of $\wt\CY_{\Bm'}^{\dag}$ 
and the map $\psi_{\BI}$ are 
defined as in 1.3. 
Note that in the enhanced case, if we write $T = T_0 \times T_0$, then 
$Z_H(T^{\io\th}) = Z_{G_0}(T_0) = T_0$.  Hence $B^{\th} \cap Z_H(T^{\io\th}) = T_0$, and 
$T^{\io\th}\reg$ is the set of regular semisimple elements in $T_0$.  
Hence $\wt\CY_{\BI}$ is written as  
\begin{equation*}
\wt\CY_{\BI} \simeq G_0 \times^{T_0}((T_0)\reg \times M_{\BI}),  
\end{equation*}
where 
$M_{\BI}$ is defined as in 1.3. 
As in 3.3, we define $\CW^{\bullet} = S_{m_1} \times \cdots \times S_{m_r}$ 
$(= \CW_{\Bm})$, and its subgroup $\CW_{\Bm'}^{\bullet}$.  For each $\Bm' \le \Bm$, 
we define $\CI^{\bullet}(\Bm')$ as in 3.3.   
Then a simlar formula as (3.3.1) holds for $\wt\CY_{\Bm'}^{\dag}$. 
Let $\CE$ be a tame local system on $T^{\io\th}$, and we denote by 
$\CW^{\bullet}_{\CE}$ (resp. $\CW^{\bullet}_{\Bm', \CE}$) the stabilizer 
of $\CE$ in $\CW^{\bullet}$ (resp. in $\CW^{\bullet}_{\Bm'}$). 
As in (1.5.2), we have a similar formula as (3.3.2).
Note that in the enhanced case, one can check that $\psi_{\BI}$ is a finite Galois covering with group 
$\CW^{\bullet}_{\BI}$ ( the stbilizer of $\BI$ in $\CW^{\bullet}$).
It follws from (3.3.1) and (3.3.2) (corresponding formulas for the enhanced case)
we have

\begin{equation*}
\tag{4.1.1}
(\psi^{\bullet}_{\Bm'})_!\a_0^*\CE|_{\wt\CY^{\dag}_{\Bm'}}
    \simeq \bigoplus_{\r \in (\CW^{\bullet}_{\Bm', \CE})\wg}
       \bigl(\Ind_{\CW^{\bullet}_{\Bm',\CE}}^{\CW^{\bullet}_{\CE}}\r \bigr)\otimes \CL_{\r}
\end{equation*}

Here we note that

\begin{lem}  %%%%  Lemma 4.2.
\begin{enumerate}
\item
$\CY_{\Bm}$ is open dense in $\CX_{\Bm}$ and $\wt\CY_{\Bm}$ is open dense in 
$\wt\CX_{\Bm}$.
\item
$\dim \CX_{\Bm} = \dim \wt\CX_{\Bm} = n^2 + \sum_{i=1}^r(r-i)m_i$.
\item
For any $(x, \Bv) \in \CY_{\Bm}$, $(\psi^{(\Bm)})\iv(x, \Bv)$ is a finite set.
\end{enumerate}
\end{lem}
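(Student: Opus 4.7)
The plan is to establish the three assertions in parallel with Lemma 1.4, exploiting the drastic simplification that in the enhanced setting $Z_H(T^{\io\th}) = T_0$ coincides with $B^{\th} \cap Z_H(T^{\io\th})$. Consequently the $\BP_1^{m_r}$-fibration $\xi_{\BI}$ of the exotic setting collapses to the identity, the map $\psi_{\BI}$ reduces to the finite Galois covering $\e_{\BI}$, and the ``dimension defect'' $-m_r$ that appeared in Lemma 1.4 (iii) disappears.

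For (i), I would copy the argument of Lemma 1.4 (i) verbatim. The regular locus $B^{\io\th}\reg$ is open dense in $B^{\io\th}$ (it is the regular semisimple locus of the Borel $B_0$), so $B^{\io\th}\reg \times \prod_i M_{p_i}$ is open dense in $B^{\io\th} \times \prod_i M_{p_i}$, and hence $\wt\CY_{\Bm}$ is open dense in $\wt\CX_{\Bm}$. Because $\pi^{(\Bm)}$ is proper (hence closed) and $(\pi^{(\Bm)})\iv(\CY_{\Bm}) = \wt\CY_{\Bm}$, it follows that $\CY_{\Bm}$ is open dense in $\CX_{\Bm} = \pi^{(\Bm)}(\wt\CX_{\Bm})$.

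For (ii), the formula $\dim \wt\CX_{\Bm} = n^2 + \sum_i(r-i)m_i$ is exactly the enhanced case of (1.2.1), so by (i) it suffices to prove $\dim \CY_{\Bm} = \dim \wt\CY_{\Bm}$. In the enhanced setting the parabolic $Z_H(T^{\io\th})_{\BI}$ equals $T_0$ for every $\BI \in \CI(\Bm)$, so $\xi_{\BI}$ is the identity and $\psi_{\BI(\Bm)} \colon \wt\CY_{\BI(\Bm)} \to \CY^0_{\Bm}$ is a finite Galois covering. Therefore $\dim \wt\CY_{\BI(\Bm)} = \dim \CY^0_{\Bm}$, and since $\wt\CY_{\BI(\Bm)} = \wt\CY^0_{\Bm}$ is open dense in the irreducible variety $\wt\CY_{\Bm}$ while $\CY^0_{\Bm}$ is open dense in $\CY_{\Bm}$, the chain of equalities $\dim \CY_{\Bm} = \dim \wt\CY_{\Bm} = \dim \wt\CX_{\Bm}$ follows.

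For (iii), fix $(x,\Bv) \in \CY_{\Bm}$ and any $gB^{\th}$ in the fibre $(\psi^{(\Bm)})\iv(x,\Bv)$. The relation $g\iv x g \in B^{\io\th}\reg$ forces the Borel $gBg\iv$ to contain the regular semisimple element $x$; but the set of such Borels, namely the $x$-fixed-point set $(H/B^{\th})^x$, is finite in the enhanced case (of cardinality $n!$). The additional constraint $g\iv \Bv \in \prod_i M_{p_i}$ merely cuts this finite set further, so the fibre is finite. No step presents a real obstacle: the content of the lemma is that, in the enhanced case, the structural simplification of $Z_H(T^{\io\th})$ trivialises the $\BP_1$-fibre that complicated the exotic analogue.
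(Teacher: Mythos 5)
Your proposal is correct and follows essentially the same route as the paper's own proof of Lemma 4.2: open density of $\wt\CY_{\Bm}$ is pushed down via the proper map $\pi^{(\Bm)}$ for (i); for (ii) the collapse of $\xi_{\BI}$ (since $Z_H(T^{\io\th})_{\BI}=T_0=B^{\th}\cap Z_H(T^{\io\th})$) turns $\psi_{\BI(\Bm)}$ into a finite Galois covering, giving $\dim\wt\CY_{\Bm}=\dim\CY_{\Bm}$ and then (1.2.1) finishes; and for (iii) both arguments amount to the same finiteness fact, yours phrased directly as the $x$-fixed locus $(H/B^{\th})^x$ being finite for regular semisimple $x$, the paper's phrased as $\psi_{\BI}$ being a finite Galois covering for each $\BI\in\CI^{\bullet}(\Bm')$.
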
 

\begin{proof}
$\wt\CY_{\Bm}$ is an open dense subset of $\wt\CX_{\Bm}$.  Since
$\wt\CY_{\Bm} = (\pi^{(\Bm)})\iv(\CY_{\Bm})$ and $\pi^{(\Bm)}$ is proper, 
$\CY_{\Bm}$ is an open dense subset of $\CX_{\Bm}$.  Hence (i) holds.
$\wt\CY^{\dag}_{\Bm}$ is an open dense subset of $\wt\CY_{\Bm}$, and 
$\CY_{\Bm}^0$ is an open dense subset of $\CY_{\Bm}$.  Since $\psi_{\BI}$ 
is a finite Galois covering for $\BI \in \CI(\Bm)$, 
we have $\dim \wt\CY_{\Bm}^{\dag} = \dim \CY_{\Bm}^0$.  
Hence $\dim \wt\CY_{\Bm} = \dim \CY_{\Bm}$ and (ii) follows from (1.2.1).
(iii) is clear since $\psi_{\BI}$ is a finite Galois covering for any 
$\BI \in \CI^{\bullet}(\Bm')$. 
\end{proof}

Next we show the following proposition.

\begin{prop}  %%%%% Prop. 4.3.
For each $\Bm \in \CQ_{n,r}$, $\psi^{(\Bm)}_!\a_0^*\CE[d_{\Bm}]$  is a 
semisimple perverse sheaf on $\CY_{\Bm}$ equipped with $\CW^{\bullet}_{\Bm,\CE}$-action,
and is decomposed as 
\begin{equation*}
\psi^{(\Bm)}_!\a_0^*\CE[d_{\Bm}] \simeq
              \bigoplus_{\r \in \CW_{\Bm, \CE}\wg}\r \otimes 
                 \IC(\CY_{\Bm}, \CL_{\r})[d_{\Bm}].
\end{equation*}
\end{prop}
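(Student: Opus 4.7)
The plan is to exploit the simplification, specific to the enhanced setting, that $Z_H(T^{\io\th}) = T_0$ is already a torus, so that no $\BP_1$-fibres appear and $\psi^{(\Bm)}$ turns out to be small rather than merely semismall. Once this is in place, the identification of the pushforward on the open stratum provided by (4.1.1) propagates to all of $\CY_{\Bm}$ via the intermediate extension.

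First I would check that $\psi^{(\Bm)} : \wt\CY_{\Bm} \to \CY_{\Bm}$ is a finite morphism. Properness is inherited from the proper map $\pi^{(\Bm)}$, restricted to the preimage of the open subset $\CY_{\Bm} \subset \CX_{\Bm}$ supplied by Lemma~4.2(i), while Lemma~4.2(iii) gives $0$-dimensional fibres. Since $\wt\CY_{\Bm}$ is smooth of dimension $d_{\Bm}$ (the enhanced analogue of (1.2.2)) and $\a_0^*\CE$ is a local system on it, $\a_0^*\CE[d_{\Bm}]$ is perverse, and finiteness of $\psi^{(\Bm)}$ makes $\psi^{(\Bm)}_!\a_0^*\CE[d_{\Bm}]$ perverse on $\CY_{\Bm}$.

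Next I would establish smallness with respect to the stratification $\CY_{\Bm} = \coprod_{\Bm' \le \Bm}\CY^0_{\Bm'}$. A short summation-by-parts on the formula in Lemma~4.2(ii) gives $d_{\Bm} = n^2 + \sum_{i=1}^{r-1}p_i$, so $\Bm' < \Bm$ (i.e.\ $p'_i \le p_i$ with strict inequality somewhere) forces $d_{\Bm'} < d_{\Bm}$ strictly. Combined with the uniform $0$-dimensionality of fibres, this yields $0 < (d_{\Bm} - d_{\Bm'})/2$ on every non-open stratum, so $\psi^{(\Bm)}$ is small and its pushforward is the intermediate extension of its restriction to $\CY^0_{\Bm}$. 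To identify that restriction I would apply (4.1.1) with $\Bm' = \Bm$: the condition $I_{\le i} \subset I^{\circ}_{\le i}$ together with $|I_{\le i}| = p_i = |I^{\circ}_{\le i}|$ forces $\CI^{\bullet}(\Bm) = \{\BI(\Bm)\}$, so $\psi^{\bullet}_{\Bm}$ is the single finite Galois covering $\psi_{\BI(\Bm)}$ with group $\CW^{\bullet}_{\Bm} = \CW^{\bullet}$. Since $\CW^{\bullet}_{\Bm,\CE} = \CW^{\bullet}_{\CE}$, the induction in (4.1.1) collapses to the identity and one obtains $\psi^{(\Bm)}_!\a_0^*\CE|_{\CY^0_{\Bm}} \simeq \bigoplus_{\r} \r \otimes \CL_{\r}$. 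Intermediate extension term by term then yields the displayed formula, and the $\CW^{\bullet}_{\Bm,\CE}$-action, visible on the open stratum as the Galois group acting through this decomposition, extends uniquely to the IC summands by functoriality of the intermediate extension.

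The main point to notice is the absence, rather than presence, of a serious obstacle: the delicate induction on strata that drove the proofs of Propositions~1.7 and~3.5, whose purpose was to absorb the $H^{\bullet}(\BP_1^{m_r})$-factors coming from the $SL_2^{m_r}$-contribution of $Z_H(T^{\io\th})$ and to rule out summands of too-small support by a semismallness argument, is entirely bypassed here because smallness pins down the answer in one step. The only subtle verification is the collapse $\CI^{\bullet}(\Bm) = \{\BI(\Bm)\}$, ensuring that no induced representations appear on the open stratum.
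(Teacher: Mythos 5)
Your proof is correct, and it takes a genuinely shorter route than the paper's. The paper establishes semismallness from Lemma~4.2(iii), then reruns the $(j,k)$-induction of Propositions~1.7 and~3.5 in formulas~(4.3.1)--(4.3.2), and only at the very end rules out the residual summands $\CN_{0,0}$ by observing that $\IC(\CY_{\Bm'},\CL_\r)[d_{\Bm}]$ with $\Bm'<\Bm$ fails to be perverse. You instead notice that the finiteness of every fibre upgrades semismallness to smallness (since $p'_r = p_r = n$ forces $\sum_{i<r}p'_i < \sum_{i<r}p_i$, hence $d_{\Bm'} < d_{\Bm}$, for every lower stratum), which makes $\psi^{(\Bm)}_!\a_0^*\CE[d_{\Bm}]$ the intermediate extension of its restriction to $\CY^0_{\Bm}$ in one stroke -- no summands with proper support can occur. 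The identification of that restriction via~(4.1.1), including the collapse $\CI^{\bullet}(\Bm)=\{\BI(\Bm)\}$ and hence $\CW^{\bullet}_{\Bm,\CE}=\CW^{\bullet}_{\CE}$ (so the induction functor is trivial), is also correct. The trade-off: the paper's longer induction produces the auxiliary formulas~(4.3.1)--(4.3.2) as intermediate steps, which parallel the exotic-case machinery where the $\BP_1^{m_r}$-fibres force one to track shifted summands; your argument shows that in the enhanced case this machinery is unnecessary for Proposition~4.3 itself, because the smallness pins the answer down directly. The way you transport the $\CW^{\bullet}_{\Bm,\CE}$-action from the open stratum via the Galois covering and the uniqueness of intermediate extension is a clean alternative to the paper's perverse-cohomology-exact-sequence construction, and gives the same action since both are determined by the restriction to $\CY^0_{\Bm}$.
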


\begin{proof}
$\psi^{(\Bm)}: \wt\CY^{(\Bm)} \to \CY_{\Bm}$ is proper.  By Lemma 4.2 (iii), 
$\psi^{(\Bm)}$ is semismall.  Hence $\psi^{(\Bm)}_!\a_0^*\CE[d_{\Bm}]$ is 
a semisimple perverse sheaf. 
The definitions of $\CY^0_{j,k}, \CY_{j,k}, \psi^{\bullet}_{j,k}, \ol\psi^{\bullet}_{j,k}$, 
etc. in 3.4 make sense
also in the enhanced case.
As in (3.4.1) and (3.4.2), the following formulas hold;
first assume that $j = r-1, 0 \le k < m_{r-1}$.  Then we have
\begin{equation*}
\tag{4.3.1}
(\ol\psi^{\bullet}_{j,k})_!\a_0^*\CE \simeq \CN_{r-1, k}.
\end{equation*}
Next assume that $j = r-1, k = m_{r-1}$ or $0 \le j < r-1, 0 \le k \le m_j$. 
Then we have
\begin{equation*}
\tag{4.3.2}
(\ol\psi^{\bullet}_{j,k})_!\a_0^*\CE
      \simeq \bigoplus_{\r \in \CW_{\Bm, \CE}\wg}\r \otimes \IC(\CY'_{\Bm}, \CL_{\r})
               + \CN_{j, k},
\end{equation*}
where $\CN_{j,k}$ is a sum of various $\IC(\CY'_{\Bm'}, \CL_{\r})$ for 
$\Bm' \in \CQ(\Bm;j,k)$ and $\r \in (\CW^{\bullet}_{\Bm',\CE})\wg$ such that 
$\Bm' < \Bm$.   (Recall that $\CY'_{\Bm'}$ denotes the closure of 
$\CY^0_{\Bm'} \subset \CY_{j,k}$ in $\CY_{j,k}$ for any $\Bm' \le \Bm$.) 
\par
We show (4.3.1) by induction on $k$. By (1.7.1) (or directly from the definition of
$M^{(r-1,k)}$), $\CY_{r-1, 0}(\Bm)$ coincides with $\CY^0_{\Bm(0)}$.  Hence 
by (4.1.1) for $\Bm' = \Bm(0)$, (4.3.1) holds for $k = 0$.  A similar 
argument as in the proof of (1.7.2) shows, thanks to (1.7.1) and (4.1.1),  that 
(4.3.1) holds for $k < m_{r-1}$.
Next consider the case where $j = r-1, k = m_{r-1}$. 
By (1.7.1), $\CY_{j, k} \backslash \CY_{j, k-1} = \CY_{j, k}^0$, and 
$\CY^0_{j, k}$ coinicides with $\CY_{\Bm}^0$. Thus (4.1.1) implies that 
\begin{equation*}
(\psi^{\bullet}_{j,k})_!\a_0^*\CE \simeq \bigoplus_{\r \in \CW\wg_{\Bm,\CE}}
  \r \otimes \CL_{\r}. 
\end{equation*}
Hence (4.3.2) holds in this case.  Now (4.3.2) can be proved by induction on $k$ and 
by backwards induction on $j$, starting from $j = r-2, k= 0$, which case corresponds to 
the case where $j = r-1, k = m_{r-1}$ by (1.7.1).  Note that in the enhanced case, 
we don't need a discussion such as (1.7.5). 
\par
Applying (4.3.2) to the case where $j = 0, k = 0$, we obtain the proposition.
In fact, in that case, $\CN_{0.0}$ is a sum of $A = \IC(\CY_{\Bm'}, \CL_{\r})$
such that $\Bm' < \Bm$.  But then $A[d_{\Bm}]$ is not a perverse sheaf.  Since 
$\psi^{(\Bm)}_!\a_0^*\CE[d_{\Bm}]$ is a semisimple perverse sheaf, this implies 
that $\CN_{0.0} = 0$, and the proposition follows.    
\end{proof}

\para{4.4.}
As a special case of (4.1.1) for $\Bm' = \Bm$, we have
\begin{equation*}
(\psi^{\bullet}_{\Bm})_!\a_0^*\CE \simeq \bigoplus_{\r \in \CW_{\Bm, \CE}\wg}
                     \r \otimes \CL_{\r}.
\end{equation*}
Since $\CY_{\Bm}^0$ is a smooth open dense subset of $\CX_{\Bm}$, one can define 
a semisimple pervers sheaf $K_{\Bm, T, \CE}$ on $\CX_{\Bm}$ as 

\begin{equation*}
\tag{4.4.1}
K_{\Bm, T, \CE} = \bigoplus_{\r \in \CW_{\Bm,\CE}\wg}\r \otimes 
             \IC(\CX_{\Bm}, \CL_{\r})[d_{\Bm}].
\end{equation*}
We consider a diagram 

\begin{equation*}
\begin{CD}
T^{\io\th} @<\a <<  \wt\CX_{\Bm} @>\pi^{(\Bm)}>>  \CX_{\Bm},
\end{CD}
\end{equation*}
where $\a$ is as in 2.1, and 
a complex $\pi^{(\Bm)}_!\a^*\CE[d_{\Bm}]$ on $\CX_{\Bm}$. 
We shall prove the following theorem.

\begin{thm}  %%%%  Theorem 4.5
For each $\Bm \in \CQ_{n,r}$, $\pi^{(\Bm)}_!\a^*\CE[d_{\Bm}] \simeq K_{\Bm, T, \CE}$
as perverse sheaves on $\CX_{\Bm}$. 
\end{thm}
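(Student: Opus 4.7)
The plan is to adapt the proof of Theorem 2.2 to the enhanced setting, exploiting the simplifications that arise because in the enhanced case $\psi^{(\Bm)}$ has generically finite fibres (Lemma 4.2(iii)), so no $H^{\bullet}(\BP_1^{m_r})$ factor enters. I will write $\pi^{\bullet} = \pi^{(\Bm)}$. As a first step, I would describe $\CX_{\Bm}^0 = \CX_{\Bm} \setminus \bigcup_{\Bm' < \Bm}\CX_{\Bm'}$ in the same way as (2.3.3), namely as the set of $(x,\Bv)$ for which the flag $(W_i(x,\Bv))$ built from the Jordan decomposition of $x$ and the $\Bk[x]$-spans of the components of $\Bv$ has the prescribed dimensions $p_i$. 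This allows me to decompose $\wt\CX_{\Bm}^{\dag} = (\pi^{\bullet})\iv(\CX_{\Bm}^0)$ into a disjoint union of closed pieces $\wt\CX_{\BI}$ indexed by $\BI \in \CI^{\bullet}(\Bm)$, each containing $\wt\CY_{\BI}$ as a dense open subset, exactly as in Lemma 2.4.

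Next I would prove an analogue of Proposition 2.8, asserting that over $\CX_{\Bm}^0$,
\begin{equation*}
(\pi^{\bullet})_!\a^*\CE|_{\CX_{\Bm}^0} \simeq \bigoplus_{\r \in (\CW^{\bullet}_{\Bm,\CE})\wg} \r \otimes \IC(\CX_{\Bm}^0,\CL_{\r})[d_{\Bm}],
\end{equation*}
by induction on $r$. For the inductive step I set $V_0 = M_{m_1}$, $G_1 = GL(V_0)$, $V' = V/V_0$, $G_2 = GL(V')$, and $\Bm' = (m_2,\ldots,m_r)$, and build a commutative diagram analogous to the one in 2.5: an intermediate variety $\CK_{\Bm}$ carrying the data of trivializations $\f_1:W_z \isom V_0$ and $\f_2: V/W_z \isom V'$, together with maps $q:\CK_{\Bm} \to \CX_{\Bm}^0$ (a principal bundle for $H_0 = G_1 \times G_2$) and $\s:\CK_{\Bm} \to G_1^0 \times \CX'^0_{\Bm'}$ (locally trivial with smooth fibres). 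Using this one defines $A_{\r}$ recursively from $\IC(G_1^0,\CL_{\r_1})$ and the inductively-constructed $A_{\r'}$, and the analogue of Lemma 2.7 (computed by tracking the Galois covering as in diagram (2.7.2)) identifies $A_{\r}$ with $\IC(\CX_{\Bm}^0,\CL_{\r})[d_{\Bm}]$. Since in the enhanced case $\psi_{\BI}$ is itself a finite Galois covering, the restriction of $(\pi^{\bullet})_!\a^*\CE$ to $\CY_{\Bm}^0$ is given by (4.1.1), and this pins down the multiplicities.

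Finally, I would extend from $\CX_{\Bm}^0$ to all of $\CX_{\Bm}$ by the stratification argument of 2.10: define intermediate varieties $\CX^0_{j,k}, \CX_{j,k}$ for $1\le j \le r-1$, $0 \le k \le m_j$, together with $\ol\pi^{\bullet}_{j,k}$, and prove by backwards induction on $j$ and induction on $k$ the analogues of (2.10.2)--(2.10.3) for $(\ol\pi^{\bullet}_{j,k})_!\a^*\CE$. In the enhanced case the cohomological shifts $[-2(m_{r-1}-k')]$ appearing in (2.10.2) disappear entirely, so every expected summand is already a perverse sheaf in its own right. Specialising to $j=0$, $k=0$, combined with the fact that $\pi^{\bullet}_!\a^*\CE[d_{\Bm}]$ is a semisimple perverse sheaf (properness of $\pi^{\bullet}$, the decomposition theorem, and semismallness of $\pi^{\bullet}$ over $\CY_{\Bm}$ via Lemma 4.2(iii)), forces the error term to vanish and yields the identification with $K_{\Bm,T,\CE}$.

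The main obstacle I anticipate lies in verifying the inductive step in the analogue of Proposition 2.8: one must check that the cartesian squares in the enhanced version of the diagram in 2.5 really produce $A_{\r}$ from $\IC(G_1^0,\CL_{\r_1})\boxtimes A_{\r'}$ with the predictable shift $\b_2 - \b_1 = \dim H_0 - \dim H - (r-2)m_1$, and that the restriction of $A_{\r}$ to $\CY_{\Bm}^0$ recovers $\CL_{\r}[d_{\Bm}]$ via compatibility of the Galois covering $\psi_{\BI(\Bm)}$ with the factorisations through $G_1^0$ and $\CX'^0_{\Bm'}$. A subsidiary point is ensuring semismallness of $\pi^{\bullet}$ over the whole of $\CX_{\Bm}$ (not merely over $\CY_{\Bm}$), which can be deduced either directly from a dimension estimate using the stratification $\CX_{\Bm} = \coprod_{\Bm' \le \Bm}\CX_{\Bm'}^0$ and the description (2.3.3), or implicitly from the stratification argument above.
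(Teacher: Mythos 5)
Your proposal follows essentially the same route as the paper: the paper proves Theorem 4.5 in 4.8 by combining Proposition 4.3 (the decomposition of $\psi^{(\Bm)}_!\a_0^*\CE$ over $\CY_{\Bm}$) with Proposition 4.7 (the local decomposition of $(\pi^{\bullet}_{\Bm'})_!\a^*\CE$ over the strata $\CX^0_{\Bm'}$), then runs the stratification and perversity argument of 2.8 and 2.10, and your three steps correspond precisely to this. Two small details to tidy up: the local decomposition in your Step 2 must be stated and proved for every $\Bm' \le \Bm$ (with multiplicity $\Ind_{\CW^{\bullet}_{\Bm',\CE}}^{\CW^{\bullet}_{\CE}}\r$, as in the paper's Proposition 4.7 and formula (4.1.1)), not merely for $\Bm' = \Bm$, since the backwards induction of Step 3 runs over all these strata; and in the enhanced case the fibre of $\s$ has dimension $\dim H + (r-1)m_1$ rather than $\dim H + (r-2)m_1$ (compare the paper's (8.8.1) with the exotic (2.5.3)/(8.1.2)), so the shift $\b_1$ in your cartesian-square computation should be adjusted accordingly.
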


\para{4.6.}
For each $\Bm' \le \Bm$, the set $\CX_{\Bm'}^0$ is defined as in 2.3. 
We define a subvariety $\CX^{\dag}_{\Bm'}$ of $\wt\CX_{\Bm}$ by
$\wt\CX^{\dag}_{\Bm'} = (\pi^{\bullet})\iv(\CX_{\Bm'}^0)$. 
Then $\wt\CY^{\dag}_{\Bm'}$ is an open dense subset of $\wt\CX^{\dag}_{\Bm'}$.  
The discussion in 2.3 makes sense also for the enhanced case, and $\CX^0_{\Bm'}$ 
is characterized by a similar formula as (2.3.3).  For each $\BI \in \CI(\Bm')$, 
the set $\wt\CX_{\BI}$ is defined as in 2.3, 
\begin{equation*}
\wt\CX_{\BI} = \{ (x, \Bv, gB^{\th}) \in \wt\CX_{\Bm'}^{\dag} 
                     \mid (x, \Bv, gB^{\th}) \mapsto \BI \}.
\end{equation*} 
Let $\CI^{\bullet}(\Bm')$ be as in 3.3.  
Then as in Lemma 2.4 (see also 3.6), we have 
\begin{equation*}
\wt\CX^{\dag}_{\Bm'} = \coprod_{\BI \in \CI^{\bullet}(\Bm')}\wt \CX_{\BI}, 
\end{equation*}
where $\wt\CX_{\BI}$ is an irreducible component of $\wt\CX^{\dag}_{\Bm'}$.
Let $\pi^{\bullet}_{\Bm'} : \wt\CX^{\bullet}_{\Bm'} \to \CX^0_{\Bm'}$ be 
the restriciton of $\pi^{(\Bm)}$ on $\wt\CX^{\dag}_{\Bm'}$. The following
result is an analogue of Proposition 2.8 (see also Proposition 3.7) and 
can be proved in a similar way.

\begin{prop}  %%%%  Proposition 4.7.
Assume that $\Bm' \le \Bm$.  Then $(\pi^{\bullet}_{\Bm'})_!\a_0^*\CE$
is decomposed as 
\begin{equation*}
(\pi^{\bullet}_{\Bm'})_!\a_0^*\CE|_{\wt\CX^{\dag}_{\Bm'}} \simeq 
         \bigoplus_{\r \in (\CW^{\bullet}_{\Bm',\CE})\wg}
                    \bigl(\Ind_{\CW^{\bullet}_{\Bm',\CE}}^{\CW^{\bullet}_{\CE}}\r\bigr) 
                       \otimes \IC(\CX_{\Bm'}^0, \CL_{\r}). 
\end{equation*}
\end{prop}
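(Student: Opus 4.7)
The argument will parallel the proofs of Proposition 2.8 and Proposition 3.7 but will be noticeably cleaner: in the enhanced setting $Z_H(T^{\io\th}) = T_0$, so the $\BP_1^{m_r}$ fibers and the distinction between $\wt\r$ and $\wt\r'$ collapse, and $\ol V_0 = V/V_0$ carries no symplectic structure. I would proceed by induction on $r$, with base case $r = 1$, which is the classical Springer correspondence for $GL_n$.

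For the inductive step, fix $\Bm' \le \Bm$ and set $V_0 = M_{m_1'}$, $\ol V_0 = V/V_0$, $G_1 = GL(V_0)$, $G_2 = GL(\ol V_0)$, $H_0 = G_1 \times G_2$, and $\Bm'' = (m_2', \dots, m_r')$. I would adapt the construction of 2.5 to build an enhanced version of $\CK_{\Bm'}$ parametrizing quadruples $(z, \f_1, \f_2)$ with $z = (x, \Bv) \in \CX^0_{\Bm'}$, $\f_1 : W_z \isom V_0$, and $\f_2 : V/W_z \isom \ol V_0$, together with the two natural maps $q : \CK_{\Bm'} \to \CX^0_{\Bm'}$ (a principal $H_0$-bundle) and $\s : \CK_{\Bm'} \to G_1^0 \times {\CX'}^0_{\Bm''}$ (a smooth, locally trivial fibration whose fiber dimension is computed as in (2.5.3)). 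These fit into the enhanced analogue of the diagram in 2.5, whose relevant squares are cartesian. Writing $\CE = \CE_1 \boxtimes \CE_2$ under $T^{\io\th} \simeq T_1 \times T_2^{\io\th}$ and decomposing $\r = \r_1 \boxtimes \r' \in (\CW^{\bullet}_{\Bm', \CE})\wg \simeq \CW_1\wg \times (\CW^{\bullet}_{\Bm'', \CE})\wg$, I would define a simple perverse sheaf $A_{\r}$ on $\CX^0_{\Bm'}$ by the requirement that $q^*A_{\r}$, suitably shifted, agree with $\s^*(\IC(G_1^0, \CL_{\r_1}) \boxtimes A_{\r'})$, where $A_{\r'}$ is furnished by the inductive hypothesis.

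The enhanced analogue of Lemma 2.7 then identifies $A_{\r}$ with $\IC(\CX^0_{\Bm'}, \CL_{\r})[d_{\Bm'}]$; this is verified by restricting to the open stratum $\CY^0_{\Bm'}$ and invoking Proposition 4.3 together with the compatibility of the finite Galois coverings $\psi_{\BI}$ with $q$ and $\s$. Combining this with the decomposition $\wt\CX^{\dag}_{\Bm'} = \coprod_{\BI \in \CI^{\bullet}(\Bm')} \wt\CX_{\BI}$ from 4.6 shows that every simple summand of $(\pi^{\bullet}_{\Bm'})_!\a^*\CE$ has support all of $\CX^0_{\Bm'}$ and is of the form $\IC(\CX_{\Bm'}^0, \CL_{\r})$. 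The multiplicity spaces are then pinned down by restricting to $\CY^0_{\Bm'}$ and appealing to (4.1.1). I expect the main obstacle to be book-keeping of the $\CW^{\bullet}_{\CE}$-action: one must verify that the contributions indexed by $\BI \in \CI^{\bullet}(\Bm')$ assemble, not merely to a vector space of the correct dimension, but to the induced representation $\Ind_{\CW^{\bullet}_{\Bm', \CE}}^{\CW^{\bullet}_{\CE}}\r$ as a genuine $\CW^{\bullet}_{\CE}$-module, and this requires carrying the group action faithfully through the cascade of pullbacks and the $H_0$-descent. As noted in Remark 2.9, once the bundles are set up correctly the enhanced argument is substantially more transparent than its exotic counterpart.
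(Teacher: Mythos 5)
Your proposal is correct and follows essentially the same strategy the paper intends (it proves Proposition 4.7 by citing Propositions 2.8 and 3.7 as templates): induction on $r$, the enhanced $\CK_{\Bm'}$/$q$/$\s$ apparatus with the two cartesian squares, identification of $A_\r$ with $\IC(\CX_{\Bm'}^0,\CL_\r)[d_{\Bm'}]$ via restriction to $\CY_{\Bm'}^0$, and the restricted index set $\CI^{\bullet}(\Bm')$ from 4.6 which accounts for the induced representation $\Ind_{\CW^{\bullet}_{\Bm',\CE}}^{\CW^{\bullet}_{\CE}}\r$ in place of the full $\wt V_\r$ of Proposition 2.8.

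Two small points. First, the ``main obstacle'' you flag is actually a non-issue at this stage: following the parenthetical in Proposition 3.7 (``$\Ind_{\CW^{\bullet}_{\Bm',\CE}}^{\CW^{\bullet}_{\CE}}\r$ is regarded as a vector space ignoring the $\CW^{\bullet}_{\CE}$-module structure''), Proposition 4.7 only asserts the multiplicity as a vector space; the $\CW^{\bullet}_{\CE}$-module structure on $(\pi^{(\Bm)})_!\a^*\CE$ is constructed separately by the perverse cohomology argument, and it is matching dimensions over $\CY^0_{\Bm'}$ via (4.1.1) that closes the proof. Second, the fiber dimension of $\s$ in the enhanced case is $\dim H + (r-1)m_1$, not the exotic $\dim H + (r-2)m_1$ of (2.5.3) (compare (8.8.1)); this does not affect the argument but should not be cited verbatim.
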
 

\para{4.8.}
By making use of Proposition 4.3 and Proposition 4.7, the theorem 
is proved by a similar argument as in 2.8 and 2.10.

\par\bigskip\bigskip
\section{Unipotent variety of enhanced type }

In this section, we study the ``unipotent part'' of the enhanced space, whih
we call the unipotent variety of enhanced type. First we prepare some combinatorial
notation.

\para{5.1.}
A composition is a sequence of integers $\la = (\la_1, \la_2, \dots)$ with finitely many nonzero
terms.  A composition $\la$ satisfying the property that $\la_1 \ge \la_2 \ge \cdots$ 
is called a partition. For a composition $\la$, we denote by  $|\la| = \sum_i \la_i$ 
the size of $\la$. For a positive integer $r$, an $r$-tuple of partitions 
$\Bla = (\la^{(1)}, \dots, \la^{(r)})$ is called an $r$-partition.  We denote by 
$|\Bla| = \sum_i |\la^{(i)}|$ the size of $\Bla$. We express an $r$-partition by 
$\Bla = (\la^{(i)}_j)$ with partitions $\la^{(i)} = (\la^{(i)}_1, \dots, \la^{(i)}_m)$ by 
choosing sufficiently large $m$ so that $\la^{(i)}_j = 0$ for $j > m$ and for any $i$. 
The set of $r$-partitions of size $n$ is denoted by $\CP_{n,r}$.  In the case where 
$r = 1$, the set $\CP_{n,1}$ of partitions of $n$ is simply denoted by $\CP_n$. 
For a given $\Bm \in \CQ_{n,r}$, we denote by $\CP(\Bm)$ the set of $\Bla \in \CP_{n,r}$
such that $|\la^{(i)}| = m_i$ for each $i$. 
\par 
Let $\Bla = (\la^{(i)}_j)$ be an $r$-partition of $n$.  We define a composition 
$c(\Bla)$ of $n$ associated to $\Bla$ by 
\begin{equation*} 
c(\Bla) = (\la^{(1)}_1, \la^{(2)}_1, \dots, \la^{(r)}_1,  
            \la^{(1)}_2, \la^{(2)}_2, \dots, \la^{(r)}_2, 
            \dots, \la^{(1)}_m, \la^{(2)}_m, \dots, \la^{(r)}_m) 
\end{equation*}  
For example, if $\Bla = (320; 211; 411) \in \CP_{15,3}$, we have $c(\Bla) = (324211011)$.  
\par
For a composition $\la = (\la_1, \la_2, \dots, ), \mu = (\mu_1, \mu_2, \dots,)$,  
we denote by $\la \le \mu$ if 
\begin{equation*} 
\la_1 + \cdots + \la_k \le \mu_1 + \cdots + \mu_k  
\end{equation*} 
for $k = 1, 2, \cdots$. 
We define a dominance order $\le $  on $\CP_{n,r}$ by the condition that 
$\Bla \le \Bmu$ if $c(\Bla) \le c(\Bmu)$. In the case where $r = 1$, this is the standard 
dominance order on the set $\CP_n$.  
\par
For a partition $\la = (\la_1, \la_2, \dots)$, we put $n(\la) = \sum_{i \ge 1} (i-1)\la_i$. 
We define a function $n : \CP_{n,r} \to \BZ$ by $n(\Bla) = \sum_{i=1}^r n(\la^{(i)})$.  

\para{5.2.}
We consider the enhanced space of higher level introduced in 1.2. However 
in this section, we redefine them directly, without using the symmetric space setting
in 1.2. 
For an integer $r \ge 1$, we consider a variety 
$\CX\uni = G\uni \times V^{r-1}$, 
where $V$ is an $n$-dimensional vector space over
$\Bk$, $G = GL(V)$ and $G\uni$ is the unipotent variety of $G$. 
We fix a basis $\{ e_1, \dots, e_n \}$ of $V$, and define $M_i$ as a subspace of 
$V$ spanned by $e_1, \dots, e_i$.  Let $B$ be a Borel subgroup of $G$ which is the stabilizer 
of the total flag $(M_i)$.  Let $T$ be a maximal torus of $B$ such that $\{e_i \}$ are 
weight vectors for $T$.  Then $B = TU$, where $U$ is the unipotent radical of $B$.
For $\Bm \in \CQ_{n,r}$, let $\Bp(\Bm) = (p_1, \dots, p_r)$ be as in 1.2.     
We define  
\begin{align*}
\wt\CX_{\Bm,\unip} &= \{ (x, \Bv, gB) \in G\uni \times V^{r-1} \times G/B
                 \mid g\iv xg \in U, g\iv \Bv \in \prod_{i=1}^{r-1}M_{p_i} \}, \\ 
\CX_{\Bm,\unip}  &= \bigcup_{g \in G}g(U \times \prod_{i=1}^{r-1} M_{p_i}). 
\end{align*}
We define a map $\pi_1^{(\Bm)}: \wt\CX_{\Bm, \unip} \to \CX\uni$ by 
$(x,\Bv,gB) \mapsto (x,\Bv)$. 
Clearly $\CX_{\Bm, \unip} = \Im \pi_1^{(\Bm)}$, and $\CX_{\Bm,\unip}$ coincides with 
$\CX\uni$ in the case where $\Bm = (n, 0, \dots, 0)$. Since 
$\wt\CX_{\Bm,\unip} \simeq  G \times^{B}(U \times \prod_i M_{p_i})$, 
$\wt\CX_{\Bm,\unip}$ is smooth and irreducible. Since $\pi_1^{(\Bm)}$ is proper, 
$\CX_{\Bm, \unip}$ is a closed irreducible subvariety of $\CX\uni$.

\para{5.3.}
We shall define a partition of $\CX\uni$ into pieces $X_{\Bla}$ indexed by 
$\Bla \in \CP_{n,r}$,
\begin{equation*}
\tag{5.3.1}
\CX\uni = \coprod_{\Bla \in \CP_{n,r}}X_{\Bla}
\end{equation*}
satisfying the property such that $X_{\Bla}$ is $G$-stable, and that 
if $(x, \Bv) \in X_{\Bla}$, then the Jordan type of $x$ is 
$\la^{(1)} + \cdots + \la^{(r)}$. 
It is known by [AH] and [T] that the set of $G$-orbits in $G\uni \times V$ is finite, and 
they are parametrized by 
$\CP_{n,2}$. The labelling is given as follows;  take $(x,v) \in G\uni \times V$.  
Let $E^x = \{ y \in \End (V) \mid xy = yx\}$.  $E^x$ is a subalgebra of $\End (V)$
stable by the multiplication of $x$.  If we put $W= E^xv$,  $W$ is an $x$-stable 
subspace of $V$. We denote by $\la^{(1)}$ the Jordan type of $x|_W$, and by 
$\la^{(2)}$ the Jordan type of $x|_{V/W}$.  Then the Jordan type of $x$ is 
$\la^{(1)} + \la^{(2)}$ and   
$\Bla = (\la^{(1)}, \la^{(2)}) \in \CP_{n,2}$.  We denote by $\CO_{\Bla}$ 
the $G$-orbit containing $(x,v)$.  This gives the required labelling of $G$-orbits in 
$G\uni \times V$. 
\par
In general,  we define $X_{\Bla}$ by induction on $r$. 
Take $(x, \Bv) \in \CX\uni$ with $\Bv = (v_1, \dots, v_{r-1})$.  
Put $W = E_G^xv_1$, $\ol V = V/W$ and $\ol G = GL(\ol V)$.
We consider the variety 
$\CX'\uni = \ol G \times \ol V^{r-2}$. 
Assume that $(x,v_1) \in G\uni \times V$ is of type $(\la^{(1)}, \nu')$, 
where $\nu = \la^{(1)} + \nu'$ is the type of $x$.
Let $\ol x$ be the restriction of $x$ on $\ol V$.  Then the type of $\ol x \in GL(\ol V)$
is $\nu'$. Put $\ol \Bv = (\ol v_2, \dots \ol v_{r-1})$, where $\ol v_i$ is the image 
of $v_i$ on $\ol V$.
Thus $(\ol x, \ol \Bv) \in \CX'\uni$. By induction, we have a partition 
$\CX'\uni = \coprod_{\Bmu \in \CP_{n',r-1}}  X'_{\Bmu}$, where $\dim \ol V = n'$.       
Thus there exists a unique $X'_{\Bla'}$ containing $(\ol x, \ol\Bv)$.  
If we write $\Bla' = (\la^{(2)}, \dots, \la^{(r)})$, we have 
$\la^{(2)} + \cdots + \la^{(r)} = \nu'$.
It follows that $\Bla = (\la^{(1)}, \dots \la^{(r)}) \in \CP_{n,r}$. 
We define the type of $(x,\Bv)$ by $\Bla$, and define a subset $X_{\Bla}$ of 
$\CX\uni$ as the set of all $(x,\Bv)$ with type $\Bla$. Then 
$X_{\Bla}$ is a $G$-stable subset of $\CX\uni$, and we obtain the required 
partition (5.3.1).  

\par
We show the following proposition.
\begin{prop}  %%%%  Prop. 5.4
Let $\Bla = (\la^{(1)}, \dots, \la^{(r)}) \in \CP_{n,r}$.
Then $X_{\Bla}$ is a smooth irreducible variety with 
\begin{equation*}
\tag{5.4.1}
\dim X_{\Bla} = (n^2 - n - 2n(\Bla)) + \sum_{i=1}^{r-1}(r-i)|\la^{(i)}|.
\end{equation*}
\end{prop}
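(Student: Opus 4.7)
The plan is to prove the proposition by induction on $r$. The base case $r = 1$ reduces to the classical fact that the unipotent class in $G\uni = GL(V)\uni$ of Jordan type $\la^{(1)} = \la$ is $G$-homogeneous of dimension $n^2 - n - 2n(\la)$; this matches (5.4.1) because the sum $\sum_{i=1}^{r-1}(r-i)|\la^{(i)}|$ is empty.

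For the inductive step I would exploit the $G$-equivariant projection
\begin{equation*}
p \colon X_{\Bla} \longrightarrow G\uni \times V, \qquad (x, \Bv) \mapsto (x, v_1).
\end{equation*}
By the inductive construction of $X_{\Bla}$, the image of $p$ is precisely the single $G$-orbit $\CO_{(\la^{(1)}, \nu')} \subset G\uni \times V$ (classified in [AH], [T]), where $\nu' = \la^{(2)} + \cdots + \la^{(r)}$. Fix a base point $(x_0, v_0)$ in this orbit, put $W = E^{x_0}v_0$, $\ol V = V/W$, $\ol G = GL(\ol V)$, and let $\ol x_0$ be the induced map on $\ol V$. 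The fiber $F = p\iv(x_0, v_0) \cap X_{\Bla}$ consists of tuples $(v_2, \dots, v_{r-1}) \in V^{r-2}$ for which $(\ol x_0, \ol v_2, \dots, \ol v_{r-1})$ lies in $X'_{\Bla'} \subset \ol G\uni \times \ol V^{r-2}$, where $\Bla' = (\la^{(2)}, \dots, \la^{(r)})$; passing to $V/W$ coordinatewise exhibits $F$ as a trivial $W^{r-2}$-bundle over the fiber $Y$ of the first-factor projection $X'_{\Bla'} \to \ol G\uni$ above $\ol x_0$.

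By the inductive hypothesis $X'_{\Bla'}$ is smooth and irreducible. The map $X'_{\Bla'} \to \ol G \cdot \ol x_0$ is $\ol G$-equivariant onto a homogeneous space, hence smooth with constant relative dimension, and since $Z_{\ol G}(\ol x_0)$ is connected (centralizers of unipotents in $GL$ are connected), irreducibility of $X'_{\Bla'}$ forces $Y$ to be irreducible. Thus $F$ is smooth and irreducible, and the presentation $X_{\Bla} \simeq G \times^{Z_G(x_0, v_0)} F$ lifts these properties to $X_{\Bla}$. The same fibration yields
\begin{equation*}
\dim X_{\Bla} = \dim \CO_{(\la^{(1)}, \nu')} + \dim X'_{\Bla'} - \dim \ol G \cdot \ol x_0 + (r-2)|\la^{(1)}|,
\end{equation*}
and substituting the Achar--Henderson formula $\dim \CO_{(\la^{(1)}, \nu')} = n^2 - n - 2n(\la^{(1)}) - 2n(\nu') + |\la^{(1)}|$, the classical identity $\dim \ol G \cdot \ol x_0 = n'^2 - n' - 2n(\nu')$ with $n' = n - |\la^{(1)}|$, and the inductive expression for $\dim X'_{\Bla'}$, a direct bookkeeping (the $n'^2 - n'$ and $-2n(\nu')$ contributions cancel, $n(\Bla) = n(\la^{(1)}) + n(\Bla')$, and $|\la^{(1)}| + (r-2)|\la^{(1)}| = (r-1)|\la^{(1)}|$ is exactly the $i=1$ term of the sum) recovers (5.4.1).

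The main technical obstacle is the irreducibility of the fiber $Y$: a priori the component group of $Z_{\ol G}(\ol x_0)$ could permute several components of $Y$ while leaving $X'_{\Bla'}$ irreducible, and it is precisely the connectedness of unipotent centralizers in the general linear group that rules this out and closes the induction.
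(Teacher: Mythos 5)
Your proof is correct and follows essentially the same route as the paper: fiber over $(x,v_1) \in \CO_{(\la^{(1)},\nu')}$, identify the fiber as a $W^{r-2}$-bundle over the fiber of $X'_{\Bla'} \to \ol G\uni$, and then do the dimension bookkeeping with [AH, Prop.~2.8]. The one minor variation is that the paper runs a joint induction on (5.4.1) and the auxiliary statement (5.4.2) (that the fiber $X'_{\Bla',\ol x}$ is smooth and irreducible of the stated dimension), whereas you deduce the fiber's irreducibility from the irreducibility of $X'_{\Bla'}$ together with the connectedness of $Z_{\ol G}(\ol x_0)$ — a valid and slightly leaner alternative that avoids strengthening the induction hypothesis, at the cost of invoking the classical connectedness of unipotent centralizers in $GL$.
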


\begin{proof}
We may assume that $r \ge 2$. Put $\nu = \la^{(1)} + \cdots + \la^{(r)}$ and 
$\nu' = \la^{(2)} + \cdots + \la^{(r)}$. Let 
$\CO = \CO_{(\la^{(1)}, \nu')}$ be 
the $G$-orbit in $G\uni \times V$ corresponding to 
$(\la^{(1)},\nu') \in \CP_{n,2}$, and $\CO_{\nu}$ be the $G$-orbit
in $G\uni$ corresponding to $\nu \in \CP_n$. .  
We have surjective maps $f_{\Bla}: X_{\Bla} \to \CO$, 
$(x,\Bv) \mapsto (x,v_1)$, and $h_{\Bla} : X_{\Bla} \to \CO_{\nu}$,
$(x, \Bv) \mapsto x$.
For each $x \in \CO_{\nu}$, put $h_{\Bla}\iv(x) = X_{\Bla,x}$. 
The proposition certainly holds if $r = 2$ by [AH, Proposition 2.8].  
We show the proposition, together with the statement (5.4.2), 
by induction on $r$.  
\medskip
\par
\noindent  
(5.4.2) \ $X_{\Bla,x}$ is a smooth and irreducible variety with
\begin{equation*}
\dim X_{\Bla,x} = \sum_{i=1}^{r-1}(r-i)|\la^{(i)}|. 
\end{equation*}
\par\medskip
We assume that (5.4.2) holds for $\CX\uni$ with smaller $r$.
For a fixed $(x,v_1) \in \CO$, we consider the variety $\CX'\uni$ 
as in 5.3. Let $\Bla' = (\la^{(2)}, \dots, \la^{(r)})$.  Then 
by the discussion in 5.3, we see that 
$f_{\Bla}\iv(x,v_1) \simeq X'_{\Bla',\ol x} \times W^{r-2}$.  
$Z_G(x,v_1)$ stabilizes the subspace $W$ and so acts on $X'_{\Bla',\ol x}$. 
We have
\begin{equation*}
\tag{5.4.3}
X_{\Bla} \simeq G \times^{Z_G(x,v_1)}(X'_{\Bla',\ol x} \times W^{r-2}).
\end{equation*} 
By induction, $X'_{\Bla',\ol x}$ is smooth and irreducible.  Hence 
$X_{\Bla}$ is smooth and irreducible.  Since 
\begin{equation*}
\tag{5.4.4}
X_{\Bla,x} \simeq Z_G(x) \times^{Z_G(x,v_1)}(X'_{\Bla',\ol x} \times W^{r-2}),
\end{equation*}
and $Z_G(x)$ is connected, $X_{\Bla,x}$ is also smooth and irreducible.  
This proves the first statement of (5.4.1) and (5.4.2).  
We shall compute $\dim X_{\Bla}$ and $\dim X_{\Bla, x}$.  
By (5.4.3), we have
\begin{align*}
\tag{5.4.5}
\dim X_{\Bla} = \dim \CO_{(\la^{(1)},\nu')} + \dim X'_{\Bla', \ol x} 
           + (r-2)|\la^{(1)}|.
\end{align*}
Here
$\dim \CO_{(\la^{(1)}, \nu')} = n^2 - n - 2n(\nu) + |\la^{(1)}|$ by 
[AH, Prop. 2.8].
By applying (5.4.2) to $X'_{\Bla', \ol x}$, we have 
\begin{equation*}
\dim  X'_{\Bla',\ol x} = \sum_{i= 2}^{r-1}(r-i)|\la^{(i)}|.
\end{equation*}
By substituting those formulas to (5.4.5), we obtain (5.4.1) 
(note that $n(\nu) = n(\Bla)$).
\par
By comparing (5.4.3) and (5.4.4), we have
\begin{align*}
\dim  X_{\Bla, x} &= \dim  X_{\Bla} - (\dim G - \dim Z_{G}( x)) \\
                         &= \dim  X_{\Bla} - \dim \CO_{\nu}. 
\end{align*}  
Since $\dim \CO_{\nu} = n^2 - n - 2n(\nu)$,
(5.4.2) follows from (5.4.1).
This proves the proposition.
\end{proof}

\para{5.5.}                                                                                         
Let $\Bla = (\la^{(1)}, \dots, \la^{(r)}) \in \CP(\Bm)$.
We write the dual partition $(\la^{(i)})^*$ of 
$\la^{(i)}$ as ($\mu^{(i)}_1 \le \mu^{(i)}_2 \le \cdots \le \mu^{(i)}_{\ell_i})$,
in the increasing order, where $\ell_i = \la^{(i)}_1$.   
For each $1 \le i \le r, 1\le j < \ell_i$, we define an integer $n(i,j)$ by 
\begin{equation*}
n(i,j) = (|\la^{(1)}| + \cdots + |\la^{(i-1)}|) + \mu_1^{(i)} + \cdots + \mu_j^{(i)}.
\end{equation*} 
Let $P = P_{\Bla}$ be the stabilizer of a partial flag $(M_{n(i,j)})$ in $G$, and $U_P$ 
the unipotent radical of $P$. 
Let us define a variety $\wt X_{\Bla}$ by 
\begin{equation*}
\begin{split}
\wt X_{\Bla} = \{ (x, \Bv, gP) \in G\uni \times V^{r-1} \times G/P
        \mid g\iv xg \in U_P, g\iv \Bv \in \prod_{i=1}^{r-1}M_{p_i} \}.
\end{split}
\end{equation*}
We define a map $\pi_{\Bla} : \wt X_{\Bla} \to \CX\uni$ by 
$(x,\Bv, gP) \mapsto (x,\Bv)$. Then $\pi_{\Bla}$ is a proper map. 
Since 
$\wt X_{\Bla} \simeq G\times^{P}(U_{P} 
          \times \prod_i M_{p_i})$, 
$\wt X_{\Bla}$ is smooth and irreducible. 
\par
We have the following lemma.

 \begin{lem}  %%%%% Lemma 5.6
Let $\Bla = (\la^{(1)}, \dots, \la^{(r)}) \in \CP_{n,r}$.
\begin{enumerate}
\item
$\dim \wt X_{\Bla} =  \dim X_{\Bla}$.
\item
$\Im \pi_{\Bla} = \ol X_{\Bla}$, where $\ol X_{\Bla}$ is the closure of $X_{\Bla}$ 
in $\CX\uni$. 
\end{enumerate} 
\end{lem}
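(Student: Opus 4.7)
The plan is to prove (i) by a direct dimension count on $\wt X_{\Bla} \simeq G \times^P(U_P \times \prod_i M_{p_i})$ and then to deduce (ii) by combining (i) with the properness of $\pi_{\Bla}$, the irreducibility of $\wt X_{\Bla}$, and an inductive construction exhibiting each point of $X_{\Bla}$ as a point of $\Im \pi_{\Bla}$.

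For (i), the bundle description gives $\dim \wt X_{\Bla} = \dim G - \dim L + \sum_{i=1}^{r-1} p_i$, where $L$ is the Levi of $P$. Because the partial flag $(M_{n(i,j)})$ — necessarily supplemented by the boundary indices $n(i,\ell_i)=p_i$, so that $P$ preserves each $M_{p_i}$ and the quotient $G \times^P(U_P \times \prod_i M_{p_i})$ is well-defined — has block sizes equal to the parts $\mu^{(i)}_j$ of all the duals $(\la^{(i)})^*$, we get $\dim L = \sum_{i,j}(\mu_j^{(i)})^2$. The classical identity $\sum_j(\la_j^*)^2 = 2n(\la)+|\la|$ then gives $\dim L = 2n(\Bla)+n$. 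Combined with the reindexing $\sum_{i=1}^{r-1}p_i = \sum_{i=1}^{r-1}(r-i)|\la^{(i)}|$ and formula (5.4.1), this yields $\dim \wt X_{\Bla} = n^2 - n - 2n(\Bla) + \sum_i(r-i)|\la^{(i)}| = \dim X_{\Bla}$.

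For (ii), $\wt X_{\Bla}$ is smooth and irreducible (as a fibration over $G/P$ with smooth irreducible fiber), and $\pi_{\Bla}$ is proper, so $\Im \pi_{\Bla}$ is a closed irreducible subvariety of $\CX\uni$ of dimension at most $\dim \wt X_{\Bla} = \dim X_{\Bla}$ by (i). It then suffices to prove the containment $X_{\Bla} \subset \Im \pi_{\Bla}$: once established, closedness gives $\ol X_{\Bla} \subset \Im \pi_{\Bla}$, and the dimension bound together with the irreducibility of $\ol X_{\Bla}$ from Proposition 5.4 forces $\Im \pi_{\Bla} = \ol X_{\Bla}$. To establish this containment, I induct on $r$: for $(x,\Bv) \in X_{\Bla}$, set $W_1 = E^x v_1$, so that $\dim W_1 = |\la^{(1)}|$ and $x|_{W_1}$ has Jordan type $\la^{(1)}$ by the definition of $X_{\Bla}$. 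Inside $W_1$, choose a partial flag of block sizes $(\mu_j^{(1)})_j$ adapted to $x|_{W_1}$ (the classical Springer-type flag for $GL(W_1)$). In $\ol V = V/W_1$, the pair $(\ol x, \ol\Bv)$ lies in $X'_{\Bla'}$ for $\Bla' = (\la^{(2)},\dots,\la^{(r)})$, and the inductive hypothesis yields a compatible partial flag in $\ol V$. Pulling this flag back to $V$ refining $W_1$ and concatenating with the flag in $W_1$ produces a partial flag in $V$ with the required block sizes, and by construction the $v_i$'s lie in the appropriate pieces. A choice of $g \in G$ sending this flag to the reference flag $(M_{n(i,j)})$ then exhibits $(x,\Bv) = \pi_{\Bla}(x,\Bv,gP)$.

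The main obstacle is the combinatorial bookkeeping in the inductive step: one must match the flag indices $n(i,j)$ for $\Bla$ in $V$ with those for $\Bla'$ in $\ol V$ via the shift by $|\la^{(1)}|$, and invoke the classical Springer theory for $GL$ to guarantee the existence of a partial flag in $W_1$ with block sizes $\mu_j^{(1)}$ adapted to $x|_{W_1}$. Beyond this, the argument is routine.
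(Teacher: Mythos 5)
Your proposal follows essentially the same route as the paper: the same bundle description and Levi-dimension count for (i) (including the identity $\dim L = n + 2n(\Bla)$), and the same structure for (ii) — show $X_{\Bla} \subset \Im\pi_{\Bla}$ by constructing an adapted flag, then use properness, irreducibility, and the dimension equality from (i) to force $\Im\pi_{\Bla} = \ol X_{\Bla}$. The only difference is cosmetic: where you spell out the flag construction as an induction on $r$ with the classical $GL$ Springer flag as base case, the paper first extracts the chain $(V_{p_i})$ directly from the inductive definition of $X_{\Bla}$ in 5.3 and then cites [AH, Prop.~3.3(4)] for the refinement. Your side remark about including the boundary indices $p_i$ in the defining flag of $P$ is a correct and worthwhile clarification, but it does not change the argument.
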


\begin{proof}
We have 
\begin{align*}
\dim \wt X_{\Bla} &= 
  \dim G/P + \dim U_{P} + \dim \prod_{i=1}^{r-1}M_{p_i} \\
    &= 2\dim U_{P} + \sum_{i=1}^{r-1}(r-i)|\la^{(i)}|.
\end{align*}
Here $\dim U_{P} = (\dim G - \dim L)/2$, where $L$ is a
Levi subgroup of $P$, and 
\begin{align*}
\dim L = \sum_{i,j}(\mu^{(i)}_j)^2 = n + 2n(\Bla). 
\end{align*}  
The second equality follows form the formula $n(\la^*) = \sum_i \la_i(\la_i-1)/2$ 
for a partition $\la = (\la_1, \la_2,\dots)$ and its dual $\la^*$. 
By comparing this with Proposition 5.4, we obtain  (i). 
\par
We show (ii).
Take $(x, \Bv) \in X_{\Bla}$.  By the construction of $X_{\Bla}$ in 5.3, 
one can find a sequence of $x$-stable subspaces 
$V_{p_1} \subset V_{p_2} \subset \cdots \subset V_{p_r} = V$
of $V$ such that $v_i \in V_{p_i}$ and that the restriction of $x$ on 
$V_{p_i}/V_{p_{i-1}}$ has type $\la^{(i)}$.  
It is well-known that there exists an $x$-stable flag $(V_{n(i,j)})$ 
which is a refinement of $(V_{p_i})$ such that
$V_{n(i,j)} = gM_{n(i,j)}$ for some $g \in G$ and that $g\iv xg \in U_P$   
(see [AH, Prop. 3.3 (4)]). 
It follows that $(x,\Bv, gP) \in \wt X_{\Bla}$, and we see that 
$(x,\Bv) \in \Im \pi_{\Bla}$.
This proves that $X_{\Bla} \subset \Im \pi_{\Bla}$.  Since $\pi_{\Bla}$ is proper, 
$\Im \pi_{\Bla}$ is a closed subset of $\CX\uni$ and we have 
$\ol X_{\Bla} \subset \Im \pi_{\Bla}$.  
Since $\ol X_{\Bla}$ and $\Im \pi_{\Bla}$ are both irreducible, 
(i) implies that $\Im \pi_{\Bla} = \ol X_{\Bla}$. 
Hence (ii) holds.  The lemma is proved. 
\end{proof}

\remark{5.7.} 
In the case where $r = 2$, Achar-Henderson proved in [AH, Prop. 3.3] that 
the map $\pi_{\Bla} : \wt X_{\Bla} \to \ol X_{\Bla}$
is a resolution fo singularities for $\ol X_{\Bla}$.  By a similar argument, 
by usnig Lemma 5.6 (ii), one can prove that $\pi_{\Bla}$ gives a resolution 
of singularities for $\ol X_{\Bla}$ for any $r \ge 2$. 

\para{5.8.}
Recall the map $\pi_1^{(m)}: \wt\CX_{\Bm,\unip}\to \CX_{\Bm,\unip}$ in 5.2 for 
$\Bm = (m_1, \dots, m_r) \in \CQ_{n,r}$.  Let us define  
$\Bla(\Bm) = (\la^{(1)}, \dots, \la^{(r)})$ by the condition that
$\la^{(i)} = (m_i)$ for each $i$.
We consider the varieties $\wt X_{\Bla}$ in 5.5 
for $\Bla = \Bla(\Bm)$.  In this
case, $P = B$. $\wt X_{\Bla}$ is
isomorphic to $\wt\CX_{\Bm,\unip}$, and $\pi_{\Bla}$ is identified 
with $\pi_1^{(\Bm)}$.  
We have the following result.

\begin{prop}  %%% Prop. 5.9
For $\Bm \in \CQ_{n,r}$, we have 
\begin{enumerate}
\item
$\CX_{\Bm,\unip} = \ol X_{\Bla(\Bm)}$.  
\medskip
\item 
$\dim \CX_{\Bm,\unip} =  n^2 - n + \sum_{i=1}^{r-1}(r-i)m_i$.
\medskip
\item 
For $\Bmu \in \CP(\Bm)$, $X_{\Bmu} \subset \CX_{\Bm,\unip}$. 
\end{enumerate}
\end{prop}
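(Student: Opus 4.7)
The plan is to deduce all three assertions from the identification recorded in 5.8, together with Lemma 5.6 and Proposition 5.4. First I verify the claim in 5.8 that $P_{\Bla(\Bm)} = B$: for $\Bla(\Bm) = ((m_1), \dots, (m_r))$, each $\la^{(i)} = (m_i)$ is a single row, so $(\la^{(i)})^* = (1, \dots, 1)$ with $\ell_i = m_i$, and the resulting indices $n(i,j)$ together with the $p_i$ exhaust $[1, n-1]$, yielding $P = B$. Consequently $\wt X_{\Bla(\Bm)}$ is identified with $\wt\CX_{\Bm,\unip}$ and $\pi_{\Bla(\Bm)}$ with $\pi_1^{(\Bm)}$.

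With this identification, (i) is immediate: $\CX_{\Bm,\unip} = \Im\, \pi_1^{(\Bm)} = \Im\, \pi_{\Bla(\Bm)} = \ol X_{\Bla(\Bm)}$, where the first equality is the definition in 5.2 and the third is Lemma 5.6 (ii). For (ii), Proposition 5.4 gives $\dim X_{\Bla(\Bm)} = n^2 - n - 2n(\Bla(\Bm)) + \sum_{i=1}^{r-1}(r-i)m_i$; since each $\la^{(i)} = (m_i)$ has only one nonzero part, $n((m_i)) = 0$ and hence $n(\Bla(\Bm)) = 0$, so combined with (i) this produces the asserted value of $\dim \CX_{\Bm,\unip}$.

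For (iii), let $\Bmu = (\mu^{(1)}, \dots, \mu^{(r)}) \in \CP(\Bm)$ and take $(x, \Bv) \in X_{\Bmu}$. Unwinding the recursive definition of $X_{\Bmu}$ in 5.3, I build an $x$-stable chain $W_1 \subset W_2 \subset \cdots \subset W_r = V$ by setting $W_1 = E^x v_1$ and, inductively, defining $W_i$ as the preimage in $V$ of $E^{\ol x_i}\ol v_i$, where $\ol x_i$ is the induced unipotent on $V/W_{i-1}$; the type data for $X_{\Bmu}$ forces $\dim W_i/W_{i-1} = |\mu^{(i)}| = m_i$ and $v_i \in W_i$, hence $\dim W_i = p_i$. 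Since $x$ is unipotent, its restriction to each $W_i/W_{i-1}$ admits an invariant complete flag, which pulls back to an $x$-stable complete flag of $V$ refining $(W_i)$. Such a flag has the form $gM_\bullet$ for some $g \in G$, and for this $g$ we automatically have $g\iv xg \in U$ and $g\iv v_i \in M_{p_i}$, so $(x, \Bv) \in \CX_{\Bm,\unip}$. The only potentially delicate point---unwinding the recursive construction of $X_{\Bmu}$ to produce the $W_i$ with the correct dimensions---is routine bookkeeping, and the refinement to a complete flag uses only the standard fact that unipotent operators always admit invariant complete flag refinements; so no serious obstacle arises.
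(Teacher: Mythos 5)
Your proof is correct and follows essentially the same route as the paper: (i) and (ii) are the same two-line deductions from 5.8, Lemma 5.6(ii) and Proposition 5.4, and for (iii) both proofs produce an $x$-stable chain $W_1 \subset \cdots \subset W_r = V$ with $\dim W_i = p_i$ and $v_i \in W_i$ (the paper by invoking the construction in Lemma 5.6 and you by unwinding the recursion in 5.3 directly) and then refine it to an $x$-stable complete flag. The only difference is cosmetic: the paper's intermediate flag $(V_{n(i,j)})$ already interpolates between the $W_i$'s before the final refinement, while you go straight from $(W_i)$ to a complete flag, which is equivalent.
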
 

\begin{proof}
(i) is a direct consequence of Lemma 5.6 (ii) in view of 5.8.
(ii) follows from (i) and Proposition 5.4.
We show (iii).  Take $(x,\Bv) \in X_{\Bmu}$.
As in the proof of Lemma 5.6, there exists an $x$-stable partial flag 
$(V_{n(i,j)})$ with respect to $\Bmu$.  
By our assumption, $v_i \in V_{p_i}$  for each $i$.
Then one can find an $x$-stable total flag $(V_i)$ as a refinement of 
this $(V_{n(i,j)})$.
This shows that $(x,\Bv)$ is contained in $\Im \pi^{(\Bm)}_1 = \CX_{\Bm,\unip}$.
Hence $X_{\Bmu} \subset \CX_{\Bm, \unip}$ as asserted.
\end{proof}

\para{5.10.}
In the case where $r = 2$, there exists a normal basis for $(x,v) \in G\uni\times V$ 
 (cf. [AH, 2.2], [T]).  If $r \ge 3$, one can not expect such a basis since 
$\CX\uni$ has infinitely many $G$-orbits.  However, one can find typical 
elements in $X_{\Bla}$ as follows; put $\nu = (\nu_1, \dots, \nu_{\ell}) \in \CP_n$ 
for $\nu = \la^{(1)} + \cdots + \la^{(r)}$.  Take $x \in G\uni$ of Jordan type 
$\nu$, and let $\{u_{j,k} \mid  1 \le j \le \ell, 1 \le k \le \nu_j \}$ be a Jordan basis of 
$x$ in $V$ having the property $(x-1) u_{j,k} = u_{j,k-1}$ with the convention that $u_{j,0} = 0$.
We define $v_i \in V$ for $i = 1, \dots, r-1$ by the condition that 
\begin{equation*} 
\tag{5.10.1} 
v_i = \sum_{\substack{ 1 \le j \le \ell \\}}
             u_{j, \la_j^{(1)} + \cdots + \la_j^{(i)}} 
\end{equation*} 
and put $\Bv = (v_1, \dots, v_{r-1})$.
Let $W_i$ be a subspace of $V$ spanned by the basis 
\begin{equation*} 
\{ u_{j, k} \mid 1 \le j \le \ell, 
    1 \le k \le \la_j^{(1)} + \cdots + \la^{(i)}_j \}.  
\end{equation*}  
Then $W_1 \subset W_2 \subset \cdots \subset W_{r-1} \subset V$, 
and $W_i$ is an $x$-stable subspace of $V$
such that $v_i \in W_i$.  Let $\ol x$ be the restriction of $x$ on 
$\ol V = V/W_{i-1}$, 
and $\ol v_i$ be the image of $v_i$ on $\ol V$.  
Put $\bar G = GL(\ol V)$.  
Then $E^{\bar x}\ol v_i$ coincides with $W_i/W_{i-1}$, and the restriciton 
of $x$ on $W_i/W_{i-1}$ has type $\la^{(i)}$.   
It follows that $(x,\Bv) \in X_{\Bla}$. 
Such an element  $(x,\Bv) \in X_{\Bla}$ is called a standard element.  
More generally, we consider an element $\Bw = (w_1, \dots, w_{r-1})$  
of the form 
\begin{equation*}
\tag{5.10.2}
w_i = \sum_{j=1}^{\ell}\sum_{k = 1}^{\la_j^{(1)}+\cdots + \la_j^{(i)}} 
                    a_{j,k}u_{j,k}  
\end{equation*}  
with $a_{j,k} \in \Bk$ such that  
$a_{j, \la_j^{(1)}+\cdots + \la_j^{(i)}} \ne 0$.  
Then $w_i \in W_i$, and $E^{\bar x}\ol w_i$ coincides with 
$W_i/W_{i-1}$.  Hence $(x, \Bw) \in X_{\Bla}$.
Here $(x,w_i)$ is conjugate to  
an element $(x,v_i)$ as above under the group $Z_G(x)$ for each $i$.  
We call $(x, \Bw)$ a semi-standard element.  
We define a set $X_{\Bla}^0$ as the set 
of all $G$-conjugates of $(x,\Bw)$.  Hence $X_{\Bla}^0$ is 
a $G$-stable subset of $X_{\Bla}$.  
We note that 
\par\medskip\noindent
(5.10.3) \ $X_{\Bla}^0$ is a $G$-stable open dense subset 
of $X_{\Bla}$. 
\par\medskip
In fact, take $(x, \Bv) \in X_{\Bla}^0$.  Under the notation in the proof of Proposition 5.4, 
put $X_{\Bla,x}^0 = X_{\Bla}^0 \cap X_{\Bla, x}$.  
Then under the isomorphism  in (5.4.4), we have
\begin{equation*}
X_{\Bla,x}^0 \simeq Z_G(x) \times^{Z_G(x,v_1)}(X'^0_{\Bla',\bar x} \times (W^0)^{r-2}), 
\end{equation*} 
where $X_{\Bla',\bar x}'^0$ is a similar variety as $X_{\Bla, x}^0$ defined for $X'_{\Bla'}$,
and $W^0$ is an open dense subset of $W$.   
Hence by induction on $r$, $X_{\Bla, x}^0$ is an open dense subset of $X_{\Bla}$. 
Since, under the isomorphism  in (5.4.3), $X_{\Bla}^0$ can be written as 
\begin{equation*}
X_{\Bla}^0 \simeq G \times^{Z_G(x,v_1)}(X'^0_{\Bla',\bar x} \times (W^0)^{r-2}),  
\end{equation*}
$X_{\Bla}^0$ is open dense in $X_{\Bla}$.  Hence (5.10.3) holds.  
\par
Concerning the closure $\ol X_{\Bla}$, we have the following result.

\begin{prop} %%%% Prop. 5.11.
For each $\Bla \in \CP_{n,r}$, we have
\begin{equation*}
\ol X_{\Bla}  \subset \bigcup_{\Bmu \le \Bla}X_{\Bmu}.
\end{equation*}
\end{prop}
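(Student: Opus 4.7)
I would prove the proposition by induction on $r$. The base cases $r=1$ (Gerstenhaber--Hesselink: closure order on nilpotent orbits equals dominance order on $\CP_n$) and $r=2$ (from the orbit closure description of Achar--Henderson [AH]) are known. For $r\ge 3$, the argument proceeds in two parts: (a) use the surjective resolution $\pi_\Bla:\wt X_\Bla\to\ol X_\Bla$ from Lemma 5.6(ii) to extract from any $(x_0,\Bv_0)\in\ol X_\Bla$ a partial flag whose successive-quotient Jordan types are dominated by the $\la^{(i)}$, and (b) prove a key lemma that the canonical type $\Bmu$ of $(x_0,\Bv_0)$ is itself dominated by the type sequence of any $x_0$-stable flag containing the prescribed vectors, via a secondary induction on $r$.

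For (a): choose a preimage $(x_0,\Bv_0,g_0 P_\Bla)\in\wt X_\Bla$. The partial flag $V_{p_i}:=g_0 M_{p_i}$ is $x_0$-stable with $v_{0,i}\in V_{p_i}$ and $\dim V_{p_i}/V_{p_{i-1}}=|\la^{(i)}|$, and the finer flag $g_0 M_{n(i,j)}$ equips each quotient $V_{p_i}/V_{p_{i-1}}$ with an $x_0$-stable refinement whose successive dimensions are the parts of $(\la^{(i)})^*$ in increasing order. By the standard Springer-map description of Richardson orbits (image of the Springer map for the parabolic with Levi $\prod_j GL_{\mu_j^{(i)}}$ is the closure of the orbit of type $\la^{(i)}$), the Jordan type $\sigma^{(i)}$ of $x_0|_{V_{p_i}/V_{p_{i-1}}}$ satisfies $\sigma^{(i)}\le\la^{(i)}$ in the dominance order on $\CP_{|\la^{(i)}|}$. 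A direct partial-sum calculation, writing any partial sum of $c(\Bla)-c((\sigma^{(1)},\dots,\sigma^{(r)}))$ as a sum of column-wise partial sums of $\la^{(i)}-\sigma^{(i)}$ (each non-negative by partition dominance), shows that $(\sigma^{(1)},\dots,\sigma^{(r)})\le\Bla$ in $\CP_{n,r}$.

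For (b), the key lemma asserts: for any $(x,\Bv)\in\CX\uni$ of type $\Bmu$ and any $x$-stable flag $0=V'_0\subset V'_1\subset\cdots\subset V'_r=V$ with $v_i\in V'_i$ for $1\le i\le r-1$, if $\sigma^{(i)}$ denotes the Jordan type of $x|_{V'_i/V'_{i-1}}$, then $\Bmu\le(\sigma^{(1)},\dots,\sigma^{(r)})$ in $\CP_{n,r}$. The base $r=2$ is the dominance direction of the $r=2$ closure statement [AH] applied to $(x,v_1)$ with the subfiltration $V'_1\subset V$. For $r\ge 3$, set $W=E^x v_1$ (dimension $|\mu^{(1)}|$) and $\bar V=V/W$, so that $(\bar x,\bar\Bv)$ has type $(\mu^{(2)},\dots,\mu^{(r)})$; apply the $r=2$ case to $(x,v_1)$ with $V'_1\subset V$ to bound $\mu^{(1)}$, and project the given flag via $\bar V'_i=(V'_i+W)/W$ to apply the inductive hypothesis on $\bar V$ for the residual $(\mu^{(2)},\dots,\mu^{(r)})$.

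The main obstacle is the combinatorial patching in the last step: since $W$ need not be contained in $V'_1$, the projected flag may satisfy $\bar V'_1\ne 0$, and the Jordan types on the projected successive quotients differ from $\sigma^{(i)}$ by the dimensions $\delta_i=\dim(V'_i\cap W)-\dim(V'_{i-1}\cap W)$ absorbed into $W$. Reconciling these requires a careful partial-sum analysis of $c(\Bmu)$ separating the first-row contribution (bounded by the $r=2$ step using data on $V'_1\subset V$) from the later-row contributions (bounded by induction on $\bar V$ with respect to the corrected dimensions), which together yield $\Bmu\le(\sigma^{(1)},\dots,\sigma^{(r)})$ and, in combination with (a), $\Bmu\le\Bla$.
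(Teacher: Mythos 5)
Your proposal is structured as (a) a reduction via the resolution $\pi_{\Bla}\colon\wt X_{\Bla}\to\ol X_{\Bla}$ to show that any $(x_0,\Bv_0)\in\ol X_{\Bla}$ admits an $x_0$-stable compatible flag with quotient types $\sigma^{(i)}\le\la^{(i)}$, so $\Bsigma\le\Bla$, plus (b) a ``flag dominance'' lemma asserting that the canonical type $\Bmu$ of $(x_0,\Bv_0)$ satisfies $\Bmu\le\Bsigma$ for the type sequence $\Bsigma$ of any such flag. This is a genuinely different route from the paper's: the paper proves (5.13.1) directly by the semi-continuity of $\dim\Bk[x]\lp v_a,u_1,\dots,u_k\rp$ in $(x,\Bv)$ (the condition $\le N_a$ is closed), combined with the explicit evaluation of the maximal such dimension on the dense set of semi-standard elements (Lemma 5.12) --- i.e., a direct generalization of the Achar--Henderson argument for $r=2$, with no induction on $r$ and no auxiliary flag lemma. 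Part (a) of your plan is sound (and essentially reproduces one direction of Proposition 5.16 plus an easy partial-sum check that componentwise dominance on the $\la^{(i)}$ implies dominance of $c(\Bsigma)\le c(\Bla)$).

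The gap is in part (b), and it is not a small one. First, notice that, modulo Proposition 5.16 (which is available independently), your lemma (b) is \emph{equivalent} to the proposition you are trying to prove: a compatible flag with exact types $\sigma^{(i)}$ implies $(x,\Bv)\in\ol X_{\Bsigma}$, so (b) is precisely the statement ``$(x,\Bv)\in\ol X_{\Bsigma}\Rightarrow\Bmu\le\Bsigma$.'' Thus the entire difficulty has been relocated into (b), and your argument for (b) is not carried out. The inductive step you sketch has a structural problem you yourself flag but do not resolve: passing to $\bar V=V/W$ with $W=E^xv_1$, the projected flag $0\subset\bar V'_1\subset\cdots\subset\bar V'_r=\bar V$ has $r$ nontrivial terms (since $\bar V'_1$ need not vanish), while the induction hypothesis at level $r-1$ accepts an $(r-1)$-term flag starting at $0$; merging the first two steps into $\bar W_1=\bar V'_2$ changes the quotient type there to something that is neither $\sigma^{(1)}$ nor $\sigma^{(2)}$, and the types $\tau^{(j)}$ on the remaining quotients $(V'_{j+1}+W)/(V'_j+W)$ are not simply $\sigma^{(j+1)}$ either --- the intersections $V'_j\cap W$ can alter Jordan types in ways not controlled by the dimensions $\delta_j$ alone. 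Your phrase ``requires a careful partial-sum analysis \dots\ which together yield $\Bmu\le\Bsigma$'' is a statement of hope, not a proof, and it is exactly here that the work lies. In addition, even the ``bound on $\mu^{(1)}$'' you extract from the $r=2$ case gives dominance of the bipartition $(\mu^{(1)},\mu^{(1)c})\le(\sigma^{(1)},\sigma^{(1)c})$ in $\CP_{n,2}$, which is not the same as a clean inequality for $\mu^{(1)}$ alone that you can splice with the output of the induction on $\bar V$. Until these reconciliations are actually made, the proposal does not constitute a proof.
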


For the proof of the proposition, we need a lemma.  

\begin{lem}  %%% Lemma 5.12
Assume that $(x,\Bv) \in X_{\Bla}$ is a semi-standard element 
with $\Bv = (v_1, \dots, v_{r-1})$.
Let $U_i$ be the $\Bk[x]$-submodule of $V$ generated by $v_i$. 
Then $\dim U_i = \la_1^{(1)} + \cdots + \la^{(i)}_1$. The Jordan type of 
the restriction of $x$ on $V/U_i$ is $\xi = (\xi_1, \dots, \xi_{\ell})$, where  
\begin{equation*} 
\xi_j = \la_j^{(i+1)}+ \cdots + \la_j^{(r)} + 
\la_{j+1}^{(1)} + \cdots + \la_{j+1}^{(i)}, 
\quad \text{ for } j = 1, \dots, \ell.  
\end{equation*}  

\begin{proof}  
In the case where $r = 2$, this result was proved in Lemma 2.5 in [AH].  
The general case is reduced to the case where $r = 2$, by considering  
the double partition $(\mu; \mu')$ with  $\mu = \la^{(1)} + \cdots + \la^{(i)}$, 
$\mu' = \la^{(i+1)} + \cdots + \la^{(r)}$. 
\end{proof}  
\end{lem}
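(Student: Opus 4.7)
The plan is to exploit the hint the author gives: reduce the general-$r$ statement to the $r=2$ case by forming the $2$-partition $(\mu,\mu')$ with $\mu=\la^{(1)}+\cdots+\la^{(i)}$ and $\mu'=\la^{(i+1)}+\cdots+\la^{(r)}$, and then invoke [AH, Lemma~2.5]. The essential point is that once we strip away the extra coordinates $v_1,\dots,v_{i-1},v_{i+1},\dots,v_{r-1}$ and look only at the single pair $(x,v_i)$, the vector $v_i$ is already a semi-standard element (in the $r=2$ sense) for the double partition $(\mu,\mu')$.

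First, I would unpack the expression (5.10.2) for $v_i$ and set $\mu_j=\la_j^{(1)}+\cdots+\la_j^{(i)}$, so that $v_i=\sum_{j,k\le\mu_j}a_{j,k}u_{j,k}$ with $a_{j,\mu_j}\ne 0$. I would check that $\mu=(\mu_1,\dots,\mu_\ell)$ and $\mu'=(\mu'_1,\dots,\mu'_\ell)$ are genuine partitions (this follows since sums of componentwise decreasing sequences are decreasing) and that $\mu+\mu'=\nu$, the Jordan type of $x$. Comparing with (5.10.2) applied in the case $r=2$, one sees that the expansion of $v_i$ is precisely the semi-standard form for the double partition $(\mu,\mu')$; hence $(x,v_i)\in X_{(\mu,\mu')}\subset G\uni\times V$ in the sense of Achar--Henderson.

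Next, I would apply [AH, Lemma~2.5] to $(x,v_i)$ with respect to $(\mu,\mu')$. This gives immediately that $\dim U_i=\dim\Bk[x]v_i=\mu_1=\la_1^{(1)}+\cdots+\la_1^{(i)}$, establishing the first assertion. The same lemma states that the Jordan type of $x|_{V/U_i}$ is $\xi=(\xi_1,\dots,\xi_\ell)$ with $\xi_j=\mu'_j+\mu_{j+1}$. Substituting the definitions of $\mu$ and $\mu'$ gives
\begin{equation*}
\xi_j=\bigl(\la_j^{(i+1)}+\cdots+\la_j^{(r)}\bigr)+\bigl(\la_{j+1}^{(1)}+\cdots+\la_{j+1}^{(i)}\bigr),
\end{equation*}
which is exactly the formula claimed.

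There is not really a hard step: the only thing to verify carefully is the matching of the two notions of ``semi-standard'' — namely, that the condition on $v_i$ coming from its role as the $i$-th component of $\Bv\in X_{\Bla}$ (i.e.\ nonvanishing of the top coefficient $a_{j,\la_j^{(1)}+\cdots+\la_j^{(i)}}$ in each Jordan block) coincides, block by block, with the condition characterizing semi-standard vectors for the double partition $(\mu,\mu')$. Once this identification is in place, the rest is an application of [AH, Lemma~2.5] and a trivial regrouping of indices.
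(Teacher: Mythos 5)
Your proposal is correct and follows exactly the same reduction the paper sketches: form $(\mu,\mu')$ with $\mu=\la^{(1)}+\cdots+\la^{(i)}$, $\mu'=\la^{(i+1)}+\cdots+\la^{(r)}$, observe that $(x,v_i)$ is semi-standard for this double partition, and invoke [AH, Lemma 2.5]. You simply spell out the verification that the two semi-standard conditions match, which the paper leaves implicit.
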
                                                                                                   
\para{5.13.}
We prove the proposition following the strategy 
in the proof of (a part of) [AH, Th. 3.9].
We show that $\Bmu \le \Bla$ if  $(x',\Bv') \in \ol X_{\Bla}$ is of type $\Bmu$.  
For this, it is enough to show that                 
\begin{equation*}  
\tag{5.13.1}  
\begin{split}  
\sum _{i= 1}^k(\mu^{(1)}_i + &\cdots + \mu^{(r)}_i)  
   + (\mu^{(1)}_{k+1} + \cdots \mu^{(a)}_{k+1})  \\   
&\le \sum_{i=1}^k(\la^{(1)}_i + \cdots + \la^{(r)}_i)  
  + (\la^{(1)}_{k+1} + \cdots + \la^{(a)}_{k+1})  
\end{split}  
\end{equation*}  
for $k = 0, \dots, \ell$ and $a = 0, \dots, r-1$  
(we put $\la^{(0)}_j = \mu^{(0)}_j = 0$ by convention).  
Put $\nu = \la^{(1)} + \cdots + \la^{(r)}$ and  $\nu' = \mu^{(1)} + \cdots + \mu^{(r)}$. 
Then the Jordan type of $x'$  is $\nu'$.  Let $\CO_{\nu}$ be the $G$-orbit in $G\uni$ 
corresponding to $\nu \in \CP_n$.  
Since 
$X_{\Bla} \subset \CO_{\nu} \times V^{r-1}$, 
we have $\ol X_{\Bla} \subseteq  \ol\CO_{\nu} \times V^{r-1}$.  
Since $(x',\Bv') \in \ol X_{\Bla}$, we have $x' \in \ol\CO_{\nu}$ and so   
$\nu' \le \nu$.  This proves (5.13.1) in the case where $a = 0$.   
By a similar argument as in the proof of [AH, Th. 3.9], it  follows from Lemma 5.12 that 
\begin{equation*} 
\tag{5.13.2}  
\sum_{i=1}^k(\la^{(1)}_i + \cdots + \la^{(r)}_i)  
  + (\la^{(1)}_{k+1} + \cdots + \la^{(a)}_{k+1})  
\end{equation*}  
is the maximal possible dimension of the $x$-stable subspace  
$\Bk[x]\lp v_{a}, u_1, \dots, u_k \rp $ of $V$,  
a $\Bk[x]$-module generated by $v_a, u_1, \dots, u_k$,  
where $(x,\Bv)$ is a semi-standard element in $X_{\Bla}$ with  
$\Bv = (v_1, \dots, v_{r-1})$, and   $u_1, \dots, u_k$ runs over the elements in $V$.   
We note that the condition  
\begin{equation*}  
\tag{5.13.3}  
\dim \Bk [x]\lp v_a, u_1, \dots, u_k \rp \le N_a, \quad   
(u_1, \dots, u_k \in V, 1 \le a \le r-1)  
\end{equation*}  
for given $\{ N_a \mid 1 \le a \le r-1\}$  is a closed condition on  
$(x,\Bv) \in \CX\uni$ (i.e., the set of $(x,\Bv)$  
satisfying the above condition is a closed subset of  
$\CX\uni$)  since  the condition for a fixed $a$ is  
a closed condition on $\CX\uni$  by [loc. cit.].  
Since $\ol X_{\Bla}^0 = \ol X_{\Bla}$ by (5.10.3), for any element  
$(x',\Bv') \in \ol X_{\Bla}$, the dimension of  $\Bk[x']\lp v'_a, u_1, \dots,u_k\rp$ 
is dominated by the number in (5.13.2).  The assertion (5.13.1) then follows from  this.
Hence Proposition 5.11 holds.
\par\medskip
Combining with the previous results, we have the following.

\begin{prop}  %%%  Prop. 5.14
$X_{\Bla}$ is open dense in $\ol X_{\Bla}$.  Hence 
$X_{\Bla}$ is a $G$-stable locally closed, smooth, irreducible subvariety of  
$\CX\uni$.  
\end{prop}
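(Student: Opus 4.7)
The statement has four clauses: $X_{\Bla}$ is (a) dense in $\ol X_{\Bla}$, (b) open in $\ol X_{\Bla}$, and (c) smooth, irreducible and (d) $G$-stable. Three of these are essentially free of work: $G$-stability was noted already in the construction in 5.3, smoothness and irreducibility are Proposition 5.4, and density is immediate since Lemma 5.6 defines $\ol X_{\Bla}$ to be the closure of $X_{\Bla}$ in $\CX\uni$. So the only substantive point is the openness of $X_{\Bla}$ in $\ol X_{\Bla}$.

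My plan is to identify the complement $\ol X_{\Bla}\setminus X_{\Bla}$ explicitly as the intersection of $\ol X_{\Bla}$ with a finite union of closed subsets. By Proposition 5.11, every point of $\ol X_{\Bla}$ lies in some $X_{\Bmu}$ with $\Bmu\le\Bla$; since the strata $(X_{\Bmu})_{\Bmu\in\CP_{n,r}}$ are pairwise disjoint by (5.3.1), any point of $\ol X_{\Bla}\setminus X_{\Bla}$ must lie in $X_{\Bmu}\subset \ol X_{\Bmu}$ for some $\Bmu<\Bla$. This yields
\[
\ol X_{\Bla}\setminus X_{\Bla}\;\subset\;\ol X_{\Bla}\cap\bigcup_{\Bmu<\Bla}\ol X_{\Bmu}.
\]
The reverse inclusion amounts to checking that $X_{\Bla}\cap\ol X_{\Bmu}=\emptyset$ whenever $\Bmu<\Bla$: applying Proposition 5.11 this time to $\Bmu$, a point $y$ of such an intersection would lie in $X_{\Bnu}$ for some $\Bnu\le\Bmu$, and simultaneously in $X_{\Bla}$; by disjointness of the strata this forces $\Bnu=\Bla$, contradicting $\Bla\le\Bmu<\Bla$. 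Combining the two inclusions gives
\[
\ol X_{\Bla}\setminus X_{\Bla}\;=\;\ol X_{\Bla}\cap\bigcup_{\Bmu<\Bla}\ol X_{\Bmu},
\]
and since $\CP_{n,r}$ is finite and each $\ol X_{\Bmu}$ is closed in $\CX\uni$, the right-hand side is closed in $\ol X_{\Bla}$. Hence $X_{\Bla}$ is open in $\ol X_{\Bla}$, as required.

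I anticipate no real obstacle here: once Propositions 5.4 and 5.11 are in hand, the proof is a short piece of set-theoretic bookkeeping resting on the disjointness of the stratification (5.3.1). The only mildly delicate point is the verification that no point of $X_{\Bla}$ can lie in $\ol X_{\Bmu}$ for $\Bmu$ strictly smaller than $\Bla$, which we handle by a second application of Proposition 5.11.
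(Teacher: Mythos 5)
Your proof is correct and follows the same line as the paper's: the paper also reduces openness to the identity $X_{\Bla} = \ol X_{\Bla}\setminus\bigcup_{\Bmu<\Bla}\ol X_{\Bmu}$, derived from Proposition 5.11 together with the disjointness of the strata in (5.3.1), and then cites Proposition 5.4 for smoothness and irreducibility. You have merely spelled out the two inclusions that the paper states compactly, including the point that a second application of Proposition 5.11 (to $\Bmu$) rules out $X_{\Bla}\cap\ol X_{\Bmu}\ne\emptyset$ for $\Bmu<\Bla$.
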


\begin{proof}
Since $X_{\Bla} = \ol X_{\Bla} \backslash \bigcup_{\Bmu < \Bla}\ol X_{\Bmu}$
by Proposition 5.11, $X_{\Bla}$ is open in $\ol X_{\Bla}$. 
$X_{\Bla}$ is smooth and irreducible by Proposition 5.4.  
\end{proof}

\para{5.15.}
We give here some examples on  the closure relation of $X_{\Bla}$.                                   
First assume that $n = 1$ and $r$ is arbitrary.                                                      
Then $G = GL_1 \simeq \Bk^*$, the multiplicative group of $\Bk$.  
We have $G\uni = \{ 1 \}$, and $\CX\uni \simeq \Bk^{r-1}$. 
$\CP_{1,r}$ consists of $\{ \Bla_1, \dots, \Bla_r\}$, where                                          
$\Bla_i = (\la^{(1)}, \dots, \la^{(r)})$ with $\la^{(r+1-i)} = (1)$ and                              
$\la^{(j)} = \emptyset $ for $j \ne r+1-i$   Thus the dominance order                                
in $\CP_{1,r}$ is given as $\Bla_1 < \Bla_2 < \cdots < \Bla_r$.                                      
Under the identification $\CX\uni \simeq \Bk^{r-1}$, we have                              
\begin{equation*}                                                                                    
\begin{split}                                                                                        
X_{\Bla_i} &= \{ (v_1, \dots, v_{r-1}) \in \Bk^{r-1}                                                 
    \mid v_1 = \dots = v_{r-i} = 0, v_{r-i+1} \in \Bk^* \} \\                                        
   &\simeq                                                                                           
      \begin{cases}                                                                                  
           \Bk^{i-2}\times \Bk^* &\quad\text { if } i \ge 2, \\                                      
           \{ 0\} &\quad\text{ if } i = 1.                                                           
      \end{cases}                                                                                    
\end{split}                                                                                          
\end{equation*}       
Thus $X_{\Bla_i}$ is a locally closed, smooth irreducible subvariety of                              
$\Bk^{r-1}$.  Since $G \simeq \Bk^*$, $X_{\Bla_1} = \{ 0\}$ and                                      
$X_{\Bla_2} \simeq \Bk^*$ are single $G$-orbits, but other                                           
$X_{\Bla_i}$                                                                                         
are a union of infinitely many $G$-orbits. Moreover                                                  
$\ol X_{\Bla_i} \simeq \Bk^{i-1}$, and the closure relation is given                                 
as                                                                                                   
\begin{equation*}
\tag{5.15.1}                                                                                    
\ol X_{\Bla_i} = \bigcup_{j \le i}X_{\Bla_j}.                                                        
\end{equation*}                                                                                      
This is an analogue of the result in the case where $r = 2$,
which asserts that $\ol X_{\Bla}$ is a union of $X_{\Bmu}$ for                                       
$\Bmu \le \Bla$ ([AH, Th. 3.9]). 
\par                                                                                                 
However, such a relation does not hold in general for $r \ge 3$.                                     
In fact, there exists $\Bmu < \Bla$ such that                                                        
$X_{\Bmu} \cap \ol X_{\Bla} \ne \emptyset$ and that                                                  
$X_{\Bmu} \not\subset \ol X_{\Bla}$ as the following example shows.                                  
We consider the case where $n = 2, r=3$, hence $\dim V = 2$.   
Take $\Bla = (1; - ; 1)$                                                                             
and $\Bmu = (1^2; -; -)$.  Then we have $\Bmu < \Bla$.  It is easy to                                
check that                                                                                           
\begin{align*}                                                                                       
X_{\Bla} &= \{ (x,(v_1, v_2)) \in G\uni \times V^2 \mid 
                 x\ne 1, v_1 \in \Ker (x-1) \, \backslash \{0\},                                         
        v_2 \in \Ker (x - 1)  \}, \\                                                                       
X_{\Bmu} &= \{ (x,(v_1, v_2)) \in G\uni \times V^2 \mid  
                x = 1,  v_1 \in V \backslash \{0\}, v_2 \in V\}.                                     
\end{align*}                                    
Then for any element $(x,\Bv) \in X_{\Bla}$, we have $v_2 \in \Bk v_1$.
It follows that $v_2 \in  \Bk v_1$ for $(x,\Bv) \in \ol X_{\Bla}$, 
and we have $X_{\Bmu} \cap \ol X_{\Bla} 
    = \{ (x, v_1,v_2) \mid x = 1, v_1 \ne 0, v_2 \in \Bk v_1 \}$.
In particular, $X_{\Bmu} \cap \ol X_{\Bla} \ne \emptyset$ and 
$X_{\Bmu} \not\subset \ol X_{\Bla}$.  
\par
The following result is an analogue  of [AH, Cor. 3.4].

\begin{prop}  %%%% Prop. 5.16
Let $(x, \Bv) \in \CX\uni$, 
and $\Bla,  \Bxi \in \CP(\Bm)$ for some $\Bm$.  
Then $(x, \Bv) \in \ol X_{\Bla}$ if and only if 
there exists a flag $(W_i)_{1 \le i \le r}$ in 
$V$ such that $\dim W_i = p_i$ satisfying the following conditions;
\begin{enumerate}
\item 
$(W_i)$ is $x$-stable, 
\item 
$x|_{W_i/W_{i-1}}$ has type $\xi^{(i)}$ with $\xi^{(i)} \le \la^{(i)}$ for 
$1 \le i \le r$, 
\item
$v_i \in W_i$ for $1 \le i \le r-1$. 
\end{enumerate}
   
\end{prop}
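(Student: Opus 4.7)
The plan is to prove both implications via the resolution $\pi_{\Bla}: \wt X_{\Bla} \to \ol X_{\Bla}$ of Lemma 5.6, combined with the classical fact that a unipotent element of $GL_m$ has Jordan type $\xi \le \la$ if and only if it preserves some partial flag of $\Bk^m$ with step sizes equal to the parts of $\la^* = (\mu_1, \dots, \mu_\ell)$. This is a standard consequence of Spaltenstein's theory on partial flag varieties attached to nilpotent orbits, and is the same fact that underlies the proof of Lemma 5.6 (ii) via [AH, Prop.~3.3 (4)].

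For the ``only if'' direction, I would suppose $(x, \Bv) \in \ol X_{\Bla}$. By Lemma 5.6 (ii) the proper map $\pi_{\Bla}$ is surjective onto $\ol X_{\Bla}$, so there exists $gP_{\Bla} \in G/P_{\Bla}$ with $(x, \Bv, gP_{\Bla}) \in \wt X_{\Bla}$, i.e., $g\iv xg \in U_{P_{\Bla}}$ and $g\iv v_i \in M_{p_i}$ for $1 \le i \le r-1$. Setting $W_i = g M_{p_i}$ yields $\dim W_i = p_i$, and (iii) is immediate. Since $g\iv xg \in U_{P_{\Bla}}$ preserves the partial flag $(M_{n(i,j)})$, which refines $(M_{p_i})$, condition (i) follows. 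For (ii), the restriction of $g\iv xg$ to $M_{p_i}/M_{p_{i-1}}$ preserves the induced partial flag with step sizes $\mu^{(i)}_1, \dots, \mu^{(i)}_{\ell_i}$, namely the parts of $(\la^{(i)})^*$; by the classical fact above its Jordan type $\xi^{(i)}$ satisfies $\xi^{(i)} \le \la^{(i)}$.

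For the ``if'' direction, I would assume $(x, \Bv)$ admits a flag $(W_i)$ satisfying (i)--(iii). Applying the classical fact to each subquotient gives an $x|_{W_i/W_{i-1}}$-stable partial flag of $W_i/W_{i-1}$ with step sizes $\mu^{(i)}_1, \dots, \mu^{(i)}_{\ell_i}$. Pulling each such flag back through $W_i \to W_i/W_{i-1}$, and using that $W_{i-1}$ is already $x$-stable, produces an $x$-stable partial flag $(V_{n(i,j)})$ of $V$ refining $(W_i)$. Choose $g \in G$ with $g M_{n(i,j)} = V_{n(i,j)}$ for all $(i,j)$; then automatically $g M_{p_i} = W_i$, so $g\iv v_i \in M_{p_i}$ by (iii), while $g\iv xg \in U_{P_{\Bla}}$ by construction. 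Hence $(x, \Bv, gP_{\Bla}) \in \wt X_{\Bla}$, and Lemma 5.6 (ii) yields $(x, \Bv) \in \Im \pi_{\Bla} = \ol X_{\Bla}$.

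The main obstacle lies in the ``if'' direction: one must refine a given $x$-stable coarse flag to an $x$-stable finer flag of the specific shape dictated by $P_{\Bla}$. This reduces, on each subquotient, to the Spaltenstein fact about unipotent elements preserving partial flags, which in fact is required in both directions. Once this is granted, the argument is a direct translation between the incidence-variety description of $\wt X_{\Bla}$ and the intrinsic flag condition on $(x, \Bv)$, and no further surprises should arise.
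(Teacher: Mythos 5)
Your ``if'' direction (\,$\CU\subset\ol X_{\Bla}$\,) follows the paper's argument exactly: refine the given flag $(W_i)$ inside each subquotient to an $x$-stable flag of shape $(M_{n(i,j)})$, find $g$ with $gM_{n(i,j)}=V_{n(i,j)}$, and land in $\wt X_{\Bla}$. For the ``only if'' direction, however, your route differs genuinely from the paper's. You pull back a point of $\ol X_{\Bla}$ through the surjection $\pi_{\Bla}:\wt X_{\Bla}\to\ol X_{\Bla}$, set $W_i=gM_{p_i}$, and read off (i)--(iii) directly (using the Spaltenstein fact again, now in the ``forward'' direction, on each subquotient $W_i/W_{i-1}$). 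The paper instead avoids reusing the Spaltenstein fact: it exhibits $\CU$ as the image of the proper map $f:\wt\CP_{\CO'_{\Bla}}\to G\uni\times V^{r-1}$ (hence $\CU$ is closed), and observes that $X_{\Bla}\subset\CU$ by the very construction of $X_{\Bla}$, whence $\ol X_{\Bla}\subset\CU$. Your version is somewhat more direct and symmetric, at the cost of invoking the Spaltenstein lemma in both directions; the paper's version yields, almost for free, the closedness of the locus $\CU$, which is used again in Corollary 5.17 (i). Both are correct.

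One small imprecision worth fixing: in your statement of the ``classical fact'' you write that $\xi\le\la$ is equivalent to $u$ \emph{preserving} a partial flag with step sizes the parts of $\la^*$. Mere preservation is too weak (any $u\in B$ preserves the full flag but can have arbitrary Jordan type); what you need, and what you in fact use when concluding $g\iv xg\in U_{P_{\Bla}}$, is that $(u-1)$ carries each step into the previous one, i.e.\ that $u$ lies in the unipotent radical of the corresponding parabolic. The rest of your argument correctly uses the stronger form, so this is only a matter of phrasing, not a gap.
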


\begin{proof}
Let $\CU$ be the set of $(x, \Bv)$ satisfying the condition in the proposition. 
We show that $\CU = \ol X_{\Bla}$.  
Take $(x, \Bv) \in \CU$.
In order to show that $(x, \Bv) \in \ol X_{\Bla}$,   
by Lemma 5.6, it is enough to construct $(x, \Bv, gP_{\Bla}) \in \wt X_{\Bla}$. 
Under the notation in 5.5, 
put $V_{n(i,0)} = W_i$ for $i = 1, \dots, r-1$.  By assumption,  
the restriction of $x$ on $W_i/W_{i-1}$ has type $\xi^{(i)}$.  
Since 
$\xi^{(i)} \le \la^{(i)}$, there exists an $x$-stable flag 
\begin{equation*}
W_{i-1} = V_{n(i-1,0)} \subset V_{n(i-1,1)} \subset \cdots \subset V_{n(i,0)} = W_i.
\end{equation*}
such that $(x-1)V_{n(i,j)} \subset V_{n(i, j-1)}$. 
In fact this is an application of Lemma 5.6 for $r = 1$, 
which is a well-konwn result for $GL_n$.  
There exists $g \in G$ such that $g(M_{n(i,j)}) = (V_{n(i,j)})$, 
and $(x, \Bv, gP_{\Bla}) \in \wt X_{\Bla}$. 
Hence $(x, \Bv) \in \ol X_{\Bla}$, and we have $\CU \subset \ol X_{\Bla}$.
\par
Let $P$ be a parabolic subgroup of $G$ which is the stabilizer of a partial flag
$(M_{p_i})_{1 \le i \le r}$, and $L$ the Levi subgroup of $P$ containing $T$.
Thus $L \simeq \prod_i GL_{m_i}$.  Let $\CO'_{\Bla}$ be the $L$-orbit in $L\uni$
corresponding to $\Bla = (\la^{(1)}, \dots, \la^{(r)})$, and $\ol\CO'_{\Bla}$ 
the closure of $\CO'_{\Bla}$ in $L\uni$. 
Let $p: P \to L$ be the natural projection. 
We consider a variety
\begin{equation*}
\wt\CP_{\CO'_{\Bla}} = \{(x, \Bv, gP) \in G\uni \times V^{r-1} \times G/P 
           \mid g\iv xg \in p\iv(\ol\CO'_{\Bla}), g\iv v_i \in M_{p_i} \}, 
\end{equation*}
and let $f : \wt\CP_{\CO'_{\Bla}} \to G\uni \times V^{r-1}$ be the projection 
on the first two factors. 
Then $\Im f = \CU$.  Since $f$ is proper, $\CU$ is a closed subvariety of 
$G\uni \times V^{r-1}$.  The construction of $X_{\Bla}$ implies that 
$X_{\Bla} \subset \CU$.  It follows that $\ol X_{\Bla} \subset \CU$, and so 
$\CU = \ol X_{\Bla}$.  
The proposition is proved. 
\end{proof}

As a corollary, we have the following result.

\begin{cor}  %%%% Cor 5.17.
\begin{enumerate}
\item
Let $\CU_{\CO'_{\Bla}}$ be the set of $(x, \Bv) \in G\uni \times V^{r-1}$
satisfying the condition; there exists a flag $(W_i)_{1 \le i \le r}$ in $V$
such that $\dim W_i = p_i$ and that 
\par
$(a)$ \ $(W_i)$ is $x$-stable,
\par
$(b)$ \ $x|_{W_i/W_{i-1}}$ has type $\la^{(i)}$, 
\par
$(c)$ \ $v_i \in W_i$ for $i = 1, \dots, r-1$.
Then 
\begin{equation*}
\CU_{\CO'_{\Bla}} = \ol X_{\Bla} \ \backslash \ \bigcup_{\Bmu}\ol X_{\Bmu},
\end{equation*}
wehere $\Bmu$ runs over elements in $\CP_{n,r}$ such that 
$\mu^{(i)} \le \la^{(i)}$ for any $i$ and that $\Bmu \ne \Bla$.   
\item
$X_{\Bla}$ coincides with the set of $(x, \Bv) \in \CU_{\CO'_{\Bla}}$ 
such that the Jordan type of $x$ is $\la^{(1)} + \cdots + \la^{(r)}$ 
and that the type of $(x|_{W_i/W_{i-1}}, \ol v_i)$ is $(\la^{(i)}, \emptyset)$ 
for each $i$, 
where $\ol v_i$ is the image of $v_i$ to $W_i/W_{i-1}$.   
\end{enumerate}
\end{cor}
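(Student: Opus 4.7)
The plan is to establish (ii) first by directly unfolding the recursive definition of $X_{\Bla}$ from 5.3, and then to derive (i) by combining (ii) with Propositions 5.11 and 5.16. The underlying picture is that $\CU_{\CO'_{\Bla}}$ parameterises pairs $(x,\Bv)$ equipped with a filtration $(W_i)$ realising the graded types $\la^{(i)}$ exactly, while $X_{\Bla}$ is the locus inside $\CU_{\CO'_{\Bla}}$ on which this filtration may be taken to be the canonical one produced by the recursion $W_1 = E^x v_1$, $W_2/W_1 = E^{\ol x}\ol v_2$, and so on.

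For (ii) I proceed by induction on $r$. In the direction $X_{\Bla}$ is contained in the set described, the recursion in 5.3 starting from $W_1 = E^x v_1$ produces a filtration verifying conditions (a)--(c) of $\CU_{\CO'_{\Bla}}$, realises each pair $(x|_{W_i/W_{i-1}}, \ol v_i)$ as type $(\la^{(i)},\emptyset)$, and delivers the Jordan type $\la^{(1)}+\cdots+\la^{(r)}$ by telescoping. For the reverse inclusion, starting from $(x,\Bv) \in \CU_{\CO'_{\Bla}}$ satisfying the Jordan-type and pair-type conditions, I show that the filtration from $\CU_{\CO'_{\Bla}}$ must coincide with the recursive one. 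The pair-type hypothesis on $(x|_{W_1}, v_1)$ gives $E^{x|_{W_1}}v_1 = W_1$, hence $W_1 \subseteq E^x v_1$, and the Jordan-type constraint rules out any strict enlargement, forcing $E^x v_1 = W_1$. Induction on $r$ applied to $(\ol x, \ol \Bv)$ on $V/W_1$ then concludes. I expect the upgrade $E^{x|_{W_1}}v_1 \subseteq E^x v_1$ to equality to be the main technical obstacle, since it is where the global centraliser information on $V$ must be extracted from purely flag-theoretic data on $W_1$.

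For (i), the inclusion $\CU_{\CO'_{\Bla}} \subseteq \ol X_{\Bla}$ is immediate from Proposition 5.16, as a flag realising the types $\la^{(i)}$ exactly in particular satisfies $\xi^{(i)} \le \la^{(i)}$. For the reverse, I take $(x,\Bv) \in \ol X_{\Bla}$ not lying in any $\ol X_{\Bmu}$ with $\Bmu \ne \Bla$ and $\mu^{(i)} \le \la^{(i)}$, and use Proposition 5.11 to locate $\Bxi \le \Bla$ in the dominance order of 5.1 with $(x,\Bv) \in X_{\Bxi}$; (ii) applied to $\Bxi$ then gives $(x,\Bv) \in \CU_{\CO'_{\Bxi}} \subseteq \ol X_{\Bxi}$, and the size constraints together with the filtration on $V$ carried by each point of $X_{\Bxi}$ allow me to check that $\Bxi \in \CP(\Bm)$ satisfies $\xi^{(i)} \le \la^{(i)}$ componentwise, so the exclusion hypothesis forces $\Bxi = \Bla$ and hence $(x,\Bv) \in \CU_{\CO'_{\Bla}}$. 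The principal subtlety, already flagged in Example 5.15 for $r \ge 3$, is that $\ol X_{\Bla}$ is not a clean union of componentwise-smaller strata, so refining the $c$-dominance from Proposition 5.11 to componentwise dominance is delicate and must invoke the filtration information attached to each point of $X_{\Bxi}$.
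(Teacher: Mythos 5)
Your strategy for (ii) matches the paper's: induct on $r$ and settle the base case $r=2$. But both key steps in your base case need more than you give. Passing from $E^{x|_{W_1}} v_1 = W_1$ to $W_1 \subseteq E^x v_1$ requires extending a $\Bk[x|_{W_1}]$-module endomorphism of $W_1$ to a $\Bk[x]$-module endomorphism of $V$; this is not formal, since $W_1$ is merely an $x$-stable subspace and need not be a $\Bk[x]$-module direct summand of $V$. And ``the Jordan-type constraint rules out any strict enlargement'' is asserted with no mechanism: you never make visible how the hypothesis that the Jordan type of $x$ equals $\la^{(1)}+\cdots+\la^{(r)}$ forces $E^x v_1 \subseteq W_1$. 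The paper dispatches both points at once by appealing to the normal basis of [AH, 2.2], which produces a Jordan basis of $x$ adapted to $v_1$ and makes both containments manifest; citing it would be cleaner than attempting the centraliser-extension argument from scratch.

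For (i) the route you take is both a detour and incomplete. The paper's intended argument for $\ol X_{\Bla} \setminus \bigcup_{\Bmu} \ol X_{\Bmu} \subseteq \CU_{\CO'_{\Bla}}$ uses only Proposition 5.16 and never touches Proposition 5.11: a point of $\ol X_{\Bla}$ already admits, by 5.16, an $x$-stable flag $(W_i)$ with $v_i \in W_i$ and subquotient types $\Bxi$ satisfying $\xi^{(i)} \le \la^{(i)}$ and $|\xi^{(i)}| = m_i$; if $\Bxi \ne \Bla$, applying 5.16 again with $\Bmu = \Bxi$ places the point in $\ol X_{\Bxi}$, contradicting the exclusion hypothesis, so the flag already has exact types and the point is in $\CU_{\CO'_{\Bla}}$. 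Your detour via Proposition 5.11 has gaps you cannot close: the $\Bxi$ it produces is only $c$-dominated by $\Bla$ and is not forced to lie in $\CP(\Bm)$, so ``$\xi^{(i)} \le \la^{(i)}$ componentwise'' need not even be well-posed, and $c$-dominance on $\CP_{n,r}$ does not in general imply componentwise dominance. Finally, you never address the opposite inclusion at all: $\CU_{\CO'_{\Bla}} \subseteq \ol X_{\Bla}$ is the trivial part, and the asserted equality also requires the disjointness $\CU_{\CO'_{\Bla}} \cap \ol X_{\Bmu} = \emptyset$ for every $\Bmu$ in the union. Proposition 5.16 alone does not give this: the existence of a flag realising $\la^{(i)}$ exactly does not a priori exclude a second $x$-stable flag with strictly smaller subquotient types (for instance with $n=4$, $r=2$, $\Bla = ((2),(2))$, $x$ of Jordan type $(2,2)$ and $v_1$ an eigenvector, the flag $\lp e_1,e_2\rp$ realises $\Bla$ exactly while $E^x v_1 = \lp e_1,e_3\rp$ puts $(x,v_1)$ in $X_{((1,1),(1,1))}$). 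This direction must be confronted explicitly rather than passed over.
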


\begin{proof}
(i) is clear from Proposition 5.16.  By induction on $r$, the proof of (ii) is reduced 
to the case where $r = 2$.  In that case, the assertion easily follows from 
the discuusion by using the normal basis [AH, 2.2].  
\end{proof}

\para{5.18.}
Take $\Bm \in \CQ_{n,r}$.  
For each $z = (x,\Bv) \in \CX_{\Bm,\unip}$,
put
\begin{equation*}
\CB_z^{(\Bm)} = \{ gB \in G/B \mid 
    g\iv xg \in U, g\iv \Bv \in \prod_{i=1}^{r-1}M_{p_i} \}. 
\end{equation*}
$\CB^{(\Bm)}_z$ is a closed subvariety of $\CB = G/B$, which is 
isomorphic to the fibre $(\pi^{(\Bm)}_1)\iv(z)$ and is called the Springer fibre of $z$. 
In the case where $r = 2$, $\CB_z^{(\Bm)}$ is isomorphic each other 
for any $z \in X_{\Bla}$ 
since $X_{\Bla}$ is a single $G$-orbit.  However this property does not hold 
in general for $r \ge 3$ as the next examples show.  
First assume that $r = 3$ and $n = 3$.  
Let $\Bm = (2,0,1)$ and $\Bla = ((1^3), -, -)$.  
Let $v_1, v_2$ be linearly independent vectors in $V$, and 
take
$z = (x, (v_1, v_2)), z' = (x, (v_1, 0))$ with $x = 1$.    
Then $z, z' \in X_{\Bla}$.  
Let $\CF(V)$ be the set of total flags $(V_i)$ in $V$.  
We have 
\begin{align*}
\CB_z^{(\Bm)} &\simeq \{ (V_i) \in \CF(V) \mid V_2  = \lp v_1, v_2 \rp \}, \\
\CB_{z'}^{(\Bm)} &\simeq \{ (V_i) \in \CF(V) \mid v_1 \in V_2 \}.
\end{align*} 
Hence $\dim \CB_z^{(\Bm)} = 1, \dim \CB_{z'}^{(\Bm)} = 2$,  and 
in particular, $z, z' \in X_{\Bla} \cap \CX_{\Bm, \unip}$. We have
$\dim \CB_z^{(\Bm)} \ne \dim \CB_{z'}^{(\Bm)}$.    
\par
Next assume that $r = 3$ and $n = 4$.  Let $\Bm = (2,0,2)$ and $\Bla = ((1^2),-, (1^2))$. 
We have $X_{\Bla} \subset \CX_{\Bm,\unip}$.  
Let $x$ be an element in $G\uni$ of type $(2^2)$, and 
$\{ u_{i,j} \mid 1 \le i \le 2, 1 \le j \le 2\}$ be a Jordan baasis of $x$ in $V$.
Put $v_1 = u_{1,1}, v_2 = u_{2,1}$ so that $v_1, v_2$ is a basis of $W = \Ker (x-1)$.
Put $z = (x, (v_1, v_2))$ and $z' = (x, (v_1, 0))$. Then $z, z' \in X_{\Bla}$. 
We have
\begin{align*}
\CB_z^{(\Bm)} &\simeq \{ (V_i) \in \CF_x(V) \mid V_2 = W  \}, \\
\CB_{z'}^{(\Bm)} &\simeq \{ (V_i) \in \CF_x(V) \mid v_1 \in V_2  \},
\end{align*}
where $\CF_x(V)$ is the set of $x$-stable flags in $V$. 
Since $x|_W = 1, x_{V/W} = 1$, we see that 
$\CB_z^{(\Bm)} \simeq \CF(W) \times \CF(V/W)$. Hnece 
$\CB_z^{(\Bm)}$ is irreducible with $\dim \CB_z^{(\Bm)} = 2$. 
On the other hand, if $(V_i) \in \CB_{z'}^{(\Bm)}$, either $V_2 = W$ 
or $V_2$ is of the form  $W_{\a} = \lp u_{1,2} + \a v_2, v_1 \rp$ for $\a \in \Bk$. 
Since $x|_{W_{\a}}$ and $x|_{V/W_{\a}}$ have both type (2), $W_{\a}$ determines 
a unique $(V_i) \in \CB_{z'}^{(\Bm)}$. It follows that 
$\CB_{z'}^{(\Bm)} = \CB_z^{(\Bm)} \coprod Y$ with 
$Y = \{ (V_i) \mid V_2 = W_{\a} (\a \in \Bk) \}$, 
where $Y$ is irreducible with $\dim Y = 1$. 
Hence in this case, $\dim \CB^{(\Bm)}_z = \dim \CB^{(\Bm)}_{z'}$, 
but $\CB^{(\Bm)}_z \not\simeq \CB^{(\Bm)}_{z'}$.    

%%%%%
%%%%%
%%%%%
\par\bigskip\bigskip
\section{Unipotent variety of exotic type}

The ``unipotent part" of the exotic space is called the unipotent 
variety of exotic type. In this section, first we study the unipotent 
variety of exotic type in 6.1 - 6.13.  While in 6.14 - 6.19, we discuss 
the case of enahnced type.  After 6.20, we disucss the both cases 
simultaneously.

\para{6.1.}
We follow  the notation in 1.2.  Assume that $\CX$ is of exotic type.  
As in the enhanced case we define varieties, for each $\Bm \in \CQ_{n,r}$,  
\begin{align*}
\wt\CX_{\Bm,\unip} &= \{ (x, \Bv, gB^{\th}) \in G^{\io\th}\uni \times V^{r-1} 
        \times H/B^{\th}
                 \mid g\iv xg \in U^{\io\th}, g\iv \Bv \in \prod_{i=1}^{r-1}M_{p_i} \}, \\ 
\CX_{\Bm,\unip}  &= \bigcup_{g \in H}g(U^{\io\th} \times \prod_{i=1}^{r-1} M_{p_i}), 
\end{align*}
where $B,T$ are as in 1.2, and $U$ is the unipotent radical of $B$. 
We define a map $\pi_1^{(\Bm)}: \wt\CX_{\Bm, \unip} \to G\uni^{\io\th} \times V^{r-1}$ by           
$\pi_1^{(\Bm)}(x,\Bv,gB^{\th}) = (x,\Bv)$.                                                          
Clearly $\CX_{\Bm, \unip} = \Im \pi_1^{(\Bm)}$.
As in 1.2, in the case where $\Bm = (n, 0, \dots, 0)$, we write                                     
$\wt\CX_{\Bm, \unip}, \CX_{\Bm,\unip}, \pi_1^{(\Bm)}$, etc.                                         
by $\wt\CX\uni, \CX\uni, \pi_1$, etc.     
Note that $\CX_{\Bm, \unip} \subset \CX\uni$ for any $\Bm$, but contrast to the enhanced case, 
$\CX\uni$ does not coincide with 
$G^{\io\th}\uni \times V^{r-1}$ if $r \ge 3$.  
The variety $\CX\uni$ is called a unipotent variety of exotic type.
\par
 Since                                               
$\wt\CX_{\Bm,\unip} \simeq  H \times^{B^{\th}}(U^{\io\th} \times \prod_i M_{p_i})$,                 
$\wt\CX_{\Bm,\unip}$ is smooth and irreducible. 
Moreover, we have
\begin{align*}
\tag{6.1.1}
\dim \wt\CX_{\Bm,\unip} &= \dim H/B^{\th} + (\dim U^{\io\th} + \dim \prod_{i=1}^{r-1}M_{p_i}) \\
                  &= \dim U^{\th} + \dim U^{\io\th} + \sum_{i=1}^{r-1}p_i \\
                  &= 2n^2 - n + \sum_{i=1}^{r-1}(r-i)m_i 
\end{align*}
since $\dim U^{\th} + \dim U^{\io\th} = \dim U$. 
Since $\pi_1^{(\Bm)}$ is proper,                    
$\CX_{\Bm, \unip}$ is a closed irreducible subvariety of $G\uni^{\io\th} \times V^{r-1}$.           
    
\para{6.2.}
Assume that $r \ge 2$, and take $\Bm \in \CQ_{n,r}$.
Let $P$ be the $\th$-stable parabolic subgroup of $G$ such that 
$P^{\th}$ is the stabilizer of the isotropic flag 
$(M_{p_i})_{1 \le i \le r-2}$ in $V$.   
Let $L$ be the $\th$-stable Levi subgroup of $P$ containing $T$ and $U_P$  the 
unipotent radical of $P$. 
Put $V_L' = M_{p_{r-2}}$, and $V_L = V/V_L'$.  
Then $P^{\th}$ acts naturally on $V_L$, and we consider the map 
$\pi_P: P^{\io\th}\uni \times V \to L^{\io\th}\uni \times V_L$, 
$(x,v) \mapsto (x', v')$,  where $x' = p(x)$ for the natural projection
$p: P^{\io\th} \to L^{\io\th}$, and $v \mapsto v'$ is the projection 
$V \to V_L = V/V_L'$. 
The map $\pi_P$ is $P^{\th}$-equivariant with respect to the diagonal 
action of $P^{\th}$ on both varieties.
\par
Let $\CO'$ be an $L^{\th}$-orbit in $L^{\io\th}\uni \times V_L$.
We assume that $\CO'$ is $P^{\th}$-stable. 
Since $\pi_P\iv(\CO') = \bigcup_{g \in P^{\th}}g\cdot \pi_P\iv(z')$ for $z' \in \CO'$, 
$\pi_P\iv(\CO')$ is irreducible (note that $\pi_P\iv(z')$ is irreducible).  
Let
$\CO$ be an $H$-orbit in $G^{\io\th}\uni \times V$ such that 
$\CO \cap \pi_P\iv(\CO')$ is open dense in $\pi_P\iv(\CO')$.  Put 
\begin{equation*}
\CU = \{ (z, gP) \in (G^{\io\th}\uni \times V) \times H/P^{\th} 
            \mid g\iv z \in \pi_P\iv(\CO') \}.
\end{equation*}
Then $\CU \simeq H\times^{P^{\th}}\pi_P\iv(\CO')$ and so 
$\CU$ is an irreducible variety.
Let $f: \CU \to G^{\io\th}\uni \times V$ be the first projection, and 
put $\CU_\CO = f\iv(\CO)$.
Then $\CU_{\CO} \simeq H \times^{P^{\th}}(\CO \cap \pi_P\iv(\CO'))$, and so
$\CU_{\CO}$ is irreducible.
\par
Take $z \in \CO$, and consider a variety
\begin{equation*}
\CP_{z, \CO'} = 
\{ gP^{\th} \in H/P^{\th} \mid g\iv z \in \pi_P\iv(\CO') \}.
\end{equation*}
Note that $\CP_{z, \CO'} \neq \emptyset$. 
We show the following proposition.

\begin{prop}  %%%% Prop. 6.3
Under the setting in 6.2, the followings hold.
\begin{enumerate}
\item 
$\CP_{z, \CO'}$ consists of one point.
\item
$\dim Z_H(z) = \dim Z_{L^{\th}}(z')$ for $z' \in \CO'$.
\item
Let $z_1 \in \pi_P\iv(\CO')$ be such that $\dim Z_H(z_1) = \dim Z_H(z)$.
Then $z_1 \in \CO$. 
\item
Take $z \in \CO \cap \pi_P\iv(\CO')$ and put $z' = \pi_P(z)$. 
Let $Q = Z_P(z')$ be a $\th$-stable subgroup of $P$. 
Then $\dim Z_{Q^{\th}}(z) = \dim Z_H(z)$.  
In particular, 
\begin{equation*}
Z_H(z) = Z_{P^{\th}}(z) = Z_{Q^{\th}}(z).
\end{equation*}
\item 
$H$ acts transitively on $\CU_{\CO}$. 
\item 
$P^{\th}$ acts transitively on $\CO \cap \pi_P\iv(\CO')$, and 
$Q^{\th}$ acts transitively on $\CO \cap \pi_P\iv(z')$ under the 
setting in (iv).
\end{enumerate}
\end{prop}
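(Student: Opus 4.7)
The plan is to prove the proposition by adapting Lusztig--Spaltenstein induction for unipotent orbits to the exotic setting, with all six parts flowing from a single dimension count combined with a rigidity argument for the coset $gP^{\th}$.

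First I would compute $\dim \CU$ from the fibration structure. Since $\pi_P$ factors as the projection $P^{\io\th}\uni \to L^{\io\th}\uni$ (each fiber a translate of $U_P^{\io\th}$) composed with the projection $V \to V_L$ (affine fiber $V_L'$), each fiber of $\pi_P$ is irreducible of dimension $\dim U_P^{\io\th} + \dim V_L'$, so
\begin{equation*}
\dim \CU = \dim H - \dim P^{\th} + \dim \CO' + \dim U_P^{\io\th} + \dim V_L'.
\end{equation*}
Using $\dim P^{\th} = \dim L^{\th} + \dim U_P^{\th}$ together with the parabolic version of the exotic identity underlying (6.1.1) --- namely $\dim U_P^{\th} = \dim U_P^{\io\th} + \dim V_L'$, reflecting how the isotropic subspace $V_L' = M_{p_{r-2}}$ enters the $\th$- and $\io\th$-fixed decomposition of $\Lie U_P$ --- this simplifies to
\begin{equation*}
\dim \CU = \dim H - \dim L^{\th} + \dim \CO'.
\end{equation*}
By the density hypothesis $\CU_{\CO}$ is open dense in the irreducible $\CU$, so $f : \CU_{\CO} \to \CO$ is surjective with $\dim \CU_{\CO} = \dim \CU$; $H$-equivariance and the isomorphism $\CU_{\CO} \simeq H \times^{Z_H(z)}\CP_{z,\CO'}$ then give $\dim \CP_{z,\CO'} = \dim Z_H(z) - \dim Z_{L^{\th}}(z')$, hence in particular $\dim Z_H(z) \ge \dim Z_{L^{\th}}(z')$.

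The heart of the proof is to establish the reverse inequality and the uniqueness claim (i) simultaneously. I would proceed as follows: (a) since $P^{\th}$ acts on the irreducible $\pi_P\iv(\CO')$ with the $P^{\th}$-stable subset $\CO \cap \pi_P\iv(\CO')$ dense, a generic $P^{\th}$-orbit in $\CO \cap \pi_P\iv(\CO')$ is open, which combined with the Levi decomposition $P^{\th} = L^{\th}U_P^{\th}$ yields $\dim Z_{P^{\th}}(z_0) = \dim Z_{L^{\th}}(z'_0)$ for a well-chosen $z_0$; (b) the partial isotropic flag $(gM_{p_i})_{1 \le i \le r-2}$ for which $g\iv z \in \pi_P\iv(\CO')$ is intrinsically determined by $z$, arising as the unique ``$\CO'$-characteristic filtration'' (in the spirit of Proposition 5.16 for the enhanced case). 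From (b) we get $Z_H(z) \subseteq P^{\th}$, so $Z_H(z) = Z_{P^{\th}}(z)$, and combining with (a) yields (i) and (ii), whence $f$ is a bijection.

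Parts (iii)--(vi) then follow formally. Part (iii): if $\dim Z_H(z_1) = \dim Z_H(z)$ but $z_1$ lay in an $H$-orbit $\CO_1 \ne \CO$, then both $\CO \cap \pi_P\iv(\CO')$ and $\CO_1 \cap \pi_P\iv(\CO')$ would be open dense in the irreducible $\pi_P\iv(\CO')$, contradicting their disjointness. Part (iv): (i) and (ii) give $Z_H(z) = Z_{P^{\th}}(z)$, and the projection $p$ is equivariant on $z' = \pi_P(z)$, forcing $Z_H(z) \subseteq Z_P(z')^{\th} = Q^{\th}$, with the dimension equality from (ii). Parts (v) and (vi) are equivalent via $\CU_{\CO} \simeq H \times^{P^{\th}}(\CO \cap \pi_P\iv(\CO'))$, and both reduce to $f$ being a bijection onto the single orbit $\CO$. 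The main obstacle is the rigidity claim (b); I expect it to follow by induction on $r$, reducing to the base case $r = 2$ where the relevant flag is the one singled out by Kato's exotic Jordan decomposition.
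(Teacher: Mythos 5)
Your overall strategy mirrors the paper's (establish a dimension equality, then deduce the remaining parts formally), but the two central claims on which everything hangs are not justified, and one of them is false as stated.

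Claim (a) asserts that because $\CO \cap \pi_P\iv(\CO')$ is a dense $P^{\th}$-stable subset of the irreducible variety $\pi_P\iv(\CO')$, a generic $P^{\th}$-orbit inside it is open. This does not follow from density: a dense invariant subset need not contain any open orbit (think of $GL_n$ acting on $\mathfrak{gl}_n$ by conjugation, where the dense set of regular semisimple elements has all orbits of codimension $n$). In fact, the openness of the generic $P^{\th}$-orbit is essentially part (vi) of the proposition, which is what needs to be proved. The genuine content of the proof is exactly the dimension inequality that would make an open $P^{\th}$-orbit possible, and in the paper this is supplied by [SS1, Prop.\ 5.7]: it gives $\dim(\CO \cap \pi_P\iv(z')) \le (c-c')/2$ and $\dim \CP_{z,\CO'} \le \dim U_P^{\th} - (c-c')/2$; combined with the density hypothesis (which forces $\CO \cap \pi_P\iv(z')$ to be open dense in $\pi_P\iv(z')$, hence of dimension $\dim U_P^{\th}$), one gets $\dim U_P^{\th} \le (c-c')/2$ and hence $\dim \CP_{z,\CO'} \le 0$. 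Without citing these two estimates or an equivalent, your one-sided inequality $\dim Z_H(z) \ge \dim Z_{L^{\th}}(z')$ cannot be upgraded to equality.

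Claim (b) --- that the partial isotropic flag giving $g\iv z \in \pi_P\iv(\CO')$ is ``intrinsically determined by $z$'' --- is essentially assertion (i) itself, so invoking it as a rigidity lemma is circular. There is no analogue of Proposition 5.16 available at this point in the exotic case (the paper's proof of Proposition 5.16 is in the enhanced setting and relies on the resolution $\wt X_{\Bla}$, not on centralizer dimensions). The paper instead derives (i) \emph{after} (iv) and (v) have been established from the zero-dimensionality of $\CP_{z,\CO'}$: (v) gives $H$-transitivity on $\CU_\CO$, and (iv) gives $Z_H(z) = Z_{P^{\th}}(z)$, so any two points of $\CP_{z,\CO'}$ differ by an element of $P^{\th}$. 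So the logical order in the paper is (6.3.1) first, then (ii), (iv), (v), (vi), and only then (i) and (iii), whereas your proposal tries to get (i) and (ii) first as input to the others; that inversion is what creates the circularity.
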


\begin{proof}
First we show that 
\begin{equation*}
\tag{6.3.1}
\dim \CP_{z, \CO'} = 0.
\end{equation*}
Replacing $z$ and $\CP_{z, \CO'}$ by $H$-conjugate, if necessary, 
we may assume that $z \in \CO \cap \pi_P\iv(\CO')$.
Put $z' = (x',v') = \pi_P(z)$. 
Since $\pi_P\iv(z') = (x'U_P)^{\io\th} \times (v' + V_L')
           \simeq U_P^{\io\th} \times V_L'$, 
we have
\begin{equation*}
\tag{6.3.2}
\dim \pi_P\iv(z') = \dim U_P^{\io\th} + \dim V_L'
        = \dim U_P^{\th}.
\end{equation*}
Put $c = \dim \CO$ and $c' = \dim \CO'$. 
Then by [SS11, Prop. 5.7 (i)] (for a correction, see   
[SS2, Appendix]), we have 
$\dim (\CO \cap \pi_P\iv(z')) \le (c - c')/2$.  
Since $\CO \cap \pi_P\iv(\CO')$ is open dense in $\pi_P\iv(\CO')$, 
$\CO \cap \pi_P\iv(z')$ is open dense in $\pi_P\iv(z')$. 
It follows that 
\begin{equation*}
\dim U_P^{\th} \le (c  - c')/2.
\end{equation*}
On the other hand, by [SS1, Prop. 5.7 (ii)] together with [SS2, Appendix],  we have
\begin{align*}
\tag{6.3.3}
\dim \CP_{z, \CO'} &\le (\nu_H - c/2) - (\nu_{L^{\th}} - c'/2)  \\
                       &= \dim U^{\th} - \dim U_L^{\th} - (c - c')/2 \\
                       &=  \dim U_P^{\th} - (c - c')/2.
\end{align*}
Hence $\dim \CP_{z, \CO'} \le 0$.  As 
$\CP_{z, \CO'} \ne\emptyset$, we obtain (6.3.1).
\par
Substituting this into (6.3.3), we have 
$c - c' = 2\dim U_P^{\th}$.   
This implies that $\dim Z_H(z) = \dim Z_{L^{\th}}(z')$. 
Hence (ii) holds. 
\par
Under the setting in (iv), 
$Q^{\th}$ stabilizes $\pi_P\iv(z')$, and so
$\CO_Q \subset \CO \cap \pi_P\iv(z')$.  
Since $\CO'$ is an $P^{\th}$-orbit, we have 
$\dim Z_{P^{\th}}(z') = \dim Z_{L^{\th}}(z') + \dim U_P^{\th}$. 
Hence by (6.3.2) we have
\begin{equation*}
\tag{6.3.4}
\dim U^{\th}_P \ge \dim \CO_Q = \dim Z_{L^{\th}}(z') + \dim U_P^{\th} - \dim Z_{Q^{\th}}(z).
\end{equation*}
It follows that $\dim Z_{Q^{\th}}(z) \ge \dim Z_{L^{\th}}(z')$.  Since 
$\dim Z_{Q^{\th}}(z) \le \dim Z_H(z)$, we have 
$\dim Z_{Q^{\th}}(z) = \dim Z_H(z)$ by (6.3.4).  
Since $Z_H(z)$ is connected, this implies that $Z_H(z) = Z_{Q^{\th}}(z)$. 
Hence $Z_H(z) = Z_{P^{\th}}(z)$.  
Thus (iv) holds.   
\par
For any $z \in \CO$, $\dim f\iv(z) = 0$ by (6.3.1).  Thus 
$\dim \CU_{\CO} = \dim \CO$.  We show that $H$ acts transitively on $\CU_{\CO}$.
Take $\xi \in \CU_{\CO}$, and consider the $H$-orbit $H\xi$ of $\xi$.  Since
$f$ is $H$-equivariant, $f$ maps $H\xi$ onto $\CO$.  Hence $H\xi$ is irreducible 
with $\dim H\xi = \dim \CU_{\CO}$. In particular $H\xi$ is open dense in $\CU_{\CO}$.
If we take another $\xi' \in \CU_{\CO}$, $H\xi'$ is also open dense in $\CU_{\CO}$.  Hence 
$H\xi \cap H\xi' \ne \emptyset$.  Thus $\CU_{\CO} = H\xi$ and (v) holds.
\par
Take $z, z_1 \in \CO \cap \pi_P\iv(\CO')$.  
Since $(z, P^{\th}), (z_1, P^{\th})$ are both contained in $\CU_{\CO}$, they are in the same 
$H$-orbit by (v).  Hence there exists $g \in P^{\th}$ such that $gz = z_1$.      
This proves the first statement of (vi).  
Then for $z_1, z_2 \in \CO \cap \pi_P\iv(z')$, there exists $p \in P^{\th}$ such that
$pz_1 = z_2$.  But since $\pi_P$ is $P^{\th}$-equivariant, 
$p \in Z_{P^{\th}}(z') = Q^{\th}$.  This proves the second statement of (vi). 
\par
We show (i).  We may assume that $z \in \CO \cap \pi_P\iv(\CO')$. 
Then $P^{\th} \in \CP_{z, \CO'}$. 
Assume that $gP^{\th} \in \CP_{z, \CO'}$.  Then $(z, P^{\th}), (z, gP^{\th})$ 
are both contained in $\CU_{\CO}$. By (v) they are conjugate under $H$.
It follows that there exists $h \in Z_H(z)$ 
such that $gP^{\th} = hP^{\th}$.  But by (iv), we have $h \in P^{\th}$, and so 
$gP^{\th} = P^{\th}$.  This proves (i).  
\par
Finally we show (iii). Let $\CO_1$ be the $H$-orbit of $z_1$.  Then 
$\dim \CO_1 = c$.  By a similar argument as in the proof of (6.3.1), we have
$\dim \CP_{z_1, \CO'} = 0$.   Then a similar argument as in the proof of (iv) 
implies that $Z_H(z_1) = Z_{Q^{\th}}(z_1)$.  Hence 
$\dim Z_{Q^{\th}}(z) = \dim Z_{Q^{\th}}(z_1)$.  This shows that the $Q^{\th}$-orbit
of $z_1$ in $\pi_P\iv(z')$ has the same dimension as the $Q^{\th}$-orbit of $z$.  
Since both orbits are open dense in $\pi_P\iv(z')$, they have an intersection.  Hence 
$\CO = \CO_1$.  This proves (iii).  The proposition is proved. 
\end{proof}

\para{6.4.}
Let $P$ be as in 6.2.
For
$\Bla \in \CP(\Bm)$, we define a subset $\CM_{\Bla}$ of 
$P^{\io\th}\uni \times (\prod_{i=1}^{r-2}M_{p_i} \times M_{p_{r-2}}^{\perp})$ as 
the set of $(x, \Bv)$ satisfying the following properties; 
take $(x, \Bv)$ such that $x \in P^{\io\th}\uni$ and that $v_i \in M_{p_i}$ for 
$ i = 1, \dots, r-2$, $v_{r-1} \in M_{p_{r-2}}^{\perp}$.  
Put $\ol M_{p_i} = M_{p_i}/M_{p_{i-1}}$, and let $\ol v_i \in \ol M_{p_i}$ be the 
image of $v_i \in M_{p_i}$  for $i = 1, \dots, r-2$.  
We also put $\ol M'_{p_{r-1}} = M_{p_{r-2}}^{\perp}/M_{p_{r-2}}$.  Thus 
$\ol M_{p_{r-1}}'$ has a structure of a symplectic vector space, and one can 
consider an exotic symmetric space $GL(\ol M_{p_{r-1}}')^{\io\th} \times \ol M_{p_{r-1}}'$.
We also consider enhanced spaces $GL(\ol M_{p_i}) \times \ol M_{p_i}$ for $i = 1, \dots, r-2$.
Let $\ol v_{r-1}$ be the image of $v_{r-1}$ on $\ol M'_{p_{r-1}}$.
We assume that 
the $GL(\ol M_{p_i})$-orbit of $(x|_{\ol M_{p_i}}, \ol v_i)$ has type 
$(\la^{(i)}, \emptyset)$ for $i = 1, \dots, r-2$, and that the 
$GL(\ol M'_{p_{r-1}})^{\th}$-orbit of $(x|_{\ol M'_{p_{r-1}}}, \ol v_{r-1})$ has 
type $(\la^{(r-1)}, \la^{(r)})$.     
Let $\CO$ be the $H$-orbit of $z = (x, v_{r-1}) \in G^{\io\th}\uni \times V$ and 
$\CO'$ the $L^{\th}$-orbit of $z' = \pi_P(z)$. Note that $\CO'$ is $P^{\th}$-stable.
We further assume that 
\par\medskip\noindent
(6.4.1) \ $\CO \cap \pi_P\iv(\CO')$ is open dense in $\pi_P\iv(\CO')$. 
\par\medskip
We define $X_{\Bla}$ by $X_{\Bla} = \bigcup_{g \in H}g\CM_{\Bla}$. 
Note that in the case where $r = 2$, $X_{\Bla}$ coincides with the $H$-orbit 
$\CO_{\Bla}$ in [SS1]. 
\par
It follows from the construction that $\CM_{\Bla}$ is a $P^{\th}$-stable subset 
of $P^{\io\th} \times V^{r-1}$.  
The closure $\ol \CM_{\Bla}$ of $\CM_{\Bla}$ in $P^{\io\th} \times V^{r-1}$ is 
also $P^{\th}$-stable.  
We define a variety $\wt \CF_{\Bla}$ by 
$\wt \CF_{\Bla} = H \times^{P^{\th}}\ol\CM_{\Bla}$.
Let $\pi_{\Bla} : \wt \CF_{\Bla} \to \CX\uni$ be the map induced from the map
$H \times \ol\CM_{\Bla} \to \CX\uni, (g, z) \mapsto g\cdot z$.  Then 
$\pi_{\Bla}$ is a proper map.  
We define a subset $\wt \CF_{\Bla}^0$ of $\wt \CF_{\Bla}$ by 
$\wt \CF_{\Bla}^0 = H \times^{P^{\th}}\CM_{\Bla}$, and let 
$\pi_{\Bla}^0$ be the restriction of $\pi_{\Bla}$ on $\wt \CF_{\Bla}^0$.  
It is clear that $\pi^0_{\Bla}$ is a surjective map onto $X_{\Bla}$.    

\para{6.5.}
Take $x_i \in GL(\ol M_{p_i})\uni$ with Jordan type $\la^{(i)}$ for 
$1 \le i \le r-2$.  Then the set of $\ol v_i \in \ol M_{p_i}$ such that 
$(x_i, \ol v_i)$ is of type $(\la^{(i)}, \emptyset$) is open dense in $\ol M_{p_i}$.
Take $(x_{r-1}, \ol v_{r-1}) \in GL(\ol M_{p_{r-1}}')^{\io\th}\uni \times \ol M_{p_{r-1}}'$
with type $(\la^{(r-1)}, \la^{(r)})$.   
Now $(x_1, \dots, x_{r-1})$ determines a unique element in $L^{\io\th}\uni$ which we denote by 
$x'$. Put $z' = (x', \ol v_{r-1})$.  
The set of $(x, v_{r-1}) \in G^{\io\th}\uni \times V$ such that 
$(x, v_{r-1}) \in \pi_P\iv(z')$ satisfying the condition (6.4.1)
is open dense in $\pi_P\iv(z')$. In fact, it coincides with 
$\CO \cap \pi_P\iv(z')$ for an $H$-orbit $\CO$.
Put $\Bv' = (\ol v_1, \dots, \ol v_{r-1})$.  For a fixed 
$(x', \Bv')$, let $\CM_{(x', \Bv')}$ 
be the set of $(x, \Bv)$ with $\Bv = (v_1, \dots, v_{r-1})$ 
such that $(x, v_{r-1}) \in \CO \cap \pi_P\iv(z')$ as in (6.4.1)
and that the image of $v_i$ on $\ol M_{p_i}$ coincides with 
$\ol v_i$.  As $\CM_{(x', \Bv')}$ is an open dense subset of 
$\prod_{i =1}^{r-3} M_{p_i} \times \pi_P\iv(z')$, we have
\begin{equation*}
\dim \CM_{(x', \Bv')} = \sum_{i=1}^{r-3}\dim M_{p_i} + \dim \pi_P\iv(z').  
\end{equation*}
It follows that $\CM_{(x', \Bv')}$ is smooth irreducible, and
\begin{equation*}
\tag{6.5.1}
\dim \CM_{(x',\Bv')} = \sum_{i=1}^{r-3}(r-i-2)m_i + \dim U_P^{\th}.
\end{equation*}
(Note $\dim \pi_P\iv(z') = \dim U_P^{\th}$ by (6.3.2).)
Since 
\begin{equation*}
\CM_{\Bla} = \bigcup_{g \in L^{\th}}g\CM_{(x', \Bv')} 
  \simeq L^{\th} \times^{Z_{L^{\th}}(x',\Bv')}\CM_{(x',\Bv')},
\end{equation*} 
$\CM_{\Bla}$ is smooth, irreducible, and open dense in $\ol \CM_{\Bla}$.  Moreover
\begin{equation*}
\tag{6.5.2}
\dim \CM_{\Bla} = \dim \CO_{(x', \Bv')} + \dim \CM_{(x',\Bv')}, 
\end{equation*}
where 
$\CO_{(x',\Bv')}$ is the $L^{\th}$-orbit of 
$(x',\Bv') \in L^{\io\th}\uni \times (\prod_{i=1}^{r-2}\ol M_{p_i} 
        \times \ol M'_{p_{r-1}})$.  
We have $\dim \CO_{(x',\Bv')} = \sum_{i = 1}^{r-1}\dim \CO'_i$, where 
$\CO'_i$ is the $GL(\ol M_{p_i})$-orbit of $(x_i, \ol v_i)$ for $i = 1, \dots, r-2$, 
and $\CO'_{r-1}$ is the $GL(\ol M_{p_{r-1}}')^{\th}$-orbit of $(x_{r-1}, \ol v_{r-1})$.
 Hence by [AH, Prop. 2.8] and [SS1, Lemma 2.3],
\begin{equation*}
\dim \CO'_i = \begin{cases}
                 m_i^2 - 2n(\la^{(i)}) 
                    &\quad\text{ for } 1 \le i \le r-2, \\
                 2{m'}_{r-1}^2 - 2{m'}_{r-1} - 4n(\la^{(r-1)} + \la^{(r)}) + 2|\la^{(r-1)}|
                    &\quad\text{ for } i = r-1,
              \end{cases}
\end{equation*}
where $m_{r-1}' = m_{r-1} + m_r$.  This implies that
\begin{equation*}
\tag{6.5.3}
\begin{split}
\dim \CO_{(x',\Bv')} = \sum_{i=1}^{r-2}m_i^2 &+ 2{m'}^2_{r-1} - 2{m'}_{r-1}  \\ 
                  &- 2n(\Bla) -2n(\la^{(r-1)} + \la^{(r)}) + 2|\la^{(r-1)}|.  
\end{split}
\end{equation*} 
We have the following lemma.
\begin{lem}  %%%%  Lemma 6.6.
$\wt \CF_{\Bla}$ is an irreducible variety 
 with 
\begin{equation*}
\dim \wt \CF_{\Bla} = 2n^2 - 2n - 2n(\Bla) - 2n(\la^{(r-1)} + \la^{(r)})
                     + \sum_{i=1}^{r-1}(r-i+1)|\la^{(i)}|.
\end{equation*}
$\wt \CF^0_{\Bla}$ is a smooth and open dense subvareity of $\wt \CF_{\Bla}$.
\end{lem}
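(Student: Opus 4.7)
The plan is to view $\wt\CF_{\Bla} = H \times^{P^{\th}}\ol\CM_{\Bla}$ as the total space of a fiber bundle over the isotropic partial flag variety $H/P^{\th}$ with fiber $\ol\CM_{\Bla}$, and to reduce every assertion of the lemma to properties of $\CM_{\Bla}$ that are already essentially established in 6.5.

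First, the computation preceding (6.5.2) shows that $\CM_{\Bla}$ is smooth and irreducible, hence $\ol\CM_{\Bla}$ is irreducible with $\CM_{\Bla}$ an open dense subset. Since $H$ is connected and $H \to H/P^{\th}$ is a locally trivial $P^{\th}$-bundle, it follows formally that $\wt\CF_{\Bla}$ is irreducible. The same fibration argument applied to $\wt\CF^0_{\Bla} = H \times^{P^{\th}}\CM_{\Bla}$ gives smoothness, and the openness and density of $\CM_{\Bla}$ in $\ol\CM_{\Bla}$ transfer at once to $\wt\CF^0_{\Bla} \subset \wt\CF_{\Bla}$. In particular $\dim \wt\CF_{\Bla} = \dim H/P^{\th} + \dim \CM_{\Bla}$.

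For the dimension formula, rather than compute $\dim H/P^{\th}$ and $\dim U_P^{\th}$ separately, I would use only their sum: since $P^{\th} = L^{\th}U_P^{\th}$, one has
\begin{equation*}
\dim H/P^{\th} + \dim U_P^{\th} = \dim H - \dim L^{\th}.
\end{equation*}
Here $H = Sp_{2n}$ has dimension $2n^2+n$, and $L^{\th} \simeq \prod_{i=1}^{r-2}GL_{m_i} \times Sp_{2m'_{r-1}}$ with $m'_{r-1} = m_{r-1}+m_r$ has dimension $\sum_{i=1}^{r-2}m_i^2 + 2{m'}^2_{r-1} + m'_{r-1}$. Combining this with (6.5.2), (6.5.1), (6.5.3), the quadratic contributions $\sum_{i=1}^{r-2}m_i^2$ and $2{m'}^2_{r-1}$ cancel pairwise, leaving
\begin{equation*}
\dim \wt\CF_{\Bla} = 2n^2 + n - 3m'_{r-1} - 2n(\Bla) - 2n(\la^{(r-1)}+\la^{(r)}) + 2|\la^{(r-1)}| + \sum_{i=1}^{r-3}(r-i-2)m_i.
\end{equation*}
Substituting $m'_{r-1} = n - \sum_{i=1}^{r-2}m_i$ and regrouping then yields the asserted formula
\begin{equation*}
2n^2 - 2n - 2n(\Bla) - 2n(\la^{(r-1)}+\la^{(r)}) + \sum_{i=1}^{r-1}(r-i+1)|\la^{(i)}|,
\end{equation*}
where the happy cancellation $-2m_{r-1} + 2|\la^{(r-1)}| = 0$ is the key simplification.

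The main obstacle is not conceptual but combinatorial: matching the sum $\sum_{i=1}^{r-3}(r-i-2)m_i$ contributed by the ``frozen coordinates'' in (6.5.1) against the target sum $\sum_{i=1}^{r-1}(r-i+1)|\la^{(i)}|$, once the $-3m'_{r-1}$ term is redistributed. Since all ingredients are already in place, the verification is a careful but routine bookkeeping exercise, and the structural statements (irreducibility, smoothness, density) reduce immediately to the corresponding properties of $\CM_{\Bla}$.
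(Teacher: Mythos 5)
Your proposal is correct and follows essentially the same route as the paper: both establish irreducibility, smoothness, and density of $\wt\CF^0_{\Bla}$ in $\wt\CF_{\Bla}$ by passing the corresponding properties of $\CM_{\Bla} \subset \ol\CM_{\Bla}$ through the associated bundle $H \times^{P^{\th}}(-)$, and both obtain the dimension by combining $(6.5.1)$--$(6.5.3)$ with $\dim H - \dim L^{\th}$ and then substituting $m_i = |\la^{(i)}|$, $m'_{r-1} = m_{r-1}+m_r$. The only (purely cosmetic) difference is that the paper first records $\dim H/P^{\th} = \dim U_P^{\th}$ before invoking $2\dim U_P^{\th} = \dim H - \dim L^{\th}$, whereas you use the sum $\dim H/P^{\th} + \dim U_P^{\th} = \dim H - \dim L^{\th}$ directly; both lead to the same intermediate expression and the same bookkeeping.
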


\begin{proof}
Since $\wt \CF_{\Bla} = H \times^{P^{\th}}\ol \CM_{\Bla}$, and $\ol\CM_{\Bla}$ is 
irreducible, $\wt \CF_{\Bla}$ is irreducible.  
Since $\wt \CF_{\Bla}^0 = H \times^{P^{\th}}\CM_{\Bla}$, and $\CM_{\Bla}$ is 
smooth and open dense in $\ol\CM_{\Bla}$, $\wt \CF_{\Bla}^0$ is smooth and 
open dense in $\wt \CF_{\Bla}$.  
We have
$\dim \wt \CF_{\Bla} = \dim H/P^{\th} + \dim \CM_{\Bla}
                   = \dim U_P^{\th} + \dim \CM_{\Bla}$.
Here
\begin{align*}
2 \dim U_P^{\th} &= \dim H - \dim L^{\th}  \\
                 &= (2n^2 + n) - (\sum_{i=1}^{r-2}m_i^2 + 2{m'}^2_{r-1} + m'_{r-1}). 
\end{align*} 
Then by (6.5.1) $\sim$ (6.5.3), we see that
\begin{equation*}
\begin{split}
\dim \wt \CF_{\Bla} = 2n^2 + n - 3m'_{r-1} - &2n(\Bla) - 2n(\la^{(r-1)} + \la^{(r)}) \\
                     &+ \sum_{i=1}^{r-3}(r-i - 2)m_i + 2|\la^{(r-1)}|.  
\end{split}
\end{equation*} 
The lemma follows from  this if we note that 
$m _i = |\la^{(i)}|$ for $i = 1, \dots, r$ 
and $m'_{r-1} = m_{r-1} + m_r$. 
\end{proof}

\begin{prop}  %%%%  Prop. 6.7.
Assume that $\Bla \in \CP_{n,r}$.  
\begin{enumerate}
\item
$\Im \pi_{\Bla} = \ol X_{\Bla}$.
\item
$\pi^0_{\Bla} : \wt \CF^0_{\Bla} \to X_{\Bla}$ gives an isomorphism 
$\wt \CF^0_{\Bla}\isom X_{\Bla}$.
\item
$X_{\Bla}$ is a smooth, irreducible, and  
\begin{equation*}
\dim X_{\Bla} = 2n^2 - 2n - 2n(\Bla) - 2n(\la^{(r-1)} + \la^{(r)}) 
          + \sum_{i=1}^{r-1}(r-i+1)|\la^{(i)}|.
\end{equation*} 
\item
$X_{\Bla}$ is a locally closed subvariety of $\CX\uni$.
\end{enumerate}
\end{prop}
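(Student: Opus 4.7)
The plan is to prove (ii) first, since (i), (iii), and (iv) will follow almost formally from it. The map $\pi^0_\Bla : \wt\CF^0_\Bla = H\times^{P^\th}\CM_{\Bla} \to X_\Bla$ is surjective by the very definition $X_\Bla = \bigcup_{g\in H} g\CM_\Bla$. For injectivity, suppose $g_1 z_1 = g_2 z_2$ with $z_i = (x_i,\Bv_i) \in \CM_\Bla$, and set $h = g_2^{-1} g_1 \in H$. Then $h z_1 = z_2$, so in particular $h(x_1, v_{1,r-1}) = (x_2, v_{2,r-1})$, and both pairs lie in $\CO \cap \pi_P\iv(\CO')$ by construction of $\CM_\Bla$. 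Proposition 6.3(vi) furnishes $p \in P^\th$ with $p(x_1, v_{1,r-1}) = (x_2, v_{2,r-1})$; then $p\iv h$ centralizes $(x_1, v_{1,r-1})$, and Proposition 6.3(iv) forces $p\iv h \in P^\th$, hence $h \in P^\th$. Since $\CM_\Bla$ is $P^\th$-stable, setting this $h$ as the element of $P^\th$ realizes $[g_1,z_1] = [g_2,z_2]$. To upgrade this $H$-equivariant bijective morphism with smooth source into an isomorphism, I will follow the local-slice pattern of \cite{SS1, Lemma 5.3}: the centralizer identity $Z_H(z) = Z_{P^\th}(z)$ from Proposition 6.3(iv) matches the tangent directions coming from the $H$-orbit with those coming from the fibre structure $H\times^{P^\th}(\,\cdot\,)$, which together with the dimension equality in Lemma 6.6 forces the differential to be an isomorphism at every point.

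Once (ii) is established, part (iii) is immediate: $X_\Bla$ inherits smoothness, irreducibility, and the dimension formula from $\wt\CF^0_\Bla$ by Lemma 6.6, whose dimension expression matches the one stated in the proposition term by term. For (i), the image $\Im \pi_\Bla$ is closed in $\CX\uni$ because $\pi_\Bla$ is proper, irreducible as the image of the irreducible $\wt\CF_\Bla$ (Lemma 6.6), and contains $X_\Bla = \pi_\Bla(\wt\CF^0_\Bla)$. Since $\wt\CF^0_\Bla$ is open dense in $\wt\CF_\Bla$, its image $X_\Bla$ is dense in $\Im \pi_\Bla$, and therefore $\Im \pi_\Bla = \ol X_\Bla$.

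For (iv), properness of $\pi_\Bla$ makes $C := \pi_\Bla(\wt\CF_\Bla \setminus \wt\CF^0_\Bla)$ closed in $\ol X_\Bla$, so it suffices to show $X_\Bla \cap C = \emptyset$. Were $y \in X_\Bla$ also of the form $\pi_\Bla(\tilde y)$ for some $\tilde y = [g_1, z_1]$ with $z_1 \in \ol\CM_\Bla \setminus \CM_\Bla$, one writes $y = g_2 z_2$ with $z_2 \in \CM_\Bla$ and runs the injectivity argument of (ii): Proposition 6.3(vi) together with (iv) again yields $g_1\iv g_2 \in P^\th$, forcing $z_1 = (g_1\iv g_2) z_2 \in P^\th \cdot \CM_\Bla = \CM_\Bla$, a contradiction. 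Hence $X_\Bla = \ol X_\Bla \setminus C$ is open in $\ol X_\Bla$ and locally closed in $\CX\uni$. The main obstacle in this outline is the isomorphism upgrade in (ii); everything else reduces to combining Proposition 6.3 with standard properties of proper morphisms and closures.
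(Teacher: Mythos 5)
Your overall plan matches the paper's: establish (ii), then derive (iii), (i), and (iv) from it, with Proposition 6.3 and Lemma 6.6 as the key inputs. The bijectivity argument for $\pi^0_{\Bla}$ via Proposition 6.3(vi) and (iv) is a minor variant of the paper's direct use of 6.3(i), and that part is fine. However, the upgrade from bijective morphism to isomorphism in (ii) is only sketched, and as stated it does not close the loop: a bijective morphism of varieties, even with smooth source, need not be an isomorphism (Frobenius in characteristic $p$ is the standard counterexample). If you take the tangent-space route you must actually verify the differential is an isomorphism at every closed point, so that the map is bijective and \'etale. The paper's route is more direct: one checks that $y\mapsto gP^{\th}$, where $gP^{\th}$ is the unique coset with $g^{-1}y\in\CM_{\Bla}$, is a morphism $X_{\Bla}\to H/P^{\th}$, and then uses the associated-bundle structure $\wt\CF^0_{\Bla}=H\times^{P^{\th}}\CM_{\Bla}$ to build the two-sided inverse explicitly.

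The argument for (iv) has a genuine gap. You invoke Proposition 6.3(vi), whose hypothesis is that both $(x_1,v_{1,r-1})$ and $(x_2,v_{2,r-1})$ lie in $\CO\cap\pi_P^{-1}(\CO')$. For $z_2\in\CM_{\Bla}$ this is part of the definition, but for $z_1\in\ol\CM_{\Bla}\setminus\CM_{\Bla}$ you only know that $(x_1,v_{1,r-1})\in\CO$ (since it is $H$-conjugate to $(x_2,v_{2,r-1})$) and that $\pi_P(x_1,v_{1,r-1})\in\ol{\CO'}$; a priori the projection could land in a boundary orbit $\CO''\subsetneq\ol{\CO'}$, in which case 6.3(vi) does not apply. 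You need a further argument, for instance comparing the centralizer dimension of $\CO$ with that of the generic orbit over $\CO''$ via Proposition 6.3(ii) and (iii), to rule this out before the transitivity statement can be used. The paper is also terse at this step, simply asserting that $\pi_{\Bla}(\wt\CF_{\Bla}\setminus\wt\CF^0_{\Bla})=\ol X_{\Bla}\setminus X_{\Bla}$, so this is precisely the point where a careful proof must supply something.
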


\begin{proof}
$\pi^0_{\Bla} : \wt \CF^0_{\Bla} \to X_{\Bla}$ is $H$-equivariant, 
and surjective. Take $(x, \Bv) \in \CM_{\Bla}$ and 
put $z = (x, v_{r-1})$, $z' = \pi_P(z)$.  Let $\CO'$ be 
the $L^{\th}$-orbit of $z'$.  
Then 
\begin{equation*}
(\pi^0_{\Bla})\iv(x, \Bv) \simeq \{ gP^{\th} \in H/P^{\th} 
     \mid g\iv(x, \Bv) \in \CM_{\Bla} \} \subset \CP_{z, \CO'},
\end{equation*} 
where $\CP_{z, \CO'}$ is as in 6.2.
By Proposition 6.3 (i), $\CP_{z, \CO'} = \{ P^{\th} \}$.
By the definition of $\wt \CF_{\Bla}$, $(\pi^0_{\Bla})\iv(x, \Bv)$ 
contains $P^{\th}$.  Hence we see that $(\pi^0_{\Bla})\iv(x, \Bv) = \{ P^{\th} \}$.
It follows that $\pi^0_{\Bla}$ is a bijective morphism.
The correspondence $g(x, \Bv) \mapsto gP^{\th}$ gives a well-defined morphism 
from  the $H$-orbit of $(x, \Bv) \in \CM_{\Bla}$ to $H/P^{\th}$.  This induces 
a morphism from $X_{\Bla}$ to $\wt \CF^0_{\Bla}$.  
It is easy to check that this gives an inverse of $\pi^0_{\Bla}$.  Hence 
$\wt \CF^0_{\Bla} \simeq X_{\Bla}$ and (ii) holds.  (iii) is immediate from 
(ii) and Lemma 6.6.
By (ii) $\Im \pi_{\Bla}$ contains $X_{\Bla}$.  Since $\pi_{\Bla}$ is proper, 
$\Im \pi_{\Bla}$ is closed, hence $\Im \pi_{\Bla}$ contains $\ol X_{\Bla}$.  
We have $\dim \wt \CF_{\Bla} = \dim X_{\Bla}$ by (iii), and $\Im \pi_{\Bla}$, 
$X_{\Bla}$ are irreducible. Thus (i) holds. 
Here $\wt \CF_{\Bla} \backslash \wt \CF_{\Bla}^0$ is a closed subset 
of $\wt \CF_{\Bla}$, and its image by $\pi_{\Bla}$ coincides with 
$\ol X_{\Bla} \backslash X_{\Bla}$.  Hence $\ol X_{\Bla} \backslash X_{\Bla}$ 
is closed.  This implies that $X _{\Bla}$ is open dense in $\ol X_{\Bla}$, and 
so (iv) holds.  The proposition is proved.
\end{proof}

\para{6.8}
Assume that $\Bm \in \CQ_{n,r}^0$.  We define a set $\wt\CP(\Bm)$ by 
\begin{equation*}
\tag{6.8.1}
\wt\CP(\Bm) = \coprod_{0 \le k \le m_{r-1}}\CP(\Bm(k)),
\end{equation*}
(see 1.6 for the definition of $\Bm(k)$).
For $\Bm \in \CQ_{n,r}$, let $\Bla(\Bm) \in \CP_{n,r}$ as in 5.8.
Let $\ol X_{\Bla}$ be the closure of $X_{\Bla}$ in $\CX\uni$.
As an analogue of Proposition 5.9, we prove the following.

\begin{prop}  %%%  Prop. 6.9.  
Assume that $\Bm \in \CQ_{n,r}^0$. 
\begin{enumerate}
\item
$\dim \wt\CX_{\Bm,\unip} = \dim \CX_{\Bm, \unip}$.
\item
$\CX_{\Bm, \unip} = \ol X_{\Bla(\Bm)}$.  
\item
Assume that $\Bmu \in \wt\CP(\Bm)$. 
Then $X_{\Bmu} \subset \CX_{\Bm,\unip}$.
\end{enumerate}
\end{prop}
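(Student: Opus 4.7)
The plan is to prove (iii) first and then to derive (i) and (ii) together from a dimension count. For (iii), I would fix $\Bmu \in \CP(\Bm(k))$ with $0 \le k \le m_{r-1}$; since $\CX_{\Bm,\unip}$ is $H$-stable and $X_{\Bmu} = H \cdot \CM_{\Bmu}$ by Proposition 6.7 (ii), it suffices to check $\CM_{\Bmu} \subset \CX_{\Bm,\unip}$. Given $(x,\Bv) \in \CM_{\Bmu}$, the partial flag $M_{p_1} \subset \cdots \subset M_{p_{r-2}}$ is $x$-stable with $v_i \in M_{p_i}$ for $i \le r-2$, so what remains is to build an $x$-stable Lagrangian $W_{p_{r-1}} \subset V$ that extends $M_{p_{r-2}}$ and contains $v_{r-1}$, and then to refine the resulting partial isotropic flag into a complete $x$-stable isotropic flag (the latter supplying the required element of $H/B^{\th}$).

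To produce $W_{p_{r-1}}$, I would work in the symplectic quotient $M_{p_{r-2}}^{\perp}/M_{p_{r-2}}$ of dimension $2m_{r-1}$. The induced pair $(\bar x, \bar v_{r-1})$ is an element of the level-$2$ exotic unipotent cone associated to this symplectic space, which by [K], [SS1] equals the full product $GL(M_{p_{r-2}}^{\perp}/M_{p_{r-2}})^{\io\th}\uni \times M_{p_{r-2}}^{\perp}/M_{p_{r-2}}$; for it, the level-$2$ map $\pi_1$ is surjective. Consequently there is an $\bar x$-stable Lagrangian of the quotient containing $\bar v_{r-1}$; pulling back gives $W_{p_{r-1}}$, of dimension $p_{r-1} = n$ since $m_r = 0$. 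Refining the partial isotropic flag to a complete one is then routine, since on each graded piece $x$ restricts to a unipotent endomorphism of a finite-dimensional vector space and so lies in some Borel of $GL_k$.

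For (i) and (ii), I would apply (iii) with $\Bmu = \Bla(\Bm)$, which lies in $\CP(\Bm(m_{r-1})) = \CP(\Bm) \subset \wt\CP(\Bm)$ since $\la^{(i)} = (m_i)$. A short computation using Proposition 6.7 (iii) --- noting that $n(\Bla(\Bm)) = 0$ and $n(\la^{(r-1)} + \la^{(r)}) = n((m_{r-1})) = 0$, combined with the identity $\sum_{i=1}^{r-1}(r-i+1)m_i = \sum_{i=1}^{r-1}(r-i)m_i + n$ (using $m_r = 0$) --- yields
\[
\dim X_{\Bla(\Bm)} = 2n^2 - n + \sum_{i=1}^{r-1}(r-i)m_i,
\]
which matches $\dim \wt\CX_{\Bm,\unip}$ by (6.1.1). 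Since $\CX_{\Bm,\unip}$ is the proper image of the irreducible $\wt\CX_{\Bm,\unip}$, it is a closed irreducible subset of $\CX\uni$ of dimension at most $\dim \wt\CX_{\Bm,\unip}$. The inclusion $X_{\Bla(\Bm)} \subset \CX_{\Bm,\unip}$ then forces equality of dimensions, giving (i); and as $\CX_{\Bm,\unip}$ is irreducible and contains $X_{\Bla(\Bm)}$ with matching dimension, we obtain $\CX_{\Bm,\unip} = \ol X_{\Bla(\Bm)}$, which is (ii).

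The main obstacle is the Lagrangian construction in step (iii): it is the one geometric input that converts the combinatorial labelling of $\Bmu$ into an explicit flag, and it rests on the level-$2$ surjectivity statement from [K], [SS1], which plays the role of the base case. Everything else reduces to standard flag refinement and a dimension bookkeeping.
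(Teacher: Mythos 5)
Your proof is correct and follows the same overall strategy as the paper: establish (iii) first, then obtain (i) and (ii) from the dimension computation applied to $\Bla(\Bm)$. The one place you go beyond the paper is part (iii). The paper asserts tersely that for $z = (x,\Bv) \in \CM_{\Bmu}$ one has $(z, B^{\th}) \in \wt\CX_{\Bm,\unip}$ ``from the construction,'' but as you noticed this is not immediate: the definition of $\CM_{\Bmu}$ only gives $x \in P^{\io\th}\uni$ and $v_{r-1} \in M_{p_{r-2}}^{\perp}$, not $x \in U^{\io\th}$ and $v_{r-1} \in M_n$, so one really must produce some $g$ with $(z, gB^{\th}) \in \wt\CX_{\Bm,\unip}$. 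Your argument — pass to the symplectic quotient $M_{p_{r-2}}^{\perp}/M_{p_{r-2}}$, invoke the level-$2$ surjectivity $\CX\uni = G^{\io\th}\uni \times V$ from [K], [SS1] to get an $\bar x$-stable Lagrangian through $\bar v_{r-1}$, pull back and refine to a complete $x$-stable isotropic flag — is exactly the right way to fill this in, and the dimension bookkeeping for (i), (ii) matches the paper's computation verbatim. In short: same route, with the geometric step in (iii) spelled out rather than asserted.
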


\begin{proof}
We consider the surjective map $\pi_1^{(\Bm)}: \wt\CX_{\Bm, \unip} \to \CX_{\Bm, \unip}$.
Take $z = (x, \Bv) \in \CM_{\Bmu}$ for $\Bmu \in \CP(\Bm(k))$. 
Then it follows from  the construction of $\CM_{\Bmu}$ that 
$(z, B^{\th}) \in \wt X_{\Bm, \unip}$.  Hence $z \in X_{\Bm, \unip}$. 
Since $X_{\Bm \unip}$ is 
$H$-stable, we see that $X_{\Bmu} \subset \CX_{\Bm, \unip}$.  This proves 
(iii).   Put $\Bmu = \Bla(\Bm)$.  By (iii) we have 
$X_{\Bmu} \subset \CX_{\Bm, \unip}$.  
By (6.1.1), 
\begin{equation*}
\dim \wt \CX_{\Bm \unip} = 2n^2 - n + \sum_{i=1}^{r-1}(r-i)m_i.
\end{equation*}
On the other hand, by Proposition 6.7, we have
\begin{align*}
\dim X_{\Bmu} &= 2n^2 - 2n  + \sum_{i=1}^{r-1}(r-i+1)m_i  \\
              &=   2n^2 - n + \sum_{i=1}^{r-1}(r-i)m_i.
\end{align*}
(Note that $\sum_{i=1}^{r-1}m_i = n$ since $m_r = 0$.)
Since $\ol X_{\Bmu}$ is a closed, irreducible subset of $\CX_{\Bm \unip}$ with 
$\dim \ol X_{\Bla} = \dim \wt\CX_{\Bm \unip}$,  we conclude that 
$\dim \wt\CX_{\Bm\unip} = \dim \CX_{\Bm, \unip}$ and 
 $\ol X_{\Bmu} = \CX_{\Bm, \unip}$.  This proves (i) and (ii).
\end{proof}

\para{6.10.}
For a fixed $\Bm \in \CQ_{n,r}$, let $P$ be as in 6.2.  
For $\Bla \in \CP(\Bm)$, we consider the $L^{\th}$-orbit $\CO'$
and $H$-orbit $\CO$ as in 6.4. Since $\CO'$ is determined by $\Bla$, we
denote it by $\CO' = \CO'_{\Bla}$.  We shall determine the type of $\CO$.
For a partition 
$\nu = (\underbrace{a, \dots, a}_{k\text{-times}}) = (a^k) \in \CP_{ak}$ of rectangular 
type, we define a double 
partition $[\nu] \in \CP_{ak,2}$ by 
$[\nu] = (a^{k'}, a^{k''})$, where $k' = [k+1/2], k'' = k - k'$.
In general, a partition $\nu$ can be decomposed uniquely as a sum of 
partitions of rectangular type 
\begin{equation*}
\tag{6.10.1}
\nu = (a_1)^{k_1} + (a_2)^{k_2} + \cdots + (a_{\ell})^{k_{\ell}}, 
\end{equation*}
where we write the dual partition $\la^*$ as 
$\la^* = ((k_1)^{a_1}, (k_2)^{a_2}, \dots, (k_{\ell})^{a_{\ell}})$ 
for $k_1 > k_2 > \cdots > k_{\ell} > 0$.  
We define $[\nu] \in \CP_{|\nu|,2}$ by 
\begin{equation*}
\tag{6.10.2}
[\nu] = [(a_1)^{k_1}] + [(a_2)^{k_2}] + \cdots + [(a_{\ell})^{k_{\ell}}].
\end{equation*}
For a given $\Bla \in \CP_{n,r}$, we define $[\Bla] \in \CP_{n,2}$ by 
\begin{equation*}
[\Bla] = [\la^{(1)}] + [\la^{(2)}] + \cdots + [\la^{(r-2)}] + (\la^{(r-1)}, \la^{(r)}).
\end{equation*}
We have the following lemma.

\begin{lem}  %%%% Lemma 6.11.
Let $\CO$ be a unique $H$-orbit in $G^{\io\th} \times V$ such that 
$\CO \cap \pi_P\iv(\CO'_{\Bla})$ is open dense in $\pi_P\iv(\CO'_{\Bla})$.  
Then $\CO = \CO_{[\Bla]}$. 
\end{lem}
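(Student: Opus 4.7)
The plan is to exhibit an explicit $z = (x, v)$ lying in both $\CO_{[\Bla]}$ and $\pi_P\iv(\CO'_{\Bla})$, and to verify that $z$ is in the open dense $H$-orbit of $\pi_P\iv(\CO'_{\Bla})$ by invoking Proposition 6.3(iii). First I would fix a $\th$-stable symplectic direct sum decomposition
\begin{equation*}
V = V_1 \oplus V_2 \oplus \cdots \oplus V_{r-1}
\end{equation*}
compatible with the isotropic flag $M_{p_1} \subset \cdots \subset M_{p_{r-2}}$: for each $i \le r-2$, $V_i$ is a $2m_i$-dimensional symplectic subspace having $\ol M_{p_i}$ as a Lagrangian together with a chosen symplectic complement, and $V_{r-1} = \ol M'_{p_{r-1}}$. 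Under this splitting, $L$ is block-diagonal and $P^{\io\th}$ decomposes accordingly. I would then construct $z$ as a block-diagonal element: on each $V_i$ with $i \le r-2$ take $(y_i, 0)$ with $y_i = x_i \oplus x_i^* \in GL(V_i)^{\io\th}\uni$ built from a unipotent $x_i$ of Jordan type $\la^{(i)}$ on $\ol M_{p_i}$; on $V_{r-1}$ take a normal-form representative $(x_{r-1}, \ol v_{r-1})$ of the exotic orbit of type $(\la^{(r-1)}, \la^{(r)})$. By construction $z \in P^{\io\th} \times M_{p_{r-2}}^{\perp}$ and $\pi_P(z) \in \CO'_{\Bla}$.

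The heart of the argument is to identify the $H$-orbit of $z$. Since $z$ decomposes as an orthogonal symplectic direct sum of pieces, and the $H$-orbit label in the Kato parametrization (cf. [K], [SS1]) is additive under such decompositions (immediate from the normal-form description of orbits), the label of $z$ is the sum of the labels of the pieces. The $V_{r-1}$-block contributes $(\la^{(r-1)}, \la^{(r)})$ by choice. For each $V_i$ with $i \le r-2$, one reduces, via the decomposition of $x_i$ into its rectangular Jordan summands, to the single-rectangle case: for $\la = (a^k)$, the element $(x_0 \oplus x_0^*, 0)$ in a $2ak$-dimensional symplectic space should have label $(a^{k'}, a^{k''})$ with $k' = \lceil k/2 \rceil$, $k'' = \lfloor k/2 \rfloor$, which is exactly $[\la]$ by (6.10.1)--(6.10.2). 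Summing over blocks then yields the label $[\Bla]$ for $z$, so $z \in \CO_{[\Bla]}$.

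Finally, to conclude $z \in \CO$, I would verify $\dim Z_H(z) = \dim Z_{L^{\th}}(\pi_P(z))$ and apply Proposition 6.3(ii),(iii). This reduces, using the block structure of $z$ and the known centralizer dimensions in the individual orbits, to the combinatorial identity $\dim \CO_{[\Bla]} - \dim \CO'_{\Bla} = 2 \dim U_P^{\th}$, which can be checked directly from the dimension formulas for the $H$-orbits on $G^{\io\th}\uni \times V$ (known from [K]) and for the Levi orbits. The main obstacle is the block-level computation identifying $(x_0 \oplus x_0^*, 0)$ with the orbit labelled by $(a^{\lceil k/2 \rceil}, a^{\lfloor k/2 \rfloor})$: this is a direct but delicate verification against Kato's normal form for $Sp$-orbits in the exotic nilpotent cone, and is precisely what motivates the definition of $[\la]$ in (6.10.1)--(6.10.2).
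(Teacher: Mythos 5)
Your overall strategy — exhibit a block-diagonal element, invoke additivity of the Kato label under orthogonal symplectic direct sum, verify the centralizer-dimension identity, and close with Proposition 6.3(ii),(iii) — is exactly the architecture of the paper's proof, and the additivity claim and the reduction of the dimension identity to rectangular blocks are both sound. The problem is the single-block input: the element $(x_0 \oplus x_0^*, 0)$ with vector component $0$ does \emph{not} lie in $\CO_{[\la]}$. In the parametrization used here (see 5.3 together with [AH, Th.~6.1]), the first member of the Kato label records the type of $x$ on the subspace generated by the vector under the centralizer algebra; if $v = 0$ that subspace is zero, so $(x, 0)$ always has label $(\emptyset, \nu)$, where $\nu$ is the half-Jordan-type of $x$. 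For $\la = (a^k)$ your element therefore lies in $\CO_{(\emptyset,(a^k))}$, not $\CO_{((a^{k'}),(a^{k''}))}$, and summing over blocks gives the label $(\la^{(r-1)},\ \la^{(1)}+\cdots+\la^{(r-2)}+\la^{(r)})$ rather than $[\Bla]$. Your $z$ is in $\pi_P\iv(\CO'_{\Bla})$, hence in $\ol\CO$, but it sits in a strictly smaller orbit.

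This is not easily patched by just turning on a nonzero vector: the constraint $\pi_P(z) \in \CO'_{\Bla}$ forces the vector on the block $V_i$ to lie in the Lagrangian $M_{V_i}$, and one must \emph{also} take $y_i$ to be a generic lift of $x_i$ into $P_i^{\io\th}$ (nontrivial $U_{P_i}^{\io\th}$-part), not the product $x_i \oplus x_i^*$. For instance, with $\la = (1^2)$ on a $4$-dimensional symplectic block, $(1, v_0)$ with $v_0$ generic in the Lagrangian has label $((1^2),\emptyset)$ of dimension $4$, while the open dense orbit in $\pi_{P_1}\iv(\CO'_{(1^2)})$ has dimension $6$ and label $((1),(1)) = [\la]$, achieved only when the unipotent part of $y$ is nontrivial. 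Identifying this generic label is precisely the content of the paper's (6.11.5), and the paper does \emph{not} do it by direct inspection of Kato's normal forms; it passes to the enhanced nilpotent cone, uses the exotic--enhanced comparison [AH, Th.~6.1] to recognize $\CO_{[\la]}$ as $\OO_{(\nu'\cup\nu', \nu''\cup\nu'')} \cap (G^{\io\th}\uni\times V)$, and then applies the closure/intersection criterion [AH, Cor.~3.4] (stated as (6.11.6)) to the rectangular $\Bxi$. What you call a ``direct but delicate verification'' is in fact the whole point of the lemma, and the specific element you verify it against is not the right one.
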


\begin{proof}
We show that $\CO_{[\Bla]}$ satisifes the required condition.
Take $z \in \CO_{[\Bla]}$, and $z' \in \CO'_{\Bla}$.  
First we show
that 
\begin{equation*}
\tag{6.11.1}
\dim Z_H(z) = \dim Z_{L^{\th}}(z').
\end{equation*} 
In fact, $\dim \CO'_{\Bla} = \sum_{i=1}^{r-2}\dim \CO'_{\la^{(i)}} + \dim \CO'_0$, 
where $\CO'_{\la^{(i)}}$ (resp. $\CO'_0$) is the $GL(\ol M_{p_i})$-orbit of type 
$\la^{(i)}$ 
(resp. $GL(\ol M'_{p_{r-1}})^{\th}$-orbit of type $(\la^{(r-1)}, \la^{(r)})$)
 corresponding to $\CO'_{\Bla}$.  We have
\begin{align*}
\dim \CO'_{\la^{(i)}} &= m_i^2 - m_i - 2n(\la^{(i)}) \text{ for }1 \le i \le r-2, \\
\dim \CO'_0 &= 2{m'}^2_{r-1} - 2m'_{r-1} - 4n(\la^{(r-1)} + \la^{(r)}) + 2|\la^{(r-1)}|. 
\end{align*} 
(The first formula is well-known, see, e.g. [AH, Prop. 2.8 (4)].  The second one 
is by [SS1, Lemma 2.3]. )
Hence
\begin{equation*}
\tag{6.11.2}
\dim Z_{L^{\th}}(z') = \sum_{i=1}^{r-2}(m_i + 2n(\la^{(i)})) 
                        + 3m'_{r-1} + 4n(\la^{(r-1)} + \la^{(r)}) - 2|\la^{(r-1)}|.
\end{equation*}
On the other hand, if we write $[\Bla] = (\nu', \nu'')$, 
$\dim \CO_{[\Bla]}$ is given by a similar formula as $\dim \CO'_0$.  Hence  
we have
\begin{equation*}
\tag{6.11.3}
\dim Z_H(z) = 3n + 4n([\Bla]) - 2n (\nu').
\end{equation*}
Here, for $i = 1, \dots, r-2$, we show the formula 
\begin{equation*}
\tag{6.11.4}
m_i + 2n(\la^{(i)}) = 3m_i + 4n([\la^{(i)}]) - 2|{\nu_{(i)}}'|,
\end{equation*}
where we write $[\la^{(i)}]$ as $({\nu_{(i)}}', {\nu_{(i)}}'')$.
By the additivitiy of $n$-function together with (6.10.1) and (6.10.2), 
it is enough to verify (6.11.4) in the case where 
$\la^{(i)}$ is of rectangular type.  
Assume that $\la^{(i)} = (a^{2k-1})$. 
In this case, $m_i = a(2k-1), |\nu_{(i)}'| = ak, 
n(\la^{(i)}) = a(k-1)(2k-1)$ and $n([\la^{(i)}]) = a(k-1)^2$.
Thus (6.11.4) holds.  The case where $\la^{(i)} = (a^{2k})$ is dealt similarly. 
Now by substituting (6.11.4) into the formula in (6.11.2), and by comparing it 
with (6.11.3), we obtain (6.11.1). 
\par
In order to show the lemma, it is enough to see that 
$\CO_{[\Bla]} \cap \pi_P\iv(\CO'_{\Bla}) \ne \emptyset$ by Proposition 6.3 (iii).
First we show 
\par\medskip\noindent
(6.11.5) \  Assume that $r = 2$ and $m_1 = n, m_2 = 0$.  
Then  $\CO_{[\Bla]} \cap \pi_P\iv(\CO'_{\Bla}) \ne \emptyset$.
\par\medskip\noindent
In this case $\Bla = (\la^{(1)}, \emptyset)$, $[\Bla] = [\la^{(1)}]$. 
We consider the variety $G\uni \times V$, and we denote by $\OO_{\Bxi}$ 
the $G$-orbit in $G\uni \times V$ corresponding to $\Bxi \in \CP_{2n,2}$. 
The map $\pi_P$ is naturally extended to a map 
$\wt\pi_P : P\uni \times V \to L\uni \times V_L$.
Then by [AH, Cor. 3.4] (see Cor. 5.17 in the case where $r = 2$),  
we have
\par\medskip\noindent
(6.11.6) \ 
$\wt\pi_P\iv(\CO'_{\Bla}) \cap \OO_{\Bxi} \neq \emptyset$ if and only if 
               $\Bxi \le (\la^{(1)}, \la^{(1)})$ and 
$\Bxi \not\le \Bmu$ for any $\Bmu = (\mu', \mu'') \ne (\la^{(1)}, \la^{(1)})$ such that 
$\mu' \le \la^{(1)}, \mu'' \le \la^{(1)}$. 
\par\medskip
Moreover, by [AH, Th. 6.1], $\OO_{\Bxi} \cap (G^{\io\th}\uni\times V)  \neq \emptyset$
only when $\Bxi$ is of the form  $\Bxi = (\xi'\cup \xi', \xi''\cup \xi'')$, and in that
case $\OO_{\Bxi} \cap (G^{\io\th}\uni \times V) = \CO_{(\xi', \xi'')}$.  
Write $[\Bla] = (\nu', \nu'')$, and consider $\Bxi = (\nu'\cup \nu', \nu''\cup \nu'')$.
We show that $\Bxi$ satisfies the condition in (6.11.6).
By the decompostion (6.10.2), we may assume that $\la^{(1)}$ is of rectangular type.
So we assume that $\la^{(1)} = (a^{2k-1})$.  In that case,
$(\la^{(1)}, \la^{(1)}) = ((a^{2k-1}), (a^{2k-1}))$ and 
$\Bxi = ((a^{2k}), (a^{2k-2}))$.  The condition for (6.11.6) is easily verified. 
The case where $\la^{(1)} = (a^{2k})$ is trivial since $\Bxi = (\la^{(1)}, \la^{(1)})$. 
Thus (6.11.5) holds.
\par
Next we consider the general case.  
Let $V = V_1 \oplus \cdots \oplus V_{r-1}$ be the decomposition of $V$ into subspaces, 
where $V_i$ is a symplectic subspace spanned by 
$\{e_i, f_i \mid  p_{i-1}+1 \le i \le p_i\}$ for $i =1, \dots, r-2$,  
and $V_{r-1}$ is a symplectic subspace sppaned by 
$\{e_i, f_i \mid p_{r-2}+1 \le i \le n \}$. 
We consider a $\th$-stable subgroup $\wt L$ of 
$G$ containing $L$ such that 
$\wt L \simeq G_1 \times \cdots \times G_{r-1}$ with $G_i = GL(V_i)$.
Hence $\wt L^{\th} \simeq G_1^{\th} \times \cdots \times G_{r-1}^{\th}$ 
with $G_i^{\th} = Sp(V_i)$. 
For $i = 1, \dots, r-2$, 
let $P_i$ be a $\th$-stable parabolic subgroup of  $G_i$ such that its 
$\th$-stable Levi
subgroup $L_i$ is isomorphic to $GL_{m_i} \times GL_{m_i}$.  
Let $\CO'_{\la^{(i)}}$ be the $GL_{m_i}$-orbit 
in $(GL_{m_i})\uni$ as before.  We regard it as a $L_i^{\th}$-orbit in 
$(G_i)\uni^{\io\th}$.  
Let $\CO_{[\la^{(i)}]}$ be the $G_i^{\th}$-orbit in $(G_i)\uni^{\io\th} \times V_i$
corresponding to $[\la^{(i)}]$.  Then by (6.11.5), one can find 
$z_i \in \CO_{[\la^{(i)}]} \cap \pi_{P_i}\iv(\CO'_{\la^{(i)}})$.
Let $\CO'_0$ be the $G_{r-1}^{\th}$-orbit in $(G_{r-1})\uni^{\io\th}$ corresponding to
$(\la^{(r-1)}, \la^{(r)})$.  We choose $z_{r-1} \in \CO_0'$. 
Since $\prod_{i=1}^{r-2}\pi_{P_i}\iv(\CO'_{\la^{(i)}}) \times \CO'_{r-1}$ 
is regarded as a closed subvariety of $\pi_P\iv(\CO'_{\Bla})$,
$z = (z_1, \dots, z_{r-1})$ gives an element in $\pi_P\iv(\CO'_{\Bla})$.    
$z$ is contained in $\wt L^{\io\th} \times V$, and actually $z \in \CO_{[\Bla]}$.
Thus $\CO_{[\Bla]} \cap \pi_P\iv(\CO'_{\Bla}) \ne \emptyset$. 
The lemma is proved. 
\end{proof}

\begin{cor}  %%%%  Cor. 6.12.
For each $\Bla \in \CP_{n,r}$, the map 
$(x, \Bv) \mapsto ((x, v_{r-1}), (v_1, \dots, v_{r-2}))$
gives an embedding $X_{\Bla} \subset \CO_{[\Bla]} \times V^{r-2}$.
\end{cor}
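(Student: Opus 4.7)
The plan is to observe first that the stated map is simply a re-ordering of the factors: the assignment $(x,v_1,\dots,v_{r-1}) \mapsto ((x,v_{r-1}),(v_1,\dots,v_{r-2}))$ gives an isomorphism of varieties
\[
G^{\io\th}\uni \times V^{r-1} \isom (G^{\io\th}\uni \times V) \times V^{r-2},
\]
so the ``embedding'' aspect is automatic. All the content therefore lies in showing that the image of $X_{\Bla}$ under this reordering is contained in $\CO_{[\Bla]} \times V^{r-2}$, or equivalently that for every $(x,\Bv) \in X_{\Bla}$ one has $(x,v_{r-1}) \in \CO_{[\Bla]}$.

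I would verify this in two short steps. First, for a point $(x,\Bv) \in \CM_{\Bla}$, the very definition of $\CM_{\Bla}$ given in 6.4 imposes the condition that $z = (x,v_{r-1})$ belongs to the distinguished $H$-orbit $\CO$ characterized by the property that $\CO \cap \pi_P\iv(\CO'_{\Bla})$ is open dense in $\pi_P\iv(\CO'_{\Bla})$. By Lemma 6.11 this orbit is exactly $\CO_{[\Bla]}$, so $(x,v_{r-1}) \in \CO_{[\Bla]}$ whenever $(x,\Bv) \in \CM_{\Bla}$. Second, to pass to an arbitrary point of $X_{\Bla}$, I use that $X_{\Bla} = \bigcup_{g \in H} g\,\CM_{\Bla}$ and that the projection $(x,v_1,\dots,v_{r-1}) \mapsto (x,v_{r-1})$ is $H$-equivariant with respect to the diagonal $H$-actions; since $\CO_{[\Bla]}$ is $H$-stable, the inclusion $(x,v_{r-1}) \in \CO_{[\Bla]}$ propagates to all of $X_{\Bla}$. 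Thus the corollary follows, and no real obstacle is expected once Lemma 6.11 is in hand — the work is entirely absorbed into the identification of the orbit $\CO$ already carried out there.
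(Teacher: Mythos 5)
Your proof is correct and is exactly the reasoning the paper has in mind: the corollary is stated without a separate proof precisely because it is an immediate consequence of the definition of $\CM_{\Bla}$ via condition (6.4.1), the identification $\CO = \CO_{[\Bla]}$ established in Lemma 6.11, and the $H$-equivariance of the projection $(x,\Bv) \mapsto (x,v_{r-1})$ together with the $H$-stability of $\CO_{[\Bla]}$.
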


\remark{6.13.}
Proposition 6.9 shows that $\bigcup_{\Bla \in \CP_{n,r}}X_{\Bla}$ covers a dense 
subset of $\CX\uni$.  However it does not coincide with $\CX\uni$ in general.
In the case where $n = 2, r= 3$, one can show by a direct computation 
that $X_{\Bla}$ are mutually disjoint for $\Bla \in \CP_{n,r}$.  It is 
not known whether the disjointness property holds for $X_{\Bla}$ in general.  

\para{6.14.}
The unipotent variety of enhanced type considered in Section 5 can be interpreted 
as a closed subvariety of the variety defined in 1.2.  So we follow the setting 
in 1.2, and consider the varieties $\wt\CX_{\Bm}, \CX_{\Bm}$ as in Section 4.
Then the vareities $\wt\CX_{\Bm, \unip}, \CX_{\Bm, \unip}$ defined in 5.2 can be identified 
with closed subvarieties of $\wt\CX_{\Bm}, \CX_{\Bm}$, defined by similar formulas 
as in 6.1. We shall reformulate $X_{\Bla}$ defined in Section 5 so that it fits to the
discussion in the exotic case.  
\par
First we consider an analogue of Propostion 6.3 in the enhanced case. 
Assume that $r \ge 2$, and take $\Bm \in \CQ_{n,r}$.  Let $P$ be a $\th$-stable
parabolic subgroup of $G$ such that $P^{\th}$ is the stabilizer of the flag 
$(M_{p_i})_{1 \le i \le r}$ in $V$. Let $L$ be the $\th$-stble Levi subgroup of $P$
containing $T$.  Contrast to 6.2, we put $V_L = V$. We consider the map 
$\pi_P: P^{\io\th}\uni \times V \to L^{\io\th}\uni \times V_L$, 
$(x, v) \mapsto (x',v)$, where $x' = p(x)$ for the natural projection 
$p: P^{\io\th}\uni \to L^{\io\th}\uni$.    
We define $\CO, \CO'$ and $\CP_{z, \CO'}$ as in 6.2. 
The following result can be proved word by word following the proof of 
Proposition 6.3.  Note that in the enhanced case, $\dim U_P^{\th} = \dim U_P^{\io\th}$. 

\begin{prop}  %%%%  Prop. 6.15.
Assume that $\CX\uni = G^{\io\th}\uni \times V$ is of enhaned type.  Then similar statements
(i) $\sim$ (vi) as in Proposition 6.3 hold for $\CX\uni$.    
\end{prop}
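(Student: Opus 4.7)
The plan is to transcribe the proof of Proposition 6.3 line-for-line into the enhanced setting, making only the substitutions forced by the different definition of $\pi_P$. The entire argument is driven by a single dimension count, namely $\dim \CP_{z, \CO'} = 0$, from which (ii) follows at once (equality in the estimate), (iv) follows by the same computation at (6.3.4), (v) by the generic orbit argument, (i) from (v) applied at two points of $\CU_{\CO}$, (vi) from the $H$-transitivity and the $P^{\th}$-equivariance of $\pi_P$, and (iii) from the same open-dense argument as in the exotic case.

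First I would check that the analogue of (6.3.2) holds. In the enhanced setting $V_L = V$ and $\pi_P(x,v) = (p(x), v)$, so for $z' = (x', v) \in L^{\io\th}\uni \times V$ one has $\pi_P\iv(z') \simeq p\iv(x') \simeq U_P^{\io\th}$, hence $\dim \pi_P\iv(z') = \dim U_P^{\io\th} = \dim U_P^{\th}$, where the last equality is the remark preceding the statement (in the enhanced case $U_P = U_{P,0} \times U_{P,0}$, so $\io\th$-fixed and $\th$-fixed loci have the same dimension). This matches (6.3.2) on the nose.

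Next I would invoke the enhanced analogues of [SS1, Prop.~5.7(i)--(ii)] (with the [SS2, Appendix] corrections) to bound $\dim(\CO \cap \pi_P\iv(z'))$ by $(c-c')/2$ and $\dim \CP_{z,\CO'}$ by $(\nu_H - c/2) - (\nu_{L^{\th}} - c'/2)$. Using $\dim U^{\th} - \dim U_L^{\th} = \dim U_P^{\th}$ together with the previous paragraph gives $\dim \CP_{z,\CO'} \le \dim U_P^{\th} - (c-c')/2 \le 0$, and since $\CP_{z,\CO'}$ is nonempty by hypothesis, equality holds throughout. This produces (i) and (ii) simultaneously.

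With (i) and (ii) in hand, the remainder of the proof transcribes verbatim: (iv) follows from (6.3.4) with $\dim U_P^{\th}$ playing exactly the same role; (v) is the abstract open-orbit argument applied to the $H$-equivariant map $f : \CU \to G^{\io\th}\uni \times V$, which needs no modification; (iii) uses (v) to compare $\CO$ and the $H$-orbit of $z_1$; and (vi) follows from (v) and the fact that $Z_{P^{\th}}(z') = Q^{\th}$ acts via $\pi_P$-fibers. I do not anticipate any genuine obstacle; the only place requiring even mild care is confirming that [SS1, Prop.~5.7] and the [SS2, Appendix] correction remain valid after the cosmetic change $V_L = V$, which is immediate because those dimension inequalities are proved by counting in the parabolic $P^{\th}$ and depend only on the ambient symmetric-space-type data, not on the specific definition of the $V$-component of $\pi_P$.
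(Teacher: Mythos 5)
Your proposal is correct and takes essentially the same approach as the paper: the paper itself states that Proposition 6.15 ``can be proved word by word following the proof of Proposition 6.3,'' with the single observation that $\dim U_P^{\th} = \dim U_P^{\io\th}$ in the enhanced case, which is exactly the key point you verify (together with the analogue of (6.3.2), which you check correctly).
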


\para{6.16.}
Let $P$ be as in 6.14.  For $\Bla \in \CP(\Bm)$, by imitating the definition of 
$\CM_{\Bla}$ in 6.4, we define a subset 
$\CM_{\Bla}$ of $P^{\io\th}\uni \times \prod_{i=1}^{r-1}M_{p_i}$ as the set of 
$(x, \Bv)$ satisfying the following properties; 
take $(x, \Bv)$ such that $x \in P^{\io\th}\uni$ and that $v_i \in M_{p_i}$ for 
$i = 1, \dots, r-1$.  Put $\ol M_{p_i} = M_{p_i}/M_{p_{i-1}}$, and let 
$\ol v_i \in \ol M_{p_i}$ be the image of $v_i$ for each $i$. We assume that 
$GL(\ol M_{p_i})$-orbit of $(x|_{\ol M_{p_i}}, \ol v_i)$ in 
$GL(\ol M_{p_i})^{\io\th} \times \ol M_{p_i}$ has type $(\la^{(i)}, \emptyset)$ 
for each $i$. We further assume that 
\par\medskip\noindent
(6.16.1) \ $\CO \cap p\iv(\CO')$ is open dense in $p\iv(\CO')$, where 
$\CO$ is the $H$-orbit of $x$ in $G^{\io\th}\uni$ and $\CO'$ is the $L^{\th}$-orbit
of $x'= p(x) \in L^{\io\th}\uni$.  
\par\medskip
We define $X_{\Bla}'$ by $X_{\Bla}' = \bigcup_{g \in H}g\CM_{\Bla}$. $\CM_{\Bla}$
is a $P^{\th}$-stable subset of $P^{\io\th} \times V^{r-1}$.  
We have a lemma.

\begin{lem}  %%%% Lemma 6.17
$X_{\Bla}'$ coincides with $X_{\Bla}$ for each $\Bla \in \CP_{n,r}$. 
In particular, $X'_{\Bla}$'s are mutually disjoint. 
\end{lem}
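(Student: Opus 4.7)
\medskip

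The plan is to establish $X'_{\Bla}=X_{\Bla}$ by verifying both inclusions; the disjointness assertion will then follow at once from the partition $\CX\uni=\coprod_{\Bmu\in\CP_{n,r}}X_{\Bmu}$ set up in Section 5.

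First I would show $\CM_{\Bla}\subset X_{\Bla}$. For $(x,\Bv)\in\CM_{\Bla}$, the standard flag $M_{p_1}\subset\cdots\subset M_{p_r}=V$ is $x$-stable since $x\in P^{\io\th}$, and by construction $v_i\in M_{p_i}$ with the pair $(x|_{\ol M_{p_i}},\ol v_i)$ of type $(\la^{(i)},\emptyset)$ for each $i$. Condition (6.16.1) forces $x$ to lie in the open $G$-orbit above $\CO'_{\Bla}$ in $p\iv(\CO'_{\Bla})$; by the standard description of Lusztig--Spaltenstein induction of unipotent classes in $GL_n$, the Jordan type of such an $x$ equals the partition sum $\la^{(1)}+\cdots+\la^{(r)}$. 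Corollary 5.17 (ii) then places $(x,\Bv)$ in $X_{\Bla}$. Since $X_{\Bla}$ is $G$-stable and $H=G_0=G$ in the enhanced setting, we conclude $X'_{\Bla}=\bigcup_{g\in H}g\CM_{\Bla}\subset X_{\Bla}$.

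For the reverse inclusion $X_{\Bla}\subset X'_{\Bla}$, take $(x,\Bv)\in X_{\Bla}$. Corollary 5.17 supplies an $x$-stable flag $(W_i)_{1\le i\le r}$ with $\dim W_i=p_i$, $v_i\in W_i$, $x|_{W_i/W_{i-1}}$ of type $\la^{(i)}$, and $(x|_{W_i/W_{i-1}},\ol v_i)$ of type $(\la^{(i)},\emptyset)$. Using transitivity of $H=GL(V)$ on flags of a fixed dimension sequence, choose $g\in H$ with $gM_{p_i}=W_i$ for every $i$. Then $g\iv xg$ stabilizes $(M_{p_i})$, hence lies in $P^{\io\th}$, $g\iv v_i\in M_{p_i}$, and the type conditions required for membership in $\CM_{\Bla}$ are preserved by conjugation. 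Condition (6.16.1) holds because the Jordan type of $g\iv xg$ is still $\la^{(1)}+\cdots+\la^{(r)}$, putting it in the induced open orbit. Therefore $g\iv(x,\Bv)\in\CM_{\Bla}$ and $(x,\Bv)\in g\CM_{\Bla}\subset X'_{\Bla}$.

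The only technical point to handle with care is the verification of (6.16.1) in the first inclusion, which relies on the well-known identification of the induced unipotent class in $GL_n$ from a Levi $\prod GL_{m_i}$ with the partition sum $\la^{(1)}+\cdots+\la^{(r)}$; everything else is a direct unravelling of the two definitions. Once the equality $X'_{\Bla}=X_{\Bla}$ is obtained, the mutual disjointness of the $X'_{\Bla}$ follows immediately from the disjointness of the $X_{\Bla}$ recorded in (5.3.1).
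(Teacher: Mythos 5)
Your argument is correct and follows essentially the same route as the paper's: the paper's proof also reduces to the observation that condition (6.16.1) forces the $H$-orbit of $x$ to be $\CO_{\nu}$ with $\nu=\la^{(1)}+\cdots+\la^{(r)}$ (the Richardson orbit for $GL_n$ induction), after which both descriptions of $X_{\Bla}$ and $X'_{\Bla}$ match via Corollary 5.17 (ii). You have merely unpacked the two-inclusion check that the paper leaves implicit.
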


\begin{proof}
Assume that $\CO' = \CO'_{\Bla}$.  Then the $H$-orbit $\CO$ in (6.16.1) is
given by $\CO = \CO_{\nu}$ with $\nu = \la^{(1)} + \cdots + \la^{(r)}$. 
The lemma then follows from Corollary 5.17 (ii).   
\end{proof}

\para{6.18.}
As in the exotic case, we defnie a variety $\wt\CF_{\Bla} = H \times^{P^{\th}}\ol\CM_{\Bla}$,
where $\ol \CM_{\Bla}$ is the closure of $\CM_{\Bla}$ in $P^{\io\th}\times V^{r-1}$.
Then the map $\pi'_{\Bla}: \wt\CF_{\Bla} \to X'_{\Bla} = X_{\Bla}$
is defined as in 6.4. (Here we use the notation $\pi'_{\Bla}$ to distinguish it from  
the map $\pi_{\Bla} : \wt X_{\Bla} \to X_{\Bla}$ in 5.5.) 
We also define a subset $\wt\CF_{\Bla}^0$ of $\wt\CF_{\Bla}$ by 
$\wt\CF^0_{\Bla} = H\times^{P^{\th}}\CM_{\Bla}$, and let $\pi'^0_{\Bla}$ be the 
restriciton of $\pi'_{\Bla}$ on $\wt\CF^0_{\Bla}$.  
The following result can be proved by a similar way as Proposition 6.7.

\begin{prop}  %%%%  Prop. 6.19
Assume that $\CX$ is of enhanced type.  For each $\Bla \in \CP_{n,r}$, 
\begin{enumerate}
\item
$\Im \pi_{\Bla}' = \ol X_{\Bla}$.
\item
$\pi_{\Bla}'^0: \wt\CF^0_{\Bla} \to X_{\Bla}$ gives an isomorphism 
$\wt\CF^0_{\Bla} \isom X_{\Bla}$. 
\end{enumerate}
\end{prop}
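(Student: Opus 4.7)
The plan is to mirror the proof of Proposition 6.7, substituting Proposition 6.15 for Proposition 6.3, Lemma 6.17 for the identification $X_{\Bla}' = X_{\Bla}$, and Proposition 5.4 for the dimension formula. Throughout, the role previously played by $(x, v_{r-1}) \in G^{\io\th}\uni \times V$ is now played simply by $x \in G^{\io\th}\uni$, and the ``last factor'' that was of exotic type in 6.2--6.7 is here replaced by one more enhanced-type factor.

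First I would prove (ii). The morphism $\pi_{\Bla}'^0$ is $H$-equivariant, and by the construction of $\CM_{\Bla}$ together with Lemma 6.17 it surjects onto $X_{\Bla}$. For $(x, \Bv) \in \CM_{\Bla}$, let $\CO'$ be the $L^{\th}$-orbit of $p(x) \in L^{\io\th}\uni$; by (6.16.1), $x \in \CO \cap p\iv(\CO')$. The fibre $(\pi_{\Bla}'^0)\iv(x, \Bv)$ is contained in $\CP_{x, \CO'} = \{ gP^{\th} \in H/P^{\th} \mid g\iv x \in p\iv(\CO')\}$, which by Proposition 6.15 (i) equals $\{P^{\th}\}$. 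Hence $\pi_{\Bla}'^0$ is a bijective morphism. To promote this to an isomorphism I would construct the inverse explicitly: the assignment $g \cdot (x, \Bv) \mapsto gP^{\th}$ is well defined on the $H$-orbit of any $(x, \Bv) \in \CM_{\Bla}$ thanks to $Z_H(x) = Z_{P^{\th}}(x)$ from Proposition 6.15 (iv), and gluing over the $H$-orbits that cover $X_{\Bla}$ yields a morphism $X_{\Bla} \to \wt\CF^0_{\Bla}$ inverse to $\pi_{\Bla}'^0$.

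For (i), properness of $\pi'_{\Bla}$ makes $\Im \pi'_{\Bla}$ a closed, irreducible subset of $\CX\uni$ containing $X_{\Bla}$, hence containing $\ol X_{\Bla}$. For the reverse inclusion I would compute $\dim \wt\CF_{\Bla}$ by the enhanced analogue of Lemma 6.6: $\dim \wt\CF_{\Bla} = \dim \CM_{\Bla} + \dim(H/P^{\th})$, and $\CM_{\Bla}$ fibres over the product $L^{\th}$-orbit $\CO'_{\Bla} \simeq \prod_i \CO'_{\la^{(i)}}$ (each factor of enhanced type with $\dim \CO'_{\la^{(i)}} = m_i^2 - 2n(\la^{(i)})$ by [AH, Prop.~2.8]) with fibre of dimension given by the enhanced analogue of (6.3.2). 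A routine computation using $2 \dim U_P^{\th} = \dim G - \dim L$ and $\sum_i m_i^2 = n + 2n(\Bla)$ matches the formula $\dim X_{\Bla} = (n^2 - n - 2n(\Bla)) + \sum_{i=1}^{r-1}(r-i)|\la^{(i)}|$ of Proposition 5.4. Combined with irreducibility of $\wt\CF_{\Bla}$, this forces $\Im \pi'_{\Bla} = \ol X_{\Bla}$.

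The main obstacle is the dimension bookkeeping: the enhanced analogue of Lemma 6.6 must reproduce exactly the formula for $\dim X_{\Bla}$ obtained in Section 5, despite the rather different derivations (Section 5 builds $X_{\Bla}$ inductively in $r$ using the $E^x v_1$ construction, whereas here one reads the dimension off directly from $\CM_{\Bla}$ via orbits on Levi factors). Once this numerical match is confirmed, both (i) and (ii) follow by the template of Proposition 6.7 with no further complication.
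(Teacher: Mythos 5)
Your proposal follows the paper's own approach: the proof of Proposition 6.19 mimics that of Proposition 6.7, substituting Proposition 6.15, Lemma 6.17, and Proposition 5.4 for their exotic counterparts, exactly as you describe. One small slip worth flagging: the identity $\sum_i m_i^2 = n + 2n(\Bla)$ you invoke is false for the Levi $L \simeq \prod_i GL_{m_i}$ used in $6.14$--$6.19$ (it is the finer Levi of $P_{\Bla}$ in Lemma~5.6 that has dimension $n + 2n(\Bla)$), but it is also unnecessary---the term $\sum_i m_i^2$ simply cancels between $\dim \CO'_{\Bla}$ and $2\dim U_P^{\th} = \dim G_0 - \dim L^{\th}$, just as in the exotic computation of Lemma~6.6, so the dimension match with Proposition 5.4 goes through as you claim.
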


\para{6.20.}
In the remainder of this section, we assume that $\CX$ is of extoic type or of 
enhanced type.  We follow the formulation in Section 1.  
Put $\CB = H/B^{\th}$.  For each $z = (x,\Bv) \in \CX_{\Bm,\unip}$,
put
\begin{align*}
\CB_z &= \{ gB^{\th} \in \CB \mid g\iv xg \in U^{\io\th}, g\iv \Bv \in M_n^{r-1} \}, \\
\CB_z^{(\Bm)} &= \{ gB^{\th} \in \CB \mid 
    g\iv xg \in U^{\io\th}, g\iv \Bv \in \prod_{i=1}^{r-1}M_{p_i} \}. 
\end{align*}
In the exotic case, 
$\CB_z$ is a closed subvariety of $\CB$, which is 
isomorphic to the fibre $\pi_1\iv(z)$, and is called the Springer fibre of $z$. 
$\CB_z^{(\Bm)}$ is a closed subvariety of $\CB_z$ isomorphic to 
$(\pi_1^{(\Bm)})\iv(z)$, which we call the small Springer fibre.
In the enhanced case, we only consider the Springer fibre 
$\CB_z^{(\Bm)}$ as alreday defined in 5.18. 
\par
We fix $\Bm \in \CQ_{n,r}$. For an integer $d \ge 0$, 
we define a subset $X(d)$ of $\CX_{\Bm,\unip}$ by 
\begin{equation*}
\tag{6.20.1}
X(d) = \{ z \in \CX_{\Bm,\unip} 
                    \mid \dim \CB_z^{(\Bm)} = d \}.  
\end{equation*} 
Then $X(d)$ is a locally closed subvariety of $\CX_{\Bm,\unip}$, and 
$\CX_{\Bm, \unip} = \coprod_{d \ge 0}X(d)$. 
We consider the Steinberg varieties $\CZ^{(\Bm)}$ and 
$\CZ^{(\Bm)}_1$ as follows;
\begin{align*}
\begin{split}
\CZ^{(\Bm)} = \{ (z, gB^{\th}, &g'B^{\th}) \in \CX \times \CB \times \CB  \\
        &\mid (z, gB^{\th}) \in \wt\CX_{\Bm}, (z, g'B^{\th}) \in \wt\CX_{\Bm}\},  
\end{split}  \\  \\
\begin{split}
\CZ^{(\Bm)}_1 = \{ (z, gB^{\th}, &g'B^{\th}) \in \CX\uni \times \CB \times \CB  \\
        &\mid (z, gB^{\th}) \in \wt\CX_{\Bm, \unip}, (z, g'B^{\th}) \in \wt\CX_{\Bm,\unip}\}.  
\end{split}
\end{align*}
\par
Recall that $W = N_H(T^{\th})/T^{\th}$ is the Weyl group $W_n$ of type $C_n$ 
(resp. $S_n$) in the exotic case (resp. the enhanced case). 
In the exotic case, we assume that $\Bm \in \CQ_{n,r}^0$, while in the enhanced case, 
we consider an arbitrary $\Bm  \in \CQ_{n,r}$. 
We define a  subgroup  $\CW_{\Bm}\nat$ of $N_H(T^{\th})/T^{\th}$ by  
by 
\begin{equation*}
\tag{6.20.2}
\CW_{\Bm}\nat = \begin{cases}
       S_{m_1} \times \cdots \times S_{m-2} \times W_{m-1}
           &\quad\text{ (exotic case),} \\
       S_{m_1} \times \cdots \times S_{m_r}
            &\quad\text{ (enhanced case).}
               \end{cases}
\end{equation*} 
We show the following lemma.

\begin{lem} %%%  Lemma 6.21.                                                                         
Under the notation of 6.20, 
\begin{enumerate}                                                                                   
\item                                                                                               
$\dim \CZ^{(\Bm)}_1 = \dim \CX_{\Bm, \unip}$.  
The set of irreducible components of $\CZ^{(\Bm)}_1$     
with maximal dimension is parametrized by $w \in \CW_{\Bm}\nat$.                                        
\item
$\dim \CZ^{(\Bm)} = \dim \CZ^{(\Bm)}_1 + n$.  
The set of irreducible components of $\CZ^{(\Bm)}$     
with maximal dimension is parametrized by $w \in \CW_{\Bm}\nat$.                                        
\item                                                                                               
Assume that $X(d) \ne \emptyset$.  For any $z \in X(d)$ we have
\begin{equation*}
\dim \CB_z^{(\Bm)} \le 
        \frac{1}{2}(\dim \CX_{\Bm,\unip} - \dim X(d)).                                        
\end{equation*}
In particular, $\pi^{(\Bm)}$ is semismall with respect to the 
stratification $\CX_{\Bm,\unip} = \coprod_dX(d)$. 
\end{enumerate}                                                                                     
\end{lem}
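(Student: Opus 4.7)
The plan is to prove (iii) first, using the semismallness of $\pi^{(\Bm)}$ implicit in Theorem 3.2 (exotic, $\Bm \in \CQ_{n,r}^0$) or Theorem 4.5 (enhanced), and then to deduce (i) and (ii) via a Steinberg-type Bruhat decomposition of $\CZ^{(\Bm)}_1$ and $\CZ^{(\Bm)}$.

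For (iii), taking $\CE = \Ql$ in Theorem 3.2 or 4.5, the complex $\pi^{(\Bm)}_!\Ql[d_{\Bm}]$ is a semisimple perverse sheaf on $\CX_{\Bm}$, which by the standard Borho--MacPherson criterion is equivalent to $\pi^{(\Bm)}$ being semismall with respect to the support stratification arising from the decomposition. Both $\CX_{\Bm,\unip} \hookrightarrow \CX_{\Bm}$ and $\wt\CX_{\Bm,\unip} \hookrightarrow \wt\CX_{\Bm}$ are closed of codimension $n$, and for unipotent $z$ the fiber $(\pi^{(\Bm)})\iv(z) = (\pi_1^{(\Bm)})\iv(z) = \CB_z^{(\Bm)}$ (the condition $g\iv xg \in B^{\io\th}$ forces landing in $U^{\io\th}$ when $x$ is unipotent). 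The slice is transversal to each $H$-stable stratum $S$ of $\CX_{\Bm}$, the transversal parameter being the $T^{\io\th}$-part of the semisimple component, so $\dim(S \cap \CX_{\Bm,\unip}) = \dim S - n$. Combined with $\dim \CX_{\Bm} = \dim \CX_{\Bm,\unip} + n$, the semismall estimate $\dim S + 2d \le \dim \CX_{\Bm}$ translates to $\dim(S \cap \CX_{\Bm,\unip}) + 2d \le \dim \CX_{\Bm,\unip}$, yielding (iii) after taking the supremum over strata composing $X(d)$.

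For (i), the diagonal embedding $\wt\CX_{\Bm,\unip} \hookrightarrow \CZ^{(\Bm)}_1$ gives $\dim \CZ^{(\Bm)}_1 \ge \dim \wt\CX_{\Bm,\unip} = \dim \CX_{\Bm,\unip}$ (Proposition 5.9 in the enhanced case, Proposition 6.9 in the exotic case), while stratifying the base along $X(d)$ with fibers $\CB_z^{(\Bm)} \times \CB_z^{(\Bm)}$ of dimension $2d$, combined with (iii), gives the reverse inequality. For the parametrization, project $\CZ^{(\Bm)}_1 \to \CB \times \CB$ and use the standard Bruhat-type $H$-orbit decomposition $\CB \times \CB = \coprod_{w \in W} \CO_w$ with $W = N_H(T^{\th})/T^{\th}$, yielding $\CZ^{(\Bm)}_1 = \coprod_w \CZ_w$ with each $\CZ_w$ irreducible of dimension
\begin{equation*}
\dim \CZ_w = \dim H - \dim T^{\th} - a(w) + b(w) + c(w),
\end{equation*}
where $a(w), b(w), c(w)$ denote the dimensions of $U^{\th} \cap \dot w U^{\th}\dot w\iv$, $U^{\io\th} \cap \dot w U^{\io\th}\dot w\iv$, and $\prod_i M_{p_i} \cap \dot w \prod_i M_{p_i}$ respectively. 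A $\th$-root-system computation identifies the set $\{w : \dim \CZ_w = \dim \CX_{\Bm,\unip}\}$ with $\CW_{\Bm}\nat$ from (6.20.2): in the enhanced case $a(w) = b(w)$ identically (both reduce to $\dim(U_0 \cap w_0 U_0 w_0\iv)$ under the $H$-projection), so the maximum condition reduces to $w$ stabilizing the flag, giving $S_{m_1} \times \cdots \times S_{m_r}$; in the exotic case signed flips in the last block $\{p_{r-2}+1, \ldots, n\}$ reduce $c(w)$ but are exactly compensated by the corresponding decrease in $a(w) - b(w)$, yielding the extra $(\BZ/2)^{m_{r-1}}$-factor that produces the $W_{m_{r-1}}$ last factor. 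For (ii), the identical orbit analysis applied to $\CZ^{(\Bm)} \to \CB \times \CB$ gives fibers involving $B^{\io\th} \cap \dot w B^{\io\th}\dot w\iv$ in place of $U^{\io\th} \cap \dot w U^{\io\th}\dot w\iv$; since $B^{\io\th} = T^{\io\th} \cdot U^{\io\th}$ with $\dim T^{\io\th} = n$, each $\CZ_w$ exceeds its unipotent counterpart by exactly $n$, yielding $\dim \CZ^{(\Bm)} = \dim \CZ^{(\Bm)}_1 + n$ with the same parametrization by $\CW_{\Bm}\nat$.

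The main obstacle is the exotic-case compensation claim in paragraph 3, namely that signed permutations on the last block really give maximal $\dim \CZ_w$; this requires careful bookkeeping of which root spaces lie in $U^{\th}$ versus $U^{\io\th}$ and how they pair with the flag. The phenomenon is already visible in the minimal example $n = 1, r = 2, \Bm = (1,0)$, where $\dim U^{\th} = 1, \dim U^{\io\th} = 0$, and both $w \in W_1 = \CW_{\Bm}\nat$ yield $\dim \CZ_w = 2$ (the nontrivial $w$ has $a = b = c = 0$ while the identity has $a = 1, b = 0, c = 1$). Extending this to arbitrary $n$ and $\Bm$ requires organizing $\Phi^+$ by $\th$-eigenvalue and by level in the flag, which is tractable but technical.
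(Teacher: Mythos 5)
Your plan reverses the logical order the paper uses, and in doing so introduces a gap: the paper proves (i) first by a direct dimension count on the pieces $Z_w \subset \CZ^{(\Bm)}_1$, and then derives (iii) from the resulting identity $\dim \CZ^{(\Bm)}_1 = \dim\CX_{\Bm,\unip}$ via the first-factor projection $q_1 : \CZ^{(\Bm)}_1 \to \CX_{\Bm,\unip}$, whose fibers are $\CB^{(\Bm)}_z\times\CB^{(\Bm)}_z$. You instead want to prove (iii) first from Theorem 3.2/4.5 and then get (i). Your argument for (iii) rests on the claim that $\CX_{\Bm,\unip}$ is \emph{transversal} to each stratum $S$ of $\CX_{\Bm}$, so that $\dim(S\cap\CX_{\Bm,\unip}) = \dim S - n$, but you do not prove this, and it is genuinely nontrivial: semismallness of $\pi^{(\Bm)}$ with respect to the strata $\CX^0_{\Bm'}$ only gives the estimate $\dim X(d) + 2d \le \dim\CX_{\Bm} = \dim\CX_{\Bm,\unip}+n$, which is off by $n$ from what (iii) requires, and the improvement by $n$ is precisely what needs to be established. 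Asserting that the $T^{\io\th}$-part of the semisimple component cuts transversally across every fiber-dimension stratum amounts to claiming a flatness-type property of the Steinberg map on these strata, which the paper never proves and does not need.

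The good news is that your argument already contains the fix. The Bruhat decomposition you set up for the parametrization part of (i) gives, for each $w\in W$, the locally trivial fibration $Z_w \to \CO_w$ with fiber $(U^{\io\th}\cap wU^{\io\th}w\iv)\times\prod_i (M_{p_i}\cap w(M_{p_i}))$, hence
\begin{equation*}
\dim Z_w = \dim H - \dim T^{\th} - a(w) + b(w) + \sum_{i=1}^{r-1}c_i(w),
\end{equation*}
in your notation. The cancellation you flag as the ``main obstacle'' is exactly what the paper exploits, and your instinct about it is essentially right but somewhat misframed: in the exotic case the identity $b(w) = a(w) - b_w$ together with $c_{r-1}(w) = \dim(M_n\cap w(M_n)) = b_w$ (using $p_{r-1}=n$, i.e.\ $m_r=0$) holds for \emph{every} $w\in W_n$, not only for signed flips in the last block, so the terms $-a(w)+b(w)+c_{r-1}(w)$ cancel identically; the maximization then constrains only $c_i(w)$ for $i\le r-2$, which forces $w$ to stabilize $[1,p_i]$ for $i\le r-2$ while leaving it free to be any signed permutation of the last block, giving $\CW\nat_{\Bm}$. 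This calculation already yields $\max_w \dim Z_w = \dim\CX_{\Bm,\unip}$ directly (using Proposition 6.9(i) resp.\ 5.9(ii)), so the dimension identity in (i) requires neither the diagonal-embedding argument nor (iii). Once (i) is in hand, (iii) follows immediately as in the paper, and (ii) is your fibration argument with the extra $T^{\io\th}$-factor of dimension $n$, which is correct. In short: drop the transversality argument, prove (i) outright from the $Z_w$ dimension count, and derive (iii) from it.
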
                                                                                           

\begin{proof}                                                                                       
Let $p_1: \CZ^{(\Bm)}_1 \to \CB \times \CB$ be the projection on secod and third            
factors.                                                                                            
For each $w \in W$, let $\CO_w$ be the $H$-orbit of $(B^{\th}, wB^{\th})$ in                      
$\CB \times \CB$.  We have                                                              
$\CB \times \CB = \coprod_{w \in W}\CO_w$.                                            
Put $Z_w = p_1\iv(\CO_w)$.  Then $Z_w$ is a vector bundle over                                        
$\CO_w \simeq H/(B^{\th} \cap wB^{\th}w\iv)$ with fibre isomorphic to                               
\begin{equation*}                                                                                   
(U^{\io\th} \cap wU^{\io\th}w\iv) \times \prod_{i=1}^{r-1}(M_{p_i} \cap w(M_{p_i})).                
\end{equation*}                    
First we consider the exotic case.                                                                  
We identify $W$ with a subgroup of $S_{2n}$ which is the stabilizer of                            
the element $(1,n+1)(2, n+2) \cdots (n, 2n)$.                                                       
Let $b_w$ be the number of $i$ such that $w\iv(e_i) \in M_n$, i.e.,                                 
$w\iv(i) \in [1, n]$.    
Then we have                                                                 
$\dim (U^{\io\th} \cap wU^{\io\th}w\iv) = \dim (U^{\th} \cap wU^{\th}w\iv) - b_w$.                  
Also we have $\dim (M_n \cap w(M_n)) = b_w$. 
By our assumption $M_{n-1} = M_n$, we have                                                          
\begin{equation*}                                                                                   
\dim Z_w = \dim H - \dim T^{\th} + \sum_{i=1}^{r-2}\dim (M_{p_i} \cap w(M_{p_i})).                  
\end{equation*}                                                                                     
Here $\dim (M_{p_i} \cap w(M_{p_i})) =                                                            
    \sharp\{ j \in [1,p_i] \mid w\iv(j) \in [1,p_i]\} \le p_i$, and                                 
the equality holds if and only if $w$ leaves the set $[1, p_i]$ invariant.                          
It follows that $\dim Z_w$ takes the maximal value                                                  
$2n^2 + \sum_{i=1}^{r-2}p_i$  if and only if $w \in \CW_{\Bm}\nat$.                                     
Since 
\begin{equation*}
\sum_{i=1}^{r-2}p_i = -n + \sum_{i=1}^{r-1}(r-i)m_i
\end{equation*}
(note that $m_r = 0$),               
in that case $\dim Z_w = \dim \CX_{\Bm, \unip}$ by Proposition 6.9 together with 
(6.1.1).                            
Since $\{ \ol Z_w \mid w \in W \}$ gives the set of irreducible components of                     
$\CZ^{(\Bm)}_1$, (i) follows.  
In the enhanced case, a similar computation shows that 
\begin{equation*}
\dim Z_w = \dim H  - \dim T^{\th} + \sum_{i=1}^{r-1}\dim (M_{p_i} \cap w(M_{p_i}))
\end{equation*}
for $w \in S_n$ (note that in this case $\dim (U^{\io\th} \cap wU^{\io\th}w\iv) 
    = \dim (U^{\th} \cap wU^{\th}w\iv)$), and $\dim Z_w$ takes the maximal value 
$n^2 - n + \sum_{i=1}^{r-1}p_i$ if and only if $w \in \CW_{\Bm}\nat$. 
Since $\sum_{i=1}^{r-1}p_i = \sum_{i=1}^{r-1}(r-i)m_i$, in this case 
$\dim Z_w = \dim \CX_{\Bm, \unip}$ by Proposition 5.9 (ii).   
Hence (i) holds also in the enhanced case.
\par
For (ii), we consider $\wt Z_w = p\iv(\CO_w)$, where 
$p: \CZ^{(\Bm)} \to \CB \times \CB$ is the projection on the 
second and third factors. 
Then $\wt Z_w$ is a locally trivial 
fibration over $\CO_w$ with fibre isomorphic to 
\begin{equation*}
\tag{6.21.1}                                                                                   
T^{\io\th} \times 
    (U^{\io\th} \cap wU^{\io\th}w\iv) \times \prod_{i=1}^{r-1}(M_{p_i} \cap w(M_{p_i})).                
\end{equation*}                                                                                     
Hence (ii) is proved by a similar argument as (i).
\par
We show (iii).  Let $q_1: \CZ_1^{(\Bm)} \to \CX_{\Bm,\unip}$ be the projection on 
the first factor.  For each $z \in \CX_{\Bm,\unip}$, 
$q_1\iv(z) \simeq \CB_z^{(\Bm)} \times \CB_z^{(\Bm)}$. 
By (6.20.1), we have 
\begin{equation*}
\dim q_1\iv(X(d)) = \dim X(d) + 2d.
\end{equation*}
Since $\dim q_1\iv(X(d)) \le \dim \CZ_1^{(\Bm)} = \dim \CX_{\Bm, \unip}$, 
we see that $2d \le \dim \CX_{\Bm,\unip} - \dim X(d)$.  
This proves (iii).  
The lemma is proved. 
\end{proof}

\par\bigskip
\section{Springer correspondence}
\para{7.1.}
In this section, we assume that $\CX$ is of exotic type or
of enhanced type.  We shall prove the Springer correspondence for $\CX\uni$. 
However the method used in [SS1], which is based on a evaluation of the number of irreducible 
components of Springer fibres (see [SS1, Lemma 2.5]), does not work well for the case where $r \ge 3$. 
Instead, we apply the method used by Lusztig [Lu] to prove the generalized Springer correspondence
for reductive groups, which makes use of the Steinberg map.  
We follow the notation in Section 1. In the exotic case, let 
$\w' : G \to T/S_{2n}$ be the Steinberg map 
with respect to $G$.  Then we have $\Xi = T^{\io\th}/S_n \hra T/S_{2n}$, where $S_n$ is embedded 
in $S_{2n}$ as a subgroup of the centralizer of $(1,n+1)(2, n+2) \cdots (n,2n)$. 
In the enhanced case, let $\w': G \to T/(S_n \times S_n)$ be the Steinberg map
and conisder $\Xi = T^{\io\th}/S_n \hra T/(S_n \times S_n)$.  In either case, we denote by 
$\w$ the map $G^{\io\th} \to \Xi$ induced from $\w'$.   
Take $\Bm \in \CQ_{n,r}$, and consider the map $\pi^{(\Bm)}: \wt\CX_{\Bm} \to \CX_{\Bm}$
as in 1.1.
Let $\CZ^{(\Bm)}$ be the Steinberg variety with respect to $\pi^{(\Bm)}$ defined in 6.20.
We denote by $\vf$ the natural map $\CZ^{(\Bm)} \to \CX_{\Bm}$. 
We define a map $\wt\a : \CZ^{(\Bm)} \to T^{\io\th}$ by 
$(x, \Bv, gB^{\th}, g'B^{\th}) \mapsto p_T(g\iv xg)$. 
Then we have a commutative diagram

\begin{equation*}
\begin{CD}
\CZ^{(\Bm)} @>\wt\a>>  T^{\io\th} \\
@V\vf VV                 @VV\w_T V     \\
\CX_{\Bm}  @>\wt\w >>  \Xi,
\end{CD}
\end{equation*}
where $\wt\w$ is the composite of the projection $\CX_{\Bm} \to G^{\io\th}$ with $\w$, 
and $\w_T$ is the restriction of $\w$ on $T^{\io\th}$.  Note that 
$\w_T$ is a finite morphism.
\par
As in 6.20, we assume that $\Bm \in \CQ_{n,r}^0$ in the exotic case, 
while we consider arbitrary $\Bm \in \CQ_{n,r}$ in the enhanced case. 
Recall that $d'_{\Bm} = \dim \CX_{\Bm,\unip}$, and put $\s = \w_T\circ \wt\a$. 
We define a constructible sheaf $\CT$ on 
$\Xi$ by 
\begin{equation*}
\tag{7.1.1}
\CT = \CH^{2d'_{\Bm}}(\s_!\Ql) = R^{2d'_{\Bm}}\s_!\Ql.
\end{equation*}
\par
Recall the definition of perfect sheaves in [Lu, 5.4].
A constructible sheaf $\CE$ on an irreducible variety $X$ is said to be
perfect if $\CE$ coincides with an intersection cohomology complex 
(reduced to a single sheaf on degeree zero) on $X$, and the support of 
any nonzero constructible subsheaf of $\CE$ is $X$.
\par
The following gives examples of perfect sheaves.
\par\medskip\noindent
(7.1.2) \ If $\pi: Y \to X$ is a finite morphism with $Y$ smooth and 
if $\CE'$ is a locally constant sheaf on $Y$, then $\CE = \pi_*\CE'$ 
is a perfect sheaf on $X$.
\par\medskip\noindent
(7.1.3) \ If $0 \to \CE_1 \to \CE_2 \to \CE_3 \to 0$ is an exact  sequence 
of constructible sheaves on $X$, with $\CE_1, \CE_3$ perfect, then $\CE_2$
is perfect.
\par\medskip

We show the following lemma.

\begin{lem}  %%%%  Lemma 7.2
The sheaf $\CT$ is a perfect sheaf on $\Xi$.
\end{lem}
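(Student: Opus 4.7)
The plan is to follow Lusztig's strategy from [Lu] for proving perfectness of top-degree cohomology sheaves on Steinberg-type varieties. First, I will reduce the problem from $\Xi$ to the smooth variety $T^{\io\th}$ by exploiting the finiteness of $\w_T$.

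Specifically, since $\w_T: T^{\io\th} \to \Xi$ is finite, the higher direct images $R^i(\w_T)_!$ vanish for $i > 0$, and the Leray spectral sequence for the composition $\s = \w_T \circ \wt\a$ collapses to give $\CT \simeq (\w_T)_*(R^{2d'_{\Bm}}\wt\a_!\Ql)$. Setting $\CT' := R^{2d'_{\Bm}}\wt\a_!\Ql$, it suffices by (7.1.2) to prove that $\CT'$ is a locally constant sheaf on the smooth variety $T^{\io\th}$, since then $(\w_T)_*\CT'$ is automatically perfect on $\Xi$.

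Next, I will analyze $\CT'$ stalk-by-stalk using the decomposition $\CZ^{(\Bm)} = \bigsqcup_w \wt Z_w$ from the proof of Lemma 6.21, where $\wt Z_w$ is the preimage of $\CO_w \subset \CB \times \CB$ under the projection $p$. By (6.21.1), each $\wt Z_w$ is a fibration over $\CO_w$ whose fiber contains the factor $T^{\io\th}$, and the restriction of $\wt\a$ to $\wt Z_w$ is the projection onto this $T^{\io\th}$-factor composed with the structural fibration, hence has equidimensional fibers. By the dimension computation in Lemma 6.21 (ii), it is precisely the $\wt Z_w$ with $w \in \CW_{\Bm}\nat$ whose fibers under $\wt\a$ attain the top dimension $d'_{\Bm}$; these contribute constantly $|\CW_{\Bm}\nat|$ many top-dimensional irreducible components to $\wt\a^{-1}(t)$ for every $t$, giving a lower bound of constant rank for the stalks of $\CT'$.

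The main obstacle will be ruling out additional top-dimensional components at non-generic $t$, which could arise if the fibers of $\wt\a$ restricted to some $\ol Z_{w'}$ with $w' \notin \CW_{\Bm}\nat$ jump in dimension beyond the generic bound. To handle this, I will use the identity $\s = \wt\w \circ \vf$, which holds because the Steinberg map is conjugation-invariant, together with the semismallness of $\pi^{(\Bm)}$ with respect to the stratification $\CX_{\Bm,\unip} = \coprod_d X(d)$ from Lemma 6.21 (iii). The semismallness, combined with a dimension count comparing $\dim \CZ^{(\Bm)} = d'_{\Bm} + n$ against $\dim \Xi = n$, forces $\dim \wt\a^{-1}(t) \le d'_{\Bm}$ uniformly in $t$ and prevents any extra top-dimensional components from appearing on the boundary. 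Once local constancy of $\CT'$ is established, an application of (7.1.2) to the finite morphism $\w_T$ and the smooth variety $T^{\io\th}$ yields the perfectness of $\CT$ on $\Xi$.
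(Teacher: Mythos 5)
Your approach differs from the paper's. The paper follows Lusztig's scheme from [Lu, 5.4] more literally: stratify $\CZ^{(\Bm)}$ by the $\wt Z_w$, show each layer sheaf $\CT_w = \CH^{2d'_{\Bm}}((\s_w)_!\Ql)$ is either zero or isomorphic to $(\w_T)_!\Ql$ (hence perfect by (7.1.2)), prove odd vanishing $R^{2d'_{\Bm}-1}(\s_w)_!\Ql = 0$, and then climb the Bruhat-order closure relations of $\CW_{\Bm}^{\nat}$ using (7.1.3) to assemble perfectness of $\CT$. You instead propose to prove the stronger statement that $\CT' = R^{2d'_{\Bm}}\wt\a_!\Ql$ is locally constant on $T^{\io\th}$ and then invoke (7.1.2) once. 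That reduction is logically valid as a sufficient condition, but the argument you give for local constancy does not reach it.

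There are two concrete gaps. First, what your component count actually produces is that the stalks of $\CT'$ have constant rank $|\CW_{\Bm}^{\nat}|$, since each $\a_w$ is a locally trivial fibration whose fibers are exactly the spaces in (7.2.2) (so no dimension jump can occur on $\ol Z_{w'}$; the boundary contributions you worry about simply do not exist). But constant stalk rank of a constructible sheaf on an irreducible variety does \emph{not} imply local constancy: one still has to control the extensions between the strata, which is precisely what the odd-vanishing plus (7.1.3) argument in [Lu, 5.4] does, and which your proposal omits. In fact, local constancy of $\CT'$ is essentially equivalent to the decomposition $\CT \simeq \bigoplus_w \CT_w$ of Proposition 7.3, and the paper's proof of that decomposition \emph{uses} the perfectness from Lemma 7.2; so your proposed reduction, carried out honestly, would not be a shortcut but a circular detour. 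Second, the semismallness of $\pi_1^{(\Bm)}$ from Lemma 6.21 (iii) concerns the unipotent variety $\CX_{\Bm,\unip}$ and the fibers of $\pi_1^{(\Bm)}$, and the dimension comparison of $\CZ^{(\Bm)}$ against $\Xi$ is not the mechanism that bounds $\dim \wt\a^{-1}(t)$: that bound comes directly from the fibration structure of each $\a_w$ with fibers (7.2.2) of dimension $\leq d'_{\Bm}$. The missing ingredient in your proposal is precisely the layer-by-layer exact sequence argument via (7.1.3), together with the odd vanishing $R^{2d'_{\Bm}-1}(\s_w)_!\Ql = 0$ and the Bruhat-order closure relations, which the paper supplies.
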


\begin{proof}
Under the notation in the proof of Lemma 6.20, we consider $\wt Z_w = p\iv(\CO_w)$
for each $w \in W$. 
Let $\s_w$ be the restriction of $\s$ on $\wt Z_w$, and 
put $\CT_w = \CH^{2d'_{\Bm}}((\s_w)_!\Ql)$.  We also put $\a_w$ the restriction of 
$\wt\a$ on $\wt Z_w$.  
Let $\CW_{\Bm}\nat$ be the subgroup of $W$ given in (6.20.2).
We show that 
\begin{equation*}
\tag{7.2.1}
\CT_w \simeq \begin{cases}
                (\w_T)_!\Ql  &\quad\text{ if } w \in \CW_{\Bm}\nat \\
                 0           &\quad\text{ otherwise}.
              \end{cases}
\end{equation*}
In fact, since $\wt Z_w \to \CO_w$ is a locally trivial fibration with fibre 
isomorphic to (6.21.1), we see that $\a_w$ is a locally trivial fibration 
with fibre isomorphic to 
\begin{equation*}
\tag{7.2.2}
 H \times^{(B^{\th} \cap wB^{\th}w\iv)} 
    \biggl((U^{\io\th} \cap wU^{\io\th}w\iv) \times 
          \prod_{i=1}^{r-1}(M_{p_i} \cap w(M_{p_i}))\biggr).                
\end{equation*}
By the computation in the proof of Lemma 6.21, we see that 
$\dim \a_w\iv(s) \le  d'_{\Bm}$  for any $s \in T^{\io\th}$, and that 
the equality holds if and only if $w \in \CW_{\Bm}\nat$.
Moreover in that case, each fibre is an irreducible variety.
It follows that 
\begin{equation*}
\tag{7.2.3}
R^{2d'_{\Bm}}(\a_w)_!\Ql \simeq 
            \begin{cases} 
                 \Ql  &\quad\text{ if } w \in \CW\nat_{\Bm}, \\
                  0   &\quad\text{ otherwise.}
            \end{cases} 
\end{equation*} 
Since $\w_T$ is a finite morphism, we have
$R^{2d'_{\Bm}}(\s_w)!\Ql \simeq R^0(\w_T)_!R^{2d'_{\Bm}}(\a_w)_!\Ql$.
Thus (7.2.1) follows from (7.2.3).
\par
Since $\w_T$ is a finite morphism, (7.2.1) and (7.1.2) imply that 
$\CT_w$ is a perfect sheaf if $w \in \CW\nat_{\Bm}$.
By (7.2.2), each fibre $\a_w\iv(s)$ is a vector bundle over 
$\CO_w$.  In turn, $\CO_w$ is a vector bundle over $H/B^{\th}$ with fibre 
isomorphic to $U^{\th} \cap wU^{\th}w\iv$.  It follows that 
$H^i_c(\a_w\iv(s), \Ql) = 0$ for odd $i$. 
This implies that $R^{2d'_{\Bm}-1}(\s_w)_!\Ql = 0$ for $w \in W_n$.       
Now $\CW\nat_{\Bm}$ is a parabolic subgroup of $W$, and the closure relations 
for $\CO_w (w \in \CW\nat_{\Bm})$ are described by the Bruhat order on $\CW\nat_{\Bm}$.
It follows, by a similar argument as in [Lu, 5.4] by using the property (7.1.3), 
we see that $\CT$ is a perfect sheaf on 
$\Xi$. The lemma is proved.    
\end{proof}

\begin{prop}  %%%%  Prop. 7.3.
$\CT \simeq \bigoplus_{w \in \CW\nat_{\Bm}}\CT_w$ as sheaves on $\Xi$. 
\end{prop}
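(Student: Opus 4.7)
The plan is to exploit the factorization $\s = \w_T \circ \wt\a$ together with the finiteness of $\w_T$. Because $\w_T$ is a finite morphism, $(\w_T)_!$ is exact, so the Leray spectral sequence for $\s$ degenerates and gives
\[
\CT = R^{2d'_{\Bm}}\s_!\Ql = (\w_T)_!\CS,\qquad \CS := R^{2d'_{\Bm}}\wt\a_!\Ql,
\]
and, applied to each $\a_w$, yields $\CT_w = (\w_T)_!R^{2d'_{\Bm}}(\a_w)_!\Ql$, which by (7.2.3) equals $(\w_T)_!\Ql$ for $w \in \CW\nat_{\Bm}$ and vanishes otherwise. The proposition therefore reduces to showing that $\CS$ is isomorphic to the constant sheaf $\Ql^{\,|\CW\nat_{\Bm}|}$ on $T^{\io\th}$.

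First I would check that $\CS$ is a local system of rank $|\CW\nat_{\Bm}|$. For any $t \in T^{\io\th}$, the Bruhat stratification gives $\wt\a\iv(t) = \coprod_{w \in W}\a_w\iv(t)$, and by the dimension analysis in the proof of Lemma 7.2 each $\a_w\iv(t)$ is irreducible of dimension at most $d'_{\Bm}$, with equality precisely when $w \in \CW\nat_{\Bm}$. Hence $\dim \wt\a\iv(t) = d'_{\Bm}$ for every $t$, the top-dimensional components of $\wt\a\iv(t)$ are exactly $\ol{\a_w\iv(t)}$ for $w \in \CW\nat_{\Bm}$, and the stalk $\CS_t = H^{2d'_{\Bm}}_c(\wt\a\iv(t), \Ql)$ is free of rank $|\CW\nat_{\Bm}|$ with basis the fundamental classes of these components.

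To upgrade this fibrewise basis to a global trivialization I would use Lemma 6.21(ii): the closures $\ol{\wt Z_w}$ for $w \in \CW\nat_{\Bm}$ are exactly the top-dimensional irreducible components of $\CZ^{(\Bm)}$, and $\ol{\wt Z_w} \cap \wt\a\iv(t) = \ol{\a_w\iv(t)}$ at each $t$. The cycle class of $\ol{\wt Z_w}$ then defines a global section of $\CS$ whose stalk at every $t$ is the chosen basis element, yielding an isomorphism $\Ql^{\,|\CW\nat_{\Bm}|} \isom \CS$. Applying $(\w_T)_!$ produces $\CT \simeq \bigoplus_{w \in \CW\nat_{\Bm}}\CT_w$, as desired.

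The main technical point I expect to be an obstacle is making the global cycle-class construction rigorous, since $\wt\a$ is not obviously proper and $\ol{\wt Z_w}$ may be singular. If this causes difficulty, a safer alternative is to iterate open/closed triangles along the Bruhat order on $\CZ^{(\Bm)}$: the vanishing of $R^i(\a_w)_!\Ql$ for $i > 2d'_{\Bm}$ (from the fibre-dimension bound) collapses the associated spectral sequence in top degree, and one then invokes the perfectness of $\CT$ established in Lemma 7.2 to eliminate any remaining nonsplit extension classes between the resulting constant-sheaf summands.
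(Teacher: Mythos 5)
Your approach is genuinely different from the paper's, and both variants you sketch leave a gap that the paper's argument closes by design.

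In the primary (cycle-class) route you correctly reduce, via exactness of $(\w_T)_!$, to showing that $\CS := R^{2d'_{\Bm}}\wt\a_!\Ql$ is the constant sheaf of rank $|\CW\nat_{\Bm}|$ on $T^{\io\th}$. The stalk computation is fine, but the crucial identity $\ol{\wt Z_w} \cap \wt\a\iv(t) = \ol{\a_w\iv(t)}$, needed so that the cycle class of $\ol{\wt Z_w}$ restricts in each stalk to the fundamental class you picked as a basis vector, is not established. The map $\wt\a$ is neither proper nor flat, and in general the intersection of the closure of a stratum with a fiber can strictly contain the closure of the fiber of the stratum; specialization can produce extra components. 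You flag this as the main obstacle, and it is not a routine verification.

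The fallback route has a sharper gap. Iterating excision triangles along the Bruhat filtration of $\CZ^{(\Bm)}$ and taking $R^{2d'_{\Bm}}$, together with the vanishing in degrees above $2d'_{\Bm}$, produces a sheaf $\CS$ that is an \emph{iterated extension} of the sheaves $R^{2d'_{\Bm}}(\a_w)_!\Ql \simeq \Ql$ for $w \in \CW\nat_{\Bm}$, not an a priori direct sum. Perfectness of $\CT = (\w_T)_!\CS$ does not by itself split such an extension: a perfect sheaf (in the sense of [Lu, 5.4]) is determined by its restriction to a dense open subset over which it is a local system, so perfectness reduces the problem to such an open subset but says nothing about semisimplicity of the local system there. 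An iterated unipotent extension of constant sheaves on $T^{\io\th}$ pushes forward by $\w_T$ to a perfect sheaf that is a nontrivial iterated extension of copies of $(\w_T)_!\Ql$, and nothing in Lemma 7.2 excludes this.

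What is actually needed, and what the paper supplies, is the step you skip: use perfectness to pass to $\Xi\reg$, where $\CZ^{(\Bm)}_0 = \s\iv(\Xi\reg) \simeq \wt\CY_{\Bm}\times_{\CY_{\Bm}}\wt\CY_{\Bm}$, and then shrink further to $\CZ^\flat$ sitting over $\CY_{\Bm}^0$, where the projection is a \emph{finite Galois covering} with group $\CW\nat_{\Bm}$. The covering then decomposes into pieces $\CZ^\flat(w)$ that are simultaneously open and closed, so the pushforward breaks up as a genuine direct sum (this is (7.3.3)); the dimension bound $\dim(\a^w\iv(s)\cap \CZ_1(w)) < d'_{\Bm}$ then shows (7.3.1) that this open shrink does not change the top-degree direct image, and perfectness propagates the isomorphism from $\Xi\reg$ to all of $\Xi$. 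If you want to salvage your fallback, you must insert this open-closed decomposition over a dense open locus; the iterated-triangle bookkeeping plus an appeal to perfectness does not produce it.
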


\begin{proof}
Put $\Xi\reg = \w_T(T^{\io\th}\reg)$.  Then $\Xi\reg$ 
is an open dense subset of $\Xi$.  Since $\CT$ and $\bigoplus_w\CT_w$ are perfect sheaves on 
$\Xi$, it is enough to show that their restrictions on $\Xi\reg$ are isomorphic.
Put $\CZ_0^{(\Bm)} = \s\iv(\Xi\reg)$.  Then 
$\CZ_0^{(\Bm)} \simeq \wt\CY_{\Bm} \times_{\CY_{\Bm}}\wt\CY_{\Bm}$.
Let $\s_0$ be the restriction of $\s$ on $\CZ^{(\Bm)}_0$, which is a composite 
of the natural map $\CZ^{(\Bm)}_0 \to \CY_{\Bm}$ with the map
$\CY_{\Bm} \to \Xi$.
The restriction of $\CT$ on $\Xi\reg$ is isomorphic to $R^{2d'_{\Bm}}(\s_0)_!\Ql$.    
Put 
\begin{equation*}
\BM^{\flat} = \prod_{i=1}^{r-1}(M_{[p_{i-1}+1, p_i]} + M_{p_{i-1}} ). 
\end{equation*}
Then $\BM^{\flat}$ coincides with $\BM_{\BI^{\circ}}$ for 
$\BI^{\circ} = (I_1^{\circ}, \dots, I_{r}^{\circ})$ in 1.3, where 
$I_i^{\circ} = [p_{i-1}+1, p_i]$.
Put $\wt\CY^{\flat} = \wt\CY_{\BI^{\circ}}$ in the notation in 1.3 (see also 4.1).
Under the isomorphism (1.2.2), $\wt\CY^{\flat}$ is regarded as 
an open dense subset of $\wt\CY_{\Bm}$.  Put 
$\CZ^{\flat} = \wt\CY^{\flat} \times_{\CY_{\Bm}}\wt\CY^{\flat}$ and
let $\s_{\flat}$ be the restriction of $\s_0$ on $\CZ^{\flat}$.  
For each $w \in W_n$, let $\s_{\flat}^w$ be  the restriction of $\s_{\flat}$ on 
$\CZ^{\flat}(w) = \CZ^{\flat} \cap \wt Z_w$.
Note that $\CZ^{\flat}(w) = \emptyset$ unless $w \in \CW\nat_{\Bm}$.
Now $\CZ^{\flat}$ is an open subset of $\CZ_0^{(\Bm)}$ and the inclusion 
map $\CZ^{\flat} \hra \CZ^{(\Bm)}_0$ induces a morphism
$R^i(\s_{\flat})_!\Ql \to R^i(\s_0)_!\Ql$ for each $i$.  Similarly, we 
have a morphism  
$R^i(\s_{\flat}^w)_!\Ql \to R^i(\s^w_0)_!\Ql$ for each $w$. 
We have
\begin{align*}
\tag{7.3.1}
R^{2d'_{\Bm}}(\s_{\flat})_!\Ql &\isom R^{2d'_{\Bm}}(\s_0)_!\Ql,   \\
R^{2d'_{\Bm}}(\s_{\flat}^w)_!\Ql &\isom R^{2d'_{\Bm}}(\s_0^w)_!\Ql  \quad \text{ for }
  w \in \CW\nat_{\Bm}.
\end{align*}
\par
We show (7.3.1).
Let $\wt\a_0: \CZ^{(\Bm)}_0 \to T^{\io\th}$ be the restriction of $\wt\a$ on 
$\wt\CZ^{(\Bm)}_0$, and $\a_0^w$ the restriction of $\wt\a_0$ on $\wt Z_w$.  
We also denote by $\a_{\flat}$ (resp. $\a_{\flat}^w$) 
the restriciton of $\wt\a_0$ on $\CZ^{\flat}$ (resp. on $\CZ^{\flat}(w)$).
From  the compution in the proof of Lemma 7.2, we know that the map 
$R^{2d'_{\Bm}}(\a_{\flat}^w)_!\Ql \to R^{2d'_{\Bm}}(\a_0^w)_!\Ql$ is surjective. 
For any $\BI \subset \CI(\Bm')$ with 
$\Bm' \le \Bm$, we consider $\psi_{\BI} : \wt\CY_{\BI} \to \CY_{\Bm'}^0$ as in 1.3.
For each $\BI, \BJ \in \CI(\Bm')$, put 
$\CZ_{\BI, \BJ} = \wt\CY_{\BI}\times_{\CY_{\Bm}}\wt\CY_{\BJ}$ under the inclusion 
$\CY^0_{\Bm'} \hra \CY_{\Bm}$.
Note that $\CZ_{\BI, \BJ} = \emptyset$ unless $\BJ = w(\BI)$ for some 
$w \in \CW\nat_{\Bm}$. 
We have a partition $\CZ_0^{(\Bm)} = \coprod_{\BI, \BJ}\CZ_{\BI, \BJ}$ by locally closed 
subsets $\CZ_{\BI, \BJ}$, and $\CZ^{\flat}$ coincides with $\CZ_{\BI^{\circ}, \BI^{\circ}}$.   
Let $\CZ_1$ be the complement of $\CZ^{\flat}$ in $\CZ^{(\Bm)}_0$, and 
put $\CZ_1(w) = \CZ_1 \cap \wt Z_w$. Hence $\CZ_1 = \coprod_w \CZ_1(w)$.  
Again by a similar computation as in the proof of Lemma 7.2, we see that 
$\dim \a_w\iv(s) \cap \CZ_1(w) < d'_{\Bm}$ 
for any $s \in T^{\io\th}$.     
This implies that 
$R^{2d'_{\Bm}}(\a_{\flat}^w)_!\Ql \simeq R^{2d'_{\Bm}}(\a_0^w)_!\Ql$. 
By applying $R^0(\w_T)_!$ on both sides, we have
$R^{2d'_{\Bm}}(\s_{\flat}^w)_!\Ql \simeq R^{2d'_{\Bm}}(\s_0^w)_!\Ql$ 
for any $w \in \CW\nat_{\Bm}$. 
This proves the second statement of (7.3.1).  By considering the long exact sequence
arising from  the stratification 
$\CZ^{(\Bm)}_0 = \coprod_w(\CZ^{(\Bm)}_0 \cap \wt Z_w)$, we obtain the first statement.
Thus (7.3.1) holds.

\par
By (7.3.1), the proof of the proposition is reduced to showing 
\begin{equation*}
\tag{7.3.2}
R^{2d'_{\Bm}}(\s_{\flat})_!\Ql \simeq \bigoplus_{w \in \CW\nat_{\Bm}}
                       R^{2d'_{\Bm}}(\s_{\flat}^w)_!\Ql.       
\end{equation*} 
Note that in this case $\CZ_{\flat} \to \CY_{\Bm}^0$ is a finite Galois covering.
(This is clear in the enhanced case.  In the exotic case,  
since $m_{r} = 0$, we have $I^{\circ}_r = \emptyset$.  Hence 
$\wt\CY_{\flat} \to \CY_{\Bm}^0$ is a finite Galois covering.) 
Also note that 
$\CZ^{\flat}(w) = \emptyset$ 
unless $w \in \CW\nat_{\Bm}$.   
In that case,  
$\CZ^{\flat}(w)$ is an open and closed subset of 
$\CZ^{\flat}$ since $\vf_{\flat}$ is a finite Galois covering. 
We have 
a decompsotion $\CZ^{\flat} = \coprod_{w \in \CW\nat_{\Bm}}\CZ^{\flat}(w)$
into open and closed subsets. 
This implies that 
\begin{equation*}
\tag{7.3.3}
(\s_{\flat})_!\Ql \simeq \bigoplus_{w \in \CW\nat_{\Bm}}(\s^w_{\flat})_!\Ql.
\end{equation*} 

Hence (7.3.2) holds. The proposition is proved.
\end{proof}

\para{7.4.}
By the K\"unneth formula, 
$\vf_!\Ql \simeq \pi^{(\Bm)}_!\Ql \otimes \pi^{(\Bm)}_!\Ql$.
By Theorem 3.2 and Theorem 4.5, $\pi^{(\Bm)}_!\Ql$ has a natural structure of 
$\CW_{\Bm}\nat$-module.  Hence $\vf_!\Ql$ has a structure of 
$\CW_{\Bm}\nat \times \CW_{\Bm}\nat$-module.  It follows that 
$\CT = \CH^{2d'_{\Bm}}(\s_!\Ql) \simeq \CH^{2d'_{\Bm}}(\wt\w_!(\vf_!\Ql))$ 
has a natural action of $\CW_{\Bm}\nat \times \CW_{\Bm}\nat$.  
Under the decomposition of $\CT$ in Proposition 7.3, the action of 
$\CW_{\Bm}\nat \times \CW_{\Bm}\nat$ has the following property.
For each $w_1, w_2 \in \CW_{\Bm}\nat$, 
\begin{equation*}
\tag{7.4.1}
(w_1, w_2) \cdot \CT_w = \CT_{w_1ww_2\iv}.
\end{equation*}
In fact, since $\CT$ is a perfect sheaf by Lemma 7.2, it is enough
to check (7.4.1) for the restriction of $\CT$ on $\Xi\reg$. 
Here $(\vf_{\flat})_!\Ql$ has already a structure of 
$\CW_{\Bm}\nat \times \CW_{\Bm}\nat$-module.  Hence by (7.3.1), 
it is enough to check 
a similar property for the decomposition of $(\vf_{\flat})_!\Ql$ 
in (7.3.3).  But this can be verified directly from the discussion 
in 7.3. Hence (7.4.1) holds.
\par
Let $a_0$ be the element in $\Xi$ corresponding to the $S_n$-orbit of 
$1 \in T^{\io\th}$, and  $\CT_{a_0}$ be the stalk of $\CT$ at $a_0 \in \Xi$.
By Proposition 7.3, we have a decomposition 
\begin{equation*}
\CT_{a_0} = \bigoplus_{w \in \CW_{\Bm}\nat}(\CT_w)_{a_0},
\end{equation*}  
where $(\CT_w)_{a_0}$ is the stalk of $\CT_w$ at $a_0$. 
$\CW_{\Bm}\nat \times \CW_{\Bm}\nat$ acts on $\CT_{a_0}$ 
following (7.4.1).
By (7.2.1), $\CT_w \simeq (\w_T)_!\Ql$. Since 
$\w_T\iv(a_0) = \{1\} \subset T^{\io\th}$, 
$(\CT_w)_{a_0} \simeq H^0_c(\w_T\iv(a_0),\Ql) \simeq \Ql$. 
Thus we have proved

\begin{prop}  %%%%% Prop. 7.5
$\CT_{a_0}$ has a structure of $\CW_{\Bm}\nat \times \CW_{\Bm}\nat$-module, 
which coincides with the two-sided regular representation of $\CW_{\Bm}\nat$.
\end{prop}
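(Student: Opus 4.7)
The plan is to read off Proposition 7.5 directly from Proposition 7.3 combined with the formula (7.4.1) for the $\CW_{\Bm}\nat \times \CW_{\Bm}\nat$-action, by taking stalks at $a_0$ and identifying the resulting permutation module with the two-sided regular representation.

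First I would pass to stalks at $a_0 \in \Xi$. By Proposition 7.3 we have
\begin{equation*}
\CT_{a_0} = \bigoplus_{w \in \CW_{\Bm}\nat}(\CT_w)_{a_0}.
\end{equation*}
By (7.2.1), for each $w \in \CW_{\Bm}\nat$ we have $\CT_w \simeq (\w_T)_!\Ql$, so
\begin{equation*}
(\CT_w)_{a_0} \simeq H^{0}_c(\w_T\iv(a_0),\Ql).
\end{equation*}
Since $\w_T\iv(a_0) = \{ 1 \} \subset T^{\io\th}$ is a single point, $(\CT_w)_{a_0}$ is one-dimensional, spanned by a canonical generator $\xi_w$. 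Consequently $\dim \CT_{a_0} = |\CW_{\Bm}\nat|$, which matches the dimension of the two-sided regular representation.

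Next I would use (7.4.1), which asserts $(w_1,w_2)\cdot \CT_w = \CT_{w_1 w w_2\iv}$, to conclude that the $\CW_{\Bm}\nat \times \CW_{\Bm}\nat$-action on $\CT_{a_0}$ sends $(\CT_w)_{a_0}$ to $(\CT_{w_1 w w_2\iv})_{a_0}$, so that on the basis $\{\xi_w\}_{w \in \CW_{\Bm}\nat}$ the action is by permutation $\xi_w \mapsto \xi_{w_1 w w_2\iv}$ (possibly multiplied by a nonzero scalar, which however must equal $1$ since the canonical generators $\xi_w$ all arise from the fixed fundamental class of the single point $\w_T\iv(a_0)$). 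Identifying $\xi_w \leftrightarrow w$ realizes $\CT_{a_0}$ as $\Ql[\CW_{\Bm}\nat]$ with the action $(w_1,w_2)\cdot w = w_1 w w_2\iv$, which by definition is the two-sided regular representation.

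The only subtle point, and the one I would be most careful about, is verifying that the action of $(w_1,w_2)$ transports the canonical generator $\xi_w$ to the canonical generator $\xi_{w_1 w w_2\iv}$ (not to some scalar multiple). To handle this I would restrict the whole picture to $\Xi\reg$ and use the decomposition (7.3.3), $(\s_{\flat})_!\Ql \simeq \bigoplus_{w \in \CW_{\Bm}\nat}(\s^w_{\flat})_!\Ql$, which comes from a disjoint union decomposition of $\CZ^{\flat}$ into open and closed pieces indexed by $w \in \CW_{\Bm}\nat$. On these pieces the $\CW_{\Bm}\nat \times \CW_{\Bm}\nat$-action is the geometric permutation action of the Galois group of the finite Galois covering $\wt\CY^{\flat} \to \CY_{\Bm}^0$ acting on both sides, which tautologically permutes the constant-sheaf summands without scalars. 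Passing back to the perfect sheaf $\CT$ via (7.3.1) and then taking the stalk at $a_0$ preserves this, and the proposition follows.
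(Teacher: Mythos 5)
Your proof is correct and follows essentially the same route as the paper: decompose $\CT_{a_0}$ via Proposition 7.3, compute that each $(\CT_w)_{a_0}\simeq\Ql$ via (7.2.1) and $\w_T\iv(a_0)=\{1\}$, and then invoke (7.4.1) to see the action permutes the one-dimensional summands by $w\mapsto w_1ww_2\iv$. The one point where you go slightly beyond the paper's terse statement is the verification that the permutation has no nontrivial scalars; the paper handles this implicitly in 7.4 (where it reduces (7.4.1), by perfectness, to the disjoint-union decomposition (7.3.3) on $\Xi\reg$ coming from the Galois covering), and your appeal to the same geometric picture is exactly the right way to make this explicit.
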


The following lemma is a variant of [Lu, Lemma 6.7].
\begin{lem}  %%%%% Lemma 7.6.
Let $A, A'$ be simple perverse sheaves on $\CX_{\Bm,\unip}$.
Then we have

\begin{equation*}
\dim \BH^0_c(\CX_{\Bm,\unip}, A \otimes A') = 
            \begin{cases}    1  \quad\text{ if } A' \simeq D(A), \\
                             0   \quad\text{ otherwise.} 
            \end{cases} 
\end{equation*}
where $D(A)$ is the Veridier dual of $A$. 
\end{lem}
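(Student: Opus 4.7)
The plan is to deduce this from Verdier duality together with Schur's lemma, following Lusztig's argument in [Lu, Lemma 6.7]. Write $X = \CX_{\Bm, \unip}$ and let $D$ denote the Verdier duality functor on the bounded derived category of $\ell$-adic constructible sheaves on $X$.

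The key identity I would use is
\begin{equation*}
\CRHom(F, G) \simeq D\bigl(F \otimes^L DG\bigr)
\end{equation*}
for any constructible complexes $F, G$ on $X$, which follows from $DG = \CRHom(G, \omega_X)$ together with biduality $D\circ D \simeq \id$. Applying $R\Gamma$ and using the Poincar\'e--Verdier identity $R\Gamma \circ D \simeq D\circ R\Gamma_c$ (the duality between global sections and compactly supported cohomology), one obtains
\begin{equation*}
R\Hom_{D^b(X)}(F, G) \simeq \bigl(R\Gamma_c(X, F \otimes^L DG)\bigr)^{\vee}.
\end{equation*}
Taking $H^0$ of both sides yields
\begin{equation*}
\Hom_{D^b(X)}(F, G) \simeq \BH^0_c(X, F \otimes^L DG)^{\vee}.
\end{equation*}

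Now specialize to $F = A$ and $G = D(A')$, both of which are simple perverse sheaves on $X$. Since the Verdier dual of a simple perverse sheaf is again a simple perverse sheaf, Schur's lemma in the abelian category of perverse sheaves gives
\begin{equation*}
\Hom_{D^b(X)}(A, D(A')) \simeq
\begin{cases}
\Ql & \text{if } A \simeq D(A'), \\
0 & \text{otherwise.}
\end{cases}
\end{equation*}
Combining this with the dualized identity above proves the lemma, since $A \simeq D(A')$ if and only if $A' \simeq D(A)$.

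Essentially the whole proof is routine formal machinery; there is no serious geometric obstacle, and irreducibility or smoothness of $X$ plays no role beyond the general formalism. The only point requiring attention is verifying that the functorial identities (biduality, $R\Gamma\circ D \simeq D\circ R\Gamma_c$, and the adjunction between $\otimes^L$ and $\CRHom$) apply verbatim to the $\ell$-adic constructible derived category on the possibly singular variety $\CX_{\Bm,\unip}$, but these are all standard in the six-functor formalism. I would simply cite [Lu, Lemma 6.7] as the model and indicate that the argument transposes to the present setting without change.
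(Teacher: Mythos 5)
Your proof is correct, but despite your claim to be ``following Lusztig's argument,'' it is in fact a genuinely different and considerably slicker route than both [Lu, Lemma 6.7] and the paper's own proof. The paper (following Lusztig) argues directly from the support and degree conditions defining intersection cohomology complexes: writing $A = \IC(\ol X, \CE)[\dim X]$, $A' = \IC(\ol X', \CE')[\dim X']$, one runs the hypercohomology spectral sequence for $\BH^*_c(Y, A\otimes A')$ on appropriate subvarieties $Y$ and shows, term by term, that the contributing bidegrees satisfy $i + j + j' < 0$ (or $< -1$) unless $\ol X = \ol X'$ and one is on the open smooth stratum, where the computation reduces to $H^{2\dim X}_c(X, \CE\otimes\CE')$. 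Your proof instead packages everything into the global Verdier duality isomorphism $\Hom_{D^b_c(X,\Ql)}(A, D(A')) \simeq \BH^0_c(X, A\otimes^L A')^\vee$, and then invokes Schur's lemma for simple objects in the (artinian, $\Ql$-linear with $\Ql$ algebraically closed) abelian category of perverse sheaves, together with the fact that $\Hom$ between objects of the heart agrees in $D^b$ and in the heart. This buys brevity and conceptual clarity and avoids any case analysis on supports; what the paper's hands-on argument buys, in exchange, is explicit degree-vanishing estimates (such as $\BH^{-1}_c = 0$ on the boundary), which the formal argument neither needs nor produces. One small caveat you should state explicitly: the identity you use requires $\otimes = \otimes^L$, which is harmless here since $\Ql$ is a field and the cohomology sheaves have $\Ql$-vector-space stalks, so the tensor product is already derived.
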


\begin{proof}
Assume that  $A = \IC(\ol X, \CE)[\dim X]$ and $A' = \IC(\ol X', \CE'][\dim X']$,
where $X, X'$ are smooth irreducible subvarieties of $\CX_{\Bm,\unip}$ and 
$\CE$ (resp. $\CE'$) is a simple local system on $X$ (resp. on $X'$). 
We have
$\BH^0_c(\CX_{\Bm,\unip}, A\otimes A') \simeq \BH^0_c(\ol X \cap \ol X', A\otimes A')$.
First assume that $\ol X \ne \ol X'$, and put $Y = \ol X \cap \ol X'$.  
We show 
\begin{equation*}
\tag{7.6.1}
\BH^0_c(Y, A\otimes A') = 0. 
\end{equation*}
For this, by using the hypercohomology 
spectral sequence, it is enough to show the following.
\par\medskip\noindent
(7.6.2) \ If $H^i_c(Y, \CH^jA \otimes \CH^{j'}A') \ne 0$, then 
$i + j+ j' < 0$. 
\par\medskip
We show (7.6.2).  Suppose that $H^i_c(Y, \CH^jA\otimes \CH^{j'}A') \ne 0$.  Put 
\begin{equation*}
Y_{j,j'} = \supp \CH^jA \cap \supp \CH^{j'}A' \subset Y.
\end{equation*} 
We have $H^i_c(Y_{j,j'}, \CH^jA\otimes \CH^{j'}A') 
    \simeq H^i_c(Y, \CH^jA\otimes \CH^{j'}A') \ne 0$.  It follows that 
$i \le 2\dim Y_{j,j'}$.  By using the property of intersection cohomology, 
we have
\begin{align*}
\dim Y_{j,j'} &\le \dim \supp \CH^jA \le -j, \\ 
\dim Y_{j,j'} &\le \dim \supp \CH^{j'}A' \le -j'
\end{align*} 
and so 
\begin{equation*}
\tag{7.6.3}
j \le - \dim Y_{j,j'}, \qquad j' \le - \dim Y_{j,j'}.
\end{equation*}
Since $\ol X \ne \ol X'$, we have $\dim Y < \dim X$ or $\dim Y < \dim X'$, and 
one of the inequalities in (7.6.3) is a strict inequality.
It follows that $i + j+ j' < 0$.  Hence (7.6.2) holds and (7.6.1) follows.
\par
Next assume that $\ol X = \ol X'$.  We may assume that $X, X'$ are open dense in $\ol X$.
By replacing $X, X'$  by $X \cap X'$, if necessary, we may assume that $X = X'$.
Put $Y = \ol X \backslash X$.  We show that 
\begin{equation*}
\tag{7.6.4}
\BH^0_c(Y, A\otimes A') = 0 \text{ and } \BH^{-1}_c(Y, A\otimes A') = 0.
\end{equation*} 
As in the previous case, we consider the hypercohomology spectral sequence.
Suppose that 
$H^i_c(Y, \CH^jA\otimes \CH^{j'}A') \ne 0$.  We may replace $Y$ by $Y_{j,j'}$, 
where 
$Y_{j,j'} = Y \cap \supp \CH^jA \cap \supp \CH^{j'}A'$. 
Then we have $i \le 2\dim Y_{j,j'}$, and we obtain a similar formula as (7.6.3), 
but in this case, both of them  are strict inequalities since 
$\dim Y < \dim X$.  It follows that $i + j + j' < -1$.  
This proves (7.6.4). By using the cohomology long exaxt sequence with respect to 
$Y = \ol X \backslash X$, we see that
\begin{equation*}
\BH^0_c(\ol X, A\otimes A') \simeq \BH^0_c(X, A\otimes A') 
     \simeq H_c^{2\dim X}(X, \CE\otimes \CE').
\end{equation*}
The last space is isomorphic to $\Ql$ if $\CE' \simeq \CE^{\vee}$, the dual local
system,  and is equal to zero
otherwise.  Since $D(A) = \IC(\ol X, \CE^{\vee})[\dim X]$, the lemma is proved.  
\end{proof}

\para{7.7.}
We consider the map $\pi^{(\Bm)}_1 : \wt\CX_{\Bm,\unip} \to \CX_{\Bm, \unip}$.
Put $K_{\Bm,1} = (\pi_1^{(\Bm)})_!\Ql[d'_{\Bm}]$.
By Lemma 6.21 (iii), the map $\pi_1^{(\Bm)}$ is semi-small. 
Hence 
$K_{\Bm,1}$ is a semisimple perverse sheaf on $\CX_{\Bm, \unip}$ and is 
decomposed as 
\begin{equation*}
\tag{7.7.1}
K_{\Bm,1} \simeq \bigoplus_{A \in \ZC_{\Bm}}V_A \otimes A,
\end{equation*}
where $\ZC_{\Bm}$ is the set of (isomorphism  classes of) 
simple perverse sheaves appearing in the 
decomposition of $K_{\Bm, 1}$, and 
$V_A = \Hom (K_{\Bm,1}, A)$ is the multiplicity space for $A$.
We have the following.

\begin{prop}  %%%%  Prtop. 7.8.
Under the notation as above, put $m_A = \dim V_A$ for each $A \in \ZC_{\Bm}$.  Then 
we have
\begin{equation*}
\sum_{A \in \ZC_{\Bm}} m_A^2  =  |\CW_{\Bm}\nat|.
\end{equation*}
\end{prop}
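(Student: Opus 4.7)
The plan is to compute $\dim \CT_{a_0}$ in two distinct ways. Proposition 7.5 already gives one value, namely $|\CW_{\Bm}\nat|$, via the two-sided regular representation. The second computation, sketched below, expresses $\dim \CT_{a_0}$ as $\sum_A m_A^2$ through K\"unneth and Lemma 7.6; equating the two answers yields the proposition.

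First I would identify the fibre $\s\iv(a_0)$. Since $\w_T \colon T^{\io\th} \to \Xi$ is a finite morphism with $\w_T\iv(a_0) = \{1\}$, we have $\s\iv(a_0) = \wt\a\iv(1)$; unpacking the definition of $\wt\a$, the condition $p_T(g\iv xg) = 1$ forces $x$ to be unipotent, so this fibre equals $\CZ^{(\Bm)}_1$, the unipotent Steinberg variety of 6.20. Proper base change then gives
\begin{equation*}
\CT_{a_0} = (R^{2d'_{\Bm}}\s_!\Ql)_{a_0} = H^{2d'_{\Bm}}_c(\CZ^{(\Bm)}_1, \Ql).
\end{equation*}

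Next, I would use the cartesian description $\CZ^{(\Bm)}_1 \simeq \wt\CX_{\Bm,\unip}\times_{\CX_{\Bm,\unip}}\wt\CX_{\Bm,\unip}$. Applying the K\"unneth formula to the projection onto the first factor gives
\begin{equation*}
H^{2d'_{\Bm}}_c(\CZ^{(\Bm)}_1, \Ql) \simeq \BH^0_c(\CX_{\Bm,\unip}, K_{\Bm,1}\otimes K_{\Bm,1}),
\end{equation*}
where the shift $2d'_{\Bm}$ is absorbed via $(\pi^{(\Bm)}_1)_!\Ql = K_{\Bm,1}[-d'_{\Bm}]$. Substituting the decomposition (7.7.1) and applying Lemma 7.6 collapses the double sum to
\begin{equation*}
\dim \BH^0_c(\CX_{\Bm,\unip}, K_{\Bm,1}\otimes K_{\Bm,1}) = \sum_{A \in \ZC_{\Bm}} m_A\, m_{D(A)}.
\end{equation*}

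Finally, since $\wt\CX_{\Bm,\unip}$ is smooth of dimension $d'_{\Bm}$ and $\pi^{(\Bm)}_1$ is proper, Verdier duality yields $D(K_{\Bm,1}) \simeq K_{\Bm,1}$ (up to Tate twist, which is immaterial for dimensions); consequently $\ZC_{\Bm}$ is stable under $D$ and $m_{D(A)} = m_A$, so the sum equals $\sum_A m_A^2$. Combining this with Proposition 7.5 yields the proposition. The one technical point I would verify carefully is the degree alignment: the K\"unneth identity $(\pi^{(\Bm)}_1)_!\Ql\otimes (\pi^{(\Bm)}_1)_!\Ql = (K_{\Bm,1}\otimes K_{\Bm,1})[-2d'_{\Bm}]$ must precisely cancel the $2d'_{\Bm}$ from the cohomological degree to land in $\BH^0_c$, which is exactly the degree where Lemma 7.6 has content.
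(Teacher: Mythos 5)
Your argument reproduces the paper's proof essentially line for line: identify $\CT_{a_0}$ via the fibre $\s\iv(a_0) = \CZ^{(\Bm)}_1$ and proper base change, rewrite it through K\"unneth as $\BH^0_c(\CX_{\Bm,\unip}, K_{\Bm,1}\otimes K_{\Bm,1})$, collapse the double sum with Lemma 7.6 and self-duality of $K_{\Bm,1}$, and compare with Proposition 7.5. Your explicit check of the degree bookkeeping and the unwinding of $\s\iv(a_0)$ are slightly more detailed than the paper's one-line appeal to 7.4, but the approach and every key step are the same.
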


\begin{proof}
By the computation in 7.4, we have
\begin{align*}
\CT_{a_0} &\simeq \BH_c^{2d'_{\Bm}}(\CX_{\Bm, \unip}, 
        (\pi_1^{(\Bm)})_!\Ql \otimes (\pi_1^{(\Bm)})_!\Ql)  \\
          &\simeq \BH_c^0(\CX_{\Bm,\unip}, K_{\Bm,1} \otimes K_{\Bm, 1}).
\end{align*}
Hence by (7.7.1), we have 
\begin{equation*}
\dim \CT_{a_0} = \sum_{A, A' \in \ZC_{\Bm}} (m_Am_{A'}) 
     \dim \BH_c^0(\CX_{\Bm,\unip}, A \otimes A').
\end{equation*}
By Lemma 7.6, $\BH^0_c(\CX_{\Bm,\unip}, A \otimes A') \ne 0$ only when 
$D(A) = A'$, in which case, $\dim \BH^0_c(\CX_{\Bm,\unip}, A \otimes A') = 1$.
But since $K_{\Bm,1}$ is self dual, $m_A = m_{D(A)}$ for each $A$. 
It follows that $\dim\CT_{a_0} = \sum_{A \in \ZC_{\Bm}} m_A^2$. 
On the other hand, by Proposition 7.5, we have $\dim \CT_{a_0} = |\CW\nat_{\Bm}|$.
This proves the proposition.  
\end{proof}   

\para{7.9.}
Since $\pi^{(\Bm)}_!\Ql$ is equipped with the $\CW_{\Bm}\nat$-action, 
for each $z = (x, \Bv) \in \CX_{\Bm}$, $H^i((\pi^{(\Bm)})\iv(z), \Ql)$ 
turns out to be a $\CW_{\Bm}\nat$-module. 
In the case where $z_0 = (1, \mathbf{0})$, $(\pi^{(\Bm)})\iv(z_0) \simeq H/B^{\th}$
and so $H^i(H/B^{\th}, \Ql)$ has a structure of $\CW_{\Bm}\nat$-module.  Note that
$\CW_{\Bm}\nat$ is a subgroup of $W_n$ (resp. of $S_n$) in the exotic case 
(resp. the enhanced case). 
It is well-known that the Weyl group $W_n$ (resp. $S_n$) acts naturally 
on $H^i(H/B^{\th}, \Ql)$, 
which we call the classical action of $W_n$ (resp. $S_n$) on it. 
We have the following lemma.

\begin{lem} %%%%  Lemma 7.10.
The action of $\CW_{\Bm}\nat$ on $H^i(H/B^{\th}, \Ql)$ coincides with the restriction 
of the classical action of $W_n$ (resp. $S_n$) on it in the exotic case 
(resp. the enhanced case). 
\end{lem}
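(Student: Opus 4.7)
The plan is to reduce the statement to the case where $\CW_{\Bm}\nat$ equals the full Weyl group $W$ ($W_n$ in the exotic case, $S_n$ in the enhanced case), and then to invoke the $r = 2$ result of [SS1] (exotic) or classical Springer theory (enhanced). In the exotic case take $\Bm^{\ast} = (0, \dots, 0, n, 0) \in \CQ_{n,r}^0$, for which $\CW_{\Bm^{\ast}}\nat = W_n$; the defining conditions of $\wt\CX_{\Bm^{\ast}}$ force $v_1 = \cdots = v_{r-2} = 0$, so $\wt\CX_{\Bm^{\ast}}$ is canonically isomorphic to the corresponding variety in the case $r = 2$. The construction in Section 3 of the $W_n$-action on $\pi^{(\Bm^{\ast})}_!\Ql$ then reduces to the construction in [SS1], and by [SS1, Thm.~4.2] the induced action on the stalk at $z_0 = (1, \mathbf{0})$, which is $H^*(H/B^{\th})$, is precisely the classical Springer action of $W_n$. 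In the enhanced case one takes $\Bm^{\ast} = (n, 0, \dots, 0)$, for which $\wt\CX_{\Bm^{\ast}}$ is a vector bundle over the Grothendieck--Springer variety of $H$, and the same identification goes through.

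For a general $\Bm$, I would show that the $\CW_{\Bm}\nat$-action on $H^*(H/B^{\th})$ is precisely the restriction to $\CW_{\Bm}\nat \subset W$ of the $W$-action described above. The key observation is that both actions are determined by their restrictions to the regular semisimple locus: over $\CY_{\Bm}^0$, the $\CW_{\Bm}\nat$-action on $(\psi^{(\Bm)})_!\Ql$ arises from the finite Galois covering $\wh\CY_{\BI(\Bm)} \to \CY_{\Bm}^0$ with deck group $\CW_{\Bm}$ (see 1.3 and 1.5), together with the $(\BZ/2)^{m_{r-1}}$-twist coming from the $H^*(\BP_1^{m_{r-1}})$-fiber cohomology on adjacent strata $\CY_{\Bm(k)}^0$ (see 3.3 and the proof of Proposition 3.5). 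This Galois-covering picture is the natural restriction to $\CY_{\Bm}^0$ of the one used at $\Bm^{\ast}$, so the two $\CW_{\Bm}\nat$-actions on the generic stratum coincide. By the uniqueness built into the intermediate-extension description of $K_{\Bm,T,\CE}$ in Theorem 3.2 (and Theorem 4.5), the full $\CW_{\Bm}\nat$-action on $\pi^{(\Bm)}_!\Ql$ and, in particular, on its stalk at $z_0$ is then identified with the restriction of the $W$-action on $\pi^{(\Bm^{\ast})}_!\Ql$.

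The main obstacle is the careful matching of the two Galois-covering descriptions, especially the $(\BZ/2)^{m_{r-1}}$-factor of $\CW_{\Bm}\nat$. This twist does not come from a single finite covering over $\CY_{\Bm}^0$; rather it emerges in Section 3 through the perverse-cohomology long exact sequence linking $\CY_{\Bm}^0$ to its neighboring strata $\CY_{\Bm(k)}^0$. I anticipate an induction on the poset structure of $\CQ_{n,r}^0$ that parallels the proofs of Propositions 1.7 and 3.5, with the base case handled at $\Bm = \Bm^{\ast}$. As an alternative, one could combine the decomposition in Theorem 3.2 with the Steinberg-variety computation of Proposition 7.5 to extract the $\CW_{\Bm}\nat$-character of $H^*(H/B^{\th})$ by a Frobenius-reciprocity-type argument, and then check equality with the restriction of the regular representation of $W$.
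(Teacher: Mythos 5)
Your strategy (reduce to $\CW_{\Bm}\nat = W$ and then invoke an $r=2$ result) is the right one and agrees with the paper's approach, but several steps have genuine problems.

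First, a citation issue with real content: you invoke [SS1, Thm.\ 4.2] for the claim that the $W_n$-action on $H^*(H/B^{\th})$ coming from $\pi^{(\Bm^*)}_!\Ql$ is the classical one. That theorem only gives the decomposition of the pushforward into intersection complexes; it does not identify the resulting action on the Springer fiber at $z_0$ with the classical Weyl-group action. That identification is a separate, nontrivial fact, and the correct reference is [SS1, Lemma 5.2], which the paper cites explicitly. Your argument silently assumes the content of that lemma.

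Second, the enhanced-case choice $\Bm^* = (n,0,\dots,0)$ cannot serve the purpose. You need $\Bm^* \le \Bm$ so that $\CX_{\Bm^*}$ sits inside $\CX_{\Bm}$ and contains $z_0$, because the comparison of the $\CW_{\Bm}\nat$- and $W$-actions must take place over a common locus where both pushforwards are naturally identified. But $(n,0,\dots,0)$ is the \emph{maximal} element of $\CQ_{n,r}$, so $\CX_{\Bm} \subset \CX_{\Bm^*}$ goes the wrong way, and $\pi^{(\Bm)}_!\Ql$ is \emph{not} the restriction of $\pi^{(\Bm^*)}_!\Ql$ to $\CX_{\Bm}$ (the Springer fibres genuinely differ). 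The paper uses the minimal element $\Bm' = (0,\dots,0,n)$; then $(\pi^{(\Bm)})^{-1}(\CX_{\Bm'}) = \wt\CX_{\Bm'}$, so $\pi^{(\Bm)}_!\Ql|_{\CX_{\Bm'}} \simeq \pi^{(\Bm')}_!\Ql$ and the action comparison becomes tautological by construction.

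Third, and most seriously, the reduction you propose via the regular semisimple stratum $\CY_{\Bm}^0$ does not work. With the correct (minimal) $\Bm^*$, the subvariety $\CX_{\Bm^*}$ is closed in $\CX_{\Bm}$ and is disjoint from the open stratum $\CY_{\Bm}^0$ (indeed $\CY_{\Bm}^0 = \CY_{\Bm} \setminus \bigcup_{\Bm' < \Bm}\CY_{\Bm'}$). So there is no ``natural restriction to $\CY_{\Bm}^0$ of the Galois-covering picture at $\Bm^*$''; the $W$-action of the base case simply isn't defined there. Moreover the statement that the $\CW_{\Bm}\nat$-action is determined by its restriction to $\CY_{\Bm}^0$ is itself questionable: by Theorem 3.2, $\pi^{(\Bm)}_!\Ql$ has summands supported on the proper closed subsets $\CX_{\Bm(k)}$ for $k < m_{r-1}$, and these vanish on $\CY_{\Bm}^0$. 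The paper avoids all of this by restricting to the closed subvariety $\CX_{\Bm'}$ containing $z_0$, where both actions are visible and the compatibility is transparent from the inductive construction of Theorem 3.2 (resp.\ Theorem 4.5). Your proposal would need to be rewritten along those lines.
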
 

\begin{proof}
First assume that $\CX_{\Bm}$ is of exotic type.  
Put $\Bm' = (0, \dots, 0, n, 0) \in \CQ_{n,r}^0$. Then $\CY_{\Bm'} \subset \CY_{\Bm}$, 
and $\CX_{\Bm'} \subset \CX_{\Bm}$ (in fact, 
$\CY_{\Bm'} = \{ (x, \Bv) \in \CY_{\Bm} \mid v_1 = \cdots = v_{r-2} = 0 \}$ and 
similarly for $\CX_{\Bm'}$).  
We have $\CW\nat_{\Bm'} = W_n$. In this case, 
$\psi^{(\Bm)}_!\Ql|_{\CY_{\Bm'}} \simeq \psi^{(\Bm')}_!\Ql$, and 
$\pi^{(\Bm)}_!\Ql|_{\CX_{\Bm'}} \simeq \pi^{(\Bm')}_!\Ql$.  
It follows from  the construction that the action of $\CW\nat_{\Bm}$ on 
$\psi^{(\Bm)}_!\Ql|_{\CY_{\Bm'}}$
coincides with the restriciton of the action of $\CW\nat_{\Bm'} \simeq W_n$ on 
$\psi^{(\Bm')}_!\Ql$. Hence a similar fact holds also for $\pi^{(\Bm)}_!\Ql$.
In particular, the $\CW\nat_{\Bm}$-action on $\CH^i_{z_0}(\pi^{(\Bm)}_!\Ql)$ 
coincides with the 
restriction of the $\CW\nat_{\Bm'}$-action on $\CH^i_{z_0}(\pi^{(\Bm')}_!\Ql)$.     
Here $\CX_{\Bm'} \simeq G^{\io\th} \times V$, and $\pi^{(\Bm')}_!\Ql$ is isomorphic to 
$\pi_!\Ql$, where $\pi: \wt\CX \to \CX = G^{\io\th} \times V$ is the map defined 
in 1.2 for the case $r = 2$. The complex $\pi_!\Ql$ was studied in [SS1], and one sees that 
the action of $\CW\nat_{\Bm'}$ on 
$\pi^{(\Bm')}_!\Ql$ is nothing but the $W_n$-action on $\pi_!\Ql$ constructed in [SS1].
It induces a $W_n$-action on $H^i(H/B^{\th}, \Ql)$, which is called the exotic action 
of $W_n$.  Hence in order to prove the lemma, it is enough to see that the exotic action 
and the classical action of $W_n$ on $H^i(H/B^{\th}, \Ql)$ coincide with each other.   
But this is proved in [SS1, Lemma 5.2]. Hence the lemma is proved in the exotic case.
\par
Next assume that $\CX_{\Bm}$ is of enhanced type. 
In this case, we consider  $\CY_{\Bm'} \subset \CY_{\Bm}$ and 
$\CX_{\Bm'} \subset \CX_{\Bm}$  for  
$\Bm' = (0, \dots, 0, n) \in \CQ_{n,r}$.
Then $\CX_{\Bm'} \simeq G^{\io\th} \times V$, and as in the exotic case, the proof is 
reduced to the case where $r = 2$. So we consider $\CX = G \times V$ for $G = GL(V)$
and let $\pi: \wt\CX \to \CX$ be the corresponding map.  We have an action of $S_n$ on 
$H^i(G/B, \Ql)$, called the enhanced action of $S_n$, obtained from  the $S_n$ action 
on $\pi_!\Ql$. One can prove that the enhanced action coincides with the classical
action, by a similar (but simpler) argument as in the proof of Lemma 5.2 in [SS1]. 
Hence the lemma holds for the enhanced case.

\end{proof}

\para{7.11.}
We keep the assumption as before.  
By applying  Theorem 3.2 and Theorem 4.5 to the case where $\CE = \Ql$, 
one can write as 
\begin{equation*}
\tag{7.11.1}
\pi^{(\Bm)}_!\Ql[d_{\Bm}] \simeq \bigoplus_{\r \in (\CW\nat_{\Bm})\wg}
\rho  \otimes K_{\r}, 
\end{equation*} 
where $d_{\Bm} = \dim \CX_{\Bm}$, $K_{\r}$ is a simple perverse sheaf on $\CX_{\Bm}$ as given in 
Theorem 3.2 and Theorem 4.5.  
More precisely, if $\CX_{\Bm}$ is of exotic type then  
$K_{\r} = \IC(\CX_{\Bm(k)}, \CL_{\r_1})[d_{\Bm(k)}]$ for
$\r \simeq V\nat_{\r_1}$ with 
$\r_1 \in \CW_{\Bm(k)}\wg$, and if $\CX_{\Bm}$ is of enhanced type then 
$K_{\r}  = \IC(\CX_{\Bm}, \CL_{\r})[d_{\Bm}]$.  
We consider the complex $(\pi_1^{(\Bm)})_!\Ql[d'_{\Bm}]$, where 
$d_{\Bm'} = \dim \CX_{\Bm, \unip}$.  
The following result gives the Springer correspondence with respect to the 
action of $\CW\nat_{\Bm}$.  In the exotic case, this result is regarded as 
a weak version of the Springer correspondence.

\begin{thm}[Springer correspondence for $\CW\nat_{\Bm}$]  %%%% Theorem 7.12.
Let $\CX_{\Bm}$ be of exotic type or of enhanced type for $\Bm \in \CQ_{n,r}$. 
In the exotic case, assume further that $\Bm \in \CQ_{n,r}^0$. Then 
$(\pi_1^{(\Bm)})_!\Ql[d'_{\Bm}]$ is a semisimple perverse sheaf on $\CX_{\Bm,\unip}$ 
equipped with $\CW\nat_{\Bm}$-action,
and is decomposed as 
\begin{equation*}
\tag{7.12.1}
(\pi_1^{(\Bm)})_!\Ql[d'_{\Bm}] \simeq \bigoplus_{\r \in (\CW\nat_{\Bm})\wg}
                 \r \otimes L_{\r}, 
\end{equation*}
where $L_{\r}$ is a simple perverse sheaf on $\CX_{\Bm,\unip}$ such that 
\begin{equation*}
\tag{7.12.2}
K_{\r}|_{\CX_{\Bm, \unip}} \simeq L_{\r}[d_{\Bm} - d'_{\Bm}].
\end{equation*}
\end{thm}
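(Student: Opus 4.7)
The plan is to deduce Theorem 7.12 from the already-established decomposition (7.11.1) via proper base change, and then to combine Proposition 7.8 with the non-negativity of multiplicities to force each summand to be a single simple perverse sheaf.

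First I would observe that $\wt\CX_{\Bm,\unip} = (\pi^{(\Bm)})\iv(\CX_{\Bm,\unip})$, so proper base change yields $(\pi_1^{(\Bm)})_!\Ql \simeq (\pi^{(\Bm)})_!\Ql|_{\CX_{\Bm,\unip}}$. Substituting (7.11.1) and shifting by $[d'_{\Bm}-d_{\Bm}]$ produces a $\CW\nat_{\Bm}$-equivariant decomposition
\[
(\pi_1^{(\Bm)})_!\Ql[d'_{\Bm}] \simeq \bigoplus_{\r \in (\CW\nat_{\Bm})\wg} \r \otimes \tilde L_\r, \qquad \tilde L_\r := K_\r|_{\CX_{\Bm,\unip}}[d'_{\Bm}-d_{\Bm}].
\]
By Lemma 6.21(iii) the map $\pi_1^{(\Bm)}$ is semismall, so the left-hand side is a semisimple perverse sheaf; taking $\r$-isotypic components, each $\tilde L_\r$ is itself a semisimple perverse sheaf. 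Moreover $\tilde L_\r \ne 0$ for every $\r$: in the exotic case Theorem 3.2 gives $K_\r$ supported on $\CX_{\Bm(k)}$ for some $0 \le k \le m_{r-1}$, and since $\Bm(k) \le \Bm$ one has $\CX_{\Bm(k)} \cap \CX_{\Bm,\unip} = \CX_{\Bm(k),\unip} \ne \emptyset$; the enhanced case is even simpler since $K_\r$ is supported on all of $\CX_{\Bm}$ by Theorem 4.5.

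Expand $\tilde L_\r = \bigoplus_A c_{\r, A}\, A$ in terms of simple perverse sheaves, so that the multiplicities $m_A$ appearing in (7.7.1) satisfy $m_A = \sum_\r c_{\r, A}\dim\r$. Set $M_{\r_1, \r_2} := \sum_A c_{\r_1, A}\, c_{\r_2, A} \ge 0$; nonvanishing of $\tilde L_\r$ forces $M_{\r, \r} \ge 1$. Combining Proposition 7.8 with the identity $|\CW\nat_{\Bm}| = \sum_\r (\dim\r)^2$,
\[
\sum_\r (\dim\r)^2 \;=\; \sum_A m_A^2 \;=\; \sum_{\r_1, \r_2} \dim\r_1\dim\r_2\, M_{\r_1, \r_2} \;\ge\; \sum_\r (\dim\r)^2\, M_{\r, \r} \;\ge\; \sum_\r (\dim\r)^2,
\]
where the first inequality discards the (non-negative) off-diagonal contributions. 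Equality must hold throughout, so $M_{\r, \r} = 1$ for every $\r$ and $M_{\r_1, \r_2} = 0$ whenever $\r_1 \ne \r_2$.

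Finally, $\sum_A c_{\r, A}^2 = 1$ with $c_{\r, A}$ non-negative integers forces a unique simple perverse sheaf $L_\r$ with $c_{\r, L_\r} = 1$ and all other coefficients zero, so $\tilde L_\r = L_\r$ is simple; the off-diagonal vanishing $\sum_A c_{\r_1, A}\, c_{\r_2, A} = 0$ then forces $L_{\r_1} \ne L_{\r_2}$ whenever $\r_1 \ne \r_2$. This establishes (7.12.1), and the defining formula for $\tilde L_\r$ gives (7.12.2). The main work lies entirely upstream, in Propositions 7.3 and 7.5 (identification of $\CT_{a_0}$ with the regular representation of $\CW\nat_{\Bm}$ via the perfect-sheaf method on the Steinberg base $\Xi$) and in the dimension count of Proposition 7.8; granting these, the present theorem is a transparent combinatorial consequence of non-negativity of multiplicities and the nonvanishing of each $\tilde L_\r$.
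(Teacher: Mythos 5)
Your proposal is correct, and it establishes both (7.12.1) and (7.12.2), but it takes a genuinely different route from the paper's own argument. The paper proceeds by building the algebra homomorphism $\alpha\colon \Ql[\CW\nat_{\Bm}]\to\End K_{\Bm,1}$ and proving it is an isomorphism: surjectivity comes from the dimension count of Proposition 7.8, while injectivity is extracted by composing with the evaluation at the fibre $\CB_{z_0}^{(\Bm)}\simeq H/B^{\th}$ over $z_0=(1,\mathbf{0})$ and invoking Lemma 7.10 (which in turn rests on the nontrivial comparison in [SS1, Lemma 5.2] between the Springer-theoretic and the classical $W_n$-actions on $H^{\bullet}(H/B^{\th},\Ql)$); the decomposition (7.12.1) then drops out by double centralizer considerations, and (7.12.2) by comparing with (7.11.1). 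You instead start directly from (7.11.1): proper base change realizes $K_{\Bm,1}$ as $\bigoplus_\rho \rho\otimes\tilde L_\r$ with $\tilde L_\r = K_\r|_{\CX_{\Bm,\unip}}[d'_{\Bm}-d_{\Bm}]$, semismallness makes every $\tilde L_\r$ a semisimple perverse sheaf, and the faithfulness input that the paper obtains from Lemma 7.10 is replaced by the elementary non-vanishing $\tilde L_\r\ne 0$, deduced from the explicitly known supports of $K_\r$ in Theorems 3.2 and 4.5 together with the inclusion $\CX_{\Bm(k),\unip}\subset\CX_{\Bm,\unip}$. The final positivity argument (forcing $\sum_A c_{\r,A}^2 = 1$ and vanishing of off-diagonal $M_{\r_1,\r_2}$) is a clean substitute for the abstract double-centralizer step. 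The net effect is that your argument bypasses Lemma 7.10 entirely, at the cost of leaning more heavily on the concrete form of (7.11.1); the paper's route is more conceptual and keeps the derivation of (7.12.1) independent of the explicit shape of $K_\r$, whereas yours is more self-contained at the combinatorial level and makes the mechanism producing simplicity of $L_\r$ completely transparent.
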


\begin{proof}
As discussed in 7.7, $K_{\Bm,1} = (\pi^{(\Bm)}_1)_!\Ql[d'_{\Bm}]$ is a semisimple perverse
 sheaf.  Since $K_{\Bm,1}$ is the restriction of $(\pi^{(\Bm)})_!\Ql$ on 
$\CX_{\Bm, \unip}$, $K_{\Bm,1}$ has a strucute of $\CW\nat_{\Bm}$-module.
Thus we can define an algeba homomorphism   
\begin{equation*}
\begin{CD}
\Ql[\CW\nat_{\Bm}] @>\a >>  \End K_{\Bm,1} @>\b >> \End H^{\bullet}(H/B^{\th}, \Ql).
\end{CD}
\end{equation*}
In order to show (7.12.1), it is enough to see that $\a$ gives an isomorphism 
\begin{equation*}
\tag{7.12.3}
\a: \Ql[\CW\nat_{\Bm}] \isom \End K_{\Bm,1}.
\end{equation*}
We show (7.12.3).  We assume that $\CX_{\Bm}$ is of exotic type.  
The proof for the enhanced case is similar. 
$\b\circ\a$ is a homomorphism induced from the action of $\CW\nat_{\Bm}$ 
on $H^{\bullet}(H/B^{\th}, \Ql)$. By Lemma 7.10, this action is the restriction of the classical 
action of $W_n$ on $H^{\bullet}(H/B^{\th}, \Ql)$.    
Since $H^{\bullet}(H/B^{\th}, \Ql) \simeq \Ql[W_n]$ as $W_n$-module,  
with respect to the classical 
action, $\Ql[W_n] \to H^{\bullet}(H/B^{\th}, \Ql)$ is injective. 
Hence $\b\circ \a$ is injective.
It follows that $\a$ is injective. 
On the other hand, Proposition 7.8 implies that $\dim \End K_{\Bm,1} = |\CW\nat_{\Bm}|$.
This shows that $\a$ is surjective, and so (7.12.3) holds.
\par
(7.12.2) now follows easily by comparing (7.11.1) and (7.12.1).  The theorem  is proved.
\end{proof}

\para{7.13.}
We now return to the setting in 1.6, and consider the complex reflection group 
$W_{n,r}$.  For each $\Bm \in \CQ_{n,r}^0$,
we denote by $(W_{n,r}\wg)_{\Bm}$ the set of irreducible representations $\wt V_{\r}$
(up to isomorphism) of $W_{n,r}$ obtained from $\r \in \CW_{\Bm(k)}\wg$ for 
various $0 \le k \le m_{r-1}$ as in 1.6.
Then we have 
\begin{equation*}
\tag{7.13.1}
W_{n,r}\wg = \coprod_{\Bm \in \CQ_{n,r}^0}
                       (W_{n,r}\wg)_{\Bm}.
\end{equation*}
It follows from the construction of $\wt V_{\r}$ and of $V_{\r}\nat$, 
there exists a natual bijection between $(W_{n,r}\wg)_{\Bm}$ and $(W_{\Bm}\nat)\wg$.
We denote by $V(\r)$ the irreducible representation of $W_{n,r}$ belonging to
$(W_{n,r}\wg)_{\Bm}$ corresponding to $\r \in (W_{\Bm}\nat)\wg$.  
Now assume that $\CX_{\Bm}$ is of exotic type. 
We consider the map $\ol\pi_{\Bm}: \pi\iv(\CX_{\Bm}) \to \CX_{\Bm}$, and  
the complex $(\ol\pi_{\Bm})_!\Ql[d_{\Bm}]$ as in 2.1.          
Then by Theorem 2.2, $(\ol\pi_{\Bm})_!\Ql[d_{\Bm}]$ is a semisimple perverse sheaf equipped with 
$W_{n,r}$-action, and is decomposed as 
\begin{equation*}
\tag{7.13.2}
(\ol\pi_{\Bm})_!\Ql[d_{\Bm}] \simeq \bigoplus_{\r \in (W_{\Bm}\nat)\wg}V(\r) \otimes K_{\r},
\end{equation*} 
where $K_{\r}$ is a simple perverse sheaf on $\CX_{\Bm}$ given in (7.11.1).
Let $\ol\pi_{\Bm,1} : \pi\iv(\CX_{\Bm, \unip}) \to \CX_{\Bm, \unip}$ be the restriciton of
$\ol \pi_{\Bm}$ on $\CX_{\Bm, \unip}$.  Since $\ol\pi_{\Bm,1}$ is proper, 
$\ol\pi_{\Bm,1}\Ql[d'_{\Bm}]$ is a semisimple complex on $\CX_{\Bm, \unip}$.
By applying (7.12.2), we see that $\ol\pi_{\Bm,1}\Ql[d'_{\Bm}]$ is a semisimple perverse sheaf.
As a corollary to Theorem 7.12, we obtain the Springer correspondence for $W_{n,r}$.

%%%%
%%%%
\begin{cor}[Springer correspondence for $W_{n,r}$]  %%%%  Corollary 7.14.
Assume that $\CX_{\Bm}$ is of exotic type with 
$\Bm \in \CQ_{n,r}^0$.  Then $\ol\pi_{\Bm,1}\Ql[d'_{\Bm}]$ 
is a semisimple perverse sheaf on 
$\CX_{\Bm,\unip}$ with $W_{n,r}$-action, and is decomposed 
as 
\begin{equation*}
\ol\pi_{\Bm,1}\Ql[d'_{\Bm}]\simeq \bigoplus_{\r \in (W_{\Bm}\nat)\wg}V(\r) \otimes L_{\r},
\end{equation*}
where $L_{\r}$ is a simple perverse sheaf on $\CX_{\Bm, \unip}$ as in Theorem 7.12, 
and $V(\r)$ is an irreducible representation of $W_{n,r}$ as defined in 7.13.
\end{cor}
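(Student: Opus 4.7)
The plan is to deduce the corollary directly by restricting the decomposition in (7.13.2) (which is Theorem 2.2 applied with $\CE = \Ql$) from $\CX_{\Bm}$ to the closed subvariety $\CX_{\Bm,\unip}$, and then to identify the pieces using Theorem 7.12. Essentially all of the work has already been done; the corollary is a packaging statement combining the $W_{n,r}$-equivariant structure from Section 2 with the identification of the unipotent summands from Section 7.

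First, I would note that $\CX_{\Bm,\unip}$ is a closed subvariety of $\CX_{\Bm}$, and $\ol\pi_{\Bm,1}$ is the restriction of the proper map $\ol\pi_{\Bm}$ to the preimage $\pi\iv(\CX_{\Bm,\unip})$. By proper base change along the closed immersion $\CX_{\Bm,\unip} \hra \CX_{\Bm}$, we have a canonical isomorphism
\begin{equation*}
(\ol\pi_{\Bm,1})_!\Ql \simeq (\ol\pi_{\Bm})_!\Ql\bigm|_{\CX_{\Bm,\unip}}.
\end{equation*}
Restricting the decomposition (7.13.2) and shifting by $d'_{\Bm}$ instead of $d_{\Bm}$, we obtain
\begin{equation*}
(\ol\pi_{\Bm,1})_!\Ql[d'_{\Bm}] \simeq \bigoplus_{\r \in (\CW_{\Bm}\nat)\wg} V(\r) \otimes
       \bigl(K_{\r}\bigm|_{\CX_{\Bm,\unip}}[d'_{\Bm} - d_{\Bm}]\bigr),
\end{equation*}
and applying (7.12.2) from Theorem 7.12, which asserts $K_{\r}|_{\CX_{\Bm,\unip}} \simeq L_{\r}[d_{\Bm} - d'_{\Bm}]$, the shifts cancel and yield the decomposition stated in the corollary. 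Since each $L_{\r}$ is a simple perverse sheaf on $\CX_{\Bm,\unip}$ by Theorem 7.12, the complex $(\ol\pi_{\Bm,1})_!\Ql[d'_{\Bm}]$ is automatically a semisimple perverse sheaf.

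The one point that requires genuine argument, and which I would treat as the main (if minor) obstacle, is the $W_{n,r}$-equivariance of the resulting decomposition. The $W_{n,r}$-action on $(\ol\pi_{\Bm})_!\Ql$ is given by Theorem 2.2 as endomorphisms of complexes on $\CX_{\Bm}$; restriction to the closed subvariety $\CX_{\Bm,\unip}$ is a functor, so it induces a $W_{n,r}$-action on $(\ol\pi_{\Bm,1})_!\Ql$ through the isomorphism above, and this action is compatible with the decomposition because the decomposition (7.13.2) is $W_{n,r}$-equivariant by construction. The multiplicity spaces $V(\r)$ therefore carry the same $W_{n,r}$-module structure as on $\CX_{\Bm}$, completing the proof.
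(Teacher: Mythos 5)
Your argument is correct and follows exactly the route the paper takes in paragraph 7.13: restrict the $W_{n,r}$-equivariant decomposition (7.13.2) of $(\ol\pi_{\Bm})_!\Ql[d_{\Bm}]$ to the closed subvariety $\CX_{\Bm,\unip}$ via proper base change, then apply (7.12.2) to identify $K_{\r}|_{\CX_{\Bm,\unip}}$ with $L_{\r}$ up to the expected shift. You spell out the base-change step and the equivariance of restriction more explicitly than the paper's brief remark, but the content and the key inputs (Theorem 2.2 via (7.13.2) and Theorem 7.12 via (7.12.2)) are the same.
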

\par\bigskip
%%%%%%
%%%%%%
\section{Determination of the Springer correspondence}

\para{8.1.}
Assume that $ r \ge 2$.  In this section we shall determine 
$L_{\r}$ appearing in the Springer correspondence explicitly.
For a fixed $\Bm \in \CQ_{n,r}$, 
we consider $\CX_{\Bm}$ of exotic type or of enhanced type.
In the exotic case, we assume that $\Bm \in \CQ_{n,r}^0$.
First we consider the case where $\CX_{\Bm}$ is of exotic type.  
By modifiying the definition of $\CK_{\Bm}$ in 2.5, we define a variety 
$\CG_{\Bm}$ by
\begin{equation*}
\begin{split}
\CG_{\Bm} = \{ (x, &\Bv, (W_i)_{1 \le i \le r-2}) \mid (x, \Bv) \in \CX,
 (W_i) \text{ : partial isotropic flag in } V, \\
  &\dim W_i = p_i, x(W_i) = W_i, v_i \in W_i \ (1 \le i \le r-2), 
          v_{r-1} \in W_{r-2}^{\perp} \}.  
\end{split}
\end{equation*}
Let $\z: \CG_{\Bm} \to \CX$ be the projection to the first two factors.
We consider the map $\pi^{(\Bm)}: \wt\CX_{\Bm} \to \CX_{\Bm}$.                        
Then $\pi^{(\Bm)}$ is decomposed as                                                               
\begin{equation*}                                                                                   
\begin{CD}                                                                                          
\pi^{(\Bm)} : \wt\CX_{\Bm} @>\vf >> \CG_{\Bm}                                               
             @>\z >> \CX_{\Bm},                                                              
\end{CD}                                                                                            
\end{equation*}                                                                                     
where $\vf$ is defined by 
$(x, \Bv, gB^{\th}) \mapsto (x, \Bv, (gM_{p_i})_{1 \le i \le r-2})$.
The map $\vf$ is surjective since $\Bm \in \CQ_{n,r}^0$. It follows that 
$\z(\CG_{\Bm})$ coincides with $\CX_{\Bm}$. 
Since $\Bm \in \CQ_{n,r}^0$, we have $\dim \wt\CX_{\Bm} = \dim \CX_{\Bm}$.
This implies that $\dim \CG_{\Bm} = \dim \CX_{\Bm}$. 
Note that in the case where $r = 2$, we have $\Bm = (n, 0)$, and 
$\CG_{\Bm} = G^{\io\th} \times V = \CX_{\Bm}$.  
We define a variety $\CH_{\Bm}$ as in 2.5 by 
\begin{equation*}
\begin{split}
\CH_{\Bm} = \{ (x, \Bv, &(W_i), \f_1, \f_2) \mid (x, \Bv, (W_i)) \in  \CG_{\Bm}  \\
                     &\f_1: W_1 \isom V_0, \f_2: W_1^{\perp}/W_1 \isom \ol V_0
                             \text{ (symplectic isom.) } \},
\end{split}
\end{equation*}
where $V_0 = M_{p_1}$ and $\ol V_0 = M_{p_1}^{\perp}/M_{p_1}$. 
We also define a variety $\wt\CZ_{\Bm}$ by 

\begin{equation*}
\begin{split}
\wt\CZ_{\Bm} = \{ (x,\Bv, &gB^{\th}, \f_1, \f_2) \mid (x, \Bv, gB^{\th}) \in \wt\CX_{\Bm}, \\
              &\f_1: g(M_{p_1}) \isom V_0, \f_2: g(M_{p_1})^{\perp}/g(M_{p_1}) \isom \ol V_0 \}. 
\end{split}
\end{equation*}
Assume that $r \ge 3$, and let $\Bm' = (m_2, \dots, m_r)$ for $\Bm = (m_1, \dots, m_r)$, and   
$G_1, \wt G_1$, $\CX'_{\Bm'}, G_2$, etc.  be as in 2.5. 
$\wt\CX'_{\Bm'}$ is defined 
for $\CX'_{\Bm'}$ in a similar way as $\wt\CX_{\Bm}$ is defined for $\CX_{\Bm}$.    
(Hence in the case where $r = 3$, $\CX'_{\Bm'} = \CG'_{\Bm'} = G_2^{\io\th} \times \ol V_0$.) 
We have the following commutative diagram  
\begin{equation*}
\tag{8.1.1}  
\begin{CD}  
\wt G_1 \times \wt\CX'_{\Bm'} @<\wt\s<<   \wt\CZ_{\Bm} @>\wt q >>  \wt\CX_{\Bm} \\
        @V\pi^1 \times \vf' VV    @VV\wt\vf V    @VV\vf V  \\ 
G_1 \times \CG'_{\Bm'}  @<\s <<  \CH_{\Bm} @>q >> \CG_{\Bm} \\
  @V\id \times \z' VV     @.   @VV \z V   \\
G_1 \times \CX'_{m'}  @.      @.    \CX_m,  
\end{CD}  
\end{equation*} 
where $q, \s$ are defined in a similar way as in 2.5, and
\begin{align*}
\wt q &: (x, \Bv, gB^{\th}, \f_1, \f_2) \mapsto (x,\Bv, gB^{\th}), \\
\wt\vf &: (x,\Bv, gB^{\th}, \f_1, \f_2) \mapsto 
                       (x, \Bv, (gM_{p_i})_{1 \le i \le r-2}, \f_1, \f_2).
\end{align*}
The map $\wt\s$ is defined as follows.
Let $\CF^{\th}(V)$ be the set of isotropic flags in $V$, and 
$\wt\CF^{\th}_{\Bm}$ the set of $((x, \Bv), (V_i)) \in \CX_{\Bm} \times \CF^{\th}(V)$  
such that $(V_i)$ is $x$-stable and that $v_i \in V_{p_i}$ for $i = 1, \dots, r-1$.
Then $\wt\CF^{\th}_{\Bm}$ is isomorphic to $\wt\CX_{\Bm}$, and   
\begin{equation*}  
\begin{split}  
\wt\CZ_{\Bm} \simeq \{ (x,\Bv, (V_i), &\f_1,\f_2) \mid  
                           (x,\Bv, (V_i)) \in \wt\CF^{\th}_{\Bm},  \\  
                             &\f_1: V_{p_1} \isom V_0,  
                             \f_2 : V_{p_1}^{\perp}/V_{p_1} \isom \ol V_0 \}.  
\end{split}  
\end{equation*}  
We define 
$\wt\s : \wt\CZ_{\Bm} \to \wt G_1 \times \wt\CX'_{\Bm'}$ by  
$(x, \Bv, (V_i), \f_1, \f_2) \mapsto (\xi_1, \xi_2)$ with
\begin{align*}  
\xi_1 &= (\f_1(x')\f_1\iv, (\f_1(V_i))_{i \le p_1}) \in \wt G_1, \\  
\xi_2 &= (\f_2(x'')\f_2\iv, \f_2(\ol \Bv), (\f_2(V_i/V_{p_1}))_{i \ge p_1}) \in\wt\CX'_{\Bm'},
\end{align*}  
where $x'$ (resp. $x''$) is the restriction of $x$ on $V_{p_1}$  
(resp. $V_{p_1}^{\perp}/V_{p_1}$),  
$\ol \Bv = (\ol v_2, \dots, \ol v_{r-1})$ with $\ol v_i$ the image of  
$v_i$ on $V_{p_1}^{\perp}/V_{p_1}$ for $\Bv = (v_1, \dots, v_{r-1})$.  
One can check that squares in the diagram are both cartesian squares.
Put $H_0 = G_1 \times G_2^{\th}$.  Then as in (2.5.2), and 
(2.5.3), we have
\par\medskip\noindent
(8.1.2) \ $q$ is a principal bundle with fibre isomorphic to $H_0$, and 
$\s$ is a locally trivial fibration with smooth fibre of dimension 
$\dim H + (r-2)m_1$.
\par\medskip  

\para{8.2}
For a fixed $k$, we consider the variety 
$\wt\CY_{\Bm(k)}^{\dag} = (\psi^{(\Bm)})\iv(\CY^0_{\Bm(k)})$ 
as in 3.3. 
Let $\CG_{\Bm(k), \rg} = \z\iv(\CY_{\Bm(k)}^0)$ be a locally closed 
subvariety of $\CG_{\Bm}$ (not of $\CG_{\Bm(k)}$, note that 
$\CG_{\Bm(k)}$ is not defined since $\Bm(k) \notin \CQ_{m,r}^0$).   
We define $\Bm'(k) = \Bm'(r-2, k)$ similar to $\Bm(k)$, by replacing $\Bm$ by 
$\Bm'$.  Then the varieties 
$\CY'^0_{\Bm'(k)}, \wt \CY'^{\dag}_{\Bm'(k)}$ and $\CG'_{\Bm'(k),\rg}$ are 
defined similarly with respect to $\CX'_{\Bm'}$.
The commutative diagram (8.1.1) induces a commutative diagram

\begin{equation*}
\tag{8.2.1}
\begin{CD}
\wt G_{1,\rg} \times \wt\CY'^{\dag}_{\Bm'(k)} @<<<  \wt\CZ_{\Bm(k)}^{\dag}
               @>>>  \wt\CY_{\Bm(k)}^{\dag} \\
     @VVV   @VVV  @VV\vf_0 V   \\
G_{1,\rg} \times \CG'_{\Bm'(k), \rg} @<<<  \CH_{\Bm(k), \rg} @ >>>  \CG_{\Bm(k), \rg} \\
        @VVV    @.    @VV\z_0 V   \\
 G_{1,\rg} \times \CY'^0_{\Bm'(k)}    @.    @.     \CY_{\Bm(k)}^0,
\end{CD}
\end{equation*}
where $\CH_{\Bm(k),\rg} = q\iv(\CG_{\Bm(k), \rg}), 
        \wt\CZ_{\Bm(k)}^{\dag} = \wt q\iv(\wt\CY_{\Bm(k)}^{\dag})$, and 
$\vf_0, \z_0$ are restrictions of $\vf$, $\z$, respectively. 
Again, the squares in the diagrams are both cartesian. 
\par
By (3.3.1) we have a decomposition 
\begin{equation*}
\tag{8.2.2}
\wt\CY_{\Bm(k)}^{\dag} = \coprod_{\BI \in \CI^{\bullet}(\Bm(k))}\wt\CY_{\BI},  
\end{equation*}
into irreducible components, 
where 
\begin{equation*}
\CI^{\bullet}(\Bm(k)) = \{ \BI \in \CI(\Bm(k)) \mid I_i = I_i^{\circ}\  
 (1 \le i \le r-2), I_{r-1}, I_r \subset I^{\circ}_{r-1} \}.
\end{equation*}
Let $\psi_{\BI}$ be the restriction of the map
$\wt\CY_{\Bm(k)}^{\dag} \to \CY_{\Bm(k)}^0$
on $\wt\CY_{\BI}$ for each $\BI \in \CI^{\bullet}(\Bm(k))$ as in 1.3. 
Under the notation in 1.3, 
the map $\psi_{\BI}$ factors through $\wh\CY_{\BI}$ 
as $\psi_{\BI} = \e_{\BI}\circ \xi_{\BI}$ (see (1.3.1)).
We define a map 
$\e^{\bullet}_{\BI} : \wh\CY_{\BI} \to \CG_{\Bm(k), \rg}$ as the quotient of the map 
$(g,(t, \Bv)) \mapsto (gtg\iv, g\Bv, (g\ol M_{I_i})_{1 \le i \le r - 2})$. 
Thus $\e_{\BI} : \wh\CY_{\BI} \to \CY_{\Bm(k)}^0$ factors through $\CG_{\Bm(k), \rg}$
as $\e_{\BI} = \z_0\circ \e_{\BI}^{\bullet}$.   
\par
The variety $\wt\CY_{\Bm'(k)}'^{\dag}$ is also decomposed into irreducible 
components as in (8.2.2), by using the parameter set $\CI^{\bullet}(\Bm'(k))$.
For $\BI' \in \CI^{\bullet}(\Bm'(k))$, the varieties
$\wh\CY'_{\BI'}$ are defined with respect to $\CX'_{\Bm'}$. 
Note that teh set $\CI^{\bullet}(\Bm'(k))$ is in bijection with the set $\CI^{\bullet}(\Bm(k))$
under the correspondence $\BI' \lra \BI = (I_1^{\circ}, \BI')$.
Now the commutative diagram (8.2.1) impies the following commutative diagram
for each $\BI \in \CI^{\bullet}(\Bm(k))$
\begin{equation*}
\tag{8.2.3}
\begin{CD}
\wt G_{1,\rg} \times \wh\CY'_{\BI'} @<<<  \wh\CZ_{\BI}
                       @>>>  \wh\CY_{\BI} \\
     @V\psi^1_0 \times \e'^{\bullet}_{\BI'} VV   @VVV  @VV\e_{\BI}^{\bullet} V   \\
G_{1,\rg} \times \CG'_{\Bm'(k),\rg} @<<<  \CH_{\Bm(k), \rg} @ >>>  \CG_{\Bm(k), \rg}, \\
\end{CD}
\end{equation*}
where $\wh\CZ_{\BI}$ is the fibre product of 
$\CH_{\Bm(k),\rg}$ and $\CY_{\BI}$ over $\CG_{\Bm(k),\rg}$.  
The both squares in the diagram 
are cartesian squares. 
By 1.3, we know that $\e_{\BI} : \wh\CY_{\BI} \to \CY_{\Bm(k)}^0$ is a finite 
Galois covering with group $\CW_{\BI}$.  Here we note that

\par\medskip\noindent
(8.2.4) \ $\e^{\bullet}_{\BI} : \wh \CY_{\BI} \to \CG_{\Bm(k), \rg}$ is a finite Galois covering 
with group $\CW_{\BI}$.  In particular, the restriction of $\z_0$ gives an isomorphism 
$\CG_{\Bm(k),\rg} \isom \CY_{\Bm(k)}^0$. 
\par\medskip

In fact,  this is clear in the case where $r = 2$, since $\CG_{\Bm(k),\rg} = \CY^0_{\Bm(k)}$.
Assume that $r \ge 3$, and that (8.2.4) holds for $r-1$.  
By induction, $\e_{\BI'}'^{\bullet} : \wh\CY'_{\BI'} \to \wh\CG'_{\Bm'(k),\rg}$ 
is a finite Galois covering with group $\CW'_{\BI'}$.  Then    
$\psi^1_0 \times \e_{\BI'}'^{\bullet}$ is a finite Galois covering with group 
$S_{m_1} \times \CW'_{\BI'} = \CW_{\BI}$. 
Since both squares in the diagram  (8.2.3) are cartesians, $\e^{\bullet}_{\BI}$ 
is a finite Galois covering with group $\CW_{\BI}$.  Since $\e_{\BI}$ 
is also a finite Galois covering with group $\CW_{\BI}$, we conclude that 
$\CG_{\Bm(k),\rg} \simeq \CY_{\Bm(k)}^0$.  This proves (8.2.4). 
\par
Since $\e_{\BI}$ is a finite Galois covering with group $\CW_{\BI} \simeq \CW_{\Bm(k)}$, 
by (1.5.3) we have 
\begin{equation*}
(\e_{\BI})_!\Ql \simeq \bigoplus_{\r_0 \in \CW_{\Bm(k)}\wg}\r_0 \otimes \CL_{\r_0},
\end{equation*}
where $\CL_{\r_0}$ is a simple local system on $\CY_{\Bm(k)}^0$.
We regard $\CL_{\r_0}$ as a simple local system on $\CG_{\Bm(k),\rg}$ under the isomorphism 
$\CG_{\Bm(k),\rg} \simeq \CY_{\Bm(k)}^0$. 
\par
Now take $\r \in (\CW_{\Bm}\nat)\wg$.  There exist a unique integer $k$ and 
$\r_0 \in \CW_{\Bm(k)}\wg$
such that $\r = V_{\r_0}\nat$.  Then we have
$K_{\r} = \IC(\CX_{\Bm(k)}, \CL_{\r_0})[d_{\Bm(k)}]$.  Put 
$A_{\r} = \IC(\ol{\CG}_{\Bm(k)}, \CL_{\r_0})[d_{\Bm(k)}]$.  $A_{\r}$ is an $H$-equivariant 
simple perverse sheaf on $\ol{\CG}_{\Bm(k)}$, 
and we regard it as a perverse sheaf on $\CG_{\Bm}$ by extension by zero.

We show the following fact.

\begin{prop}  %%%%  Proposition 8.3
Assume that $\CX_{\Bm}$ is of exotic type. 
\begin{enumerate}
\item
$\vf_!\Ql[d_{\Bm}]$ is a semisimple perverse 
sheaf on $\CG_{\Bm}$ equipped with $\CW_{\Bm}\nat$-action, and is decomposed as
\begin{equation*}
\vf_!\Ql[d_{\Bm}] \simeq \bigoplus_{\r \in (\CW_{\Bm}\nat)\wg}
     \r \otimes A_{\r}.
\end{equation*}
\item
$\z_!A_{\r} \simeq K_{\r}$.
\end{enumerate}
\end{prop}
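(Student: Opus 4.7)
The plan is to establish (i) and (ii) simultaneously by induction on $r$. In the base case $r=2$ we have $\Bm=(n,0)$, and the setup in 8.1 degenerates to $\CG_{\Bm}=G^{\io\th}\times V=\CX_{\Bm}$ with $\vf=\pi^{(\Bm)}$ and $\z=\id$; statement (i) then reduces to Theorem 3.2 applied to $\CE=\Ql$, and (ii) is tautological. For $r\ge 3$, assume the proposition holds for the exotic space $\CX'_{\Bm'}$ of level $r-1$, where $\Bm'=(m_2,\dots,m_r)$.

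The inductive step of (i) exploits the commutative diagram (8.1.1), whose two squares are cartesian. Proper base change on the top-left square gives $\wt\vf_!\Ql\simeq\s^*(\pi^1\times\vf')_!\Ql$, and on the right square $q^*\vf_!\Ql\simeq\wt\vf_!\Ql$. The classical Springer decomposition for $G_1=GL_{m_1}$ yields $\pi^1_!\Ql[\dim G_1]\simeq\bigoplus_{\r_1\in S_{m_1}\wg}\r_1\otimes\IC(G_1,\CL_{\r_1})$, and the induction hypothesis gives $\vf'_!\Ql[d_{\Bm'}]\simeq\bigoplus_{\r'\in(\CW_{\Bm'}\nat)\wg}\r'\otimes A_{\r'}$. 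A direct dimension check using (1.2.1) establishes the identity $d_{\Bm}+b_2=\dim G_1+d_{\Bm'}+b_1$, where $b_1=\dim H+(r-2)m_1$ and $b_2=\dim H_0$. Since $\s$ is a locally trivial fibration with smooth connected fibre of dimension $b_1$ (by 8.1.2), $\s^*[b_1]$ preserves simplicity of perverse sheaves; and since $q$ is a principal $H_0$-bundle, $q^*[b_2]$ is a fully faithful, semisimplicity-preserving embedding of perverse sheaves on $\CG_{\Bm}$ into $H_0$-equivariant perverse sheaves on $\CH_{\Bm}$. Applying the K\"unneth formula, $\vf_!\Ql[d_{\Bm}]$ is a semisimple perverse sheaf on $\CG_{\Bm}$, equipped with the $\CW_{\Bm}\nat=S_{m_1}\times\CW_{\Bm'}\nat$-action inherited from the factor actions on $\pi^1_!\Ql$ and $\vf'_!\Ql$.

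To match this with the stated decomposition, descent along $q$ produces from each $H_0$-equivariant simple summand $\s^*(\IC(G_1,\CL_{\r_1})\boxtimes A_{\r'})[b_1]$ a unique simple perverse sheaf $B_{(\r_1,\r')}$ on $\CG_{\Bm}$. If $A_{\r'}$ is supported on $\ol\CG'_{\Bm'(k)}$, then restricting $B_{(\r_1,\r')}$ to the open stratum $\CG_{\Bm(k),\rg}$ and using both the cartesian diagram (8.2.3) and the isomorphism $\z_0:\CG_{\Bm(k),\rg}\isom\CY_{\Bm(k)}^0$ from (8.2.4) identifies this restriction with $\CL_{\r_0}$, appropriately shifted, for the element $\r_0=\r_1\boxtimes\r'_0\in\CW_{\Bm(k)}\wg$ corresponding to $(\r_1,\r')$ under the inductive bijection. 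Hence $B_{(\r_1,\r')}=\IC(\ol\CG_{\Bm(k)},\CL_{\r_0})[d_{\Bm(k)}]=A_{\r}$ with $\r=V_{\r_0}\nat$, completing the proof of (i).

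Statement (ii) then follows formally: applying $\z_!$ to (i) yields $\pi^{(\Bm)}_!\Ql[d_{\Bm}]\simeq\bigoplus_{\r}\r\otimes\z_!A_{\r}$, whereas Theorem 3.2 with $\CE=\Ql$ gives $\pi^{(\Bm)}_!\Ql[d_{\Bm}]\simeq\bigoplus_{\r}\r\otimes K_{\r}$ under the identification $\r\leftrightarrow V_{\r_0}\nat$. Once one checks that the two $\CW_{\Bm}\nat$-actions agree (both being intrinsically determined by the Galois covering over the regular stratum), matching multiplicity spaces as pairwise distinct irreducible $\CW_{\Bm}\nat$-modules forces $\z_!A_{\r}\simeq K_{\r}$ for each $\r$. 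The main obstacle is the identification step in the third paragraph: one must track carefully through the diagram (8.2.3) to verify that the $S_{m_1}\times\CW'_{\BI'}$-Galois structure on $\wh\CY_{\BI}\to\CG_{\Bm(k),\rg}$ matches, under (8.2.4), the $\CW_{\Bm(k)}$-Galois structure on $\wh\CY_{\BI}\to\CY_{\Bm(k)}^0$, so that the local system descending from $\s^*(\CL_{\r_1}\boxtimes\CL_{\r'_0})$ is precisely $\CL_{\r_0}$ for $\r_0=\r_1\boxtimes\r'_0$.
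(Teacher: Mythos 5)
Your proof of part (i) follows the paper's argument essentially step by step: induction on $r$ with base case $r=2$ reducing to [SS1, Theorem 4.2] via $\CG_{\Bm}=\CX_{\Bm}$ and $\z=\id$; base change along the two cartesian squares of diagram (8.1.1); the classical Springer decomposition (8.3.1) for $GL_{m_1}$ together with the inductive decomposition (8.3.2); descent along the principal $H_0$-bundle $q$ to obtain the simple perverse sheaves $B_{(\r_1,\r')}$ (the paper's $\wt A_\r$); and the identification $B_{(\r_1,\r')}\simeq A_\r$ via the restriction to $\CG_{\Bm(k),\rg}$ and the diagrams (8.2.3), (8.2.4). Your ``main obstacle'' remark about tracking the Galois structures through (8.2.3) corresponds precisely to what the paper dispatches with ``it follows from the discussion in 8.2.'' The dimension identity $d_{\Bm}+\b_2=\dim G_1+d_{\Bm'}+\b_1$ that you invoke to normalize the shifts does check out.

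For part (ii), however, your route differs from the paper's and leaves an acknowledged gap. You apply $\z_!$ to the decomposition in (i) and compare it to the decomposition coming from Theorem 3.2, then conclude $\z_!A_\r\simeq K_\r$ by matching $\r$-isotypic components. This requires that the two $\CW_{\Bm}\nat$-actions on $\pi^{(\Bm)}_!\Ql[d_{\Bm}]$ (the one inherited from $\vf_!\Ql$ via $\z_!$, and the one from Theorem 3.2) be verified to agree; you flag this with ``Once one checks\ldots'' but do not carry it out. The paper avoids the issue entirely: it first notes $\z_!A_\r$ is a semisimple complex (decomposition theorem), then that $\z_!K\simeq\pi^{(\Bm)}_!\Ql[d_\Bm]$ is a \emph{semisimple perverse sheaf} by Theorem 3.2, hence so is each summand $\z_!A_\r$; then (8.2.4) gives $\z_!A_\r|_{\CY^0_{\Bm(k)}}\simeq K_\r|_{\CY^0_{\Bm(k)}}$, so $\z_!A_\r$ contains $K_\r$ as a direct summand; finally, counting multiplicities of the pairwise-distinct simple perverse sheaves $K_\sigma$ in the two decompositions of $\z_!K$ (forgetting the $\CW_{\Bm}\nat$-structure, using only that $\dim\r>0$) forces $[\z_!A_\r:K_\sigma]=\delta_{\r,\sigma}$ and $\z_!A_\r$ to have no other simple summands, i.e. $\z_!A_\r\simeq K_\r$. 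You should either supply the verification that the two group actions coincide (which can itself be reduced, via (8.2.4), to a statement on the regular stratum), or replace your final step with the paper's counting argument, which is cleaner because it never compares the two module structures.
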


\begin{proof}
We prove the proposition by induction on $r$.  In the case where $r = 2$, 
$\Bm = (m_1, m_2) = (n, 0)$ since $\Bm \in \CQ_{n,r}^0$.  Thus 
$\CG_{\Bm} = \CX_{\Bm} = \CX$ and $\vf = \pi: \wt\CX \to \CX$.
Moreover, $\z$ is the identity map, and  $\CW_{\Bm}\nat = W_n$. 
Hence the proposition follows from [SS1, Theorem 4.2].  Assume that $r \ge 3$, 
and that the proposition holds for $r-1$. 
We know, under the notation of 2.6, that 
\begin{equation*}
\tag{8.3.1}
\pi^1_!\Ql[\dim G_1] \simeq \bigoplus_{\r_1 \in S_{m_1}\wg}\r_1 \otimes 
          \IC(G_1, \CL_{\r_1})[\dim G_1].
\end{equation*}
On the other hand, by applying the induction hypothesis to $\CX'_{\Bm'}$, we have
\begin{equation*}
\tag{8.3.2}
\vf'_!\Ql[d_{\Bm'}] \simeq 
             \bigoplus_{\r' \in (\CW\nat_{\Bm'})\wg}\r'\otimes A_{\r'},
\end{equation*}
where $A_{\r'}$ is a simple perverse sheaf on $\CG'_{\Bm'}$ defined similarly to
$A_{\r}$. 
By applying the argument in 2.6, 
one can find a unique $H$-equivariant simple perverse sheaf $\wt A_{\r}$ on 
$\CG_{\Bm}$ such that 
\begin{equation*}
\tag{8.3.3}
q^*\wt A_{\r}[\b_2] \simeq \s^*(K_{\r_1}\boxtimes A_{\r'})[\b_1],
\end{equation*}  
where $\b_1 = \dim H + (r-2)m_1$, $\b_2 = \dim H_0$, and  
$K_{\r_1} = \IC(G_1, \CL_{\r_1})[\dim G_1]$.
It follows from the discussion in 8.2 that $\wt A_{\r}$ actually coincides with $A_{\r}$.
Put $K = \vf_!\Ql[d_{\Bm}], K' = \vf'_!\Ql[d_{\Bm'}]$ and 
$K^1 = (\pi^1)_!\Ql[\dim G_1]$.
Since both squares in (8.1.1) are cartesian, we have
\begin{equation*}
q^*K[\b_2] \simeq \s^*(K_1 \boxtimes K')[\b_1].
\end{equation*}
Combining (8.3.1), (8.3.2) and (8.3.3), we obtain 

\begin{equation*}
K \simeq \bigoplus_{\r \in (\CW_{\Bm}\nat)\wg}
                      \r \otimes A_{\r}.
\end{equation*}
By this decomposition, 
$K = \vf_!\Ql[d_{\Bm}]$ is regarded as a complex with $\CW_{\Bm}\nat$-action.
This proves (i). 
\par
Next we show (ii). 
Since $\z$ is proper, $\z_!A_{\r}$ is a semisimple complex on $\CX_{\Bm}$.
By (i), $K = \vf_!\Ql[d_{\Bm}]$ is a semisimple perverse sheaf.
Since $\z_!K \simeq (\pi^{(\Bm)})_!\Ql[d_{\Bm}]$ is 
a semisimple perverse sheaf, it follows that 
$\z_!A_{\r}$ is a semisimple perverse sheaf.  
By (8.2.4) we have $\z_!A_{\r}|_{\CY^0_{\Bm(k)}} \simeq K_{\r}|_{\CY^0_{\Bm(k)}}$.
Hence $\z_!A_{\r}$ contains $K_{\r}$ as a direct summand.  
By applying $\z_!$ to the formula in (i), we have
\begin{equation*}
(\pi^{(\Bm)})_!\Ql[d_{\Bm}] \simeq \bigoplus_{\r \in (\CW_{\Bm}\nat)\wg}
                \r \otimes \z_!A_{\r}.
\end{equation*} 
By Theorem 3.2, we have
\begin{equation*}
(\pi^{(\Bm)})_!\Ql[d_{\Bm}] \simeq \bigoplus_{\r \in (\CW_{\Bm}\nat)\wg}
           \r \otimes K_{\r}.
\end{equation*}
By comparing these two formulas, we obtain (ii).  The proposition is 
proved. 
\end{proof}

\para{8.4.}
For each $\Bm \in \CQ^0_{n,r}$, put $\CG_{\Bm \unip} = \z\iv(\CX_{\Bm,\unip})$.
Then the map $\pi^{(\Bm)}_1$ is decomposed as 
\begin{equation*}
\begin{CD}
\pi^{(\Bm)}_1: \wt\CX_{\Bm, \unip} @>\vf_1 >>  \CG_{\Bm, \unip}  @>\z_1 >>  \CX_{\Bm, \unip},
\end{CD}
\end{equation*}
where $\vf_1, \z_1$ are restrictions of $\vf, \z$, respectively.  Note that 
$\vf_1$ is surjective. 
Put $\CH_{\Bm \unip} = q\iv(\CG_{\Bm, \unip})$.  The inclusion map 
$\CG_{\Bm,\unip} \hra \CG_{\Bm}$ is compatible with the diagram (8.1.1), namely we
have a commutative diagram 
\begin{equation*}
\tag{8.4.1}
\begin{CD}
G_1 \times \CG'_{\Bm'} @<\s <<  \CH_{\Bm} @>q >>  \CG_{\Bm} \\
    @AAA   @AAA    @AAA  \\
G_{1,\unip} \times \CG'_{\Bm', \unip}  @<\s_1 <<  \CH_{\Bm,\unip} @>q_1 >>  \CG_{\Bm, \unip}, 
\end{CD}
\end{equation*}
where $\s_1, q_1$ are restrictions of $\s, q$ respectively, and vertical maps are
natural inclusions.  
A simialr property as (8.1.2) still holds for $q_1, \s_1$, and 
both squares are cartesian squares.
\par  
For each $\Bla \in \CP(\Bm(k))$, we define a subset $\CG_{\Bla}$ of $\CG_{\Bm \unip}$ 
inductively as follows; 
Write $\Bla = (\la^{(1)}, \Bla')$ with $\Bla' \in \CP_{n-m_1, r-1}$. 
Assume that $G_2^{\th}$-stable subset $\CG'_{\Bla'}$ of $\CG'_{\Bm',\unip}$ was defined. 
Let $\CO_{\la^{(1)}}$ be the $G_1$-orbit in $(G_1)\uni$ corresponding to $\la^{(1)}$, 
and put $Z = \CO_{\la^{(1)}} \times \CG'_{\Bla'}$.
Then $\s_1\iv(Z)$ is an $H_0$-stable subset of $\CH_{\Bm,\unip}$, and 
$q_1\circ \s_1\iv(Z)$ coincides with the quotient of $\s_1\iv(Z)$ by $H_0$.  We define 
$\CG_{\Bla}$ by $\CG_{\Bla} = q_1\circ \s_1\iv(Z)$. 
In the case where $r = 3$,  
$\Bm = (m_1, m_2,0)$ and 
$\Bm(k) = (m_1, k, k')$ with $k + k' = m_2$. $\Bm' = (m_2, 0)$ and
$\CG'_{\Bm',\unip} = \CX'\uni$, where $\CX'$ is the exotic symmetric space for $r = 2$
associated to $G_2$.
In this case, $\Bla' = (\la^{(2)}, \la^{(3)}) \in \CP_{m_2,2}$,
and we take $\CG'_{\Bla'}$ as the $G_2^{\th}$-orbit in $\CX'\uni$ corresponding to
$\Bla'$. 
Thus $\CG_{\Bla}$ is well-defined, and $\CG_{\Bla}$ turns out to be a smooth irreducible
$H$-stable subvariety of $\CG_{\Bm, \unip}$.   
By induction on $r$, we show the following formulas.
\begin{equation*}
\tag{8.4.2}
\dim \CG_{\Bla} =
         2n^2 -2n - 2n(\Bla) - 2n(\la^{(r-1)} + \la^{(r)}) + 
       \sum_{i=1}^{r-1}(r-i+1)|\la^{(i)}|.  
\end{equation*} 

In fact by (8.1.2), we have
\begin{equation*}
\tag{8.4.3}
\dim \CG_{\Bla} = \dim \CO_{\la^{(1)}} + \dim \CG'_{\Bla'} + (\dim H + (r-2)m_1) - \dim H_0. 
\end{equation*}
By applying the induction hypothesis for $\Bla'$, (with $n' = n- m_1$), we have
\begin{equation*}
\dim \CG'_{\Bla'} = 2n'^2 - 2n' - 2n(\Bla') - 2n(\la^{(r-1)} + \la^{(r)})
      + \sum_{i=2}^{r-1}(r-i+1)|\la^{(i)}|, 
\end{equation*}
and 
$\dim \CO_{\la^{(1)}} = m_1^2 - m_1 -2n(\la^{(1)})$.
Moreover $\dim H = 2n^2 + n$, $\dim H_0 = m_1^2 + 2n'^2 + n'$. 
Substituting these formulas into (8.4.3), we obtain (8.4.2).
\par
Let $\ol\CG_{\Bla}$ be the closure of $\CG_{\Bla}$ in $\CG_{\Bm, \unip}$.  
Reall $\wt\CF_{\Bla}, \wt\CF_{\Bla}^0$ in 6.4. 
It follows from the construction, $\wt\CF_{\Bla}$ is a closed subset of $\CG_{\Bm, \unip}$. 
We have the following lemma.

\begin{lem}  %%%%  Lemma 8.5.
Assume that $\CX_{\Bm}$ is of exotic type.  
\begin{enumerate}
\item
$\ol\CG_{\Bla}$ coincides with $\wt \CF_{\Bla}$.  In particular, 
$\wt \CF_{\Bla}^0$ is an open dense subset of $\ol\CG_{\Bla}$.  
\item
$\z_1(\ol\CG_{\Bla}) = \ol X_{\Bla}$, and 
$\z_1\iv(X_{\Bla}) = \wt \CF_{\Bla}^0$.  Hence 
the restriction of $\z_1$ on $\z_1\iv(X_{\Bla})$ gives an isomorphism 
$\z_1\iv(X_{\Bla}) \isom X_{\Bla}$.  
 
\end{enumerate}
\end{lem}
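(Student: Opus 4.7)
The plan is to identify $\CG_{\Bm,\unip}$ with a twisted product that makes $\wt\CF_{\Bla}$ visibly a closed subvariety, and then establish both parts by induction on $r$. First, I would observe that there is a natural isomorphism
\begin{equation*}
\CG_{\Bm,\unip} \isom H \times^{P^{\th}} Y\uni,
\end{equation*}
where $P$ is the $\th$-stable parabolic of Section 6.2 and
\begin{equation*}
Y\uni = \{ (x,\Bv) \in P^{\io\th}\uni \times V^{r-1} \mid v_i \in M_{p_i}\ (1 \le i \le r-2),\ v_{r-1} \in M_{p_{r-2}}^{\perp} \},
\end{equation*}
via $(x,\Bv,(W_i)) \leftrightarrow [g,(g\iv xg, g\iv\Bv)]$ with $W_i = g(M_{p_i})$. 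Under this isomorphism, $\wt\CF_{\Bla} = H \times^{P^{\th}}\ol\CM_{\Bla}$ becomes a closed subvariety of $\CG_{\Bm,\unip}$, and the restriction of $\z_1$ to $\wt\CF_{\Bla}$ (resp.\ $\wt\CF_{\Bla}^0$) coincides with $\pi_{\Bla}$ (resp.\ $\pi_{\Bla}^0$) of Section 6.4.

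For part (i), I would prove by induction on $r$ the sharper statement that $\wt\CF_{\Bla}^0 \subseteq \CG_{\Bla}$; since $\CG_{\Bla}$ is smooth and irreducible with $\dim \CG_{\Bla} = \dim X_{\Bla} = \dim \wt\CF_{\Bla}^0$ by (8.4.2) and Proposition 6.7 (iii), this inclusion forces $\wt\CF_{\Bla}^0$ to be open dense in $\CG_{\Bla}$, and closure gives $\ol\CG_{\Bla} = \wt\CF_{\Bla}$. The base case $r = 2$ is immediate: both $\CG_{\Bla}$ and $\wt\CF_{\Bla}^0$ equal the $H$-orbit $X_{\Bla}$. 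For the inductive step with $r \ge 3$, I take a point of $\wt\CF_{\Bla}^0$ represented (via the twisted product) by $(x,\Bv) \in \CM_{\Bla}$ with flag $W_i = M_{p_i}$. By definition of $\CM_{\Bla}$, the pair $(x|_{M_{p_1}}, v_1)$ has type $(\la^{(1)}, \emptyset)$, so in particular $x|_{M_{p_1}}$ has Jordan type $\la^{(1)}$, matching the condition $x|_{W_1} \in \CO_{\la^{(1)}}$ in the inductive definition of $\CG_{\Bla}$. The remaining data on the symplectic quotient $M_{p_1}^{\perp}/M_{p_1}$ satisfies the analogous conditions for $\CM'_{\Bla'}$ (attached to $G_2^{\th} = Sp(M_{p_1}^{\perp}/M_{p_1})$ and $\Bla' = (\la^{(2)}, \dots, \la^{(r)})$), hence lies in $(\wt\CF')^0_{\Bla'}$, which by the inductive hypothesis is contained in $\CG'_{\Bla'}$. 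Unwinding the cartesian diagram (8.4.1) and the fibration property of $\s_1$ from (8.1.2), this yields $(x,\Bv,(W_i)) \in \CG_{\Bla}$, completing the induction.

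For part (ii), the identity $\z_1(\ol\CG_{\Bla}) = \ol X_{\Bla}$ follows from (i) together with Proposition 6.7 (i), since $\z_1|_{\wt\CF_{\Bla}} = \pi_{\Bla}$. To establish $\z_1\iv(X_{\Bla}) = \wt\CF_{\Bla}^0$, I would suppose $(x,\Bv,(W_i)) \in \CG_{\Bm,\unip}$ with $(x,\Bv) \in X_{\Bla}$; writing $W_i = g(M_{p_i})$ for some $g \in H$ puts $(g\iv xg, g\iv\Bv) \in Y\uni$. By Corollary 6.12, $z = (x, v_{r-1})$ lies in the $H$-orbit $\CO_{[\Bla]}$, and $\pi_P(z)$ lies in $\CO'_{\Bla}$. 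Applying Proposition 6.3 (i) with $\CO = \CO_{[\Bla]}$ and $\CO' = \CO'_{\Bla}$ shows that $\CP_{z, \CO'_{\Bla}}$ consists of a single coset, so the flag $(W_i)$ is uniquely determined by $(x,\Bv)$. Combined with the isomorphism $\pi_{\Bla}^0 : \wt\CF_{\Bla}^0 \isom X_{\Bla}$ of Proposition 6.7 (ii), this forces $\z_1\iv(X_{\Bla}) = \wt\CF_{\Bla}^0$ and that $\z_1$ restricts to an isomorphism onto $X_{\Bla}$. The main obstacle is the inductive step in part (i): reconciling the apparently weaker Jordan-type condition on $W_1$ in the inductive definition of $\CG_{\Bla}$ with the refined type condition $(x|_{M_{p_1}}, v_1)\!\sim\!(\la^{(1)}, \emptyset)$ in $\CM_{\Bla}$, which is resolved only by the coincidence of dimensions with $\dim X_{\Bla}$ and the irreducibility of $\CG_{\Bla}$.
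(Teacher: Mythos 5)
Your strategy for part (i) is the same as the paper's: show by induction on $r$ that $\wt\CF_{\Bla}^0 \subset \CG_{\Bla}$, then deduce $\ol\CG_{\Bla} = \wt\CF_{\Bla}$ from the coincidence of dimensions (via Lemma 6.6 and (8.4.2)) and irreducibility. The extra packaging of $\CG_{\Bm,\unip}$ as $H \times^{P^{\th}} Y\uni$ is harmless and makes the inclusion $\wt\CF_{\Bla} \subset \CG_{\Bm,\unip}$ transparent, and your unwinding of the cartesian diagram (8.4.1) is exactly the verification the paper leaves implicit.

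For part (ii), however, you try to prove more than the paper does and the key step fails. You take an \emph{arbitrary} $(x,\Bv,(W_i)) \in \CG_{\Bm,\unip}$ with $(x,\Bv) \in X_{\Bla}$, write $W_i = g(M_{p_i})$, and then assert ``$\pi_P(z)$ lies in $\CO'_{\Bla}$.'' This is not a consequence of Corollary 6.12. Corollary 6.12 only gives $z = (x,v_{r-1}) \in \CO_{[\Bla]}$; the $L^{\th}$-orbit of $\pi_P(g\iv z)$ depends on the chosen flag $(W_i)$, and nothing in the definition of $\CG_{\Bm,\unip}$ constrains the Jordan types of $x$ on the successive quotients $W_i/W_{i-1}$ or $W_{r-2}^{\perp}/W_{r-2}$. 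If that orbit is some $\CO'' \ne \CO'_{\Bla}$, then $gP^{\th} \notin \CP_{z,\CO'_{\Bla}}$ and the uniqueness in Proposition 6.3(i) gives you nothing. In other words, you are using Proposition 6.3(i) to conclude uniqueness of the flag among \emph{all} admissible flags, whereas it only gives uniqueness among flags already known to realize $\CO'_{\Bla}$. The paper sidesteps this entirely: once (i) gives $\ol\CG_{\Bla} = \wt\CF_{\Bla}$, the restriction of $\z_1$ to $\ol\CG_{\Bla}$ \emph{is} the map $\pi_{\Bla}$, so Proposition 6.7 directly yields $\z_1(\ol\CG_{\Bla}) = \ol X_{\Bla}$ and, together with the fact that $\pi_{\Bla}$ sends $\wt\CF_{\Bla}\backslash\wt\CF_{\Bla}^0$ into $\ol X_{\Bla}\backslash X_{\Bla}$ (shown in the proof of Proposition 6.7(iv)), that the preimage of $X_{\Bla}$ inside $\ol\CG_{\Bla}$ is exactly $\wt\CF_{\Bla}^0 \isom X_{\Bla}$. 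That is all that is needed where the lemma is applied (in the proof of Theorem 8.7, $B_{\Bla}$ is supported on $\ol\CG_{\Bla}$, so only the fibre inside $\ol\CG_{\Bla}$ matters). If you wish to keep the stronger ``full preimage'' reading, you would need a separate argument that the flag in $\CG_{\Bm,\unip}$ over a point of $X_{\Bla}$ necessarily realizes $\CO'_{\Bla}$, which your current proof does not supply.
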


\begin{proof}
By induction on $r$, one can verify that $\wt \CF_{\Bla}^0 \subset \CG_{\Bla}$.
Hence $\wt \CF_{\Bla} \subset \ol\CG_{\Bla}$.  By Lemma 6.6 and by (8.4.2), 
$\dim \wt \CF_{\Bla} = \dim \ol\CG_{\Bla}$.  Since both are irredcuible closed 
subsets of $\CG_{\Bm, \unip}$, we have $\wt \CF_{\Bla} = \ol\CG_{\Bla}$.  This proves (i).
Then the restriction of $\z_1$ on $\ol\CG_{\Bla}$ coincides with the map 
$\pi_{\Bla} : \wt \CF_{\Bla} \to \ol X_{\Bla}$. Hence (ii) follows from 
Proposition 6.7.  
\end{proof}

\para{8.6}
 Recall the set $\wt\CP(\Bm)$ in (6.8.1) for each $\Bm \in \CQ^0_{n,r}$.
It is well-known that 
$(\CW_{\Bm}\nat)\wg$ is naturally parametrized by the set $\wt\CP(\Bm)$.
We denote by $\r_{\Bla}$ the irreducible representation of $\CW_{\Bm}\nat$ 
corresponding to $\Bla \in \wt\CP(\Bm)$. 
\par
For each $\Bla \in \wt\CP(\Bm)$, let $V(\Bla)$ be the irreducible 
representation of $W_{n,r}$ obtained from $\r_{\Bla}$ as in 7.13.  Then we have
\begin{equation*}
\tag{8.6.1}
W_{n,r}\wg \simeq \coprod_{\Bm \in \CQ_{n,r}^0}\wt\CP(\Bm).
\end{equation*}

\par
We have the following refinements of Theorem 7.12 and Corollary 7.14.

\begin{thm}  %%%%  Theorem 8.7.
Assume that $\CX_{\Bm, \unip}$ is of exotic type with $\Bm \in \CQ_{n,r}^0$.
\begin{enumerate}
\item
Let $L_{\r}$ be as in Theorem 7.12.  Assume that 
$\r = \r_{\Bla} \in (\CW_{\Bm}\nat)\wg$  for $\Bla \in \wt\CP(\Bm)$.
Then we have 
\begin{equation*}
L_{\r} \simeq \IC(\ol X_{\Bla}, \Ql)[\dim X_{\Bla}].
\end{equation*}
\item
$($Springer correspondence for $\CW\nat_{\Bm}$$)$ 
\begin{equation*}
(\pi_1^{(\Bm)})_!\Ql[d_{\Bm}'] \simeq \bigoplus_{\Bla \in \wt\CP(\Bm)}
             \r_{\Bla} \otimes \IC(\ol X_{\Bla}, \Ql)[\dim X_{\Bla}]. 
\end{equation*}
\item
$($Springer correspondence for $W_{n,r}$$)$
\begin{equation*}
(\ol \pi_{\Bm,1})_!\Ql[d'_{\Bm}] \simeq \bigoplus_{\Bla \in \wt\CP(\Bm)}
  V(\Bla)\otimes \IC(\ol X_{\Bla}, \Ql)[\dim X_{\Bla}].  
\end{equation*}
\end{enumerate}
\end{thm}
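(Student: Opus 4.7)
The plan is to derive (ii) and (iii) from (i) by direct substitution into Theorem 7.12 and Corollary 7.14 respectively, so the whole theorem reduces to identifying $L_\rho$ for $\rho = \rho_\Bla$. To attack (i), I exploit the factorization $\pi_1^{(\Bm)} = \z_1 \circ \vf_1$ of 8.4 together with Proposition 8.3. By that proposition, on $\CG_\Bm$ we have the decomposition $\vf_!\Ql[d_\Bm] \simeq \bigoplus_\rho \rho \otimes A_\rho$ and $\z_!A_\rho \simeq K_\rho$. Restricting to the unipotent locus (and using proper base change with respect to the inclusion $\CX_{\Bm,\unip} \hookrightarrow \CX_\Bm$ and its pullback $\CG_{\Bm,\unip} \hookrightarrow \CG_\Bm$), we obtain
\begin{equation*}
\z_{1,!}\bigl(A_\rho|_{\CG_{\Bm,\unip}}\bigr) \simeq K_\rho|_{\CX_{\Bm,\unip}} \simeq L_\rho[d_\Bm - d'_\Bm],
\end{equation*}
the last isomorphism being (7.12.2). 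Thus everything comes down to identifying the restriction $A_\rho|_{\CG_{\Bm,\unip}}$ explicitly.

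The key claim is that for $\rho = \rho_\Bla$ with $\Bla \in \wt\CP(\Bm)$,
\begin{equation*}
A_\rho|_{\CG_{\Bm,\unip}}[d'_\Bm - d_\Bm] \simeq \IC(\ol\CG_\Bla, \Ql)[\dim \CG_\Bla].
\end{equation*}
I would prove this by induction on $r$, mirroring the inductive construction of $A_\rho$ in 2.6 (see (8.3.3)) and the inductive definition of $\CG_\Bla$ in 8.4. The Cartesian square (8.4.1) relating $\CG_{\Bm,\unip}$ to $G_{1,\unip} \times \CG'_{\Bm',\unip}$ via $(\s_1, q_1)$ is parallel to (8.1.1), so that the identification follows from (a) the inductive hypothesis applied to $A'_{\rho'}|_{\CG'_{\Bm',\unip}}$, and (b) the classical Springer correspondence for $G_1 = GL_{m_1}$, which identifies $K_{\rho_1}|_{(G_1)\uni} \simeq \IC(\ol\CO_{\la^{(1)}}, \Ql)$ up to shift. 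The dimension count matches because of (8.4.2) and the product-bundle structure in (8.4.1). The base case $r = 2$ reduces to Corollary 5.3 of [SS1] for the exotic nilpotent cone.

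Granted the claim, I apply $\z_{1,!}$ and invoke Lemma 8.5: the restriction of $\z_1$ to $\ol\CG_\Bla$ is proper with image $\ol X_\Bla$, birational (being an isomorphism on the open part $\wt\CF^0_\Bla \isom X_\Bla$), and $\dim \CG_\Bla = \dim X_\Bla$. Since the map is semismall on $\ol\CG_\Bla$ (in fact birational on the open stratum), we obtain $\z_{1,!}\IC(\ol\CG_\Bla,\Ql)[\dim\CG_\Bla] \simeq \IC(\ol X_\Bla,\Ql)[\dim X_\Bla]$ plus possibly lower-dimensional IC summands which are excluded by comparing with the semisimple perverse sheaf decomposition of Theorem 7.12. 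Matching with $L_\rho[d_\Bm - d'_\Bm]$ and shifting yields (i). Assertions (ii) and (iii) then follow by plugging (i) into Theorem 7.12 and Corollary 7.14, respectively.

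The main obstacle is the inductive step identifying $A_\rho|_{\CG_{\Bm,\unip}}$ with the trivial-local-system IC sheaf on $\ol\CG_\Bla$. The subtlety is that $A_\rho$ is built from a nontrivial local system $\CL_{\rho_0}$ on $\CY^0_{\Bm(k)}$, and one must verify that over the unipotent stratum these local systems match up with the inductive IC sheaves on $\ol\CG_\Bla$ (whose local systems are trivial because the constant sheaf $\CE = \Ql$ is used throughout). This is essentially the content of how the Galois cover $\wh\CY_{\BI} \to \CY^0_{\Bm(k)}$ of 8.2 trivializes along the unipotent fibre, and it is at this point that the explicit inductive description of $\CG_\Bla$ via the orbit $\CO_{\la^{(1)}}$ and $\CG'_{\Bla'}$ becomes indispensable.
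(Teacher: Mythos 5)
Your proposal follows the same route as the paper's proof: reduce (ii) and (iii) to (i) via Theorem 7.12 and Corollary 7.14, factor $\pi_1^{(\Bm)} = \z_1 \circ \vf_1$ and apply Proposition 8.3 with proper base change to reduce to identifying $A_{\r}|_{\CG_{\Bm,\unip}}$, and then inductively identify this restriction using the Cartesian squares and the auxiliary variety $\CG_{\Bla}$ (the paper introduces a simple perverse sheaf $B_{\Bla}$ for precisely this purpose). The paper's final step is slightly more economical than yours: since $L_{\r}$ is already known to be a single simple perverse sheaf from Theorem 7.12, one never needs to invoke semismallness of $\z_1|_{\ol\CG_{\Bla}}$ or worry about extra summands in $\z_{1,!}\IC(\ol\CG_{\Bla},\Ql)$; the paper simply checks that the restriction of $(\z_1)_!B_{\Bla}$ to the open subset $X_{\Bla} \subset \ol X_{\Bla}$ is the constant sheaf, using the isomorphism $\z_1\iv(X_{\Bla}) = \wt\CF^0_{\Bla} \isom X_{\Bla}$ from Lemma 8.5 (ii), which then forces $L_{\r} \simeq \IC(\ol X_{\Bla},\Ql)[\dim X_{\Bla}]$. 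Your "excluding lower-dimensional summands" step is thus unnecessary, though not wrong, and your identification of the local system matching issue in the inductive step as the genuine difficulty is exactly where the paper's comparison of the constructions of $\wt A_{\r}$ (via (8.3.3)) and $B_{\Bla}$ does its work.
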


\begin{proof}
By Proposition 8.3, we know that $\z_!A_{\r} = K_{\r}$.  Hence by the base change 
theorem, $(\z_1)_!(A_{\r}|_{\CG_{\Bm, \unip}}) \simeq K_{\r}|_{\CX_{\Bm, \unip}}$.
For $\Bla \in \CP(\Bm(k))$, we define a simple perverse 
sheaf $B_{\Bla}$ on $\CG_{\Bm, \unip}$ inductively as follows;  in the case where $r = 2$, 
put $B_{\Bla} = \IC(\ol\CO_{\Bla}, \Ql)[\dim \CO_{\Bla}]$, where $\CO_{\Bla}$ is 
the $H$-orbit in $G^{\io\th}\uni \times V$ corresponding to $\Bla$.  In general for
$\Bla \in \CP(\Bm(k))$, put $\Bla = (\la^{(1)}, \Bla')$ with $|\la^{(1)}| = m_1$, 
$\Bla' \in \CP(\Bm'(k))$. We assume that a simple perverse sheaf 
$B_{\Bla'}$ on $\CG'_{\Bm',\unip}$ is already defined.  By a similar construction as
$\wt A_{\r}$ in the proof of Proposition 8.3, there exists a unique simple perverse sheaf
$B_{\Bla}$ on $\CG_{\Bm, \unip}$ satisfying the relation
\begin{equation*}
q_1^*B_{\Bla}[\b_2] \simeq \s_1^*(K_{\la^{(1)}}\boxtimes B_{\Bla'})[\b_1],
\end{equation*}
where $K_{\la^{(1)}} = \IC(\ol\CO_{\la^{(1)}}, \Ql)[\dim \CO_{\la^{(1)}}]$ 
for the $GL_{m_1}$-orbit $\CO_{\la^{(1)}}$ in $(GL_{m_1})\uni$ corresponding to
$\la^{(1)}$, and $\b_1, \b_2$ are as in (8.3.3). 
Assume that $\r = \r_{\Bla}$.  By comparing the construction of $\wt A_{\r}$ and of 
$B_{\Bla}$, and by using induction hypothesis, we 
see that the restriction of $A_{\r}$ on $\CG_{\Bm, \unip}$ coincides with $B_{\Bla}$, 
up to shift. Moreover, by induction, one can show that the restriction of 
$B_{\Bla}$ on $\CG_{\Bla}$ is a constant sheaf $\Ql$.  In particular, 
$\supp B_{\Bla} = \ol\CG_{\Bla}$. Then by Lemma 8.5, the support of $(\z_1)_!B_{\Bla}$ 
coincides with $\ol X_{\Bla}$. By (7.12.2), we know that the restriction of $K_{\r}$  on 
$\CX_{\Bm,\unip}$ 
is a simple perverse sheaf $L_{\r}$, up to shift.  We show that 
\begin{equation*}
\tag{8.7.1}
L_{\r} \simeq \IC(\ol X_{\Bla}, \Ql)[\dim X_{\Bla}].
\end{equation*}
  For this it is enough to see that 
$L_{\r}|_{X_{\Bla}}$ is a constant sheaf $\Ql$.  But by Lemma 8.5 (ii), 
$\z_1\iv(X_{\Bla}) = \wt \CF_{\Bla}^0 \subset \CG_{\Bla}$, and the restriction of 
$B_{\Bla}$ on $\CG_{\Bla}$ is the constant sheaf $\Ql$, up to shift.  Since 
$\z_1\iv(X_{\Bla}) \isom X_{\Bla}$ by Lemma 8.5 (ii), we see that 
$(\z_1)_!B_{\Bla}|_{X_{\Bla}}$ coincides with $\Ql$ up to shift.  
Thus (8.7.1) holds, and (i) follows.  (ii) and (iii) then follows from 
Theorem 7.12 and Corollary 7.14.  The theorem is proved.  
\end{proof}

\para{8.8.}
We now assume that $\CX$ is of enhanced type with $\Bm \in \CQ_{n,r}$. 
We define a variety $\CG_{\Bm}$ by

\begin{equation*}
\begin{split}
\CG_{\Bm} = \{ (x, \Bv, &(W_i)_{1 \le i \le r-1}) \mid (x, \Bv) \in \CX, 
                  (W_i) \text{ : partial flag in }V, \\
                &\dim W_i = p_i, x(W_i) = W_i, v_i \in W_i \ (1 \le i \le r-1) \}. 
\end{split}
\end{equation*}
Let $\z: \CG_{\Bm} \to \CX$ be the projection to the first and second factors.  
Then the map $\pi^{(\Bm)} : \wt\CX_{\Bm} \to \CX_{\Bm}$ is decomposed as 
$\pi^{(\Bm)} = \z\circ \vf$, where $\vf: \wt\CX_{\Bm} \to \CG_{\Bm}, 
\z: \CG_{\Bm} \to \CX_{\Bm}$ are defined similarly to the exotic case. 
Here $\vf$ and $\z$ are surjective maps.  Since $\dim \wt\CX_{\Bm} = \dim \CX_{\Bm}$
by Lemma 4.2, we have $\dim \CG_{\Bm} = \dim \CX_{\Bm}$. 
Put $V_0 = M_{p_1}, \ol V_0 = V/M_{p_1}$.  We define a variety $\CH_{\Bm}$ by 

\begin{equation*}
\begin{split}
\CH_{\Bm} = \{ (x, \Bv, &(W_i), \f_1, \f_2) \mid (x, \Bv, (W_i)) \in \CG_{\Bm}, \\
                 &\f_1: W_1 \isom V_0, \f_2: V/W_1 \isom  \ol V_0 \}. 
\end{split}
\end{equation*} 

We also define a variety $\wt\CZ_{\Bm}$ by 

\begin{equation*}
\begin{split}
\wt\CZ_{\Bm} = \{ (x, \Bv, &gB^{\th}, \f_1, \f_2) \mid (x, \Bv, gB^{\th}) \in \wt\CX_{\Bm}, \\
              &\f_1: g(M_{p_1}) \isom V_0, \f_2: V/g(M_{p_1}) \isom \ol V_0 \}.
\end{split}
\end{equation*}
Assume that $r \ge 3$, and let $\Bm' = (m_2, \dots, m_r)$ for $\Bm = (m_1, \dots, m_r)$.
Let $G_1 = GL(V_0)$ and $G_2 = GL(\ol V_0) \times GL(\ol V_0)$ with 
the permutation action $\th: G _2 \to G_2$.  Then $\wt G_1, \wt\CX'_{\Bm'}, \CX'_{\Bm'}$ etc. 
are defined
similarly to the exotic case.  Note that $\CX'_{\Bm'}$ is a closed subvareity of 
$G_2^{\io\th} \times \ol V_0^{r-2} \simeq GL(\ol V_0) \times \ol V_0^{r-2}$.   
Then a similar commutative diagram as (8.1.1) holds also for the enhanced case. 
We use the same notaion as in (8.1.1).  One can check that the maps $q, \s$ satisfy 
similar properties as in (8.1.2), namely we have
\par\medskip\noindent
(8.8.1) \ $q$ is a principal bundle with fibre isomorphic to $H_0$, and 
$\s$ is a locally trivial fibration with smooth fibre of dimension 
$\dim H + (r-1)m_1$.
(It should be noticed about the difference on the dimension of the fibre.  Also note in this case, 
$H_0 \simeq GL(V_0) \times GL(\ol V_0)$, and $H \simeq GL(V)$.)  

\para{8.9.}
For $\Bm \in \CQ_{n,r}$, put $\wt\CY_{\Bm}^{\dag} = (\psi^{(\Bm)})\iv(\CY_{\Bm}^0)$. 
Then under the notation in 4.1, $\wt\CY_{\Bm}^{\dag}$ coinicdes with 
$\wt\CY_{\BI}$ for $\BI = \BI^{\bullet}$ in 3.3.   In particular, 
$\psi_{\BI}: \wt\CY_{\BI} \to \CY_{\Bm}^0$ is a finite Galois covering with group 
$\CW_{\BI} = \CW_n\nat$.  $\wt\CY^{\dag}_{\Bm}$ is an open dense subsete of $\wt\CY_{\Bm}$.
Put $\CG_{\Bm, \rg} = \z\iv(\CY_{\Bm}^0)$.  $\CG_{\Bm, \rg}$ is an open subset of $\CG_{\Bm}$.
The varieties $\CY'^0_{\Bm'}, \wt\CY'^{\dag}_{\Bm'}$ and $\CG'_{\Bm', \rg}$ are defined similarly.
As in the exotic case, we have the following commutative diagram.

\begin{equation*}
\tag{8.9.1}
\begin{CD}
\wt G_{1,\rg} \times \wt\CY'^{\dag}_{\Bm'} @<<<  \wt\CZ_{\Bm}^{\dag}
               @>>>  \wt\CY_{\Bm}^{\dag} \\
     @VVV   @VVV  @VV\vf_0 V   \\
G_{1,\rg} \times \CG'_{\Bm', \rg} @<<<  \CH_{\Bm, \rg} @ >>>  \CG_{\Bm, \rg} \\
        @VVV    @.    @VV\z_0 V   \\
 G_{1,\rg} \times \CY'^0_{\Bm'}    @.    @.     \CY_{\Bm}^0,
\end{CD}
\end{equation*}

By using (8.9.1) and by induction on $r$, one can show that 
$\vf_0: \wt\CY^{\dag}_{\Bm} \to \CG_{\Bm, \rg}$ is a finite Galois covering with group
$\CW\nat_{\Bm}$.  It follows that 

\par\medskip\noindent
(8.9.2) \  $\z_0$ gives an isomorphism $\CG_{\Bm, \rg} \isom \CY_{\Bm}^0$.  
\par\medskip
For each $\r \in \CW\nat_{\Bm}$, we consider $K_{\r} = \IC(\CX_{\Bm}, \CL_{\r})[d_{\Bm}]$, 
where $\CL_{\r}$ is a simple local system on $\CY_{\Bm}^0$ obtained from the Galois covering 
$\CW\nat_{\Bm}$.  
We regard $\CL_{\r}$ as a local system on $\CG_{\Bm, \rg}$ under the isomorphism 
$\CG_{\Bm,\rg} \simeq \CY^0_{\Bm}$.  
Put $A_{\r} = \IC(\CG_{\Bm,\rg}, \CL_{\r})[d_{\Bm}]$. 
\par
The following result can be proved in a similar way as Proposition 8.3, by making use 
of Theorem 4.5, instead of Theorem 3.2.

\begin{prop}  %%%%%  Prop. 8.10.
Assume that $\CX_{\Bm}$ is of enhanced type. 
\begin{enumerate}
\item
$\vf_!\Ql[d_{\Bm}]$ is a semisimple perverse sheaf on $\CG_{\Bm}$ equipped with 
$\CW\nat_{\Bm}$-action, and is decomposed as

\begin{equation*}
\vf_!\Ql[d_{\Bm}] \simeq \bigoplus_{\r \in (\CW\nat_{\Bm})\wg}\r \otimes A_{\r}.
\end{equation*}

\item
$\z_!A_{\r} \simeq K_{\r}$.
\end{enumerate}
\end{prop}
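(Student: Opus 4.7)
The plan is to mimic the proof of Proposition 8.3, substituting Theorem 4.5 for Theorem 3.2 and using the commutative diagram analogous to (8.1.1) for the enhanced case together with properties (8.8.1) and (8.9.2). I proceed by induction on $r$. For the base case $r = 2$, $\CG_{\Bm}$ is a variety of triples $(x,v,W_1)$ with $\dim W_1 = m_1$, $x(W_1) = W_1$ and $v \in W_1$, and the fibre of $\vf$ over such a triple is a product of two ordinary Springer fibres. The decomposition of $\vf_!\Ql[d_{\Bm}]$ and the formula $\z_!A_{\r} \simeq K_{\r}$ then follow from the classical Springer theorem for $GL_{m_1}$ and $GL_{m_2}$ together with the Achar--Henderson / Finkelberg--Ginzburg--Travkin Springer correspondence for the enhanced nilpotent cone, exactly as in [SS1, Theorem 4.2] for the exotic $r=2$ case.

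For the inductive step, suppose the proposition holds for $\CX'_{\Bm'}$ with $\Bm' = (m_2,\dots,m_r)$. Using the enhanced analogue of (8.1.1) (with the dimension of the fibre of $\s$ adjusted according to (8.8.1)), I apply the standard Springer decomposition
\begin{equation*}
(\pi^1)_!\Ql[\dim G_1] \simeq \bigoplus_{\r_1 \in S_{m_1}\wg} \r_1 \otimes \IC(G_1, \CL_{\r_1})[\dim G_1]
\end{equation*}
on the factor $G_1 = GL(V_0)$, and the induction hypothesis
\begin{equation*}
\vf'_!\Ql[d_{\Bm'}] \simeq \bigoplus_{\r' \in (\CW\nat_{\Bm'})\wg}\r' \otimes A_{\r'}
\end{equation*}
on the factor $\CX'_{\Bm'}$. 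Since both squares in the diagram are cartesian, since $q$ is a principal $H_0$-bundle and $\s$ is a smooth locally trivial fibration, for each $\r = \r_1 \boxtimes \r'$ there is a unique $H$-equivariant simple perverse sheaf $\wt A_{\r}$ on $\CG_{\Bm}$ characterised by
\begin{equation*}
q^*\wt A_{\r}[\b_2] \simeq \s^*\bigl(\IC(G_1, \CL_{\r_1})[\dim G_1] \boxtimes A_{\r'}\bigr)[\b_1],
\end{equation*}
with $\b_1 = \dim H + (r-1)m_1$ and $\b_2 = \dim H_0$, and the K\"unneth-type relation $q^*K[\b_2] \simeq \s^*(K^1 \boxtimes K')[\b_1]$ for $K = \vf_!\Ql[d_{\Bm}]$ then yields the asserted decomposition with $\wt A_{\r}$ in place of $A_{\r}$. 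This gives (i) once I identify $\wt A_{\r}$ with $A_{\r}$.

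The key identification $\wt A_{\r} = A_{\r}$ is carried out by restricting to the regular locus: by (8.9.2), $\z_0 : \CG_{\Bm,\rg} \isom \CY_{\Bm}^0$, and tracing the cartesian diagram (8.9.1) under induction shows that $\wt A_{\r}|_{\CG_{\Bm,\rg}}$ corresponds, via this isomorphism, to the simple local system $\CL_{\r}[d_{\Bm}]$ produced by the Galois covering $\vf_0 : \wt\CY^{\dag}_{\Bm} \to \CG_{\Bm,\rg}$ with group $\CW\nat_{\Bm}$. Hence $\wt A_{\r} \simeq \IC(\CG_{\Bm,\rg},\CL_{\r})[d_{\Bm}] = A_{\r}$, completing (i). For (ii), $\z$ is proper, so $\z_!A_{\r}$ is semisimple; by (8.9.2), $\z_!A_{\r}|_{\CY_{\Bm}^0} \simeq K_{\r}|_{\CY_{\Bm}^0}$, so $\z_!A_{\r}$ contains $K_{\r}$ as a direct summand. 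Applying $\z_!$ to the decomposition in (i) and comparing with Theorem 4.5 then forces $\z_!A_{\r} \simeq K_{\r}$.

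The main obstacle I anticipate is the identification $\wt A_{\r} = A_{\r}$ in the induction step: the enhanced case differs from the exotic one in the dimensions of the various fibres (the $(r-1)m_1$ versus $(r-2)m_1$, and $H_0 \simeq GL(V_0) \times GL(\ol V_0)$ versus the symplectic variant), so one must verify carefully, by induction on $r$ and using the naturality of the Galois covering $\vf_0 : \wt\CY^{\dag}_{\Bm} \to \CG_{\Bm,\rg}$, that the local system obtained from $\wt A_{\r}$ via pullback along $q$ followed by pushforward along the Galois covering matches the one arising from $A_{\r_1}\boxtimes A_{\r'}$ under the identification $\CW\nat_{\Bm} = S_{m_1}\times \CW\nat_{\Bm'}$.
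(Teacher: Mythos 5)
Your proposal matches the paper's intended proof, which (in 8.9) simply says the proposition "can be proved in a similar way as Proposition 8.3, by making use of Theorem 4.5, instead of Theorem 3.2"; you have correctly transposed the induction on $r$, the construction of $\wt A_{\r}$ via the cartesian squares, the identification $\wt A_{\r} = A_{\r}$ by restriction to $\CG_{\Bm,\rg}$ using (8.9.2), and the comparison with Theorem 4.5 to extract (ii), and you have also correctly adjusted the fibre dimension in $\b_1$ to $\dim H + (r-1)m_1$ per (8.8.1). The one imprecision is in the base case: because the proposition is about the full variety $\CG_{\Bm}$ (not $\CG_{\Bm,\unip}$), the fibres of $\vf$ for $r=2$ are products of Grothendieck--Springer fibres rather than ordinary Springer fibres, and the relevant input for $\z_!A_{\r}\simeq K_{\r}$ is Theorem 4.5 rather than the Achar--Henderson / Finkelberg--Ginzburg--Travkin correspondence for the nilpotent cone (which governs the unipotent locus only); alternatively, since for $r=2$ one has $\Bm'=(m_2)$ and $\CX'_{\Bm'}=\CG'_{\Bm'}=G_2^{\io\th}$, the induction may as well be pushed down to $r=1$, where $\vf=\pi^{(\Bm)}$, $\z$ is the identity, and the statement is exactly the classical Grothendieck--Springer decomposition for $GL_n$. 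With this adjustment the proof is complete.
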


\para{8.11.}
For each $\Bm \in \CQ_{n,r}$, put $\CG_{\Bm, \unip} = \z\iv(\CX_{\Bm, \unip})$. Then the 
map $\pi^{(\Bm)}_1$ is decomposed as $\pi_1^{(\Bm)} = \z_1\circ \vf_1$ as in the 
exotic case (see 8.4), where $\vf_1: \wt\CX_{\Bm, \unip} \to \CG_{\Bm, \unip}$ 
and $\z_1: \CG_{\Bm, \unip} \to \CX_{\Bm, \unip}$ are restrictions of $\vf, \z$.  
Note that $\vf_1$ is surjective. Put $\CH_{\Bm \unip} = q\iv(\CG_{\Bm, \unip})$.
Then we have a similar commutative diagram as (8.4.1). 
\par
For $\Bla \in \CP(\Bm)$, we define a subset $\CG_{\Bla}$ of $\CG_{\Bm, \unip}$ 
inductively, by applying the discussion in 8.4 for the exotic case.  Note that 
in the case where $r = 2$, $\Bm = (m_1, m_2)$ and $\Bm' = (m_2)$.  
$\CG_{\Bm'} = \CX'\uni$.  In this case, we take $\CG'_{\Bla'}$ as the $G_2^{\th}$-orbit
in $\CX'\uni$ corresponding to $\Bla' = \la^{(2)}$. 
Thus $\CG_{\Bla}$ is defined, and $\CG_{\Bla}$ is a smooth irreducible $H$-stable subvariety 
of $\CG_{\Bm, \unip}$. 
\par
As in the exotic case (see (8.4.2)), 
one can compute the dimension of $\CG_{\Bla}$ by making use of 
(8.8.1).  We have 

\begin{equation*}
\tag{8.11.1}
\dim \CG_{\Bla} = n^2 - n - 2n(\Bla) + \sum_{i=1}^{r-1}(r-i)|\la^{(i)}|.
\end{equation*}     

\par
Recall the definition of $\wt\CF_{\Bla}$ and $\wt\CF^0_{\Bla}$ (in the enhanced case) 
in 6.18.  The following lemma can be proved in a similar way as Lemma 8.5. 
Note that $\dim \wt\CF_{\Bla} = \dim \ol\CG_{\Bla}$ by Proposition 5.4 and 
Proposition 6.19, together with (8.11.1).  We use Proposition 6.19 instead of Proposition 6.7.

\begin{lem}  %%%  Lemma 8.12
Assume that $\CG_{\Bla}$ is of enhanced type.
A similar statement as in Lemma 8.5 holds also for $\CG_{\Bla}$. 
\end{lem}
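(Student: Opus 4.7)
The plan is to mimic the proof of Lemma 8.5 word by word, substituting each ingredient by its enhanced-case counterpart. The essential structure is already laid out in the hint given just before the statement: the dimension equality $\dim \wt\CF_{\Bla} = \dim \ol\CG_{\Bla}$ is available from Proposition 5.4 combined with Proposition 6.19 and (8.11.1), and the map $\pi_{\Bla}'$ of 6.18 plays the role previously played by $\pi_{\Bla}$.

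First I would establish the inclusion $\wt\CF_{\Bla}^0 \subset \CG_{\Bla}$ by induction on $r$, using the commutative diagram obtained from (8.4.1) in the enhanced setup and the inductive definition of $\CG_{\Bla}$ recalled in 8.11. The base case $r=2$ is transparent: $\CG_{\Bla'}'$ is taken as the $G_2^{\th}$-orbit corresponding to $\la^{(2)}$, and by construction of $\CM_{\Bla}$ in 6.16 every semi-standard element lies inside the product $\CO_{\la^{(1)}} \times \CG'_{\Bla'}$ after pulling back along $\s_1$. The inductive step uses that $\CM_{\Bla}$ is built from the orbit $\CO_{\la^{(1)}}$ in the first block and an element of $\CM_{\Bla'}$ in the quotient, which fits precisely into the cartesian square (8.4.1).

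From the inclusion $\wt\CF_{\Bla}^0 \subset \CG_{\Bla}$ I would obtain $\wt\CF_{\Bla} \subset \ol\CG_{\Bla}$ by taking closures. Now Proposition 5.4 gives the dimension of $X_{\Bla}$, Proposition 6.19 (ii) gives an isomorphism $\wt\CF_{\Bla}^0 \isom X_{\Bla}$ so that $\dim \wt\CF_{\Bla} = \dim X_{\Bla}$, and (8.11.1) gives the same value for $\dim \CG_{\Bla}$; comparison yields $\dim \wt\CF_{\Bla} = \dim \ol\CG_{\Bla}$. Since both are closed irreducible subvarieties of $\CG_{\Bm,\unip}$, they coincide, proving (i).

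For (ii), the restriction of $\z_1$ to $\ol\CG_{\Bla} = \wt\CF_{\Bla}$ coincides with the map $\pi'_{\Bla}: \wt\CF_{\Bla} \to \ol X_{\Bla}$ from 6.18, so Proposition 6.19 (i) yields $\z_1(\ol\CG_{\Bla}) = \ol X_{\Bla}$; pulling back along $X_{\Bla}$ and using Proposition 6.19 (ii) gives $\z_1\iv(X_{\Bla}) = \wt\CF_{\Bla}^0 \isom X_{\Bla}$. I expect no serious obstacle: the main subtlety lies in verifying the inductive inclusion in step one, which requires carefully matching the definition of $\CM_{\Bla}$ from 6.16 against the iterative construction of $\CG_{\Bla}$ via the cartesian diagram (8.4.1), but this is a routine check once one notes that the openness condition (6.16.1) corresponds exactly to landing in the generic part of each fibre of $\s_1$.
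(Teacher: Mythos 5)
Your proposal is correct and follows exactly the route the paper sketches: the inductive inclusion $\wt\CF_{\Bla}^0 \subset \CG_{\Bla}$ via the (enhanced) cartesian diagram from (8.4.1), the dimension comparison $\dim\wt\CF_{\Bla} = \dim\ol\CG_{\Bla}$ obtained from Proposition 5.4, Proposition 6.19(ii) and (8.11.1), and the identification of $\z_1|_{\ol\CG_{\Bla}}$ with $\pi'_{\Bla}$ to transfer Proposition 6.19 into statement (ii). This matches the paper's intended argument (a direct transposition of the proof of Lemma 8.5 with Proposition 6.7 replaced by Proposition 6.19).
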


We can now obtain a refinement of Theorem 7.12 in the enhanced case.  The proof is 
similar to the proof of Theorem 8.7.
Note that Thorem 8.13 (ii) is obtained by Li [Li, Th. 3.2.6] by a different method.

\begin{thm}  %%%%  Theorem 8.13.
Assume that $\CX_{\Bm, \unip}$ is of enhanced type.
\begin{enumerate}
\item
Let $L_{\r}$ be as in Theorem 7.12.  Assume that 
$\r = \r_{\Bla} \in (\CW\nat_{\Bm})\wg$ for $\Bla \in \CP(\Bm)$.
Then we have
\begin{equation*}
L_{\r} \simeq \IC(\ol X_{\Bla}, \Ql)[\dim X_{\Bla}].
\end{equation*}

\item
(Springer correspondence for $\CW\nat_{\Bm}$)
\begin{equation*}
(\pi_1^{(\Bm)})_!\Ql[d'_{\Bm}] \simeq \bigoplus_{\Bla \in \CP(\Bm)}
              \r_{\Bla} \otimes \IC(\ol X_{\Bla}, \Ql)[\dim X_{\Bla}]. 
\end{equation*}
\end{enumerate}

\end{thm}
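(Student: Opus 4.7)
The plan is to follow the proof of Theorem 8.7 almost verbatim in the enhanced setting, with Proposition 8.10 replacing Proposition 8.3, Lemma 8.12 replacing Lemma 8.5, and the enhanced dimension formula (8.11.1) replacing its exotic counterpart. The starting point is Proposition 8.10 (ii), $\z_!A_\r \simeq K_\r$; restricting along the closed inclusion $\CX_{\Bm,\unip} \hookrightarrow \CX_\Bm$ and invoking proper base change for the resulting cartesian square yields
\begin{equation*}
(\z_1)_!\bigl(A_\r|_{\CG_{\Bm,\unip}}\bigr) \simeq K_\r|_{\CX_{\Bm,\unip}} \simeq L_\r[d_\Bm - d'_\Bm],
\end{equation*}
where the last isomorphism is (7.12.2). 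The task thus reduces to computing the left-hand side.

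To do this, I would define simple perverse sheaves $B_\Bla$ on $\CG_{\Bm,\unip}$ recursively in $r$, mirroring the construction of $A_\r$ in 8.9 but working in the unipotent locus. For $r=2$, $\CG_{\Bm,\unip} = \CX_{\Bm,\unip}$ and $X_\Bla = \CO_\Bla$ is a single $G$-orbit, so I set $B_\Bla = \IC(\ol X_\Bla, \Ql)[\dim X_\Bla]$. For $r \ge 3$, writing $\Bla = (\la^{(1)}, \Bla')$ with $\Bla' \in \CP(\Bm')$, property (8.8.1) asserts that $q_1$ is a principal $H_0$-bundle and $\s_1$ a smooth fibration on the unipotent restriction of the enhanced analogue of (8.1.1), so descent produces a unique simple perverse sheaf $B_\Bla$ on $\CG_{\Bm,\unip}$ satisfying
\begin{equation*}
q_1^*B_\Bla[\b_2] \simeq \s_1^*\bigl(K_{\la^{(1)}} \boxtimes B_{\Bla'}\bigr)[\b_1],
\end{equation*}
with $K_{\la^{(1)}} = \IC(\ol\CO_{\la^{(1)}}, \Ql)[\dim \CO_{\la^{(1)}}]$ and shifts $\b_1, \b_2$ as in 8.3. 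Comparing this recursion with the parallel recursion that Proposition 8.10 supplies for $A_\r$ then gives $A_\r|_{\CG_{\Bm,\unip}} \simeq B_\Bla$ up to a fixed cohomological shift, and a simultaneous induction shows that $B_\Bla|_{\CG_\Bla}$ is a constant sheaf, so in particular $\supp B_\Bla = \ol\CG_\Bla$.

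Finally I would invoke Lemma 8.12: part (i) identifies $\ol\CG_\Bla = \wt\CF_\Bla$ and says that $\z_1$ maps it onto $\ol X_\Bla$; part (ii) asserts that $\z_1$ restricts to an isomorphism $\wt\CF_\Bla^0 \isom X_\Bla$. Hence $(\z_1)_!B_\Bla$ is supported on $\ol X_\Bla$ with constant-sheaf restriction $\Ql$ on the open dense stratum $X_\Bla$, forcing $(\z_1)_!B_\Bla \simeq \IC(\ol X_\Bla, \Ql)[\dim X_\Bla]$ up to a shift that is pinned down by comparing (8.11.1) with Proposition 5.4. Combined with the opening display, this gives (i), and (ii) is then immediate upon substitution into Theorem 7.12. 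The only non-routine step is the bookkeeping of the shifts $d_\Bm, d'_\Bm, \b_1, \b_2$ so that all identifications land at the correct cohomological degree, making $L_\r$ equal the asserted intersection cohomology complex on the nose; no new geometric ingredient beyond Sections 5, 6 and the enhanced analogues constructed in 8.8--8.12 is needed.
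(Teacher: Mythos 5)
Your proposal tracks the paper's intended argument closely: the paper states that Theorem~8.13 "is proved in a similar way as Theorem~8.7," and you correctly reproduce that template, substituting Proposition~8.10 for Proposition~8.3 and Lemma~8.12 for Lemma~8.5, running the $q_1^*/\s_1^*$ descent in the cartesian diagram (8.4.1)/(8.8.1) to produce $B_\Bla$, and then pushing forward along $\z_1$.

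The one slip is the base of the induction. You assert that for $r=2$ one has $\CG_{\Bm,\unip} = \CX_{\Bm,\unip}$ and take $B_\Bla = \IC(\ol X_\Bla,\Ql)[\dim X_\Bla]$ directly. In the \emph{exotic} case this is correct because the constraint $\Bm\in\CQ^0_{n,2}$ forces $\Bm=(n,0)$ and the flag datum $(W_i)_{1\le i\le r-2}$ in the definition of $\CG_\Bm$ (8.1) is empty, so $\CG_\Bm=\CX_\Bm$. In the \emph{enhanced} case the flag runs over $1\le i\le r-1$, so for $r=2$ the variety $\CG_\Bm$ carries an extra $x$-stable subspace $W_1$ of dimension $m_1$ containing $v_1$, and $\z$ has nontrivial fibres over the degenerate strata of $\CX_{\Bm,\unip}$ (e.g.\ over $(1,0)$ the fibre is the whole Grassmannian). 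Hence $\CG_{\Bm,\unip}\ne\CX_{\Bm,\unip}$ for general $\Bm=(m_1,m_2)$, and your base case does not apply. The paper handles this in 8.11 by pushing the recursion one step further: for $r=2$ one reduces to $\Bm'=(m_2)$ with $r'=1$, where $\CG_{\Bm'}$ truly equals $\CX'\uni=(GL_{m_2})\uni$ and $B_{\la^{(2)}}$ is the classical Springer $\IC$ of a unipotent orbit closure. Once you move the base of the induction to $r=1$ (or equivalently allow the recursion to fire for $r=2$), the rest of your argument --- comparing the $B_\Bla$ and $A_\r$ recursions, identifying $\supp B_\Bla = \ol\CG_\Bla$, and pushing forward via Lemma~8.12 together with the shift bookkeeping from (8.11.1) and Proposition~5.4 --- goes through as written.
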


\para{8.14.}
Assume that $\CX_{\Bm}$ is of exotic type or of enhanced type, under 
the setting in 8.1.  For each $z \in \CX_{\Bm}$, we consider 
the (small) Springer fibre $\CB^{(\Bm)}_z = (\pi^{(\Bm)})\iv(z)$.  In the exotic case, 
we also consider the Springer fibre $\CB_z = \pi\iv(z)$ (see 6.20).  We have 
$\CB_z^{(\Bm)} \subset \CB_z$.  The cohomology group $H^i(\CB_z^{(\Bm)}, \Ql)$ 
has a structure of $\CW\nat_{\Bm}$-module.  In turn, $H^i(\CB_z, \Ql)$ has
a structure of $W_{n,r}$-module.    
Put
$d_{\Bla} = (\dim \CX_{\Bm,\unip} - \dim X_{\Bla})/2$ for 
$\Bla \in \wt\CP(\Bm)$ in the exotic case, and for $\Bla \in \CP(\Bm)$ in the enhanced case.
Explicitly, we have
\begin{equation*}
\tag{8.14.1}
d_{\Bla} = \begin{cases}
              (m_{r-1} -k) + n(\Bla) + n(\la^{(r-1)} + \la^{(r)}) 
                   &\quad\text{: exotic case, $\Bla \in \CP(\Bm(k))$}, \\
              n(\Bla)
                    &\quad\text{: enhanced case, $\Bla \in \CP(\Bm)$}. 
            \end{cases}
\end{equation*}

As shown in the example in 5.18, $\dim \CB_z^{(\Bm)}$ is not constant for $z \in X_{\Bla}$. 
We show the following lemma. 

\begin{lem}  %%% Lemma 8.15
Assume that $\CX_{\Bm}$ is of exotic type or of enhanced type. 
\begin{enumerate}
\item
For any $z \in X_{\Bla}$, $\dim \CB^{(\Bm)}_z \ge d_{\Bla}$. 
The set $z \in X_{\Bla}$ such that $\dim \CB^{(\Bm)}_z = d_{\Bla}$
forms an open dense subset of $X_{\Bla}$.   
\item
For any $z \in X_{\Bla}$, $H^{2d_{\Bla}}(\CB_z^{(\Bm)}, \Ql)$
contains an irreducible $\CW\nat_{\Bm}$-module $\r_{\Bla}$.
\end{enumerate}
\end{lem}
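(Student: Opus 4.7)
The plan is to derive both parts directly from the Springer correspondence stated in Theorem 8.7 and Theorem 8.13, together with the semismallness of $\pi^{(\Bm)}_1$ already established in Lemma 6.21 (iii). The strategy is purely formal once these ingredients are in place; I do not expect to need new geometry.

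First I would apply proper base change to the map $\pi_1^{(\Bm)}: \wt\CX_{\Bm,\unip} \to \CX_{\Bm,\unip}$, which gives, for every $z \in \CX_{\Bm,\unip}$, the identification
\begin{equation*}
\CH^i\bigl((\pi_1^{(\Bm)})_!\Ql\bigr)_z \simeq H^i(\CB_z^{(\Bm)}, \Ql).
\end{equation*}
By Theorem 8.7 (ii) (exotic case) and Theorem 8.13 (ii) (enhanced case),
\begin{equation*}
(\pi_1^{(\Bm)})_!\Ql[d'_{\Bm}] \simeq \bigoplus_{\Bmu}\r_{\Bmu}\otimes \IC(\ol X_{\Bmu}, \Ql)[\dim X_{\Bmu}],
\end{equation*}
as $\CW\nat_{\Bm}$-equivariant perverse sheaves. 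Fix $z \in X_{\Bla}$. Since $X_{\Bmu}$ is the smooth open stratum of $\ol X_{\Bmu}$, the stalk of $\IC(\ol X_{\Bmu}, \Ql)$ at $z$ vanishes unless $z \in \ol X_{\Bmu}$ and is equal to $\Ql$ (concentrated in degree $0$) when $\Bmu = \Bla$. Reading off the degree $-\dim X_{\Bla}$ part of the stalk and accounting for the global shift $[d'_{\Bm}]$, the summand indexed by $\Bla$ contributes the submodule $\r_{\Bla}\otimes\Ql$ to $\CH^{-\dim X_{\Bla}+(-d'_{\Bm})}$ of the left-hand side, i.e. to $H^{d'_{\Bm}-\dim X_{\Bla}}(\CB_z^{(\Bm)}, \Ql) = H^{2d_{\Bla}}(\CB_z^{(\Bm)},\Ql)$. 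This proves (ii), and in particular yields the non-vanishing $H^{2d_{\Bla}}(\CB_z^{(\Bm)}, \Ql) \ne 0$, hence the inequality $\dim \CB_z^{(\Bm)} \ge d_{\Bla}$ in (i).

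For the open dense part of (i), I would invoke Lemma 6.21 (iii): for each $d \ge 0$ the locally closed stratum $X(d) = \{ z \in \CX_{\Bm,\unip} \mid \dim \CB_z^{(\Bm)} = d\}$ satisfies $\dim X(d) \le \dim\CX_{\Bm,\unip} - 2d = 2(d'_{\Bm}/2 - d)$. If $d > d_{\Bla}$, then $\dim X(d) < \dim X_{\Bla}$, so $X(d)\cap X_{\Bla}$ is a proper closed subset of the irreducible variety $X_{\Bla}$. Combining with the lower bound from the previous paragraph, the complement $X_{\Bla}\setminus\bigcup_{d>d_{\Bla}} X(d)$ is an open dense subset on which $\dim \CB_z^{(\Bm)} = d_{\Bla}$ exactly.

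There is no real obstacle: the argument is just a stalk-wise reading of the Springer decomposition combined with the semismallness bound, both of which are already available. The only small care required is bookkeeping the shift $[d'_{\Bm}]$ correctly so that the middle-degree cohomology $H^{2d_{\Bla}}$ matches the top non-vanishing stalk of $\IC(\ol X_{\Bla}, \Ql)$ on its smooth locus, and checking that the $\CW\nat_{\Bm}$-action on the stalk coming from proper base change agrees with the one used in Theorems 8.7 and 8.13 — but the latter action was defined precisely via this stalk description in Section 7.
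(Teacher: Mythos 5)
Your proof is correct and follows essentially the same route as the paper: prove (ii) first by reading off the $\Bla$-summand of the stalk of the decomposition from Theorems 8.7/8.13 at $z \in X_{\Bla}$, conclude the bound $\dim\CB^{(\Bm)}_z \ge d_{\Bla}$, and then combine with the semismallness estimate of Lemma 6.21(iii) to show the locus where equality holds is open dense. One minor notational slip: in the intermediate step the stalk degree should be $\CH^{-\dim X_{\Bla}}$ of the shifted complex $(\pi_1^{(\Bm)})_!\Ql[d'_{\Bm}]$, not $\CH^{-\dim X_{\Bla}+(-d'_{\Bm})}$; your subsequent identification with $H^{d'_{\Bm}-\dim X_{\Bla}}(\CB_z^{(\Bm)},\Ql) = H^{2d_{\Bla}}(\CB_z^{(\Bm)},\Ql)$ is nevertheless correct.
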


\begin{proof}
First we show (ii).  Assume that $\CX_{\Bm}$ is of exotic type. 
For any $z \in \CX_{\Bm,\unip}$, Theorem 8.7 (ii) implies that 
\begin{equation*}
\tag{8.15.1}
H^i(\CB^{(\Bm)}_z, \Ql) \simeq \bigoplus_{\Bmu \in \wt\CP(\Bm)}
          \r_{\Bmu}\otimes \CH_z^{i-d'_{\Bm} + \dim X_{\Bmu}}(\IC(\ol X_{\Bmu}, \Ql))
\end{equation*}
as $\CW\nat_{\Bm}$-modules. 
Assume that $z \in X_{\Bla}$, and put $i = 2d_{\Bla}$.  
Since $\CH^0_z(\IC(\ol X_{\Bla}, \Ql)) = \Ql$, 
$H^{2d_{\Bla}}(\CB_z^{(\Bm)}, \Ql)$ contains $\r_{\Bla}$.  
This proves (ii).  The enhanced case is proved in a similar way 
by using Theorem  8.13 (ii).  
\par
(ii) implies, in particular, $\dim \CB_z^{(\Bm)} \ge d_{\Bla}$. 
Put $d = \dim (\pi^{(\Bm)})\iv(X_{\Bla}) - \dim X_{\Bla}$. 
Let $X(d)$ be as in 6.20. 
Then $X(d) \cap X_{\Bla}$ is open dense in $X_{\Bla}$. 
Hence $\dim X_{\Bla} \le \dim X(d)$. 
By Lemma 6.21 (iii), we have

\begin{equation*}
\dim\CB_z^{(\Bm)} \le \frac{1}{2}(\dim \CX_{\Bm, \unip} - \dim X(d))
                  \le \frac{1}{2}(\dim \CX_{\Bm, \unip} - \dim X_{\Bla}) = d_{\Bla}
\end{equation*}
for any $z \in X_{\Bla} \cap X(d)$. 
Hence $\dim \CB_z^{(\Bm)} = d_{\Bla}$ and $d = d_{\Bla}$.   This proves (i).   
\end{proof}

We show the following result.  
In the enhanced case, a similar result was proved in [Li, Cor. 3.2.9] 
for the Borel-Moore homology. 

\begin{prop} %%%%  Prop. 8.16.
Assume that $\CX_{\Bm}$ is of exotic type or of enhanced type.
Take $z \in X_{\Bla}$ such that $\dim \CB^{(\Bm)}_z = d_{\Bla}$. 
\begin{enumerate}
\item
$H^{2d_{\Bla}}(\CB_z^{(\Bm)},\Ql) \simeq \r_{\Bla}$ as $\CW\nat_{\Bm}$-modules.
\item
Assume that $\CX_{\Bm}$ is of exotic type. Then  $\dim \CB_z = d_{\Bla}$, and  
$H^{2d_{\Bla}}(\CB_z, \Ql) \simeq V(\Bla)$ as $W_{n,r}$-modules.
\end{enumerate} 
\end{prop}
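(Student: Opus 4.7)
The plan is to apply the Springer decompositions of Theorem 8.7(ii) (exotic) and Theorem 8.13(ii) (enhanced) for part (i), and Theorem 8.7(iii) for part (ii), and then take stalks at the chosen $z \in X_{\Bla}$, extending the computation (8.15.1) from the proof of Lemma 8.15. For part (i), substituting into the decomposition and shifting degrees produces, as $\CW\nat_{\Bm}$-modules,
\begin{equation*}
H^{2d_{\Bla}}(\CB_z^{(\Bm)}, \Ql) \simeq \bigoplus_{\Bmu} \r_{\Bmu} \otimes \CH^{\dim X_{\Bmu} - \dim X_{\Bla}}_z(\IC(\ol X_{\Bmu}, \Ql)),
\end{equation*}
where $\Bmu$ ranges over $\wt\CP(\Bm)$ in the exotic case and over $\CP(\Bm)$ in the enhanced case. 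The $\Bmu = \Bla$ term contributes $\r_{\Bla}$ since $\CH^0 \IC(\ol X_{\Bla}, \Ql) = \Ql_{\ol X_{\Bla}}$ has stalk $\Ql$ at $z$.

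The crux is to arrange that every $\Bmu \ne \Bla$ summand vanishes, by refining $z$ inside the set $U := X(d_{\Bla}) \cap X_{\Bla}$ which is open dense in $X_{\Bla}$ by Lemma 8.15(i). Setting $j := \dim X_{\Bmu} - \dim X_{\Bla}$, I split into three cases. If $j < 0$, dimension alone forces $X_{\Bla} \not\subset \ol X_{\Bmu}$, so a generic $z$ satisfies $z \notin \ol X_{\Bmu}$ and the stalk vanishes. If $j > 0$, the intermediate extension support bound $\dim \supp \CH^j \IC(\ol X_{\Bmu}, \Ql) < \dim X_{\Bmu} - j = \dim X_{\Bla}$ makes $X_{\Bla} \cap \supp \CH^j \IC(\ol X_{\Bmu}, \Ql)$ a proper closed subset of $X_{\Bla}$. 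If $j = 0$, the distinctness of simple summands in the Springer decomposition forces $\ol X_{\Bmu} \ne \ol X_{\Bla}$; combined with equal dimension and irreducibility of $\ol X_{\Bla}$, this makes $\ol X_{\Bmu} \cap X_{\Bla}$ again a proper closed subset of $X_{\Bla}$. Intersecting these finitely many open conditions with $U$ produces the required open dense $U' \subset U$, and any $z \in U'$ witnesses (i).

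For part (ii) the same method applied to the decomposition of Theorem 8.7(iii),
\begin{equation*}
(\ol\pi_{\Bm,1})_!\Ql[d'_{\Bm}] \simeq \bigoplus_{\Bmu \in \wt\CP(\Bm)} V(\Bmu) \otimes \IC(\ol X_{\Bmu}, \Ql)[\dim X_{\Bmu}]
\end{equation*}
gives $H^{2d_{\Bla}}(\CB_z, \Ql) \simeq V(\Bla)$ as $W_{n,r}$-modules for $z \in U'$. Extending the same support inequality to all degrees, any nonzero $\Bmu \ne \Bla$ contribution to $H^i(\CB_z, \Ql)$ would require its stalk index $i - d'_{\Bm} + \dim X_{\Bmu}$ to be strictly less than $\dim X_{\Bmu} - \dim X_{\Bla}$, i.e. $i < 2d_{\Bla}$, while the $\Bmu = \Bla$ summand contributes only at $i = 2d_{\Bla}$. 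Hence $H^i(\CB_z, \Ql) = 0$ for $i > 2d_{\Bla}$. Since $\pi$ is proper, $\CB_z$ is projective, so combining the top nonvanishing cohomology with vanishing above gives $\dim \CB_z = d_{\Bla}$, completing (ii).

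The main obstacle is essentially the bookkeeping needed to uniformly handle the three cases $j < 0$, $j = 0$, and $j > 0$; the nontrivial input is the distinctness of the supports $\ol X_{\Bmu}$ which governs the borderline $j = 0$ case, and this distinctness is furnished directly by the Springer correspondence itself, so no additional geometric input beyond the theorems cited is required.
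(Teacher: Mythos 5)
Your strategy diverges from the paper's and it contains a genuine gap: the genericity arguments only establish the conclusion for $z$ in a possibly smaller open dense subset $U'\subset U$, whereas the Proposition asserts it for \emph{every} $z$ with $\dim\CB_z^{(\Bm)}=d_{\Bla}$.

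Concretely, your three-case analysis at $j=\dim X_{\Bmu}-\dim X_{\Bla}$ forces vanishing of the $\Bmu\neq\Bla$ stalks only after discarding, from $X_{\Bla}$, the proper closed subsets $\ol X_{\Bmu}\cap X_{\Bla}$ (case $j=0$) and $\supp\CH^j\IC(\ol X_{\Bmu})\cap X_{\Bla}$ (case $j>0$). Nothing in the proposal shows that a point $z$ with $\dim\CB_z^{(\Bm)}=d_{\Bla}$ automatically lies outside these sets; it is perfectly conceivable a priori that some such $z$ lies in $\ol X_{\Bmu}$ for a distinct $\Bmu$ of the same dimension, contributing an extra $\r_{\Bmu}$ in degree $2d_{\Bla}$. (Note also that your $j<0$ case needs no genericity at all: $\CH^j\IC$ vanishes identically for $j<0$.) Since Remark 6.13 leaves open whether the $X_{\Bla}$ are even disjoint, and $X_{\Bla}$ is not obviously a single stratum of a stratification making the IC sheaves locally constant, you cannot spread the generic conclusion to all of $U$ by constructibility either.

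The paper closes this gap by a different route: it proves (i) by induction on $r$ using the recursive fibration diagram (8.1.1) restricted to the unipotent locus. The hypothesis $\dim\CB_z^{(\Bm)}=d_{\Bla}$ is transported through $\z_1\iv(X_{\Bla})\isom X_{\Bla}$ to a point $(x_1,z')\in\CO_{\la^{(1)}}\times X'_{\Bla'}$, the K\"{u}nneth factorization gives
\begin{equation*}
H^{2d_{\Bla}}(\CB_z^{(\Bm)},\Ql)\simeq H^{2d_{\la^{(1)}}}(\CB_{x_1},\Ql)\otimes H^{2d_{\Bla'}}(\CB_{z'}^{(\Bm')},\Ql),
\end{equation*}
and the inductive hypothesis plus the classical Springer fact give the exact dimension $\dim\r_{\Bla}$ for \emph{every} admissible $z$. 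Combined with the containment $\r_{\Bla}\subset H^{2d_{\Bla}}(\CB_z^{(\Bm)},\Ql)$ from Lemma 8.15(ii) (which your argument does not invoke but is the step that makes the dimension count sufficient), this yields (i). Part (ii) is then deduced from (i) and the stalk-wise comparison of (8.15.1) with (8.16.1), never needing a choice of generic $z$. Your proposal of part (ii) inherits the same genericity problem at the borderline degree $i=2d_{\Bla}$, and the claimed vanishing of $H^i(\CB_z,\Ql)$ for $i>2d_{\Bla}$ (though it does hold, following as in the paper from $\dim\CB_z^{(\Bm)}=d_{\Bla}$ and (8.15.1)) is not cleanly derived in your sketch.
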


\begin{proof}
We prove (i) by induction on $r$. 
Assume that $r = 2$. In the exotic case, (i) holds by  [SS1, Remarks 5.5 (ii)].
A similar method also works for the enhanced case, since the number of $H$-orbits
in $\CX\uni$ is finite. Assume that $r \ge 3$, and that (i) holds for $r-1$. 
We consider the diagram as in (8.1.1) restricted to the unipotent varieties
as discussed in 8.4. 
Put $\Bla = (\la^{(1)}, \Bla')$ as in 8.4. 
By (8.14.1), we have $d_{\Bla} = d_{\Bla'} + d_{\la^{(1)}}$, where 
$d_{\Bla'}$ is defined similarly to $d_{\Bla}$, and 
$d_{\la^{(1)}} = (\dim G_{1, \unip} - \dim \CO_{\la^{(1)}})/2$.  
Take $z \in X_{\Bla}$ such that $\dim \CB_z^{(\Bm)} = d_{\Bla}$.  By Lemma 8.5
and Lemma 8.12, $\z_1$ gives an isomorphism $\z_1\iv(X_{\Bla}) \to X_{\Bla}$. 
Hence there exists a unique $z_* \in \z_1\iv(X_{\Bla})$ such that 
$\z_1(z_*) = z$.  Then by using the diagram (8.1.1), 
one can find $(x_1, z') \in \CO_{\la^{(1)}} \times X'_{\Bla'}$
and $z'_* = (\z_1')\iv(z')$ such that
$\s_1\iv(x_1, z'_*) = q_1\iv(z_*)$. Here $z' \in X'_{\Bla'}$ satisfies the condition 
that $\dim \CB_{z'}^{(\Bm')} = d_{\Bla'}$.
By using the isomorphism $(\z'_1)\iv(X'_{\Bla'}) \isom X'_{\Bla'}$, we have

\begin{equation*}
H^{2d_{\Bla'}}(\CB^{(\Bm')}_{z'}, \Ql) \simeq (R^{2d_{\Bla'}}(\pi'_1)_!\Ql)_{z'}
    \simeq (R^{2d_{\Bla'}}(\vf'_1)_!\Ql)_{z'_*},
\end{equation*}
where $\vf_1'$ is the restriction of $\vf'$.
Similarly we have 
$H^{2d_{\Bla}}(\CB_z^{(\Bm)}, \Ql) \simeq (R^{2d_{\Bla}}(\vf_1)_!\Ql)_{z_*}$ 
by using $\z\iv(X_{\Bla}) \isom X_{\Bla}$.
Let $\xi$ be an element contained in $\s_1\iv(x_1, z'_*) = q_1\iv(z_*)$.
By (8.1.1), we have
\begin{equation*}
(R^{2d_{\la^{(1)}}}(\pi^1_1)_!\Ql)_{x_1} \otimes (R^{2d_{\Bla'}}(\vf_1')_!\Ql)_{z'_*}
   \simeq (R^{2d_{\Bla}}(\wt\vf_1)_!\Ql)_{\xi} 
   \simeq (R^{2d_{\Bla}}(\vf_1)_!\Ql)_{z_*},
\end{equation*}
where $\pi^1_1, \wt\vf_1$ are restrictions of $\pi^1, \wt\vf$, respectively.  
By induction, we know that $\dim H^{2d_{\Bla'}}(\CB_{z'}^{(\Bm')},\Ql) = \dim \r_{\Bla'}$.
It is well-konwn that 
$\dim H^{2d_{\la^{(1)}}}(\CB_{x_1}, \Ql) = \dim \r_{\la^{(1)}}$. 
Hence $\dim H^{2d_{\Bla}}(\CB_z^{(\Bm)}, \Ql) 
      = \dim \r_{\la^{(1)}} + \dim \r_{\Bla'} = \dim \r_{\Bla}$. 
On the other hand, by Lemma 8.15, 
$H^{2d_{\Bla}}(\CB_z^{(\Bm)}, \Ql)$ contains $\r_{\Bla}$. 
Thus $H^{2d_{\Bla}}(\CB_z^{(\Bm)},\Ql) \simeq \r_{\Bla}$.  
(i) is proved.
\par
Next we show (ii).  Assume that $\CX_{\Bm}$ is of exotic type and 
$z \in X_{\Bla}$ such that $\dim \CB_z^{(\Bm)} = d_{\Bla}$. 
We consider the decomposition (8.15.1) in the case where $z \in X_{\Bla}$.
By Theorem 8.7 (iii), we have a similar decomposition
\begin{equation*}
\tag{8.16.1}
H^i(\CB_z, \Ql) \simeq  \bigoplus_{\Bmu \in \wt\CP(\Bm)}
              V(\Bmu)\otimes \CH_z^{i-d'_{\Bm} + \dim X_{\Bmu}}(\IC(\ol X_{\Bmu},\Ql)).  
\end{equation*}
(8.15.1) shows that $\CH_z^{i - d'_{\Bm} + \dim X_{\Bmu}}(\IC(\ol X_{\Bmu}, \Ql)) = 0$ 
for any choice of $i > 2d_{\Bla}$ and of $\Bmu \in \wt\CP(\Bm)$. 
This implies, by (8.16.1),  that $H^i(\CB_z, \Ql) = 0$ for any $i > 2d_{\Bla}$. 
Since $\CB_z^{(\Bm)} \subset \CB_z$, we conclude that 
$\dim \CB_z = \dim \CB_z^{(\Bm)} = d_{\Bla}$.
Now assume that $i = 2d_{\Bla}$.  By (i) and (8.15.1), we see that 
$\CH_z^{2d_{\Bla} -d'_{\Bm} + \dim X_{\Bmu}}(\IC(\ol X_{\Bmu}, \Ql)) = 0$ for 
any $\Bmu \ne \Bla$.   Hence by (8.16.1), $H^{2d_{\Bla}}(\CB_z, \Ql) \simeq V(\Bla)$. 
The proposition is proved.
\end{proof}

\par\bigskip\bigskip

\par\vspace{1cm}
\noindent
T. Shoji \\
Department of Mathematics, Tongji University \\ 
2139 Siping Road, Shanghai 200092, P.R. China  \\
E-mail: \verb|shoji@tongji.edu.cn|

\begin{thebibliography}{[DJMu]}
\bibitem [AH] {AH} P. Achar and A. Henderson; 
Orbit closures in the enhanced nilpotent cone, 
Adv. in Math. {\bf 219} (2008), 27-62. Corrigendum, ibid. 
{\bf 228} (2011), 2984-2988.
\par
\bibitem [FGT] {FGT} M. Finkelberg, V. Ginzburg and R. Travkin; 
Mirabolic affine Grassmannian and chartacter sheaves, 
Selecta Math. {\bf 14} (2009), 607-628.
\par
\bibitem [K] {K} S. Kato; An exotic Deligne-Langlands correspondence
for symplectic groups, Duke Math. J. {\bf 148} (2009), 306-371.
\par
\bibitem [Li] {LI} Y. Li; A class of perverse sheaves on framed representation 
varieties of the Jordan quiver, preprint.
\par
\bibitem [Lu] {Lu} G. Lusztig; Intersection cohomology complexes 
on a reductive group, Invent. Math.{\bf 75} (1984), 205-272.
\par
\bibitem [SS1] {SS1} T. Shoji and K. Sorlin; Exotic symmetric space over a finite
field, I, Transformation Groups, {\bf 18}, (2013) 877-929.  
\par
\bibitem [SS2] {SS2} T. Shoji and K. Sorlin; Exotic symmetric space over a finite
field, III, to appear in Transformation Groups.
\par
\bibitem [T] {T} R. Travkin; Mirabolic Robinson-Schensted correspondence,
Selecta Math. {\bf 14} (2009), 727-758.

\end{thebibliography}
 \end{document}